\let\JournalBibliographyStyle\bibliographystyle
\renewcommand{\bibliographystyle}[1]{\JournalBibliographyStyle{sn-mathphys-num-local}}
\documentclass[pdflatex,sn-mathphys-num]{sn-jnl}
\usepackage[T1,OT1]{fontenc}
\DeclareTextCommand{\l}{OT1}{{\fontencoding{T1}\fontfamily{lmr}\selectfont\l}}
\usepackage{amsmath,amssymb,mathtools,bm,microtype}
\usepackage{graphicx,xcolor,booktabs,longtable,array,enumitem,needspace}
\numberwithin{equation}{section}
\theoremstyle{thmstyleone}
\newtheorem{theorem}{Theorem}[section]
\newtheorem{proposition}[theorem]{Proposition}
\newtheorem{lemma}[theorem]{Lemma}
\newtheorem{corollary}[theorem]{Corollary}
\theoremstyle{thmstylethree}

\theoremstyle{thmstyletwo}

\newcommand{\R}{\mathbb R}
\newcommand{\Z}{\mathbb Z}

\newcommand{\E}{\mathbb E}
\newcommand{\Prob}{\mathbb P}

\newcommand{\dd}{\,\mathrm d}
\newcommand{\one}{\mathbf 1}

\newcommand{\cL}{\mathcal L}
\newcommand{\cA}{\mathcal A}
\newcommand{\id}{\mathrm{Id}}
\newcommand{\TV}{\mathrm{TV}}
\newcommand{\Law}{\operatorname{Law}}
\newcommand{\capB}{\operatorname{cap}_{B}}
\newcommand{\HH}{\mathcal H}

\DeclareMathOperator{\dist}{dist}

\makeatletter
\def\ps@plain{\let\@mkboth\@gobbletwo
  \let\@oddhead\@empty\let\@evenhead\@empty
  \def\@oddfoot{\hfil\thepage\hfil}\let\@evenfoot\@oddfoot}
\makeatother
\hypersetup{hidelinks,pdftitle={Branching explosion of Brownian Fleming--Viot systems},pdfauthor={Yaohui Lin and Tiankui Lu},pdfsubject={Probability theory},pdfkeywords={Fleming--Viot, Brownian motion, branching explosion, copying current, Kelvin inversion}}
\begin{document}
\raggedbottom
\title[Brownian Fleming--Viot explosion]{Branching explosion of Brownian Fleming--Viot systems}
\author*[1]{\fnm{Yaohui} \sur{Lin}}\email{linyuan@m.scnu.edu.cn}
\author[2]{\fnm{Tiankui} \sur{Lu}}
\affil*[1]{\orgdiv{School of Mathematical Sciences}, \orgname{South China Normal University}, \orgaddress{\city{Guangzhou}, \country{China}}}
\affil[2]{\orgdiv{Information and Artificial Intelligence}, \orgname{Anhui Agricultural University}, \orgaddress{\city{Hefei}, \country{China}}}
\abstract{We resolve the long-standing non-extinction problem for Brownian Fleming--Viot systems in bounded domains. For every fixed $N\ge3$, we construct a bounded connected domain in $\R^3$ in which branching explosion occurs with positive probability before every prescribed positive time, from every interior initial configuration. On this event all particles converge to one boundary point. Together with two-particle non-extinction, this gives the sharp population threshold for bounded-domain Brownian explosion. The mechanism is directed transport generated by uniform copying in a periodic absorbing environment. We compute a positive cubic current for the same two-harmonic killing profile at every $N\ge3$, with an explicit population coefficient and a controlled fourth-order remainder. To realize this transport by spherical absorption, we develop a marked genealogical transfer theorem that retains the Euclidean displacement inherited at each copy. Common ancestry makes the integer winding reconstructible from phase records and yields convergence of the stationary signed current. A boundary restart law with P\'olya weights supplies the continuity needed to turn that current into stable escape. Diffusion coefficients adapted to a common Kelvin clock preserve the joint Brownian law and uniform donor marks while compressing escape into finite lifetime. Disjoint spherical holes in a ball accumulate only at the explosion point. The boundary is smooth elsewhere, with finite area and perimeter. Boundary area within distance $r$ of the singularity is of order $r$. For three particles, the transfer theorem covers every smooth positive periodic killing rate.
}
\keywords{Fleming--Viot particle system, Brownian motion, branching explosion, copying current, genealogical homogenization, perforated domain, Kelvin inversion}
\pacs[MSC 2020]{Primary 60K35; Secondary 60J65, 60J80, 35B27}
\maketitle
\pagestyle{plain}
\thispagestyle{plain}
\setcounter{tocdepth}{1}
\tableofcontents

\newpage

\section{Introduction}\label{sec:introduction}
The non-extinction problem for Brownian Fleming--Viot systems asks whether uniform rebirth prevents infinitely many boundary deaths in finite time in every bounded domain. It is the basic global existence question for the classical particle construction: at each exit, a Brownian particle jumps to a uniformly chosen survivor, and the recursion ends if the replacement times accumulate. We give a negative answer for every fixed population of at least three particles.

The model was introduced by Burdzy, Ho{\l}yst, Ingerman and March in 1996~\cite{BHIM1996} and developed as a particle representation of the Dirichlet Laplacian by Burdzy, Ho{\l}yst and March in 2000~\cite{BHM2000}. An irreparable error in the non-extinction proof in that paper left the general bounded-domain question open, as recorded in Problem~6 of Burdzy's list~\cite{BurdzyProblems}. Its difficulty lies in the feedback between absorption and copying: each rebirth changes the population configuration that determines subsequent victims and donor displacements. Our construction exploits this feedback to produce directed transport from driftless Brownian pieces, and then converts that transport into branching explosion.

For an open set $D\subset\R^d$ and $N\ge2$, write $Z_i(t)$ for the position of label $i$. The particles move as independent standard Brownian motions between replacements. Let $\tau_n$ be the $n$th replacement time and put
\begin{equation}\label{eq:explosion-time}
 \tau_\infty=\lim_{n\to\infty}\tau_n,
\end{equation}
with $\tau_n=\infty$ if fewer than $n$ replacements occur. Branching explosion is the event $\{\tau_\infty<\infty\}$. Finite prefixes are well defined in arbitrary open sets. We write $B(x,r)$ for the open Euclidean ball and $\HH^2$ for two-dimensional Hausdorff measure.

\begin{theorem}[Bounded-domain explosion]\label{thm:bounded-explosion}
For every fixed integer $N\ge3$, there is a bounded connected open set $\Omega_N\subset\R^3$ such that, for every $z\in\Omega_N^N$ and every $T>0$, the classical Brownian Fleming--Viot system satisfies
\begin{equation}\label{eq:main-event}
 \Prob_z\left(\tau_\infty<T,\quad
 \lim_{t\uparrow\tau_\infty}Z_i(t)=0\text{ for all }1\le i\le N\right)>0.
\end{equation}
The domain is an open ball with countably many pairwise disjoint closed balls removed. The removed balls accumulate only at $0\in\partial\Omega_N$, and the boundary is $C^\infty$ away from $0$. Moreover, $\Omega_N$ has finite perimeter and
\begin{equation}\label{eq:main-area}
 \HH^2(\partial\Omega_N)<\infty,\qquad
 c_N^{\rm geom}r\le\HH^2(\partial\Omega_N\cap B(0,r))
 \le C_N^{\rm geom}r\quad(0<r<r_N^{\rm geom})
\end{equation}
for positive constants $c_N^{\rm geom},C_N^{\rm geom},r_N^{\rm geom}$.
\end{theorem}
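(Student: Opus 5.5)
The plan follows the abstract's mechanism: periodic absorption produces a directed copying current, a genealogical transfer turns that current into transport toward $0$, and a Kelvin clock compresses the transport into finite time.

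\textbf{Step 1: drift from copying.} First we treat a periodic killing rate $V$ built from two harmonics on a torus, with $N$ Brownian particles and uniform copying at rate $V$. We take $V$ as the homogenised stand-in for spherical holes. We expand the stationary signed current of the empirical centre, in the direction of the phase shift between the harmonics, in powers of the amplitude $\varepsilon$. This should give a current $J_N=\kappa_N\varepsilon^3+O(\varepsilon^4)$ with $\kappa_N>0$ for every $N\ge3$. For $N=2$ the cubic coefficient should vanish by a symmetry of the two-particle copying rule, consistent with two-particle non-extinction. The computation is a Fourier/perturbative solution of the stationary equation for relative positions, using that a copy moves the victim to the donor and so inherits the donor's Euclidean displacement.

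\textbf{Step 2: transfer to spherical absorption.} Next we replace $V$ by a lattice of small absorbing balls of radius $\delta$ and spacing $\eta$, with capacities $\capB$ tuned so that the homogenised killing is $V$. We prove a marked genealogical transfer theorem. We track each particle's unwrapped displacement as a mark, passed on at each copy. Common ancestry lets us recover the integer winding from phase records, so the marked process converges to the $V$-system and the signed current converges as well. To get stable escape, not just a mean current, we use a boundary restart law with P\'olya weights. This gives continuity of the current in the initial data, and hence a uniform positive drift $v>0$ along a chosen axis. Large deviations then show that the cloud travels distance $L$ in time $\asymp L/v$ with probability bounded below, while its diameter stays $O(1)$.

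\textbf{Step 3: geometry and compression.} Using Kelvin inversion $x\mapsto x/|x|^2$, we map the far half-space region (the direction of travel) into a neighbourhood of $0$. The periodic array of holes becomes disjoint balls accumulating only at $0$, placed inside a fixed ball. Brownian motion is Kelvin-invariant only up to the time change $|x|^{-4}$. We therefore run all particles on a common clock with diffusion coefficients adapted to it. This preserves the joint Brownian law and the uniform donor choice, since copying is instantaneous and clock-independent. Along the escaping event the total clock is $\int |X|^{-4}\,dt\sim\sum L^{-4}\cdot L\,dL<\infty$, which gives $\tau_\infty<\infty$ and all $Z_i\to0$. Shrinking the configuration near $0$ makes $\tau_\infty<T$ hold with positive probability.

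\textbf{Step 4: arbitrary start, regularity, and area.} From an arbitrary $z\in\Omega_N^N$, connectedness and the support theorem give positive probability of reaching the good starting region before $T/2$ with no replacement. The strong Markov property then finishes the argument. The boundary is $C^\infty$ away from $0$ because it is a union of disjoint round spheres. For the area bound, a hole at distance $\sim R$ in the pre-image has image radius $\sim\delta R^{-2}$ and area $\sim\delta^2R^{-4}$. The number of holes with $R\in[L,2L]$ is $\sim L^2\eta^{-2}$ if the holes fill a slab of fixed width and transverse size $\sim L$ (a cone). Then $\HH^2(\partial\Omega_N\cap B(0,r))\asymp\sum_{R>1/r}R^{-2}\asymp r$, and this also gives finite total area and perimeter.

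\textbf{Main obstacle.} I expect Step 2 to be hardest: showing that the current computed for smooth $V$ survives the spherical-hole approximation uniformly. The difficulty is controlling winding and donor displacements through the genealogy while the population's feedback keeps changing.
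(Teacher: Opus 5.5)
The gap is the link between Steps 2 and 3: you prove escape for the periodic Brownian system, but the common-clock construction runs a different process, and escape is needed for that process over an infinite horizon. Your outline otherwise follows the paper's architecture. With $t(u)=\int_0^u|Y_N(s)|^{-4}\,\dd s$, making every inverted label a standard Brownian motion in the same physical time forces label $i$ to carry $\sigma_i=|Y_i|^2|Y_N|^{-2}I$ and $b_i=-|Y_i|^2Y_i|Y_N|^{-4}$, smoothly cut off outside a high, narrow region. These coefficients are nonperiodic and depend on the whole configuration, and the drift points back towards the origin. Since $\sigma_i\ne I$ for $i<N$, quadratic variation distinguishes the two laws, so no change of measure carries an escape event of the unperturbed system over to the operational one. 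You need an escape theorem that holds uniformly over all sufficiently small smooth configuration-dependent coefficient perturbations, as in Theorem~\ref{thm:stable-current}. That is the role of the P\'olya restart law in the paper. It makes the block means $\E_x^\eta[\Delta_T^{(1)}+f(H_T)]$ continuous jointly in the start and the coefficients, uniformly on Foster sublevel sets whose closures contain configurations with several labels on the walls (Lemma~\ref{lem:sublevel-stability}). The escape argument consumes this joint continuity, not continuity of the stationary current.

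Even for the unperturbed system, ``continuity, hence a uniform positive drift'' and ``distance $L$ in time $\asymp L/v$ with probability bounded below, diameter $O(1)$'' do not produce the event Step 3 needs.
A positive stationary mean does not make the expected block displacement positive from every state. The paper fixes this in three moves:
\begin{enumerate}
\item It solves $(I-P)u=b-m$, so that $x_N^{(1)}+u(H)$ has constant expected block increment $m$.
\item It subtracts a Foster penalty $a\mathcal V(H)$, so that the drift stays at least $m/4$ under perturbation also at large $\mathcal V$, where the comparison is not uniform.
\item Second-moment martingale bounds with Doob's inequality over doubling windows then give $X_N^{(1)}(t)\ge c_*(j+t)$ and $\mathfrak D(X_t)\le\frac\delta4X_N^{(1)}(t)$ for all $t\ge0$, with probability at least $1-C_\delta/j$.
\end{enumerate}
Per-segment lower bounds cannot be chained into a positive-probability infinite-horizon event unless the failure probabilities are summable. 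An $O(1)$ diameter bound fails almost surely over infinite time, by ergodicity of the hidden chain. The proportional diameter bound is both attainable and sufficient. It keeps every label above the cutting plane and inside the region where the cutoffs equal one, so the coefficients are exactly the Kelvin ones. It also gives $|Y_i|\to\infty$ for every label and a finite clock.

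There is also a smaller slip in Step~4. With holes filling the half-space, a dyadic shell $|z|\in[L,2L]$ contains $\asymp L^3$ centres. Each inverted sphere there has area $\asymp L^{-4}$, so each shell contributes $\asymp L^{-1}$. Your count of $L^2$ centres per shell would give $\asymp r^2$, not $\asymp r$.
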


Three particles are the exact threshold. Kwa\'snicki's two-particle theorem~\cite{Kwasnicki2024}, through its Corollary 1.4, gives non-extinction in every bounded open domain from every interior configuration, with no boundary regularity assumption. The counterexamples above already occur in three dimensions, with the entire singular behavior concentrated at one point and a smooth spherical boundary everywhere else. For each population, one domain works for all initial configurations and all positive target times.

The geometry follows the transport. We first construct a periodic array of absorbing spherical holes in $\R^3$ in which copying drives the population to infinity. Retain the part above a plane $y^{(1)}=R$ missing the closed holes, where $y^{(1)}$ is the first coordinate. Kelvin inversion,
\[
 \mathcal I(y)=y/|y|^2,
\]
maps this half-space to a ball tangent to zero and sends the holes to spheres accumulating there. Figure~\ref{fig:inversion-geometry} shows the resulting domain. The proof must preserve the signed transport through absorption and preserve the Brownian copying law through inversion.
\begin{figure}[htbp]
\centering
\includegraphics{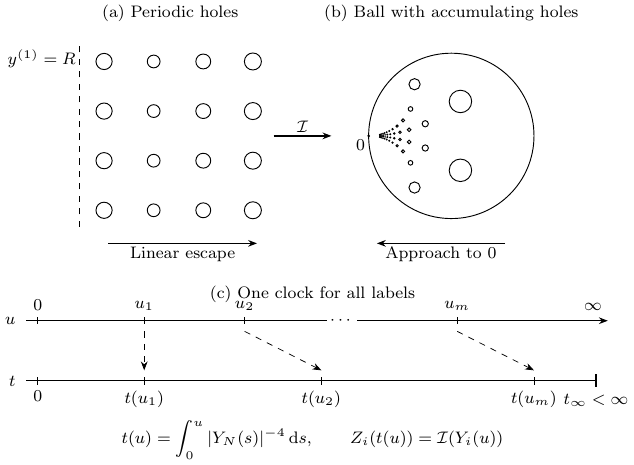}
\caption{Kelvin inversion and the common clock in a schematic section
of the construction in $\R^3$. The cut periodic domain in (a) becomes
the ball in (b), tangent to $0$. For operational positions $Y_i(u)$ and
copy times $u_m$, all labels share the physical times $t(u_m)$ in (c).
With the adapted operational coefficients, stable linear escape gives
$t_\infty=\lim_{u\to\infty}t(u)<\infty$ and
$Z_i(t(u))\to0$ for every label}
\label{fig:inversion-geometry}
\end{figure}

\subsection{The copying current}
Uniform copying turns spatial asymmetry into motion. Each Brownian segment has zero drift, and the generator of a single particle killed at a periodic rate is self-adjoint. In the population, the killing rate selects the victim; its jump samples the survivor cloud relative to that victim's phase. We identify the stationary correlation responsible for the resulting current on the Euclidean cover of the periodic system.

Normalize the neutral copying rate per ordered victim and donor pair to $1/2$. Let the killing rate depend on the first coordinate $s$ through
\begin{equation}\label{eq:intro-rate}
 \lambda_N=(N-1)/2,\qquad
 k_{N,\epsilon}(s)=\lambda_N+\epsilon\phi_*(s),\qquad
 \phi_*(s)=\cos s+\sin 2s.
\end{equation}
Let $v_{N,\epsilon}$ be the stationary longitudinal velocity. For $|\epsilon|\le\lambda_N/4$, we prove
\begin{equation}\label{eq:intro-current}
 v_{N,\epsilon}=\mathfrak c_N\epsilon^3+\epsilon^4r_{N,\epsilon},
 \qquad \mathfrak c_N>0\quad(N\ge3),\qquad
 \sup_{|\epsilon|\le\lambda_N/4}|r_{N,\epsilon}|<\infty.
\end{equation}
The coefficient has a factor $N-2$ and an explicit positive remaining factor. A single profile therefore supplies a positive current for every population in the theorem.

The first surviving response is chiral. For a real two-harmonic profile $\phi(s)=\sum_{\ell\in\{-2,-1,1,2\}}\widehat\phi_\ell e^{i\ell s}$, its orientation is encoded by
\[
 \mathcal C(\phi)=8\operatorname{Im}
       (\widehat\phi_1^2\overline{\widehat\phi_2}).
\]
This quantity is invariant under translation and changes sign under reflection; $\mathcal C(\phi_*)=1$. Translation removes the linear response, and reflection cancels the conjugate pairs at second order. The cubic resonance $1+1=2$ retains the relative phase of the two harmonics.

We compute that response through a finite Fourier elimination. Copying merges observed frequencies, and each perturbation adds at most one observed label, so the cubic calculation closes at rank four. A formal exponent records the Euclidean displacement through differentiation of the remaining root term. Successive Poisson equations turn this finite algebra into an exact stationary current identity. The correctors grow at most linearly in the real relative positions; genealogical moment bounds control the fourth-order remainder. Thus the calculation selects a fixed positive killing rate with a strictly positive current, as proved in Theorem~\ref{thm:copying-current}.

\subsection{Genealogy and signed displacement}
The passage from killing rates to absorbing holes must retain the displacement inherited at each copy. If two donor positions have the same periodic phase, their copying displacements can differ by an integer number of periods. We construct a transfer theorem for marked genealogical records that keeps this information through the absorption limit.

For a smooth positive periodic rate $k$ in three dimensions, place a hole of radius
\[
 a_z=\frac{h^3k(z)}{2\pi}
\]
at the center $z$ of each mesh cell of side $h$. Its Brownian capacity for the generator $\tfrac12\Delta$ is $2\pi a_z$, so capacity per cell volume equals $k(z)$. The one-particle form limit identifies the killing rate. Our marked comparison follows every victim, donor and survivor endpoint, and reconstructs complete phase paths from fine grids.

Write $\operatorname{ph}$ for reduction modulo $L=2\pi$ in each coordinate. For a configuration $x=(x_1,\ldots,x_N)$, the hidden state
\[
 H(x)=(\operatorname{ph}x_N,(x_i-x_N)_{i<N})
\]
retains the actual relative positions. For the periodic populations $X=(X_i)_{i\le N}$, put $H_t=H(X(t))$ and $\Delta_T=X_N(T)-X_N(0)$. Principal phase increments recover the Brownian increments on a sufficiently fine grid. Every initial lattice offset follows its ancestral label through subsequent copies. Once the backward genealogy has one ancestor, the reconstructed positions differ from the true positions by a single common lattice vector. That vector cancels from $H$ and from the reference displacement $\Delta_T$.

Let $\nu_h,\nu_k$ be the hard and soft stationary hidden laws, and define their vector currents by $\mathbf v_h(k)=\E_{\nu_h}\Delta_1$ and $\mathbf v(k)=\E_{\nu_k}\Delta_1$. Uniform death-count moments and an exponential ancestor tail turn the reconstruction into
\[
 \|\Law_{\nu_h}(H_0,\Delta_T)
        -\Law_{\nu_k}(H_0,\Delta_T)\|_\TV\longrightarrow0,
 \qquad \mathbf v_h(k)\longrightarrow\mathbf v(k),
 \quad h\downarrow0,
\]
for every fixed $T>0$. The first convergence is a joint law statement about the Euclidean lift, and uniform displacement moments give the signed mean in the second. Theorem~\ref{thm:marked-transfer} provides this transfer under uniform count and ancestry bounds. A capacity barrier verifies those bounds for small-contrast rates at every $N\ge3$.

For three particles, a lifetime potential supplies an energy estimate that treats holes of arbitrary relative radii. It controls the gradient energy between deaths and the donor squares added at copies. A microscopic rescue construction then produces a uniformly regular root density from every initial configuration, and finite-record comparison gives a common-ancestor bound. This proves transfer for every smooth positive periodic killing rate when $N=3$.

\subsection{Ballistic escape and finite lifetime}
The stationary current must yield escape of the whole population for all times under the coefficient changes used in inversion. We construct a Poisson correction with constant baseline mean increment and add a Foster penalty to keep the drift positive over all hidden states. On Foster sublevel sets, at least one label remains interior. The wall-approaching labels restart through a rapid copying cascade with P\'olya weights unchanged by adapted orders of deaths. This law gives the boundary continuity needed for uniform perturbation comparison, including nonperiodic coefficients depending on the full configuration.

From high translated seeds, with probability tending to one as the seed height increases, the reference coordinate has a positive linear lower bound and the cloud diameter stays small relative to it throughout the path. Block martingale bounds and excursion estimates carry the positive drift between observations. These are the escape properties needed to make inversion a small coefficient perturbation.

At a copy, victim and donor positions must be evaluated at the same physical time. We choose one clock for the entire population. With operational positions $Y_i(u)$, set
\begin{equation}\label{eq:intro-clock}
 t(u)=\int_0^u |Y_N(s)|^{-4}\,\dd s,\qquad
 Z_i(t(u))=\mathcal I(Y_i(u)).
\end{equation}
The operational diffusion and drift coefficients in the high, narrow region are
\[
 \sigma_i^K(Y)=\frac{|Y_i|^2}{|Y_N|^2}I_3,
 \qquad b_i^K(Y)=-\frac{|Y_i|^2Y_i}{|Y_N|^4},
\]
where $I_3$ is the identity matrix. They cancel the inversion drift and give every inverted label the same scalar covariance clock. The joint martingale identification also preserves independence of the Brownian drivers and uniform donor marks. Smooth extensions of these coefficients fall within the perturbation class controlled by stable escape.

On the escape event all $|Y_i(u)|$ grow at least linearly, so $t(\infty)<\infty$ and every $Z_i$ tends to zero. Point polarity in three dimensions forces infinitely many replacements before this clock endpoint. Raising the seed makes the remaining physical lifetime arbitrarily short. Positive Brownian tube probabilities in the connected inverted domain then transfer explosion to every interior initial configuration and every positive target time. The common-clock conversion is proved in Theorem~\ref{thm:kelvin-explosion}.

\subsection{Related work}
Non-extinction and stationary behavior have been studied under boundary regularity assumptions. L\"obus~\cite{Lobus2009} gave a structural formula for the invariant measure of the stationary Brownian Fleming--Viot system. Bieniek, Burdzy and Finch~\cite{BBF2012} proved non-extinction in bounded Lipschitz domains with a sufficiently small Lipschitz constant, and for two particles in bounded polyhedral domains. Grigorescu and Kang~\cite{GK2012} obtained criteria for smooth diffusions, including Brownian systems in nonsmooth domains with an integrable Martin kernel, and proved that a single ancestry line survives. Villemonais~\cite{Villemonais2014} established a general approximation method for conditioned Markov processes and non-explosion criteria for diffusion particle systems. Bieniek, Burdzy and Pal~\cite{BBP2012} proved finite-time extinction for two particles driven by Bessel processes of negative dimension, and for every population size in Brownian diffusions with drift stronger than the Bessel drift.

The genealogy carries its own spatial dynamics. Bieniek and Burdzy~\cite{BB2018} proved uniqueness of the spine under general assumptions and obtained its large-population limit for finite-state driving chains. Burdzy and Engl\"ander~\cite{BE2024} proved convergence of the Brownian Fleming--Viot spine, as the population tends to infinity, to Brownian motion conditioned to stay forever in a bounded Lipschitz domain with Lipschitz constant less than one. For the two-particle half-line system, Burdzy and Tadi{\'c}~\cite{BT2024} used a logarithmic map of the joint configuration and a common time change to analyze the spine. Burdzy, Engl\"ander and Marshall~\cite{BEM2025} showed that the two-particle spine in an interval has a path law mutually singular to that of this conditioned motion. Tough~\cite{Tough2023} obtained a Wright--Fisher limit for inherited colours on the accelerated time scale $t\mapsto Nt$, with a hard-killing extension for Brownian systems in smooth bounded domains.

Lin and Lu~\cite{LL2026} proved universal two-particle ancestral balance for finite reversible chains and generic circulation on a fixed three-state graph for every $N\ge3$. For $N\ge4$, their bounded-degree perturbation calculation gives a cubic ancestral current. The present construction computes an inherited Euclidean displacement current and carries it through spherical absorption to Brownian branching explosion.

For empirical measures, Grigorescu and Kang~\cite{GK2004} established the hydrodynamic limit jointly with the normalized boundary-visit count. Their tagged-particle analysis~\cite{GK2006} identified the limiting rebirth law and proved propagation of chaos. For non-explosive systems, C\'erou, Delyon, Guyader and Rousset~\cite{CDGR2020} established fixed-time central limit theorems for empirical distributions and survival-probability estimators. Journel and Monmarch\'e~\cite{JM2025} obtained exponential mixing with a decay rate independent of the population size and propagation of chaos uniformly in time for low-temperature metastable gradient diffusions with hard killing. Cardaliaguet, Cirant, Jackson and Souganidis~\cite{CCJS2026} obtained a weak error of order $N^{-1}$, uniformly in time, for smooth functionals of the empirical measure of hard-killing Brownian systems in smooth bounded domains, with independent initial particles drawn from a compactly supported law in the domain.

The capacity scaling used for spherical absorption is the critical perforation regime of Cioranescu and Murat~\cite{CM1997}. Cherednichenko, Dondl and R\"osler~\cite{CDR2018} proved norm-resolvent convergence for periodic spherical perforations with a constant effective absorption term. The capacity and kernel limits for the spatially varying hole radii used in our marked transfer theorem are proved in Appendix~\ref{app:capacity}.

\subsection{Organization}
The proof follows the positive longitudinal current through the construction:
\[
 \underbrace{\mathbf v(k)^{(1)}>0}_{\text{soft current}}
 \longrightarrow
 \underbrace{\mathbf v_h(k)^{(1)}>0}_{\text{one fixed mesh}}
 \longrightarrow
 \underbrace{\text{stable escape}}_{\text{Theorem~\ref{thm:stable-current}}}
 \longrightarrow
 \underbrace{\Prob_z(\tau_\infty<T)>0}_{\text{Theorem~\ref{thm:bounded-explosion}}}.
\]

The mesh limit selects one perforated domain. Stability and Kelvin inversion
then use that fixed mesh. After the dynamics and records, Sections 3 to 5
establish the current and its transfer; Sections 6 and 7 carry out escape and
inversion. The appendices contain the exact algebra, capacity and kernel
limits, local boundary arguments, and the arbitrary rate estimates for
three particles.

\section{Dynamics and genealogical records}\label{sec:dynamics}
Throughout the construction $N\ge3$ is fixed, the space dimension is three, and the period is $L=2\pi$. The current calculation uses the first spatial coordinate. For a configuration $x=(x_1,\ldots,x_N)$, $C_{ij}x$ replaces $x_i$ by $x_j$, where $i\ne j$. Particle paths are right-continuous, with the copied position recorded at each replacement time. For a spatial vector $y$, $y^{(1)}$ denotes its first coordinate. We write
\begin{equation}\label{eq:cloud-diameter}
 \mathfrak D(x)=\max_{i,j}|x_i-x_j|.
\end{equation}
The vector $e_1$ is the first coordinate unit vector. For a measure $\mu$, write $\mu f=\int f\dd\mu$. Our total variation convention is $\|\mu-\nu\|_{\TV}=\sup_{|f|\le1}|\mu f-\nu f|$.

\begin{lemma}[Brownian finite prefixes]\label{lem:finite-prefix}
For Brownian motion started at an interior point of an open set $D\subset\R^d$, the exit time is positive almost surely and its finite part has no atoms. Every finite prefix of the classical Brownian Fleming--Viot system in $D$ has a unique exiting label and is well defined.
\end{lemma}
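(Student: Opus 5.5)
The plan has three parts: positivity of the exit time, absence of atoms in its finite part, and an induction on replacements that builds each finite prefix from independent Brownian pieces.

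\emph{Positivity of the exit time.} Fix $x\in D$ and choose $r>0$ with $B(x,r)\subset D$. The exit time $\zeta_D$ from $D$ dominates the exit time from $B(x,r)$. By path continuity the latter is strictly positive almost surely, so $\zeta_D>0$ almost surely.

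\emph{No atoms in the finite part.} For $t>0$, apply the Markov property at the deterministic time $s=t/2$ on the event $\{\zeta_D>s\}$. This gives
\[
\Prob_x(\zeta_D=t)=\E_x\bigl[\one_{\{\zeta_D>s\}}\,\Prob_{W_s}(\zeta_D=t-s)\bigr].
\]
Note that $\zeta_D=t$ forces $W_t\in\partial D$. So it suffices to show $\Prob_y(W_u\in\partial D,\ \zeta_D=u)=0$ for Lebesgue-a.e.\ starting point $y$ and each fixed $u>0$. If $\partial D$ has Lebesgue measure zero this is immediate from the Gaussian density of $W_u$. In general I would not use this route, since $\partial D$ can have positive measure.

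Instead I would use the function $g(u)=\Prob_y(\zeta_D\le u)$. An atom at $t$ in the law under $\Prob_x$ means $\Prob_x(\zeta_D=t)>0$. Conditioning on $\mathcal F_s$, the event $\{\zeta_D=t\}$ becomes $\{\zeta_D\circ\theta_s=t-s\}$, so the law of $\zeta_D$ under $\Prob_x$ is a mixture over $y=W_s$ of translated laws. An atom at $t$ therefore forces the same atom $t-s$ for a set of $y$ of positive $\Prob_x(W_s\in\cdot)$-measure, simultaneously for every $s\in(0,t)$.

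Now use the strong Markov property at $\zeta_D$ itself. Because $\zeta_D$ is an exit time from an open set, every neighborhood of $W_{\zeta_D}$ meets $D^c$. On the atom event, the path after a small time still sits at the fixed level set $\{\zeta_D=t\}$. I expect this to contradict the fact that the family of time shifts $s$ produces a continuum of distinct atoms whose total mass must remain at most one.

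The cleanest version I would try is the following. Define $p(s,t)=\Prob_x(\zeta_D\circ\theta_s=t-s,\ \zeta_D>s)$; this equals $\Prob_x(\zeta_D=t)$ for all $s<t$. Compare two starting times by an absolutely continuous change of the Gaussian marginal, or equivalently by Brownian scaling around $x$. Scaling maps the domain, so scaling alone fails; translation invariance of the density $W_s$ is the real tool. This step, ruling out atoms for arbitrary open $D$, is the main obstacle. As a fallback I would use the known fact that first hitting times of closed sets by Brownian motion have continuous distribution on $(0,\infty)$. That fact follows from the strong Feller property: $\Prob_y(\zeta_D\le u)=1-\Prob_y(\zeta_D>u)$, and $y\mapsto\Prob_y(\zeta_D>u)$ is continuous in $u$ for a.e.\ $y$ via the heat kernel of $D$, $p^D_u(y,z)$, which is jointly continuous in $u$.

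\emph{Finite prefixes, by induction on the replacement index $n$.} Given the configuration at $\tau_{n-1}$, with all labels interior, run $N$ independent Brownian motions. Their exit times are independent and positive. By the no-atom property together with independence, $\Prob(\zeta_i=\zeta_j<\infty)=\int\Prob(\zeta_j=t)\,\Prob(\zeta_i\in\dd t)=0$ for $i\neq j$. So the minimum is attained by a unique label almost surely. At that time the other labels are interior, because each has exit time strictly larger. Jump the exiting label to a uniformly chosen survivor position, which is interior. The new configuration again lies in $D^N$, so the strong Markov property restarts the construction, and $\tau_n>\tau_{n-1}$ almost surely by the first part. Uniqueness of the prefix law follows because each step is a measurable function of independent Brownian pieces and uniform marks.
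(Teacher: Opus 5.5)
Your positivity argument and your induction on replacements match the paper's proof. The gap is the no-atom step, which the proposal does not actually prove.

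The contradiction from a ``continuum of distinct atoms'' fails. For each $s\in(0,t)$ you only obtain $\Prob_y(\zeta_D=t-s)>0$ for $y$ in a positive-measure set $A_s$. These are atoms of \emph{different} laws $\Prob_y(\zeta_D\in\cdot)$, over sets that move with $s$, so no single probability measure is forced to carry uncountably many atoms.

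The fallback is only a citation, and its justification is confused. The strong Feller property concerns continuity in the starting point. What you need is continuity of $u\mapsto\Prob_x(\zeta_D>u)$ on $(0,\infty)$ at the given interior point $x$, and that continuity is precisely the no-atom claim. It can be derived from time-continuity of the killed kernel. Write $\Prob_x(\zeta_D>u)=\int_D p^D_u(x,z)\,\dd z$ with $p^D_u(x,z)=p_u(x,z)-\E_x[p_{u-\zeta_D}(W_{\zeta_D},z);\zeta_D<u]$, and apply dominated convergence using $|W_{\zeta_D}-z|\ge\dist(z,\partial D)$. That argument is not in the proposal.

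The missing step closes directly from the identity you already wrote. For $s<t$ we have $\{\zeta_D=t\}\subset\{\zeta_D>s\}$, and $\zeta_D>s$ forces $W_s\in D$, where $\Law_x(W_s)$ has the Gaussian density $p_s(x,\cdot)$. Integrating in $s$ then gives
\[
 t\,\Prob_x(\zeta_D=t)
 =\int_0^t\E_x\bigl[\one_{\{\zeta_D>s\}}\Prob_{W_s}(\zeta_D=t-s)\bigr]\dd s
 \le\int_D\int_0^t p_s(x,y)\,\Prob_y(\zeta_D=t-s)\,\dd s\,\dd y=0.
\]
The right side vanishes because, for each fixed $y$, the inner integrand is zero off the countable atom set of $\Prob_y(\zeta_D\in\cdot)$. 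Tonelli applies since $(y,u)\mapsto\Prob_y(\zeta_D=u)=\Prob_y(\zeta_D\le u)-\Prob_y(\zeta_D<u)$ is jointly measurable.

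The paper argues differently, by localization to a ball $\overline B(x,a)\subset D$ with exit time $\eta$ and exit point $U$:
\begin{itemize}
\item The event $\{\eta=t\}\subset\{|W_t-x|=a\}$ is null, so $\eta$ is atomless.
\item Rotational invariance makes $U$ uniform and independent of $\eta$.
\item The strong Markov property gives the residual time $R=\zeta_D-\eta$ the conditional law $\Law_U(\zeta_D)$, so $R$ is independent of $\eta$.
\item Hence $\Prob_x(\zeta_D=t)=\int\Prob_x(\eta=t-r)\,\Law_x(R)(\dd r)=0$.
\end{itemize}
Both routes use only a null set (a sphere, or a countable set of times). They therefore avoid the positive-measure boundary that you correctly identified as the obstacle.
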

\begin{proof}
For $x\in D$, choose $a>0$ with $\overline B(x,a)\subset D$. Let $W$ be
Brownian motion started at $x$, and let $\eta,U$ be its exit time and
exit point of this ball. Continuity and
the Gaussian law give
\[
 \Prob_x(\eta>0)=1,\qquad
 \Prob_x(\eta=t)\le\Prob_x(|W_t-x|=a)=0\quad(t>0).
\]
Rotational invariance, or reflection when $d=1$, gives
\[
 \Law_x(\eta,U)=\Law_x(\eta)\otimes
                  \mathrm{Unif}(\partial B(x,a)).
\]
For the domain exit time $\zeta=\eta+R$, the strong Markov property gives
\[
 \Law_x(R\mid\eta,U)=\Law_U(\zeta),\qquad
 \Law_x(\eta,R)=\Law_x(\eta)\otimes\Law_x(R).
\]
Allowing $R=\infty$, we obtain
\[
 \Prob_x(\zeta=t)
 =\int_{[0,\infty)}\Prob_x(\eta=t-r)\,\Law_x(R)(\dd r)=0.
\]
Set $\tau_0=0$. Conditional on the post-copy filtration $\mathcal F_{\tau_n}$, the next
exit waits $\zeta_1,\ldots,\zeta_N$ are independent, positive and
atomless on their finite parts. Consequently
\[
 \Prob\!\left(\exists i\ne j:
       \zeta_i=\zeta_j<\infty\mid\mathcal F_{\tau_n}\right)=0.
\]
At the finite minimum all survivors are interior. Induction on $n$
completes the construction.
\end{proof}

\subsection{Soft killing and the Euclidean cover}
For a smooth positive rate $k$ on $\R^d$ satisfying $k(y+Lm)=k(y)$ for every $m\in\Z^d$, the soft system has generator
\begin{equation}\label{eq:soft-generator}
 \cL_kf(x)=\tfrac12\sum_{i=1}^N\Delta_i f(x)
 +\frac1{N-1}\sum_{i\ne j}k(x_i)[f(C_{ij}x)-f(x)].
\end{equation}
Use independent Brownian drivers and candidate clocks of rate
$k_{\max}=\|k\|_\infty$ for each label. At a candidate for label $i$,
accept with probability $k(X_i)/k_{\max}$ and choose a uniform donor.
The victim's position is evaluated before copying. For every $T<\infty$,
\[
 \begin{gathered}
 K_T^{\rm cand}\sim\mathrm{Poisson}(Nk_{\max}T),\\
 \#\{n\ge1:\tau_n\le T\}\le K_T^{\rm cand}<\infty
 \quad\text{almost surely}.
 \end{gathered}
\]

Let $M_d=(\R/L\Z)^d$ and let $\operatorname{ph}$ be componentwise reduction modulo $L$. The phase process is $(\operatorname{ph}X_i)_{i\le N}$. Its hidden extension is
\begin{equation}\label{eq:hidden-state}
 H(X)=\bigl(\operatorname{ph}X_N,\ X_1-X_N,\ldots,X_{N-1}-X_N\bigr).
\end{equation}
The reference phase and real relative coordinates determine the configuration
up to one common lattice translation. In particular, they determine every
copying displacement $X_j-X_i$, including the lattice offsets inherited
from donor positions with identical phases. Write $H_t=H(X(t))$. For a periodic hard domain $D$, the hidden space is $H(D^N)$. We denote the soft and hard invariant hidden laws by $\nu_k$ and $\nu_h$, respectively, and their phase projections by $\pi_k$ and $\pi_h$. Their existence and moments are proved below. The reference increment is
\begin{equation}\label{eq:reference-reward}
 \Delta_T=X_N(T)-X_N(0).
\end{equation}
The relative path
\[
 \mathsf Y_T=(X_i(t)-X_N(t))_{i<N,\ 0\le t\le T}
 \in D([0,T],\R^{3(N-1)})
\]
is right-continuous with left limits. Joint path convergence uses the
Skorokhod $J_1$ topology, allowing a common reparameterization of copying times.
The block excursion is
\[
 \mathfrak X_T=\sup_{s\le T}|X_N(s)-X_N(0)|
                  +\sup_{s\le T}\mathfrak D(X_s).
\]
Its law depends on the initial configuration through $H_0$. Once
integrability is established, hidden stationarity gives
\[
 \mathbf v=\E_\nu\Delta_1,\qquad
 \E_\nu\Delta_{s+t}=\E_\nu\Delta_s+\E_\nu\Delta_t,
 \qquad \E_\nu\Delta_T=T\mathbf v.
\]

\subsection{Genealogical records}
Write $\mathcal M_N=\{(i,j):1\le i,j\le N,\ i\ne j\}$ with the discrete topology. One complete copying record belongs to
\begin{equation}\label{eq:complete-single-record}
 \mathcal R=(0,\infty)\times\mathcal M_N\times C([0,1],M_d)^N.
\end{equation}
Write a record as $(u,(i,j),\eta)$: $u$ is the wait and $\eta$ contains
the phase paths up to death, rescaled to $[0,1]$. For a horizon $T$,
\[
 \begin{split}
 \mathcal R_{\rm path}(T)=\coprod_{m\ge0}\Bigl\{
 &((u_r,(i_r,j_r),\eta_r)_{r=1}^m,u_{m+1},\eta_{m+1}):\\
 &u_1,\ldots,u_m>0,\quad u_{m+1}\ge0,\quad
                    \sum_{r=1}^{m+1}u_r=T,\\
 &(i_r,j_r)\in\mathcal M_N,\quad
             \eta_r\in C([0,1],M_d)^N,\\
 &u_{m+1}=0\ \Longrightarrow\ \eta_{m+1}\text{ is constant}
 \Bigr\}.
 \end{split}
\]
Use the product of Euclidean, discrete and uniform path topologies on
each component. This ambient space is Polish; the first $m$ complete
records lie in $\mathcal R^m$.

To reconstruct a realized record, choose a representative $X(0)$ of $H_0$ and
put $\tau_r=\sum_{a\le r}u_a$. Let $\widetilde\eta_r$ be the unique
continuous lift starting at $X(\tau_{r-1})$. Then
\[
 \begin{aligned}
 X(t)&=\widetilde\eta_r\bigl((t-\tau_{r-1})/u_r\bigr),
                   &&\tau_{r-1}\le t<\tau_r,\\
 X(\tau_r)&=C_{i_rj_r}\widetilde\eta_r(1),
                   &&1\le r\le m.
 \end{aligned}
\]
The terminal piece uses the same lift rule. A common lattice change in
$X(0)$ leaves $H_t$ and $\Delta_T$ unchanged.

For a finite grid $G\subset[0,T]$ containing $0,T$, a record in
$\mathcal R_{\rm grid}$ has the form
\[
 \left(
   (\tau_r,i_r,j_r,(\operatorname{ph}X_\ell(\tau_r-))_{\ell\ne i_r})_{r\le K_T},
   (\operatorname{ph}X_i(t))_{i\le N,\ t\in G}
 \right),
\]
where $K_T$ is the death count. The donor determines the victim's
poststate. The compressed record retains the survivor endpoints and omits
the victim's pre-death endpoint. That endpoint lies on a spherical boundary
in the hard system and has a volume density in the soft system.
The survivor record admits total variation comparison; complete phase
pieces are recovered from fine grids using Brownian moduli of continuity.
The stationary target variable is
\begin{equation}\label{eq:target-record}
 \mathcal Z_T=(H_0,\Delta_T).
\end{equation}
The transfer theorem gives total variation convergence for
$\mathcal R_{\rm grid}$ and $\mathcal Z_T$, and weak convergence for
$\mathcal R_{\rm path}$.

For positive integers $m$, put $[m]=\{1,\ldots,m\}$. The ancestral label map
$\mathfrak a_{s,t}:[N]\to[N]$ starts at $\mathfrak a_{s,s}=\id$ and,
at a copy $i\leftarrow j$ at time $\tau>s$, obeys
\[
 \mathfrak a_{s,\tau}(\ell)=
 \begin{cases}
   \mathfrak a_{s,\tau-}(j),&\ell=i,\\
   \mathfrak a_{s,\tau-}(\ell),&\ell\ne i.
 \end{cases}
 \qquad
 \mathsf A_{s,t}=\{|\mathfrak a_{s,t}([N])|=1\}.
\]
Thus $\mathsf A_{s,t}$ is the event of one common ancestor at time $s$,
and $\mathfrak a_{r,t}=\mathfrak a_{r,s}\circ\mathfrak a_{s,t}$ for
$r\le s\le t$. These maps act on labels, while $C_{ij}$ acts on positions.

A partition-root initial law has $\operatorname{ph}X_i(0)=Y_{b(i)}$, where
$b:[N]\twoheadrightarrow[r]$ specifies the partition blocks and $(Y_1,\ldots,Y_r)$
has a density on $M_d^r$. Labels use independent Brownian drivers,
and their initial ancestral identities are the distinct labels in $[N]$.

\subsection{Filtrations and perturbations}
Let $s_i$ enumerate the surviving labels in increasing order. At death
$\tau_n$ of label $i_n$, use independent donor marks $U_n$ with
\[
 j_n=s_{i_n}(U_n),\qquad
 \Law(U_n\mid\mathcal F_{\tau_n-},i_n)
       =\mathrm{Unif}\{1,\ldots,N-1\}.
\]
Here $\mathcal F_t$ is the natural filtration, revealing $U_n$ at
$\tau_n$. Count estimates and restarts use this filtration; the strong
Markov property supplies fresh Brownian primitives at each stopping time.
Auxiliary hard systems have the smooth configuration-dependent
coefficients and independent label noises in~\eqref{eq:perturbation}.
Lemmas~\ref{lem:simultaneous-wall-polarity} and
\ref{lem:all-death-budget} give unique finite prefixes and operational
nonexplosion. The Kelvin identification uses
$\widehat{\mathcal F}_t=\mathcal F_t\vee\sigma(U_n:n\ge1)$ to establish the joint
law of the Brownian drivers and marks.

\section{The cubic copying current}\label{sec:copying-current}
Fix $N\ge3$, set $\lambda_N=(N-1)/2$, and use period $L=2\pi$.
Particles move as standard Brownian motions in $\R^3$ and label $i$ dies
at rate
\[
 k_{N,\epsilon}(x)=\lambda_N+\epsilon\phi_*(x^{(1)}),
 \qquad \phi_*(s)=\cos s+\sin2s,
\]
as in~\eqref{eq:intro-rate}, with the copying rule of
Section~\ref{sec:dynamics}.
For $|\epsilon|\le\lambda_N/4$, the bound $\|\phi_*\|_\infty\le2$ places
the rate in $[\lambda_N/2,3\lambda_N/2]$.

Copying supplies the drift of the centre of mass
$\bar X=N^{-1}\sum_iX_i$. For a periodic rate $k$, this drift is the
hidden-state function
\begin{align}\label{eq:soft-com-drift}
 j_k(X)&=\frac1{N(N-1)}\sum_{i\ne j}k(X_i)(X_j-X_i)\notag\\
       &=\frac1{N-1}\sum_i k(X_i)(\bar X-X_i).
\end{align}
For a constant rate, $j_k=0$. A spatial perturbation weights each
victim's displacement towards its donor, and stationary correlations
can give the weighted sum a nonzero mean. For $\phi_*$, the response
first appears at cubic order: two first-harmonic factors resonate with
one second-harmonic factor. The calculation below determines its sign
for every $N\ge3$.

\begin{lemma}\label{lem:soft-hidden-moments}
The hidden process has a unique invariant law $\nu_{N,\epsilon}$, and, for
each finite $p\ge1$,
\begin{equation}\label{eq:soft-moments}
 \sup_{|\epsilon|\le\lambda_N/4}\nu_{N,\epsilon} \mathfrak D^p<\infty.
\end{equation}
There are constants $\beta_N>0$ and $A_N<\infty$, defined below, such that
\begin{equation}\label{eq:explicit-moment}
 \nu_{N,\epsilon}\mathfrak D\le A_N/\beta_N.
\end{equation}
\end{lemma}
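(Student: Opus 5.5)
We construct a Lyapunov function for the hidden process and combine it with a Harris-type minorization argument. The natural test function is a smoothed version of the cloud spread. We take
\[
 V(x)=\sum_{i<j}|x_i-x_j|^2 ,
\]
or its smoothed square root $\sqrt{1+V}$ for the first-moment bound. Both are functions of the relative coordinates only, hence of the hidden state.

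\textbf{Step 1: the spread drift.} For $V$ itself:
\begin{itemize}
 \item The Brownian part contributes the constant $\tfrac12\sum_i\Delta_iV=3N(N-1)$ in $\R^3$.
 \item A copy $i\leftarrow j$ replaces $x_i$ by $x_j$. The change is
 \[
  \sum_{\ell\ne i,j}\bigl(|x_j-x_\ell|^2-|x_i-x_\ell|^2\bigr)-|x_i-x_j|^2 .
 \]
 \item Averaging over a uniform donor $j\ne i$, with victim rate $k(x_i)\ge\lambda_N/2$, gives a contribution of order $-c\,k(x_i)\sum_{\ell}|x_i-x_\ell|^2$. Ignoring the dependence of the rate on the victim, the constant-rate calculation yields exactly $-\,(\text{rate})\cdot 2V/(N-1)$ up to combinatorial factors.
 \item The rate fluctuation $\epsilon\phi_*$ is bounded by $\lambda_N/2$ in absolute value. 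The cross terms it produces are at most a fraction of the main negative term, so we should obtain
 \[
  \cL V\le A-\beta V
 \]
 uniformly for $|\epsilon|\le\lambda_N/4$.
\end{itemize}
This step needs care. Victim weighting by $k(x_i)$ could favour killing particles near the centre, so we will bound the increment for each victim separately rather than only on average. We will use that a jump to a random donor reduces the victim's spread on average by a multiple of $\sum_\ell|x_i-x_\ell|^2/(N-1)$. We then need the weighted sum over victims to dominate $V$ with rate at least $\lambda_N/2$.

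\textbf{Step 2: the first moment.} For the explicit bound on $\nu_{N,\epsilon}\mathfrak D$ we repeat the computation with a concave function of the spread, say $\mathfrak D$ smoothed or $\sqrt{1+V}$. Concavity makes the Itô correction bounded, and the copy term again gives $-\beta\mathfrak D$. This defines $\beta_N$ and $A_N$ explicitly, and stationarity $\nu\cL V=0$, justified by truncation, gives $\nu\mathfrak D\le A_N/\beta_N$.

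\textbf{Step 3: higher moments.} For general $p$ we apply the same scheme to $V^{p}$. The Brownian second-order terms are of lower order $V^{p-1}$. The copy term gives $-\beta_pV^p$, since every jump decreases the spread in expectation and the jump sizes are controlled by $V$ itself. Hence $\cL V^p\le A_p-\beta_pV^p$ uniformly in $\epsilon$, which gives the moment bound for all $p$.

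\textbf{Step 4: existence and uniqueness.} The phase component lives on the compact torus. The drift condition gives tightness of time averages, so Krylov--Bogoliubov yields an invariant law. For uniqueness:
\begin{itemize}
 \item The candidate-clock construction gives positive probability that all labels copy one label within unit time, starting from a sublevel set $\{V\le R\}$.
 \item After that, free Brownian motion for a short time gives a density. The relative coordinates have a Gaussian density and the reference phase a torus density, bounded below on a fixed compact set.
 \item This is a Doeblin minorization on sublevel sets. Together with the Lyapunov condition, Harris's theorem gives uniqueness and geometric ergodicity.
\end{itemize}

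The main obstacle is Step 1: getting the drift inequality to hold uniformly despite the victim-selection bias from $\epsilon\phi_*$. This is where the restriction $|\epsilon|\le\lambda_N/4$ enters, through the bound $\min k\ge\tfrac13\max k$.
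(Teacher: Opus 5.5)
Your Steps 1 and 3 are false as stated, so the proposal has a genuine gap. When victim $i$ dies, the mean change of $V$ is $(2V-(N+1)S_i)/(N-1)$, with $S_i=\sum_\ell|x_i-x_\ell|^2$. At constant rate the negative total $-2\lambda V/(N-1)$ comes from a cancellation between terms of weight $N$ and $N+1$. A victim bias of relative size about $1/N$ therefore destroys it.

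Concretely, take $x_1=\cdots=x_{N-1}=a$ and $x_N=a+de_1$, so $V=(N-1)d^2$. Only two kinds of copy change $V$: a cluster victim copying label $N$ (which gives $2(N-2)d^2$), and the death of label $N$ (which gives $0$). The jump part of $\cL V$ is therefore $d^2[(N-3)k_A-(N-1)k_B]$, where $k_A,k_B$ are the rates at $a$ and at $x_N$. Take $\epsilon=\lambda_N/4$, $a^{(1)}\equiv\pi/4$ and $d\in\pi/2+2\pi\Z$. Then $\phi_*(\pi/4)=-\phi_*(3\pi/4)=1+2^{-1/2}$, so $k_A/k_B\approx2.49>2\ge(N-1)/(N-3)$ for every $N\ge5$. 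Along these unbounded states $\cL V\ge cV-C$ with $c>0$. Hence no inequality $\cL V\le A-\beta V$ holds uniformly in $|\epsilon|\le\lambda_N/4$, and for fixed $\epsilon\ne0$ it fails for all large $N$. Step 2 inherits this failure.

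Step 3 fails independently, already at $\epsilon=0$. On the same states the jump part of $\cL V^2$ is $\lambda_Nd^4[4(N-2)^2-2(N-1)^2]$, which is positive for $N\ge5$. The reason is that the jumps are as large as $V$ itself, so convexity of $s\mapsto s^p$ outweighs the negative mean. Pairwise sums $\sum_{i<j}|x_i-x_j|^{2p}$ would repair this at $\epsilon=0$, but they meet the same bias obstruction as Step 1.

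The missing idea is to replace one-step drift by a genealogical block contraction of the diameter, which is the route the paper takes. Copying never increases the diameter, whatever the rates, since $\mathfrak D(C_{ij}x)\le\mathfrak D(x)$. Realize the killing by base clocks of rate $\lambda_N/2\le k_{N,\epsilon}$ together with thinned extra clocks. On $[0,T_N]$ with $T_N=N$, let $E$ require, for each $a<N$, exactly one base event for label $a$ in unit window $a$ with donor $N$, and no other candidate event. Then $\Prob(E)=\beta_N>0$ is independent of the state and of $\epsilon$, and $E\subset\mathsf A_{0,T_N}$.

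The ancestral envelope gives $\mathfrak D(X_{T_N})\le\mathfrak D(X_0)\one_{E^c}+4(K_{T_N}+1)U_{T_N}$, with $K_{T_N}$ the candidate count and $U_{T_N}$ the Brownian maximum. Hence $\E_x\mathfrak D(X_{T_N})\le(1-\beta_N)\mathfrak D(x)+A_N$. The inequality $(a+c)^p\le(1+\delta)a^p+C_{p,\delta}c^p$ then gives the analogous contraction for $\mathfrak D^p$, and the victim bias never enters. Conditional on $E$, the endpoint is Gaussian with mean at the common root. This yields a Doeblin minorization from every hidden state, not only from sublevel sets, so existence and uniqueness need no Lyapunov function. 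Truncated Fatou then transfers the block bounds to $\nu_{N,\epsilon}$.
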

\begin{proof}
Use independent base clocks of rate $\lambda_N/2$ for each label and extra
candidate clocks of rate $\lambda_N$. At an extra event for label $i$,
accept with probability
$[k_{N,\epsilon}(X_i)-\lambda_N/2]/\lambda_N$.
In the first $N-1$ unit windows require
exactly one base event, for label $a$ in window $a$, with donor $N$.
Forbid all other candidates, and leave the last unit window quiet.
Call this event $E$ and put $T_N=N$, $r_N=3N\lambda_N/2$. The Poisson
construction gives, independently of the Brownian drivers,
\[
 \Prob(E)=\beta_N
 =\left(\frac{\lambda_N}{2(N-1)}\right)^{N-1}e^{-r_NT_N}>0,
 \qquad E\subset\mathsf A_{0,T_N}.
\]
Conditional on $E$ and its copying times $\mathbf t$, all endpoints
have the initial root as their mean:
\[
 \begin{gathered}
 \Law(X(T_N)\mid E,\mathbf t)
 =\mathcal N_{3N}((X_N(0),\ldots,X_N(0)),\Sigma_{\mathbf t}),\\
 I\le\Sigma_{\mathbf t}\le3NT_N I.
 \end{gathered}
\]
The lower bound comes from the final quiet unit interval; the upper
bound follows from the trace $3NT_N$. Modulo common lattice
translations, the mean lies in the root torus and the relative mean is
zero. The wrapped Gaussian density is therefore uniformly positive on
a fixed compact hidden set.
Writing $P_t^{N,\epsilon}$ for the hidden transition kernel, for some
probability $m_N$ and $\eta_N>0$ we obtain
\[
 P_{T_N}^{N,\epsilon}(h,\cdot)\ge\eta_Nm_N(\cdot)
 \quad\text{for every }h\text{ and }|\epsilon|\le\lambda_N/4.
\]
This global minorization gives a unique invariant probability and
geometric convergence of the block chain.

Let $K_T$ be the total candidate count, let $W_i$ be the independent
Brownian drivers, and put $U_T=\max_i\sup_{s\le T}|W_i(s)|$.
Ancestral paths consist of at most
$K_T+1$ Brownian pieces. On the common-ancestor event $E$, their initial
relative positions disappear, so
\[
 \mathfrak D(X_{T_N})\le \mathfrak D(X_0)\one_{E^c}+4(K_{T_N}+1)U_{T_N}.
\]
Candidate counts and Brownian drivers are independent. Put
\[
 A_N=4(1+r_NT_N)\E U_{T_N}<\infty,
 \qquad \E U_T\le2\sqrt{3NT}.
\]
The first-moment contraction is
\[
 \E_x\mathfrak D(X_{T_N})\le(1-\beta_N)\mathfrak D(x)+A_N.
\]
For $p\ge1$, choose $\delta>0$ with
$\rho_p=(1+\delta)(1-\beta_N)<1$. The inequality
$(a+c)^p\le(1+\delta)a^p+C_{p,\delta}c^p$ gives
\[
 \begin{aligned}
 P_{T_N}^{N,\epsilon}\mathfrak D^p(x)
 &\le\rho_p\mathfrak D(x)^p
       +C_{p,\delta}\E(K_{T_N}+1)^p\,\E U_{T_N}^p
 \le\rho_p\mathfrak D(x)^p+C_{p,N},\\
 P_{nT_N}^{N,\epsilon}\mathfrak D^p(x)
 &\le\rho_p^n\mathfrak D(x)^p+C_{p,N}/(1-\rho_p).
 \end{aligned}
\]
Poisson and Brownian maximal moments make $C_{p,N}$ finite. Truncated
Fatou along convergence to $\nu_{N,\epsilon}$ proves
\eqref{eq:soft-moments}; the first-moment bound gives
\eqref{eq:explicit-moment}.
\end{proof}

For a fixed class $0<k_-\le k\le k_+$, use base rate $k_-$ and extra
candidate rate $k_+-k_-$. The same event has probability
$[k_-/(N-1)]^{N-1}e^{-Nk_+T_N}>0$ and the same conditional Gaussian
bounds. Thus the minorization and all moment constants depend only on
$N,k_-,k_+$, supplying the stationary laws for arbitrary-rate transfer.

\subsection{Translation symmetry and the phase of the profile}
For a periodic rate, write $v_N(k)$ for the first component of its
stationary velocity, and set $v_{N,\epsilon}=v_N(k_{N,\epsilon})$.
Common longitudinal translation and reflection commute with copying.
For longitudinal rates, uniqueness of the hidden invariant law gives
\begin{equation}\label{eq:current-symmetries}
 v_N(k(\cdot+\theta))=v_N(k),\qquad v_N(k(-\cdot))=-v_N(k).
\end{equation}
In particular, a rate that is even after a shift has zero velocity.

For a real two-harmonic profile, use the Fourier convention
$\phi(s)=\sum_\ell\widehat\phi_\ell e^{i\ell s}$ and define
\begin{equation}\label{eq:chiral-invariant}
 \mathcal C(\phi)
 =8\operatorname{Im}
       (\widehat\phi_1^2\overline{\widehat\phi_2}).
\end{equation}
For real $\phi$, translation and reflection act by
\[
 \begin{aligned}
 \widehat{\phi(\cdot+\theta)}_\ell
     &=e^{i\ell\theta}\widehat\phi_\ell,
 &\mathcal C(\phi(\cdot+\theta))&=\mathcal C(\phi),\\
 \widehat{\phi(-\cdot)}_\ell
     &=\overline{\widehat\phi_\ell},
 &\mathcal C(\phi(-\cdot))&=-\mathcal C(\phi).
 \end{aligned}
\]
In particular,
\[
 \begin{gathered}
 \phi(s)=a\cos(s+\alpha)+b\cos(2s+\beta),\\
 \mathcal C(\phi)=a^2b\sin(2\alpha-\beta).
 \end{gathered}
\]
for $a,b,\alpha,\beta\in\R$. The chosen profile has $\mathcal C(\phi_*)=1$.
Figure~\ref{fig:copying-current} connects this orientation with the
uniform-copying drift~\eqref{eq:soft-com-drift}.
\begin{figure}[tbp]
\centering
\includegraphics{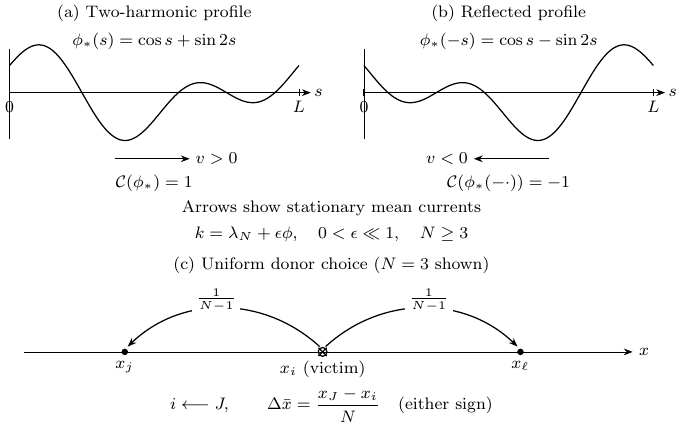}
\caption{Copying and the directed stationary current. For $N\ge3$ and
sufficiently small positive $\epsilon$, the profile in (a) and its
reflection in (b) have opposite chiralities and longitudinal stationary
mean currents, labelled $v$.
Panel (c) shows three labels, with $x_i=X_i^{(1)}$,
$\bar x=N^{-1}\sum_i x_i$ and a donor $J$ chosen uniformly among the
survivors. The centre of mass jump may have either sign}
\label{fig:copying-current}
\end{figure}

\subsection{Finite Fourier elimination}
The neutral generator is triangular in the number of observed labels:
copying merges frequencies. Each perturbation adds at most one observed
label, so the cubic calculation closes at rank four. We retain one
formal exponent to recover the displacement from the surviving root term.

Only first coordinates are used here; write $x_i=X_i^{(1)}$. For $n\le N$
and an exponent list $k=(k_1,\ldots,k_n)$ put
\begin{equation}\label{eq:En}
 E_n(k;x)=\frac1{N^{\underline n}}
  \sum_{i_1,\ldots,i_n\ \mathrm{distinct}}
      \exp\left(i\sum_{a=1}^nk_ax_{i_a}\right).
\end{equation}
Here $N^{\underline n}=N(N-1)\cdots(N-n+1)$. The neutral and perturbation generators on these first coordinates are
\begin{equation}\label{eq:normalized-generator}
 \cL_0f=\tfrac12\Delta f+\tfrac12\sum_{i\ne j}[f(C_{ij}x)-f(x)],
 \quad \cA f=\frac1{N-1}\sum_{i\ne j}\phi_*(x_i)[f(C_{ij}x)-f(x)].
\end{equation}
Lists can be sorted and ordinary zero exponents deleted. Exactly one
exponent contains a formal parameter $q$; that exponent is retained before
evaluation at $q=0$. Write $\cL_\epsilon=\cL_0+\epsilon\cA$.
Neutral ordered victim--donor pairs have rate $1/2$. Direct averaging gives
\begin{equation}\label{eq:neutral}
 \cL_0E_n=-\left(\frac12\sum_a k_a^2+\tfrac12 n(n-1)\right)E_n
       +\tfrac12\sum_{a\ne c}E_{n-1}(k^{a\to c}),
\end{equation}
where $k^{a\to c}$ deletes $k_a$ and replaces $k_c$ by $k_c+k_a$.
Copies to unobserved labels cancel in the symmetric average.

Write $k[a\leftarrow u,c\leftarrow v]$ for the list obtained by
replacing entries $a,c$ of $k$ by $u,v$, and use the analogous notation
for one replacement. For a rate perturbation $e^{i\ell x}$,
\begin{align}\label{eq:mode}
 \cA_\ell E_n=\sum_{a=1}^n\Bigg[&\frac1{N-1}\sum_{c\ne a}
 E_n(k[a\leftarrow\ell,c\leftarrow k_c+k_a])\notag\\
 &+\frac{N-n}{N-1}E_{n+1}(k[a\leftarrow\ell],k_a)\notag\\
 &-E_n(k[a\leftarrow k_a+\ell])\Bigg].
\end{align}
In the second term, $k_a$ is appended to the modified list; that term
is omitted when $n=N$. All entries on the right use their original
values. In each gain term the inserted
frequency $\ell$ remains on the victim slot, since the rate is evaluated
before copying.
The Fourier coefficients of $\phi_*(s)=\cos s+\sin2s$ are
\begin{equation}\label{eq:fourier}
 \widehat\phi_{\pm1}=\tfrac12,\quad \widehat\phi_2=-\tfrac i2,\quad
 \widehat\phi_{-2}=\tfrac i2,\qquad \cA=\sum_\ell \widehat\phi_\ell\cA_\ell.
\end{equation}

Let $s_0(q)=-q^2/2$, and call $E_1(q)$ the root.
Its derivative records the centre of mass:
\[
 i^{-1}\partial_qE_1(q;x)\big|_{q=0}
       =N^{-1}\sum_i x_i.
\]
At $q=0$, the root is the constant function.
For a non-root exponent list set
\[
 d_k(q)=\tfrac12\big(\textstyle\sum_a k_a^2-q^2\big)+\tfrac12 n(n-1).
\]
Its value at $q=0$ is positive. Resolve a finite expression $g$ in
decreasing rank: divide the coefficient of each non-root term by $d_k$,
retain the resulting term in $\mathcal Rg$, and pass half that coefficient to
every merger in \eqref{eq:neutral}. Leave the root coefficient
$\mathcal Sg$ undivided. Rank strictly decreases in a merger, so
\begin{equation}\label{eq:resolvent}
 (s_0-\cL_0)\mathcal Rg=g-\mathcal Sg\,E_1(q).
\end{equation}
Define
\begin{equation}\label{eq:finite-recursion}
 F_0=E_1(q),\qquad F_j=\mathcal R(\cA F_{j-1}),\qquad
 s_j=\mathcal S(\cA F_{j-1}),\quad j=1,2,3.
\end{equation}
Only first-order Taylor data at $q=0$ are needed. With $y=(N-1)^{-1}$
the coefficients in \eqref{eq:mode} are $y$ and $1-(n-1)y$.
For dual coefficients the division rule is
\begin{equation}\label{eq:dual}
 \frac{c_0+c_1q}{d_0+d_1q}
 =\frac{c_0}{d_0}+q\left(\frac{c_1}{d_0}
                  -\frac{c_0d_1}{d_0^2}\right)+O(q^2).
\end{equation}
All operations are finite rational polynomial operations in $y$.
The largest rank is four. Every rank-four creation coefficient contains
$(1-y)(1-2y)$. At $N=3$, substituting $y=1/2$ and deleting zero terms
reduces the recurrence to functions of rank at most three.

\begin{lemma}\label{lem:cubic}
The recursion satisfies $s_j(0)=0$ for $j\le3$,
$s_1'(0)=s_2'(0)=0$, and
\begin{equation}\label{eq:cubic}
 s_3'(0)=i\mathfrak c_N,\qquad \mathfrak c_N>0.
\end{equation}
The exact value of $\mathfrak c_N$ is given
in~\eqref{eq:population-current}.
\end{lemma}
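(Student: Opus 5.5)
We prove the lemma by direct execution of the finite recursion~\eqref{eq:finite-recursion}, organized by frequency bookkeeping so that only a small number of terms must actually be tracked. Throughout we work with dual numbers in $q$, using the division rule~\eqref{eq:dual}, and with rational functions of $y=(N-1)^{-1}$.

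\textbf{Frequency conservation.} Each application of $\cA_\ell$ in~\eqref{eq:mode} raises the total frequency $\sum_a k_a$ by exactly $\ell$. The neutral mergers in~\eqref{eq:neutral} preserve the total. Hence every term of $F_j$ has total frequency $q+\ell_1+\dots+\ell_j$, where $\ell_r\in\{\pm1,\pm2\}$. A root coefficient $s_j$ collects only terms $E_1(q')$ with $q'$ carrying the formal parameter. Such a term can contribute to $s_j$ only when $\ell_1+\dots+\ell_j=0$, so that it returns to $E_1(q)$.

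\textbf{The cases $j=1,2$.} For $j=1$ no single $\ell$ vanishes, so $s_1\equiv0$.

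For $j=2$ only the pairs $(\ell,-\ell)$ survive. Consider the reflection $x\mapsto -x$, $q\mapsto -q$, which sends $\widehat\phi_\ell$ to $\overline{\widehat\phi_\ell}$. The contribution of $(\ell,-\ell)$ is $\widehat\phi_\ell\widehat\phi_{-\ell}=|\widehat\phi_\ell|^2$ times a real rational coefficient in $q^2$. This coefficient carries no odd power of $q$ beyond those cancelled between $\ell$ and $-\ell$. More concretely, the $O(q)$ part of the pair $(\ell,-\ell)$ is the negative of that of $(-\ell,\ell)$ because $d_k(q)$ is even in the sorted list after $k\mapsto -k$. Therefore $s_2(0)=0$ and $s_2'(0)=0$. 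The identity $s_j(0)=0$ for all $j$ also follows structurally: at $q=0$ the root is constant and $\cL_\epsilon 1=0$, so the constant eigenvalue is not perturbed.

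\textbf{The case $j=3$.} The surviving triples are the permutations of $(1,1,-2)$ and of $(-1,-1,2)$. Their weights are $\widehat\phi_1^2\widehat\phi_{-2}$ and its conjugate. By the same reflection argument, the real parts cancel and only
\[
 \operatorname{Im}(\widehat\phi_1^2\overline{\widehat\phi_2})=\mathcal C(\phi_*)/8=1/8
\]
survives, which makes $s_3'(0)$ purely imaginary. Triples of the form $(\ell,-\ell,0)$ do not occur, since $\widehat\phi_0=0$.

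For each of the three orderings of $(1,1,-2)$ we then enumerate the terms of $F_1$ and $F_2$ along the path. Ranks are at most four. The rank-four creation coefficients carry the factor $(1-y)(1-2y)$, and at $N=3$ these terms are simply dropped. We divide by the $d_k(q)$ to first order and read off the $E_1$ coefficient. Summing the orderings and the conjugate family gives $s_3'(0)=i\mathfrak c_N$, with $\mathfrak c_N$ an explicit rational function of $y$. We factor out $N-2\propto(1-y)$; this factor appears because at $N=2$ copying collapses the two-particle dynamics and removes the chirality.

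\textbf{Main obstacle: the sign.} The expected difficulty is establishing the sign of the remaining factor uniformly in $N\ge3$. We would write it as $P(y)/Q(y)$ with $Q$ a product of the positive denominators $d_k(0)$, which are quadratic expressions in $n$. It then remains to show $P(y)>0$ on $y\in(0,1/2]$. The first approach is to expand $P$ in powers of $y$ and check that all coefficients are positive. Failing that, we would use the substitution $y=1/(2+t)$ with $t\ge0$ and again look for positive coefficients. The value at $N=3$, that is $y=1/2$, would be checked separately, since the rank-four terms are deleted there.
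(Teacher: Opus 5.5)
Your architecture is the paper's: frequency conservation kills $s_1$ identically, the reflection identity $\rho_N(-w)=-\rho_N(w)$ cancels the quadratic pairs, and the cubic order reduces to $s_3'(0)=\tfrac{i}{4}W_N$ with $W_N=\rho_N(1,1,-2)+\rho_N(1,-2,1)+\rho_N(-2,1,1)$. (A single word $(\ell,-\ell)$ does have a nonzero linear part, e.g.\ $\rho_N(1,-1)=-(y+2)/3$; only the pair sum is even in $q$.)

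The genuine gap is that you never compute $W_N$. Your sentence that summing the orderings ``gives $s_3'(0)=i\mathfrak c_N$'' is the whole content of the lemma. The paper's finite elimination yields
$\rho_N(1,1,-2)=(11y^2+35y-34)/150$,
$\rho_N(1,-2,1)=4(17857+2466y-919y^2)/121275$ and
$\rho_N(-2,1,1)=2(170y^2-23571y-20258)/121275$,
hence $W_N/4=(1-y)(2282-3705y)/323400$. Without these values neither the formula~\eqref{eq:population-current} nor its sign is proved. The factor $1-y$ is also an output of this computation; your explanation via the collapse at $N=2$ is not an argument.

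You also misplace the difficulty. The divisors $d_k(0)=\tfrac12\sum_a k_a^2+\tfrac12n(n-1)$ do not depend on $N$, which enters only through the insertion coefficients $y$ and $1-(n-1)y$. So $\mathfrak c_N$ is a polynomial in $y$, and once it is computed its sign is immediate: the cofactor $2282-3705y$ is linear and positive for $y\le1/2$. Your first test (positive coefficients in $y$) fails on it. The substitution $y=1/(2+t)$ with $t=N-3$ gives $(859+2282t)/(2+t)$, which is exactly~\eqref{eq:population-current}. No separate $N=3$ check is needed, because the rank-four coefficients vanish at $y=1/2$.

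Finally, $s_j(0)=0$ needs more than the remark that the constant eigenvalue is unperturbed, because formal lists are not unique representations. For instance, $E_2(\ell,q)$ and $E_1(q+\ell)$ coincide as functions at $q=0$ but are divided by $\ell^2/2+1$ and $\ell^2/2$ respectively. The paper integrates~\eqref{eq:resolvent} at $q=0$ against the neutral phase invariant law to get $\mathcal S g(0)=0$ whenever $g(0)$ vanishes as a function. It then uses neutral mixing and the zero Lebesgue average of non-root Fourier terms to obtain $F_j(0)=0$ inductively.
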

\begin{proof}
First evaluate the recurrence at $q=0$ as an identity of phase functions.
The root is $F_0(0)=1$, and $\cA1=0$. Integrating
\eqref{eq:resolvent} for $F_1$ against the neutral phase invariant law
gives $s_1(0)=0$, hence $\cL_0F_1(0)=0$. The function $F_1(0)$ is
bounded, so neutral phase mixing makes it constant. Every non-root
Fourier term at $q=0$ has a nonzero integer exponent and zero average
under product Lebesgue measure. Its constant value is consequently zero.
The same argument applies inductively: if $F_{j-1}(0)=0$, then
$\cA F_{j-1}(0)=0$, and
\[
 s_j(0)=0,\qquad F_j(0)=0,\qquad j=1,2,3.
\]

For an inserted word $w=(\ell_1,\ldots,\ell_r)$, define its unweighted
root derivative by
\[
 \rho_N(w)=\left.\partial_q\!\left[
   \mathcal S\cA_{\ell_r}\mathcal R\cA_{\ell_{r-1}}\cdots
   \mathcal R\cA_{\ell_1}E_1(q)\right]\right|_{q=0},
 \qquad r=2,3.
\]
Insertion adds the mode frequency and neutral mergers preserve it.
Thus all resulting terms have total exponent $q+\sum_a\ell_a$, and
$\rho_N(w)=0$ unless $\sum_a\ell_a=0$.
The coefficients of these operations are real. Simultaneously reversing
$q$ and all inserted frequencies leaves the operations unchanged and
gives
\begin{equation}\label{eq:word-reflection}
 \rho_N(-\ell_1,\ldots,-\ell_r)
       =-\rho_N(\ell_1,\ldots,\ell_r).
\end{equation}

There is no single frequency equal to zero in~\eqref{eq:fourier}, so the
linear derivative vanishes. At order two, each resonant word
$(\ell,-\ell)$ is paired with $(-\ell,\ell)$. Their Fourier products
both equal $|\widehat\phi_\ell|^2$, and their derivatives cancel
by~\eqref{eq:word-reflection}. This gives $s_2'(0)=0$.

At order three the only resonances are the three distinct permutations
of $(1,1,-2)$ and the three permutations with all signs reversed. Put
\[
 W_N=\rho_N(1,1,-2)+\rho_N(1,-2,1)+\rho_N(-2,1,1).
\]
For $\phi_*$, the first three words have common Fourier product $i/8$.
Their negatives have product $-i/8$ and the opposite root derivatives.
Thus
\[
 s_3'(0)=\frac{i}{4}W_N.
\]
The finite rational calculation in
Appendix~\ref{app:general-resonance} gives
$W_N=4\mathfrak c_N$, with the positive factorization
in~\eqref{eq:population-current}. It supplies the individual ordered
contributions and their exact sum. This proves~\eqref{eq:cubic}.
\end{proof}

\subsection{Correctors on the Euclidean hidden space}
Differentiating the finite elimination produces Poisson identities
on the Euclidean cover. The hidden stationary moments then control
their fourth-order remainder.

The scalar functions below are evaluated on three-dimensional
configurations through $x_i=X_i^{(1)}$.
Put $h_0=\bar x=N^{-1}\sum_i x_i$ and
$h_j=i^{-1}\partial_qF_j(0)$ for $j=1,2,3$.
Conjugation and simultaneous reversal of exponents preserve the finite
recurrence for real $\phi_*$, so $F_j(-q)=\overline{F_j(q)}$ and $h_j$ is real.
Since $s_0'(0)=0$ and $F_j(0)=s_j(0)=0$ for $j\ge1$, differentiating
\eqref{eq:resolvent} leaves the exact identities
\begin{equation}\label{eq:correctors}
 \cL_0h_1=-\cA h_0,\quad \cL_0h_2=-\cA h_1,\quad
 \cL_0h_3=\mathfrak c_N-\cA h_2.
\end{equation}
Each corrector is a finite sum of Fourier terms and Fourier terms times
linear coordinates. A common lattice translation adds a multiple of
$F_j(0)=0$; hence the correctors are hidden-state functions. Subtracting
$x_N$ from every linear coordinate gives finite constants $B_{j,N}>0$ with
\begin{equation}\label{eq:corrector-growth}
 |h_j|+|\cA h_j|+\sum_i(|\nabla_i h_j|+|\nabla_i^2h_j|)
       \le B_{j,N}(1+\mathfrak D).
\end{equation}
The constants are finite sums of absolute values of the rational
coefficients, including those of the pure Fourier terms and their
derivatives.

For an explicit first-order formula, put
\[
 e_\ell=N^{-1}\sum_a e^{i\ell x_a},\qquad
 B_\ell=\frac1{N(N-1)}\sum_{a\ne c}(x_c-x_a)e^{i\ell x_a}.
\]
Direct differentiation and copying give
\[
 \cL_0e_\ell=-\tfrac12\ell^2e_\ell,\qquad
 \cL_0B_\ell=-(\ell^2/2+1)B_\ell-i\ell e_\ell.
\]
The corrector for the individual mode $e^{i\ell x}$ is therefore
\[
 h_{1,\ell}=\frac{B_\ell-(2i/\ell)e_\ell}{\ell^2/2+1}.
\]
For a real function $\psi$, put
\[
 B_\psi=\frac1{N(N-1)}\sum_{a\ne c}(x_c-x_a)\psi(x_a).
\]
In real notation,
\begin{equation}\label{eq:real-h1}
 h_1=\tfrac23B_{\cos}+\tfrac13B_{\sin2}
       +\tfrac43N^{-1}\sum_a\sin x_a
       -\tfrac13N^{-1}\sum_a\cos2x_a.
\end{equation}

\begin{theorem}[A copying-induced cubic current]\label{thm:copying-current}
For $|\epsilon|\le\lambda_N/4$, the longitudinal stationary reference
velocity satisfies
\begin{equation}\label{eq:exact-current}
 v_{N,\epsilon}=\mathfrak c_N\epsilon^3+\epsilon^4\nu_{N,\epsilon}(\cA h_3),
 \qquad |\nu_{N,\epsilon}(\cA h_3)|\le M_N<\infty.
\end{equation}
The cubic coefficient is
\begin{equation}\label{eq:population-current}
 \mathfrak c_N
 =\frac{(N-2)[859+2282(N-3)]}{323400(N-1)^2}>0.
\end{equation}
In particular, a strictly positive velocity can be selected analytically
for every fixed $N\ge3$.
\end{theorem}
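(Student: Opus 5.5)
Our approach is an exact stationary expansion of the mean drift of the centre of mass, rather than a perturbative resolvent expansion with error terms. Because the correctors are hidden-state functions and $\nu_{N,\epsilon}$ is invariant for $\cL_\epsilon=\cL_0+\epsilon\cA$ (the $\lambda_N$ part of the rate is the neutral copying in $\cL_0$), each identity in \eqref{eq:correctors} can be integrated against $\nu_{N,\epsilon}$.

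\textbf{Step 1: reduce the velocity to the centre of mass.} Since $\bar x-x_N$ is controlled by $\mathfrak D$, Lemma~\ref{lem:soft-hidden-moments} together with hidden stationarity gives $v_{N,\epsilon}=\lim_T T^{-1}\E_\nu(\bar x(T)-\bar x(0))$. Dynkin's formula for the linear function $h_0=\bar x$ then gives $v_{N,\epsilon}=\nu_{N,\epsilon}(\cL_\epsilon h_0)$. Here $\cL_0h_0=0$, because neutral copying is symmetric and $\Delta$ kills linear functions, and $\cA h_0=j$ is the drift \eqref{eq:soft-com-drift} with $\phi_*$ in place of $k$. This yields $v_{N,\epsilon}=\epsilon\,\nu(\cA h_0)$. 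Justifying Dynkin for the unbounded $h_0$ uses the Poisson bound on the copy count and the $p$-th moments of $\mathfrak D$.

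\textbf{Step 2: telescope.} For each corrector $h_j$, invariance gives $\nu(\cL_\epsilon h_j)=0$. This holds for the unbounded $h_j$ because of the linear growth bound \eqref{eq:corrector-growth} and the uniform moments \eqref{eq:soft-moments}. One localizes by stopping or truncation, applies Dynkin over a block $[0,T]$, and notes that the boundary terms cancel by stationarity because $\nu|h_j|<\infty$ and $\nu(|\cL_\epsilon h_j|)<\infty$. Combining this with \eqref{eq:correctors} gives
\[
0=\nu(\cL_0h_1)+\epsilon\nu(\cA h_1)=-\nu(\cA h_0)+\epsilon\nu(\cA h_1),
\]
so $\nu(\cA h_0)=\epsilon\nu(\cA h_1)$. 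In the same way $\nu(\cA h_1)=\epsilon\nu(\cA h_2)$, and $\nu(\cA h_2)=\mathfrak c_N+\epsilon\nu(\cA h_3)$. Chaining these,
\[
v_{N,\epsilon}=\epsilon\nu(\cA h_0)=\epsilon^3\mathfrak c_N+\epsilon^4\nu(\cA h_3),
\]
which is \eqref{eq:exact-current}. The bound $|\nu(\cA h_3)|\le B_{3,N}(1+A_N/\beta_N)=:M_N$ follows from \eqref{eq:corrector-growth} and \eqref{eq:explicit-moment}, and is uniform in $|\epsilon|\le\lambda_N/4$.

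\textbf{Step 3: the constant.} The value \eqref{eq:population-current} is $W_N/4$, where $W_N$ comes from Lemma~\ref{lem:cubic} and the appendix computation. Its positivity is immediate: $N-2\ge1$, $859+2282(N-3)>0$ for $N\ge3$, and the denominator is positive. A positive velocity is then obtained by choosing $0<\epsilon<\min(\lambda_N/4,\mathfrak c_N/M_N)$.

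\textbf{Main obstacle.} The delicate step is justifying $\nu(\cL_\epsilon h_j)=0$ for correctors that are unbounded and grow linearly in the relative coordinates. They are not in the domain of the generator in any classical sense, and the jump terms have donor displacements. The plan is to apply Itô's formula with jumps on a block $[0,T]$. The martingale parts have second moments controlled by $\E(K_T+1)^2$ and by Brownian maxima, using the same decomposition as in the proof of Lemma~\ref{lem:soft-hidden-moments}. This reduces the identity to $\nu$-integrability of $\mathfrak D^2$, which \eqref{eq:soft-moments} supplies.
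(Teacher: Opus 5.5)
Your proposal is correct and is essentially the paper's proof. The paper applies one localized Dynkin argument to $G_\epsilon=\bar x+\epsilon h_1+\epsilon^2h_2+\epsilon^3h_3$, whose generator is $\mathfrak c_N\epsilon^3+\epsilon^4\cA h_3$ by \eqref{eq:correctors}, with $G_\epsilon-x_N$ an integrable hidden function; this is simply the weighted sum of your four chained stationary identities, and the same chain appears explicitly in the paper's three-particle appendix. Your justification also matches the paper's: stopping at large relative diameter, the growth bound \eqref{eq:corrector-growth}, $\mathfrak D(C_{ij}X)\le\mathfrak D(X)$ and the second moment in \eqref{eq:soft-moments}, together with $M_N=B_{3,N}(1+A_N/\beta_N)$.
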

\begin{proof}
The function $G_\epsilon=\bar x+\epsilon h_1+\epsilon^2h_2+\epsilon^3h_3$
satisfies, by cancellation of the first three perturbation terms,
\[
 \cL_\epsilon G_\epsilon=\mathfrak c_N\epsilon^3+\epsilon^4\cA h_3.
\]
The difference
\[
 G_\epsilon-x_N=(\bar x-x_N)
                   +\epsilon h_1+\epsilon^2h_2+\epsilon^3h_3
\]
is an integrable function of the hidden state. Its mean increment under
$\nu_{N,\epsilon}$ is zero. Consequently, for every $t>0$,
\[
 \E_{\nu_{N,\epsilon}}[G_\epsilon(X_t)-G_\epsilon(X_0)]
 =\E_{\nu_{N,\epsilon}}[X_N^{(1)}(t)-X_N^{(1)}(0)]
 =t v_{N,\epsilon}.
\]

To apply Dynkin's formula to $G_\epsilon$, stop when the relative
diameter exceeds $m$. The derivative bound~\eqref{eq:corrector-growth}
controls its Brownian quadratic variation by $C_{N,\epsilon}(1+\mathfrak D^2)$.
Each copying increment obeys the same squared bound, since
$\mathfrak D(C_{ij}X)\le\mathfrak D(X)$. The second moment
in~\eqref{eq:soft-moments} makes both martingales square integrable on
every bounded time interval under the stationary law. The same bounds
and the fixed-window Brownian and Poisson maximal moments allow the
stopping to be removed. Taking expectations in Dynkin's formula now
gives
\[
 t v_{N,\epsilon}
 =t\mathfrak c_N\epsilon^3
      +t\epsilon^4\nu_{N,\epsilon}(\cA h_3).
\]
This gives~\eqref{eq:exact-current}. Equations~\eqref{eq:corrector-growth}
and~\eqref{eq:explicit-moment} give the uniform choice
\[
 M_N=B_{3,N}(1+A_N/\beta_N),
\]
after enlarging $B_{3,N}$ if needed. The displayed population
factorization is strictly positive for every $N\ge3$.

Fix once and for all a contrast parameter
\[
 1<\Theta<\frac2{\sqrt3}.
\]
Choose
\begin{equation}\label{eq:epsilon-choice}
 \begin{gathered}
 0<\epsilon_N<\min\left\{\frac{\lambda_N}{4},
                  \frac{\lambda_N(\Theta-1)}{2(\Theta+1)},
                   \frac{\mathfrak c_N}{2(M_N+1)}\right\},\\
 k_N(x)=\lambda_N+\epsilon_N\phi_*(x^{(1)}).
 \end{gathered}
\end{equation}
The remainder bound gives
$v_{N,\epsilon_N}\ge\mathfrak c_N\epsilon_N^3/2>0$. The second amplitude
bound ensures, on writing $k_{\min}=\min_x k_N(x)$ and
$k_{\max}=\max_x k_N(x)$, that
\begin{equation}\label{eq:ratio}
 \frac{k_{\max}}{k_{\min}}
 \le\frac{\lambda_N+2\epsilon_N}{\lambda_N-2\epsilon_N}
 <\Theta.
\end{equation}
The rate $k_N$ supplies the positive current and the contrast margin
used in the geometric construction.
\end{proof}

\begin{corollary}[A quantitative three-particle window]\label{cor:quantitative-three-current}
For $N=3$ and $|\epsilon|\le1/12$,
\begin{equation}\label{eq:quantitative-three-current}
 \begin{gathered}
 \left|v_{3,\epsilon}-\mathfrak c_3\epsilon^3\right|
      \le9|\epsilon|^4,\qquad \mathfrak c_3=\frac{859}{1293600},\\
 0<\epsilon\le2^{-16}\quad\Longrightarrow\quad
 v_{3,\epsilon}>\tfrac12\mathfrak c_3\epsilon^3>0.
 \end{gathered}
\end{equation}
\end{corollary}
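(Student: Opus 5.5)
The bound will come from the exact identity~\eqref{eq:exact-current} of Theorem~\ref{thm:copying-current} with $N=3$. There $\lambda_3=1$, so $\lambda_3/4=1/4\ge1/12$ and the identity holds on the stated window. Substituting $N=3$ in~\eqref{eq:population-current} gives $\mathfrak c_3=859/(323400\cdot4)=859/1293600$. We then have
\[
 v_{3,\epsilon}-\mathfrak c_3\epsilon^3=\epsilon^4\nu_{3,\epsilon}(\cA h_3).
\]
It therefore suffices to show $|\nu_{3,\epsilon}(\cA h_3)|\le9$ for $|\epsilon|\le1/12$. The second assertion then follows by arithmetic. Indeed, $9\epsilon^4<\tfrac12\mathfrak c_3\epsilon^3$ exactly when $\epsilon<\mathfrak c_3/18=859/23284800$. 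This threshold is about $3.69\times10^{-5}$, which exceeds $2^{-16}\approx1.53\times10^{-5}$.

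For the remainder I would make the chain $M_N=B_{3,N}(1+A_N/\beta_N)$ quantitative, but not with the crude constants from Lemma~\ref{lem:soft-hidden-moments}. Those constants give a $\beta_3$ that is far too small, of order $e^{-9}/16$. Instead I would split
\[
 |\cA h_3|\le B^{(0)}+B^{(1)}\mathfrak D,
\]
where $B^{(0)}$ bounds the pure Fourier part and $B^{(1)}$ collects the coefficients multiplying linear relative coordinates. Both are read off from the explicit $N=3$ recursion in Appendix~\ref{app:general-resonance}. At $y=1/2$ that recursion has rank at most three, so $h_3$ is a short explicit list of terms $E_n(k)$ and $(x_c-x_a)E_n(k)$. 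Applying $\cA$ multiplies coefficients by at most $\|\phi_*\|_\infty$-type factors, namely $\sum_\ell|\widehat\phi_\ell|=2$ per mode times the counting factors in~\eqref{eq:mode}. Summing absolute values then gives explicit rational bounds.

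I would bound $\nu_{3,\epsilon}\mathfrak D$ by a direct Lyapunov argument rather than by minorization. Under $\cL_\epsilon$ with $N=3$, each ordered copy at rate $k(x_i)/2\ge(1-2\epsilon)/2$ collapses one pair, while Brownian motion increases the pairwise second moments at a rate fixed by the dimension. Take $V=\sum_{i<j}|X_i-X_j|^2$. The Brownian part gives $\cL_\epsilon V$ a contribution of exactly $3\cdot3\cdot2=18$ from the three pairs. The copy $C_{ij}$ replaces $|x_i-x_\ell|^2$ by $|x_j-x_\ell|^2$ and kills $|x_i-x_j|^2$. Averaging over the uniform donor shows that the jump part of $\cL_\epsilon V$ is at most $-c_\epsilon V$ with $c_\epsilon\ge(1-2\epsilon)\cdot\tfrac32$ or a similar explicit constant. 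Stationarity then gives
\[
 \nu V\le 18/c_\epsilon,\qquad \nu\mathfrak D\le(\nu V)^{1/2}.
\]
The quadratic growth of $V$ is justified by the moments in~\eqref{eq:soft-moments}.

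The main obstacle is the bookkeeping. One has to extract $h_3$ explicitly at $N=3$ and control $\nu_{3,\epsilon}(\cA h_3)$ sharply enough to land below $9$. If the crude absolute-value sums are too large, I would refine them with cancellations. One source is that pure Fourier terms with nonzero total frequency average to a small quantity against $\nu_{3,\epsilon}$: they vanish at $\epsilon=0$ and are $O(\epsilon)$ by the same Dynkin identity. Another is to use $|\epsilon|\le1/12$ to absorb lower-order contributions.
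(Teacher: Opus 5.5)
Your reduction matches the paper's: the exact identity~\eqref{eq:exact-current} with $\lambda_3=1$, the value $\mathfrak c_3=859/1293600$, the rational check $9\cdot2^{-16}<\mathfrak c_3/2$, and a bound on $\nu_{3,\epsilon}(\cA h_3)$ built from a pure-Fourier norm, a linear-coefficient norm and a stationary second moment. The gap is in the Lyapunov step, which is exactly where the constant $9$ is decided.

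Your contraction claim $c_\epsilon\ge\frac32(1-2\epsilon)$ is false. A copy $C_{ij}$ kills $|x_i-x_j|^2$, but it replaces $|x_i-x_\ell|^2$ by $|x_j-x_\ell|^2$. Summing over the two donors of victim $i$ gives $2(|x_j-x_\ell|^2-|x_i-x_j|^2-|x_i-x_\ell|^2)$, which is positive whenever the angle at $x_i$ is obtuse. The neutral jump part is therefore exactly $-V$, so $\cL_0V=18-V$: the contraction is $1$, not $3/2$. For the same reason, the rate perturbation cannot be handled by the lower bound $k\ge1-2|\epsilon|$. You need a bound $|\cA V|\le6V$, which follows from $|V(C_{ij}x)-V(x)|\le V(x)$ and $|\phi_*|\le2$. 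This gives $\cL_\epsilon V\le18-(1-6|\epsilon|)V$, hence only $\nu_{3,\epsilon}V\le36$ and $\nu_{3,\epsilon}\mathfrak D\le6$ on $|\epsilon|\le1/12$. Even with the sharp constant $10/3$ in place of $6$, you still get about $5$. The paper's coefficient certificate is $N_0<14/5$, $N_1<31/20$, $N_0+4N_1<9$, and it has essentially no margin, since $14/5+4\cdot31/20=9$. A factor near $6$ on the linear part therefore yields a bound near $12$, not below $9$.

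The fix is to use the fact that $h_3$ and $\cA h_3$ involve only \emph{longitudinal} relative coordinates. Take $S=\sum_{i<j}(x_i-x_j)^2$ with $x_i=X_i^{(1)}$, as the paper does; the full three-dimensional $V$ and $\mathfrak D$ cost you a factor $\sqrt3$. For $S$ one has $\cL_0S=6-S$ and $|\cA S|\le6S$, so $\cL_\epsilon S\le6-(1-6|\epsilon|)S$. The window $|\epsilon|\le1/12$ is chosen precisely so that $1-6|\epsilon|\ge1/2$, which gives $\nu_{3,\epsilon}S\le12$. Then $|x_r-x_3|\le\sqrt S$ and Jensen give $|\nu_{3,\epsilon}\cA h_3|\le N_0+\sqrt{12}\,N_1<N_0+4N_1<9$.

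The argument also genuinely requires the explicit norms $N_0,N_1$ of $\cA h_3$ in the basis $E_{n,0},E_{n,r}$. Your plan to read them off the $N=3$ recursion is the right one. However, the vague ``refinement by cancellations'' (e.g.\ $O(\epsilon)$ smallness of nonzero-total-frequency terms) would itself need quantitative moment constants, so it cannot rescue the three-dimensional bound.
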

\begin{proof}
Appendix~\ref{app:quantitative-three-certificate} gives
$|\nu_{3,\epsilon}\cA h_3|<9$ in this range.
Apply~\eqref{eq:exact-current} and the rational inequality
$9\cdot2^{-16}<\mathfrak c_3/2$.
The contrast condition for the geometric construction remains
\eqref{eq:epsilon-choice}.
\end{proof}

\begin{corollary}[Stationary displacement and speed]\label{cor:soft-speed}
For the soft system with any fixed smooth positive periodic rate, write $\mathbf v=\nu_k j_k$. For every $T>0$,
\begin{equation}\label{eq:soft-stationary-increments}
 \E_{\nu_k}\Delta_T=T\mathbf v,\qquad
 \E_{\nu_k}(J_T-J_0)=T\mathbf v^{(1)}/L,
 \quad J_t=\lfloor X_N^{(1)}(t)/L\rfloor.
\end{equation}
For every finite $p\ge1$, $\E_{\nu_k}\mathfrak X_T^p<\infty$.
Under the hidden stationary law, the centre of mass satisfies
\begin{equation}\label{eq:soft-almost-sure-speed}
 \lim_{t\to\infty}\frac{\bar X_t-\bar X_0}{t}=\mathbf v
 \quad\text{almost surely},\qquad
 \bar X_t=N^{-1}\sum_iX_i(t).
\end{equation}
\end{corollary}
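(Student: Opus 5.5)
The plan is to reduce everything to the hidden Markov process of Lemma~\ref{lem:soft-hidden-moments}, whose stationarity, minorization and moment bounds carry over to any smooth positive periodic rate by the remark following that lemma, with constants depending only on $N,k_-,k_+$.

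\emph{Step 1: moments of the block excursion.} On $[0,T]$ every ancestral path is made of at most $K_T^{\rm cand}+1$ Brownian pieces. Hence
\[
 \mathfrak X_T\le \mathfrak D(X_0)+C(K_T^{\rm cand}+1)U_T,
\]
with $U_T=\max_i\sup_{s\le T}|W_i(s)|$. The count $K_T^{\rm cand}$ is Poisson and independent of the drivers. So $\E_{\nu_k}\mathfrak X_T^p<\infty$ follows from $\nu_k\mathfrak D^p<\infty$ together with the Poisson and Brownian maximal moments, exactly as in the proof of Lemma~\ref{lem:soft-hidden-moments}.

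\emph{Step 2: the mean increment.} Write $\bar X_t-\bar X_0=\int_0^t j_k(X_s)\dd s+M_t$. Here $M_t$ is the sum of the Brownian martingale $N^{-1}\sum_iW_i(t)$ and the compensated copy jumps $N^{-1}(X_j-X_i)$. The jump sizes are bounded by $\mathfrak D$, so $M$ is a square-integrable martingale under $\nu_k$ by Step 1, after localizing at $\mathfrak D>m$ and removing the localization by dominated convergence. Taking expectations and using stationarity gives $\E_{\nu_k}(\bar X_T-\bar X_0)=T\nu_kj_k=T\mathbf v$. Next, $\bar X-X_N$ is an integrable function of $H$, so its stationary increment has mean zero, and therefore $\E_{\nu_k}\Delta_T=T\mathbf v$. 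For $J_t$, write $X_N^{(1)}(t)/L-\{X_N^{(1)}(t)/L\}$. The fractional part depends only on $\operatorname{ph}X_N$, which is a function of $H_t$, and it is bounded. Its stationary increment therefore has mean zero, which gives $\E_{\nu_k}(J_T-J_0)=T\mathbf v^{(1)}/L$.

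\emph{Step 3: the almost sure speed.} The integral term obeys the ergodic theorem for the stationary hidden process. Uniqueness of $\nu_k$, from the global minorization, makes $\nu_k$ ergodic. Moreover $j_k$ is a hidden-state function with $|j_k|\le k_{\max}\mathfrak D$, which is $\nu_k$-integrable. Hence
\[
 t^{-1}\int_0^tj_k(X_s)\dd s\to\nu_kj_k\quad\text{almost surely.}
\]
For the martingale, the stationary bound $\E_{\nu_k}\langle M\rangle_t\le Ct$, with $C$ depending on $\nu_k\mathfrak D^2$, gives $M_n/n\to0$ along integers by a standard strong law, for instance Chebyshev with a fourth-moment or Kronecker/Doob argument. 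Between integers the oscillation of $M$ on $[n,n+1]$ is bounded by $\mathfrak X$-type block variables with uniformly bounded $p$-th moments under stationarity. Borel--Cantelli with $p>1$ then shows these oscillations are $o(n)$, which yields~\eqref{eq:soft-almost-sure-speed}.

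\emph{Main obstacle.} The delicate point is the martingale part. The copying jumps are unbounded, of size up to $\mathfrak D$, so localization and square integrability must be justified through the stationary moments rather than bounded jumps. The interpolation between integer times likewise needs the block moment bound of Step 1 combined with Borel--Cantelli.
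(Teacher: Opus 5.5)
Your proposal is correct and follows essentially the paper's argument: Dynkin's formula for $\bar X$ together with the stationary hidden differences $\bar X-X_N$ and $X_N^{(1)}-LJ$, the ancestral envelope for $\mathfrak X_T$ (where the coefficient of $\mathfrak D(X_0)$ should be $2$ rather than $1$, which is harmless), ergodicity from the global minorization, and a martingale strong law for $M$. The only variation is that you obtain $M_n/n\to0$ along integers and then interpolate using stationary block excursions and Borel--Cantelli, whereas the paper applies the continuous-time Kronecker lemma to $\int_0^\infty(1+s)^{-1}\,dM_s$ and so needs no interpolation step.
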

\begin{proof}
The integrable differences $\bar X-X_N$ and $X_N^{(1)}-LJ$ are stationary
hidden functions. Dynkin's equation for $\bar X$ gives the first identity;
cancellation of the fractional phase gives the second. With candidate
count $K_T$ and $U_T=\max_i\sup_{s\le T}|W_i(s)|$, the ancestral envelope gives
\[
 \mathfrak X_T\le2\mathfrak D(X_0)+6(K_T+1)U_T,\qquad
 \E_{\nu_k}\mathfrak X_T^p\le C_{p,N,k,T}<\infty,
\]
by Poisson and Brownian maximal moments and
Lemma~\ref{lem:soft-hidden-moments}.

Write $\bar X_t-\bar X_0=\int_0^t j_k(H_s)\,ds+M_t$. The hidden process is ergodic by its global minorization, so the time average of the integrable drift converges to $\nu_kj_k$. The expected quadratic variation rate of each component of $M$ is bounded by $1/N+C_Nk_{\max}\nu_k\mathfrak D^2$. Hence $\int_0^\infty(1+s)^{-1}\,dM_s$ has finite expected quadratic variation and converges almost surely. The martingale version of Kronecker's lemma gives $M_t/t\to0$, proving~\eqref{eq:soft-almost-sure-speed}.
\end{proof}

\subsection{The chiral response and continuity of the current}
For the two-harmonic profiles of~\eqref{eq:chiral-invariant}, the same
finite recurrence gives
\begin{equation}\label{eq:general-chirality}
 v_{N,\epsilon,\phi}
 =\mathfrak c_N\mathcal C(\phi)\epsilon^3
 +O_{N,\phi}(\epsilon^4).
\end{equation}
Here $v_{N,\epsilon,\phi}$ is the longitudinal stationary velocity for
$k(x)=\lambda_N+\epsilon\phi(x^{(1)})$, with $|\epsilon|$ small enough
that the rate lies in the fixed bounded class of
Lemma~\ref{lem:soft-hidden-moments}. The first two response terms vanish
for every such profile: the frequency selection and paired cancellation
in Lemma~\ref{lem:cubic} apply before its Fourier weights are chosen.

At order three, put
$z=\widehat\phi_1^2\overline{\widehat\phi_2}$.
The words $(1,1,-2)$ and their permutations have common weight $z$,
while their negatives have weight $\overline z$. Using
$W_N=4\mathfrak c_N$ and~\eqref{eq:word-reflection}, their combined
root derivative is
\[
 W_Nz-W_N\overline z
 =8i\mathfrak c_N\operatorname{Im}z
 =i\mathfrak c_N\mathcal C(\phi).
\]
Differentiation gives the corresponding corrector identities, and the
growth and stationary moment bounds give a fourth-order remainder as in
Theorem~\ref{thm:copying-current}. This proves~\eqref{eq:general-chirality}.
The population enters through the positive factor $\mathfrak c_N$;
the sign of $\mathcal C(\phi)$ determines the sign of the cubic coefficient.

\begin{corollary}[Continuity and positive rates]\label{cor:soft-open-family}
For fixed $N$, velocity is continuous in the uniform norm on a neighbourhood of $k_N$. In particular, the smooth positive periodic rates with positive longitudinal velocity contain an open neighbourhood of $k_N$.
\end{corollary}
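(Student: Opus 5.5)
The plan is to write the velocity as a stationary average, $v_N(k)=\nu_k j_k^{(1)}$ by Corollary~\ref{cor:soft-speed}, and to show that $k\mapsto\nu_k$ is continuous from the uniform norm into a weighted total-variation topology strong enough to integrate the linearly growing drift $j_k$.

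Work in a uniform ball $\|k-k_N\|_\infty\le\delta$, with $\delta$ small enough that every rate in it lies in a fixed class $0<k_-\le k\le k_+$. By the remark after Lemma~\ref{lem:soft-hidden-moments}, the minorization constants $\eta,m$ at time $T_N$ and the moment bounds $\sup_k\nu_k\mathfrak D^p<\infty$ then depend only on $N,k_-,k_+$.

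Next, compare the block kernels of two rates $k,k'$ in the ball by coupling their soft systems. Use the same Brownian drivers and candidate clocks of rate $k_+$. Accept with probabilities $k(X_i)/k_+$ and $k'(X_i)/k_+$, driven by a common uniform, and take the same donor mark. The two trajectories agree on $[0,T_N]$ unless some candidate's uniform falls between the two acceptance thresholds. That has probability at most $\E K^{\rm cand}_{T_N}\,\|k-k'\|_\infty/k_+\le C\|k-k'\|_\infty$, uniformly in the initial hidden state. Hence
\[
 \sup_h\|P^{k}_{T_N}(h,\cdot)-P^{k'}_{T_N}(h,\cdot)\|_\TV\le C\|k-k'\|_\infty .
\]
The uniform Doeblin bound gives contraction coefficient $1-\eta$ for the block chain. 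The standard perturbation bound for uniformly ergodic chains then yields
\[
 \|\nu_k-\nu_{k'}\|_\TV\le\frac{C}{\eta}\|k-k'\|_\infty .
\]
To see this, write $\nu_k-\nu_{k'}=\nu_k(P^k-P^{k'})\sum_n(P^{k'})^n$ on signed measures of zero mass.

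Then pass from total variation to the unbounded drift. The bound $|j_k|\le k_+\mathfrak D$ and $|j_k-j_{k'}|\le\|k-k'\|_\infty\mathfrak D$ give
\[
 |\nu_kj_k-\nu_{k'}j_{k'}|\le\|k-k'\|_\infty\nu_k\mathfrak D+|(\nu_k-\nu_{k'})j_{k'}| .
\]
For the last term, truncate at $\mathfrak D\le R$. The bounded part is at most $k_+R\|\nu_k-\nu_{k'}\|_\TV$. The tail is at most $2k_+\sup_k\nu_k\mathfrak D^2/R$. Choosing $R=\|k-k'\|_\infty^{-1/2}$ gives continuity, in fact a Hölder modulus. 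Since $v_N(k_N)>0$ by Theorem~\ref{thm:copying-current}, positivity persists on a uniform neighbourhood, and this gives the second claim.

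The main obstacle is the coupling step. The hidden-state TV distance must be uniform in the initial state, including initial states with large diameter. This is where the thinning coupling helps: agreement of the full trajectories on $\{\text{no discrepant acceptance}\}$ does not depend on the spatial state at all, because acceptance is compared pointwise through $|k(X_i)-k'(X_i)|\le\|k-k'\|_\infty$. If the block-chain argument needs care at non-lattice times, I would apply it to the $T_N$-skeleton only. The skeleton already has the invariant law $\nu_k$, since the hidden semigroup commutes with $P_{T_N}$ and the invariant law is unique.
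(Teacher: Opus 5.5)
Your proposal is correct and follows the paper's proof essentially step for step. It uses the same driver-and-candidate coupling to get a block-kernel bound of order $\|k-k'\|_\infty$ uniform in the hidden state, then the Doeblin perturbation bound $\|\nu_k-\nu_{k'}\|_{\TV}\le\eta^{-1}\sup_h\|P_k(h,\cdot)-P_{k'}(h,\cdot)\|_{\TV}$, and finally truncation of $j_k$ at $\mathfrak D\le R$ with $R=\|k-k'\|_\infty^{-1/2}$ to get the $C(d+\sqrt d)$ modulus; the only differences (thinning all candidates at rate $k_+$, and writing out the Neumann series behind the perturbation bound) are cosmetic.
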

\begin{proof}
Work in $\lambda_N/2\le k,\widetilde k\le3\lambda_N/2$ and put
$d=\|k-\widetilde k\|_\infty\le1$. Lemma~\ref{lem:soft-hidden-moments}
gives common minorization and moment constants. With the same drivers
and candidate marks, the paths agree until a discrepant acceptance,
whose probability before $T_N$ is at most $NT_Nd$. For the block
kernels $P_k,P_{\widetilde k}$,
\[
 \begin{aligned}
 \sup_h\|P_k(h,\cdot)-P_{\widetilde k}(h,\cdot)\|_{\TV}
       &\le2NT_Nd,\\
 \|\nu_k-\nu_{\widetilde k}\|_{\TV}
       &\le\eta_N^{-1}\sup_h
            \|P_k(h,\cdot)-P_{\widetilde k}(h,\cdot)\|_{\TV}
        \le C_Nd.
 \end{aligned}
\]
Corollary~\ref{cor:soft-speed} identifies
$v_N(k)=\nu_kj_k^{(1)}$. By~\eqref{eq:soft-com-drift},
$|j_k|\le C_N\mathfrak D$ and
$|j_k-j_{\widetilde k}|\le C_Nd\mathfrak D$. For $R\ge1$,
\[
 \begin{aligned}
 |v_N(k)-v_N(\widetilde k)|
 &\le \nu_k|j_k-j_{\widetilde k}|
       +\bigl| (\nu_k-\nu_{\widetilde k})
               (j_{\widetilde k}^{(1)}\one_{\{\mathfrak D\le R\}})\bigr|\\
 &\quad +(\nu_k+\nu_{\widetilde k})
              (|j_{\widetilde k}|\one_{\{\mathfrak D>R\}})\\
 &\le C_N(d+Rd+R^{-1}).
 \end{aligned}
\]
Taking $R=d^{-1/2}$ for $d>0$ gives
$|v_N(k)-v_N(\widetilde k)|\le C_N(d+\sqrt d)$.
Positivity at $k_N$ and its strict contrast margin survive in a sufficiently small neighbourhood.
\end{proof}

\section{Periodic perforations and moment bounds}\label{sec:fixed}
Before inversion, the particles move in periodic arrays of spherical
holes. A periodic perforated domain is
\begin{equation}\label{eq:template}
 D=\R^3\setminus\bigcup_{k\in\Z^3}\bigcup_{a=1}^J
             \overline B(z_a+Lk,r_a),\qquad J\in\{1,2,\ldots\},
\end{equation}
with positive radii and a positive minimum separation between distinct
closed balls. It is connected: a compact polygonal path meets finitely
many balls, each of which can be bypassed in a disjoint exterior collar.
The separation also gives the boundary a uniform tubular neighborhood.

We also consider auxiliary systems
\begin{equation}\label{eq:perturbation}
 dX_i=\sigma_i(X)\,dW_i+b_i(X)\,du,\qquad
 \sup_x\max_i(\|\sigma_i(x)-I\|+|b_i(x)|)\le\eta.
\end{equation}
The $W_i$ are independent. Coefficients are smooth on all of $\R^{3N}$,
can depend on the entire configuration, and need not be periodic.
Estimate constants in this section depend on $N,D$ and the displayed time or
moment parameters. They are uniform for sufficiently small $\eta$,
independently of coefficient derivatives; smoothness is used locally.
The periodic baseline induces a Markov hidden process; conditional
estimates for a nonperiodic perturbation use the full real configuration.

\subsection{Death-count estimates from the boundary distance}
A collective boundary-distance function controls wall clustering and
the entire death count. Its continuous drift is negative near simultaneous
boundary contact, and uniform copying contracts it in conditional
expectation.
Choose a smooth periodic $\rho:\overline D\to[0,R_0]$, equal to boundary
distance in a fixed collar, positive in the interior, and saturated away
from the collar. Write $\rho_i=\rho(x_i)$ for a configuration $x$, and put
\begin{equation}\label{eq:Phi}
 \gamma=(N-2)/16,\quad S=\sum_i\rho_i^2,\quad
 \Phi=(NR_0^2/S)^{\gamma/2}\ge1.
\end{equation}
These functions are invariant under common lattice translations, so
we also write $\Phi(H(x))=\Phi(x)$.
For $s=1,2,4$ let $\beta=s\gamma$ and $\varphi=\Phi^s$.
Write $a_i=\sigma_i\sigma_i^{\mathsf T}$ for the covariance matrix, and set
\[
 \begin{aligned}
 g_i&=\nabla\rho(x_i)^{\mathsf T}a_i\nabla\rho(x_i),\\
 \cL_i\rho&=\tfrac12 a_i:D^2\rho(x_i)+b_i\cdot\nabla\rho(x_i).
 \end{aligned}
\]
The continuous generator is
\begin{align}\label{eq:Phi-generator}
 \cL\varphi={}&-\frac\beta2\frac\varphi S\sum_i g_i
 -\beta\frac\varphi S\sum_i\rho_i\cL_i\rho
 +\frac{\beta(\beta+2)}2\frac\varphi{S^2}\sum_i\rho_i^2g_i.
\end{align}
Cross-label quadratic variations vanish. For small $S$, every label is
in the distance collar: $|\nabla\rho(x_i)|=1$, $|\cL_i\rho|\le C$.
If $a_-I\le a_i\le a_+I$, then
\[
 \cL\varphi\le-\frac\beta2\frac\varphi S
 [Na_--(\beta+2)a_+]+C\varphi/\sqrt S.
\]
Choose $\eta$ small enough that
\[
 4\gamma+2=(N+6)/4<N,\qquad
 Na_--(4\gamma+2)a_+\ge c_*>0.
\]
Absorb $C\varphi/\sqrt S$ for small $S$; on the complement the first
two derivatives are bounded. Using
$\varphi/S=(NR_0^2)^{-1}\varphi^{1+2/\beta}$ gives
\begin{equation}\label{eq:Phi-drift}
 \cL\Phi^s\le C_s-c_s(\Phi^s)^{1+2/(s\gamma)},\qquad s=1,2,4.
\end{equation}

At death $\tau$ of label $i$, put $z_j=\rho_j^2/S$. The jump satisfies
\[
 \rho_i=0,\quad \sum_{j\ne i}z_j=1,\qquad
 \frac{(\Phi^s)^+}{(\Phi^s)^-}=(1+z_j)^{-s\gamma/2}\le1.
\]
Average the convex chord bound
$(1+z)^{-\gamma/2}\le1-(1-2^{-\gamma/2})z$, $0\le z\le1$, over
the uniform donor:
\begin{equation}\label{eq:copy-contraction}
 \E(\Phi^+\mid\mathcal F_{\tau-})\le q\Phi^-,
 \qquad q=1-\frac{1-2^{-\gamma/2}}{N-1}<1.
\end{equation}

\begin{lemma}\label{lem:all-death-budget}
For a fixed domain and all sufficiently small perturbations, the recursive
system is nonexplosive. There are uniform constants $\alpha,C>0$ such
that, if $K_t$ is its number of deaths, then, for every $p>0$,
\begin{align}
 \E_x e^{\alpha K_t}&\le e^{Ct}\Phi(x),\label{eq:count}\\
 \E_x(K_t+1)^p&\le C_{p,t}(1+\log\Phi(x))^p,\label{eq:count-moment}\\
 \sup_x\E_x\Phi(H_t)^s&\le C_s(1+t^{-s\gamma/2}),
       \qquad t>0,\ s=1,2,4.\label{eq:landing}
\end{align}
\end{lemma}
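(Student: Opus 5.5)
The plan is to combine the continuous drift bound \eqref{eq:Phi-drift} with the copy contraction \eqref{eq:copy-contraction} into one supermartingale that also pays a factor $e^{\alpha}$ per death. Fix $\alpha>0$ with $e^{\alpha}q\le1$, where $q<1$ is the constant in \eqref{eq:copy-contraction}, and consider
\[
 M_t=e^{\alpha K_t-Ct}\Phi(X_t).
\]
Between deaths, It\^o's formula and \eqref{eq:Phi-drift} with $s=1$ give $\cL\Phi\le C_1$. Since $\Phi\ge1$, this yields $\cL\Phi\le C_1\Phi$, so $e^{-Ct}\Phi$ has nonpositive drift once $C\ge C_1$. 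At a death time, \eqref{eq:copy-contraction} gives
\[
 \E(e^{\alpha}\Phi^+\mid\mathcal F_{\tau-})\le e^\alpha q\,\Phi^-\le\Phi^-.
\]
This conditional expectation is legitimate because the uniform donor mark $U_n$ is independent of $\mathcal F_{\tau_n-}$ and of the victim.

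Localization runs through the stopping times $\tau_n\wedge\sigma_m$, where $\sigma_m$ is the first time $S\le1/m$ or $|X|\ge m$. Here $\Phi$ is bounded and smooth, and Lemma~\ref{lem:finite-prefix} (or, for perturbations, the unique finite prefixes referred to in Section~\ref{sec:dynamics}) makes each prefix well defined. On these intervals $M$ is a positive supermartingale. Optional stopping together with Fatou's lemma as $m,n\to\infty$ gives
\[
 \E_x e^{\alpha K_{t\wedge\tau_n}}\le e^{Ct}\Phi(x).
\]
Letting $n\to\infty$ by monotone convergence gives $\E_x e^{\alpha K_{t\wedge\tau_\infty}}<\infty$. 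Hence $K$ is finite on $[0,t]$ almost surely, which is nonexplosion, and \eqref{eq:count} follows.

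For \eqref{eq:count-moment}, apply Jensen's inequality with the concave function $u\mapsto(\alpha^{-1}\log u+1)^p$, made concave for large $u$ by shifting its argument by a constant depending on $p$. This gives $\E(K_t+1)^p\le C_{p,t}(1+\log\Phi(x))^p$.

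The landing bound \eqref{eq:landing} is where I expect the real work to be. It must hold uniformly in $x$, including initial configurations with $\Phi(x)$ arbitrarily large. I would use the superlinear drift $\cL\Phi^s\le C_s-c_s(\Phi^s)^{1+2/(s\gamma)}$ by comparison with the ODE $y'=C_s-c_sy^{1+\kappa}$, where $\kappa=2/(s\gamma)$. This ODE comes down from infinity:
\[
 y(t)\le C(1+t^{-1/\kappa}),\qquad 1/\kappa=s\gamma/2,
\]
which is exactly the claimed rate. To make the comparison rigorous, set $f(t)=\E_x\Phi^s(H_{t\wedge\sigma})$ along localizations. Jumps only decrease $\Phi^s$, since the jump ratio is at most $1$, so they can be dropped. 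Jensen's inequality, $\E Y^{1+\kappa}\ge(\E Y)^{1+\kappa}$, then gives the differential inequality $f'\le C_s-c_sf^{1+\kappa}$ in integrated form.

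The difficulty is the localization. Stopping makes $f$ satisfy the inequality only up to boundary terms. I would handle this by working with $\E\Phi^s(X_{t\wedge\sigma_m})$ and using monotonicity of the stopped drift, or by first proving the bound with $\Phi\wedge R$ smoothed and then letting $R\to\infty$. Uniformity over small $\eta$ comes from the fact that the constants in \eqref{eq:Phi-drift} depend only on $a_\pm$ and $C$ and not on derivatives of the coefficients.
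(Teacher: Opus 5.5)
Your proposal follows the paper's proof: the same localized supermartingale $e^{\alpha K_t-Ct}\Phi$ with $e^{\alpha}q\le1$ gives nonexplosion and \eqref{eq:count}, and \eqref{eq:landing} comes from the same comparison with a supersolution $A_s+B_st^{-s\gamma/2}$ that comes down from infinity; your shifted-concave Jensen argument is an equally valid substitute for the paper's tail integration in \eqref{eq:count-moment}. The localization you flag is settled in the paper along your first option. First remove the stops in $\E_x\Phi(H_t)^s+c_s\int_0^t\E_x(\Phi(H_r)^s)^{1+2/(s\gamma)}\,dr\le\Phi(x)^s+C_st$, using Fatou for the endpoint, monotone convergence for the nonnegative integral and the nonexplosion just proved. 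Only then apply the full-configuration Markov property on $[a,b]$ and Jensen to the unstopped $m_s(t)=\E_x\Phi(H_t)^s$, so Jensen is never applied to stopped quantities and no truncation of $\Phi$ is needed.
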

\begin{proof}
Lemma~\ref{lem:simultaneous-wall-polarity} first gives unique victims for every finite
continuous segment. Choose $\alpha>0$ with $e^\alpha q\le1$.
Equations~\eqref{eq:Phi-drift} and \eqref{eq:copy-contraction} make
$e^{\alpha K_t-Ct}\Phi(H_t)$ a localized supermartingale. Write
\[
 \begin{aligned}
 \tau_m&=\inf\{t:K_t\ge m\},\qquad
 \sigma_R=\inf\{t:\Phi(H_t)\ge R\},\\
 \chi_\ell&=\inf\{t:\max_i|X_i(t)|\ge\ell\}.
 \end{aligned}
\]
For $R>\Phi(x)$ and $\ell>\max_i|x_i|$, optional stopping gives
\[
 \begin{aligned}
 \E_x\!\left[e^{\alpha K_\theta-C\theta}\Phi(H_\theta)\right]
       &\le\Phi(x),\qquad
       \theta=T\wedge\tau_m\wedge\sigma_R\wedge\chi_\ell,\\
 \Prob_x(\tau_m\le T\wedge\sigma_R\wedge\chi_\ell)
       &\le e^{CT}\Phi(x)e^{-\alpha m},\\
 \Prob_x(\sigma_R\le T\wedge\tau_m\wedge\chi_\ell)
       &\le e^{CT}\Phi(x)/R.
 \end{aligned}
\]
At fixed $m$, ancestral paths contain finitely many continuous
martingale pieces with bounded quadratic variation and bounded drift.
Their positions remain finite on $[0,T\wedge\tau_m]$, so first
$\ell\to\infty$ removes the spatial stop. Letting $R\to\infty$ then gives
\[
 \Prob_x(\tau_m\le T)\le e^{CT}\Phi(x)e^{-\alpha m}
       \longrightarrow0\quad(m\to\infty).
\]
Thus $\tau_m\uparrow\infty$ almost surely. Fatou's lemma for the stopped
supermartingale yields~\eqref{eq:count}. For the moment bound, put
$u_0=(Ct+\log\Phi(x))/\alpha$. The exponential tail gives
\[
 \begin{aligned}
 \E_x(K_t+1)^p
 &=1+p\int_0^\infty(u+1)^{p-1}\Prob_x(K_t>u)\,du\\
 &\le(1+u_0)^p
       +p\int_{u_0}^\infty(u+1)^{p-1}e^{-\alpha(u-u_0)}\,du\\
 &\le C_{p,t}(1+\log\Phi(x))^p.
 \end{aligned}
\]

Put $m_s(t)=\E_x\Phi(H_t)^s$. Localized It\^o and the downward copy jumps give
\[
 \E_x\Phi(H_t)^s+c_s\int_0^t
   \E_x(\Phi(H_r)^s)^{1+2/(s\gamma)}\,dr
 \le \Phi(x)^s+C_st.
\]
Fatou, the full-configuration Markov property and Jensen's inequality give
\[
 m_s(b)-m_s(a)\le C_s(b-a)-c_s
             \int_a^b m_s(r)^{1+2/(s\gamma)}\,dr.
\]
Set $p_s=2/(s\gamma)$ and
\[
 A_s=(2C_s/c_s)^{1/(1+p_s)},\quad
 B_s=(2/(c_sp_s))^{1/p_s},\quad
 y_s(t)=A_s+B_st^{-1/p_s}.
\]
For all sufficiently small $\varepsilon>0$,
\[
 y_s'\ge C_s-c_sy_s^{1+p_s},\qquad
 m_s(\varepsilon)\le\Phi(x)^s+C_s\varepsilon\le y_s(\varepsilon).
\]
The integral inequality permits only downward singular variation;
positive-part comparison on $[\varepsilon,T]$ gives, with
$L_{\varepsilon,T}<\infty$,
\[
 (m_s(t)-y_s(t))^+
 \le L_{\varepsilon,T}\int_\varepsilon^t
                   (m_s(r)-y_s(r))^+\,dr.
\]
Gronwall and $\varepsilon\downarrow0$ give $m_s\le y_s$, proving
\eqref{eq:landing}.
\end{proof}

\subsection{Common ancestry from arbitrary configurations}
A protected survivor recruits every other label through that label's
first copying event. Periodicity gives a uniform chance that each original
free path enters a hole; copying places that label at the protected
particle's actual Euclidean position. This construction removes the
cloud's initial lattice separation.

\begin{lemma}\label{lem:common-ancestor}
There are $T_*>0$, $\beta_*>0$ and an interior compact hidden set $C_*$ such
that, uniformly over all starting real configurations and small
perturbations,
\begin{equation}\label{eq:fixed-ancestor}
 \Prob_x\{H_{T_*}\in C_*,\ \mathsf A_{0,T_*}\}
       \ge\beta_*.
\end{equation}
\end{lemma}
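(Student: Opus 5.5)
The plan is to build an explicit scenario of probability bounded below, uniformly in the start, mirroring the soft construction of Lemma~\ref{lem:soft-hidden-moments}. The hard system has no Poisson clocks, so deaths must be forced geometrically. First I would fix a start $x$ and use the landing bound \eqref{eq:landing} at a short time $t_0$. By Markov's inequality, with probability at least $1/2$ uniformly in $x$, we get $\Phi(H_{t_0})\le R$ for a fixed $R$. Hence $S=\sum_i\rho_i^2$ is bounded below, and some label, say the protected label $p$, has $\rho_p\ge\delta_0>0$. The count bound \eqref{eq:count-moment} on $[0,t_0]$ keeps the number of deaths in this initial window finite in law; this step is used only to secure an interior survivor. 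By the Markov property at $t_0$ it then suffices to prove \eqref{eq:fixed-ancestor} from configurations with one label at distance at least $\delta_0$ from $\partial D$.

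Second, I would run the recruitment. During a window of length $T_1$, require the protected label to stay in a small ball of radius $\delta_0/2$ around its position; this is a Brownian tube event of uniformly positive probability under \eqref{eq:perturbation} with small $\eta$. Handle the other labels one at a time in windows $1,\dots,N-1$. In window $a$, label $a$ (in its current real position) follows a tube into the nearest hole and is killed. By periodicity and the uniform tubular neighbourhood, such a tube exists with length and width depending only on $D$, so the probability is uniformly positive. The remaining non-protected labels must meanwhile stay in tubes avoiding the boundary, or else also die in a controlled way. At the death of label $a$, the uniform donor equals the protected label with probability at least $1/(N-1)$.

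The main obstacle is that other labels may die before their turn and copy from unrecruited donors, or may sit near walls. I would handle this with the order-independence trick: any death whose donor is the protected label or an already recruited label is harmless. So I require, at every death in the window, that the donor is an already recruited label, which has probability at least $1/(N-1)$ per death. The number of deaths is then controlled by \eqref{eq:count} restarted from the state after $t_0$, where $\Phi\le R$. A fixed upper bound $m$ on the death count holds with probability at least $1-e^{CT}Re^{-\alpha m}\ge1/2$. Since these donor marks are independent of the path given $\mathcal F_{\tau-}$, the joint probability is at least
\[
 \tfrac12\,(N-1)^{-m}\cdot c_{\rm tube}.
\]
Here the tube events must be intersected with the count event. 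I would do this by conditioning on the spatial tube event first and using that \eqref{eq:count} applies conditionally via the Markov property at window boundaries. On this event every label descends from $p$, so $\mathsf A_{0,T_*}$ holds.

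Finally, to land in a compact interior hidden set $C_*$, I add a last quiet window in which all labels, now forming a cloud within a bounded distance of $p$, move in tubes staying at distance at least $\delta_0/4$ from $\partial D$ and within a fixed ball around $p$. The diameter then stays bounded by a constant, so the relative coordinates lie in a compact set and every phase is interior. Taking $C_*=\{h:\rho_i\ge\delta_0/4,\ \mathfrak D\le C\}$ and $T_*=t_0+NT_1$, and noting that all constants depend only on $N$, $D$ and $\eta$, gives $\beta_*>0$ uniformly over starting configurations and perturbations.
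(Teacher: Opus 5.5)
\textbf{There is a genuine gap in the probability estimate.} Your overall plan matches the paper's: a landing bound produces a protected label, every other label is recruited through a copy of $p$, and a quiet stretch follows. The gap is the bound $\tfrac12(N-1)^{-m}c_{\rm tube}$ in your recruitment step. It treats the count event $\{K\le m\}$, the tube events and the donor events as independent, and they are not. The marks fix the restart positions and hence all later deaths. Conditional uniformity of each mark given $\mathcal F_{\tau-}$ therefore gives only $\Prob(G_m)\ge(N-1)^{-m}$, where $G_m$ is the event that the first $m$ donors are recruited. It gives no control of the intersection of $G_m$ with $\{K\le m\}$.

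Applying \eqref{eq:count} from the state at a window boundary says nothing conditional on a future tube event. The only general substitute is
$\Prob(G_m\cap\mathcal T)-\Prob(K>m)\ge c_{\rm tube}(N-1)^{-m}-e^{CT}Re^{-\alpha m}$. This is negative for every $m$, because $\Phi\ge1$ and \eqref{eq:copy-contraction} forces $\alpha\le-\log q<\log(N-1)$.

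Two further problems:
\begin{itemize}
\item Asking the non-protected labels to stay in tubes avoiding the boundary is not uniform over starts that place a label at a wall.
\item Your sequential windows do not say what a label that dies early does afterwards.
\end{itemize}

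\textbf{The missing idea is to make the number of deaths deterministic on the good event.} This removes \eqref{eq:count} altogether. The paper's construction runs as follows.
\begin{itemize}
\item Place $p$ at distance at least $\delta_0$ from $\partial D$ and confine its primitive to a ball of radius $\delta_0/16$.
\item For each $i\ne p$, require the phase of its \emph{free} endpoint $X_i(1)+B_i(T_0)$ to lie in a fixed phase ball deep inside one hole. Heat-kernel positivity on the torus bounds this probability below uniformly, however close $X_i(1)$ is to a wall. The event forces label $i$ to die at some uncontrolled time in the window.
\item Prescribe $p$ as that label's first donor.
\item Require the label's fresh primitive after the copy to stay in a ball of radius $\delta_0/16$. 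The label then stays near $p$ and never dies again.
\end{itemize}
On this event exactly $N-1$ deaths occur, in whatever order, and each one copies $p$. Primitives and marks are independent, so the probability factorizes as $(N-1)^{-(N-1)}$ times small-ball and heat-kernel factors. The perturbation is handled by a separate BDG error event of probability $O(\eta^2)$, which is subtracted at the end.
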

\begin{proof}
Choose $R>4\sup_x\E_x\Phi(H_1)$, which is finite by~\eqref{eq:landing}.
Then
\[
 \Prob_x\{\Phi(H_1)\le R\}\ge3/4,
 \qquad \Phi\le R\ \Longrightarrow
 \max_i\rho_i\ge R_0R^{-1/\gamma}.
\]
By periodicity, choose $d_*>0$ with
$\rho(y)\ge R_0R^{-1/\gamma}\Rightarrow\dist(y,\partial D)\ge d_*$,
and select a protected label $p$ measurably. Fix a phase ball $Q$ strictly
inside the projection of one hole and an interval of length $T_0$. Let $B_i$ be the
original free Brownian primitives from time $1$, and $B_i'$ the fresh
primitives after their first copies. With suprema over $[0,T_0]$, require
\[
 \|B_p\|_\infty\le d_*/16,\qquad
 \bigcap_{i\ne p}\left\{
    \operatorname{ph}(X_i(1)+B_i(T_0))\in Q,\quad
    \|B_i'\|_\infty\le d_*/16
 \right\},
\]
and prescribe $p$ as the first donor mark of each label $i\ne p$.
These independent primitive requirements
define $E_{\rm prim}$. Heat-kernel positivity, Brownian small-ball
probabilities and uniform donors give
\[
 \begin{split}
 \Prob(E_{\rm prim}\mid\mathcal F_1)
 &\ge (N-1)^{-(N-1)}
       \Prob(\|B\|_\infty\le d_*/16)^N\\
 &\quad\times\left[
    \inf_{z\in M_3}\Prob\{\operatorname{ph}(z+B(T_0))\in Q\}
               \right]^{N-1}
 =:p_*>0,
 \end{split}
\]
where $B$ is standard Brownian motion; only the phases of $X_i(1)$ enter.

Let $E_{\rm err}$ bound all errors between actual and free paths by
$\varepsilon_*$ on the first $N$ continuous segments. BDG gives
\[
 \eta T_0\le\varepsilon_*/2
 \quad\Longrightarrow\quad
 \Prob(E_{\rm err}^c\mid\mathcal F_1)
       \le C_N\eta^2T_0/\varepsilon_*^2.
\]
Take $\varepsilon_*$ smaller than both $d_*/16$ and half the prescribed
depth inside the hole. On $E_{\rm prim}\cap E_{\rm err}$, the protected
label remains interior. Every other original path must hit a wall,
because its free endpoint is inside a hole; after copying $p$, its
bounded fresh primitive keeps it interior. Hence, with $T_*=1+T_0$,
\[
 E_{\rm prim}\cap E_{\rm err}\subset
 \{K_{T_*}-K_1=N-1,\ \mathsf A_{0,T_*},\ H_{T_*}\in C_*\},
\]
for a compact interior hidden set $C_*$. Choose $\eta$ with
$C_N\eta^2T_0/\varepsilon_*^2\le p_*/2$. Then
\[
 \Prob(E_{\rm prim}\cap E_{\rm err}\mid\mathcal F_1)
 \ge p_*-C_N\eta^2T_0/\varepsilon_*^2\ge p_*/2.
\]
Multiplying by the landing probability gives
\eqref{eq:fixed-ancestor} with $\beta_*=3p_*/8$.
\end{proof}

\subsection{Foster estimates and stationary laws}
Common ancestry contracts the initial width; death counts control the
noise accumulated along ancestral pieces.
Define
\[
 M_i(t)=\int_0^t\sigma_i(X_s)\,dW_i(s),\qquad U_t=\max_i\sup_{s\le t}|M_i(s)|.
\]
For every terminal ancestral path,
\[
 X_i(t)=X_{\mathfrak a_{0,t}(i)}(0)+\xi_i(t),\qquad
 |\xi_i(t)|\le2(K_t+1)U_t+B t,
\]
where $B$ bounds the drift. Consequently
\begin{equation}\label{eq:ancestor-envelope}
 \mathfrak D(X_t)\le \mathfrak D(X_0)\one_{\mathsf A_{0,t}^c}
                         +4(K_t+1)U_t+2Bt
\end{equation}
For $T\ge T_*$, $\Prob_x(\mathsf A_{0,T})\ge\beta_*$.
Choose $\delta>0$ with $(1+\delta)(1-\beta_*)<1$. Hölder,
\eqref{eq:count-moment} and BDG give, for $p=2,4$,
\[
 \begin{aligned}
 \E_x\!\left[(4(K_T+1)U_T+2BT)^p\right]
 &\le C_p\bigl(\E_x(K_T+1)^{2p}\bigr)^{1/2}
            \bigl(\E_xU_T^{2p}\bigr)^{1/2}+C_{p,T}\\
 &\le C_{p,T}(1+\log\Phi(x))^p,\\
 \E_x\mathfrak D(X_T)^p
 &\le(1+\delta)(1-\beta_*)\mathfrak D(x)^p
       +C_{p,T,\delta}(1+\log\Phi(x))^p.
 \end{aligned}
\]
Put $\rho_p=(1+\delta)(1-\beta_*)<1$.
\begin{equation}\label{eq:Foster-functions}
 \mathcal V=1+\mathfrak D^2+\Phi,\qquad \mathcal W=1+\mathfrak D^4+\Phi^2,\qquad
 \mathcal V^2\le3\mathcal W.
\end{equation}
For $p=2,4$, the landing estimate and the bound
$(1+\log r)^p\le\varepsilon r^{p/2}+C_{p,\varepsilon}$ give, for any
fixed $0<\zeta<1$ after choosing $\varepsilon$ small enough,
\[
 \E_x[1+\mathfrak D(X_T)^p+\Phi(H_T)^{p/2}]
 \le\rho_p\mathfrak D(x)^p+\zeta\Phi(x)^{p/2}+C_{p,T,\zeta}.
\]
Taking $\rho_V=\max(\rho_2,\zeta)$ and
$\rho_W=\max(\rho_4,\zeta)$ yields
\begin{equation}\label{eq:Foster}
 \E_x\mathcal V(H_T)\le\rho_V\mathcal V(x)+C_V,\qquad
 \E_x\mathcal W(H_T)\le\rho_W\mathcal W(x)+C_W,\quad \rho_V,\rho_W<1.
\end{equation}
The same ancestral estimate within the block gives
\[
 \begin{aligned}
 \mathfrak X_T&\le2\mathfrak D(x)+6(K_T+1)U_T+3BT,\\
 \E_x\mathfrak X_T^2
 &\le C_T[1+\mathfrak D(x)^2+(1+\log\Phi(x))^2]
 \le C_T\mathcal V(x).
 \end{aligned}
\]
This proves
\begin{equation}\label{eq:block-envelope}
 \E_x\mathfrak X_T^2\le C\mathcal V(x).
\end{equation}

\begin{proposition}\label{prop:template-foster}
For some $T>0$, $\beta>0$ and probability law $\mu$, the baseline hidden
block kernel $P=P_T$ satisfies
$P(\xi,\cdot)\ge\beta\mu(\cdot)$ for every hidden state $\xi$. It has a unique invariant law $\nu$, with
$\nu\mathcal W<\infty$. The invariant law is also invariant for continuous
time, and its reference displacement satisfies
$\E_\nu\Delta_t=t\mathbf v$ for some $\mathbf v\in\R^3$.
The estimates~\eqref{eq:Foster} and~\eqref{eq:block-envelope} hold uniformly for the small perturbation class.
\end{proposition}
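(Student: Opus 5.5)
The plan is to get a global Doeblin minorization for the baseline hidden block kernel from Lemma~\ref{lem:common-ancestor}. After that, the Foster bounds~\eqref{eq:Foster} control the integrability of the invariant law, and an averaging argument upgrades discrete-time invariance to continuous time.

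\emph{Minorization.} Take $T=T_*$ from Lemma~\ref{lem:common-ancestor}, and refine the event used there.
\begin{itemize}
\item On $E_{\rm prim}\cap E_{\rm err}$, every label descends from the protected label $p$, and at time $T_*$ each label sits at $p$'s actual Euclidean position plus a fresh primitive increment.
\item Replace the small-ball requirement on the fresh primitives over the final subinterval by a requirement on their joint endpoint law. For example, reserve a final quiet interval of length one inside $C_*$ on which no label dies.
\item On that interval the hidden state is $H$ of a configuration obtained by adding independent (perturbed) Gaussian increments to a starting point in a compact interior set. For $\eta$ small, heat-kernel positivity for nondegenerate diffusions killed at the boundary then gives a density bounded below by $c>0$ on a fixed ball $B\subset C_*$, uniformly in the starting point.
\item This yields $P_{T}(\xi,\cdot)\ge\beta\,\mathrm{Unif}(B)(\cdot)$ with $\beta=\beta_*c|B|$.
\end{itemize}
Only $H$ is used, so the common lattice translation left by the ancestor is irrelevant. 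The key input is that the relative coordinates are reset by common ancestry, which Lemma~\ref{lem:common-ancestor} supplies uniformly over real configurations.

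\emph{Invariant law and integrability.} Doeblin's condition gives a unique invariant law $\nu^T$ for $P_T$ and geometric total-variation convergence. Iterating~\eqref{eq:Foster} gives $P^n_T\mathcal W\le\rho_W^n\mathcal W+C_W/(1-\rho_W)$. Truncated Fatou along $P_T^n(\xi,\cdot)\to\nu^T$ then gives $\nu^T\mathcal W\le C_W/(1-\rho_W)$.

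\emph{Continuous-time invariance.} For $0<s<T$, the law $\nu^TP_s$ is again invariant for $P_T$, by the semigroup property of the hidden Markov process. Uniqueness therefore forces $\nu^TP_s=\nu^T$. The hidden process is Markov here because the baseline is periodic and the death count is nonexplosive by Lemma~\ref{lem:all-death-budget}.

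\emph{Velocity.} Integrability of $\Delta_t$ under $\nu$ follows from~\eqref{eq:block-envelope} and $\nu\mathcal V\le\sqrt{3\nu\mathcal W}<\infty$, first for $t\le T$ and then by additivity. The Markov property and stationarity give $\E_\nu\Delta_{s+t}=\E_\nu\Delta_s+\E_\nu\Delta_t$. Since $t\mapsto\E_\nu\Delta_t$ is locally bounded, the Cauchy equation yields linearity $t\mathbf v$.

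\emph{Uniformity.} For the perturbation class, the drift and BDG constants in~\eqref{eq:Foster} and~\eqref{eq:block-envelope} depend only on $\eta$ through the bounds already established. Those bounds were derived for general small $\eta$ using the full configuration and required no Markov property of $H$, so the same argument gives uniform estimates.

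\emph{Main obstacle.} The delicate step is the uniform density lower bound in the minorization. One must show that the endpoint of the final quiet segment has a density bounded below, uniformly over starting points in $C_*$ and over the perturbed coefficients, while also requiring no wall hits. I would handle this by comparison with free Brownian motion on the event $E_{\rm err}$. Concretely, I would take the lower bound for the baseline ($\eta=0$), where it follows from explicit Gaussian densities restricted to small tubes inside $D$. I would only need to establish the minorization for the baseline kernel $P$, as the statement asks.
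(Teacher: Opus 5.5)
Your proposal is correct and follows essentially the same route as the paper. The paper reaches $C_*$ with probability $\beta_*$ by Lemma~\ref{lem:common-ancestor}, appends a quiet interval, and uses positivity and continuity of the killed Brownian density on the connected domain $D$ over a compact set of representatives. It then applies Doeblin and iterated Foster with truncated Fatou, uses uniqueness to get $\nu P_s=\nu$, and combines additivity with the block-envelope bound to get $\E_\nu\Delta_t=t\mathbf v$. The only bookkeeping fix is that you need not modify the event of Lemma~\ref{lem:common-ancestor}: keep it as is and take the block length $T=T_*$ plus the quiet interval, as the paper does with $T=T_*+\delta$.
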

\begin{proof}
After the safe common-ancestor cloud in Lemma~\ref{lem:common-ancestor}, add a
fixed quiet interval of length $\delta>0$, and take $T=T_*+\delta$.
A common lattice translation represents every configuration in $C_*$
by a point of a compact interior set $K\subset D^N$ in finitely many
cells. Choose a ball $B$ with $\overline B\subset D$. The killed Brownian density
$p_\delta^D$ is positive and continuous on the connected domain, so
\[
 a_0:=\inf_{x\in K,\ y\in\overline B^N}
             \prod_{i=1}^Np_\delta^D(x_i,y_i)>0.
\]
Let $\mu$ be the hidden image of normalized volume on $B^N$.
The probability $\beta_*$ of reaching $C_*$ followed by the product
killed kernel gives
\[
 P_T(\xi,A)\ge\beta_*a_0|B|^N\mu(A)
 \quad\text{for every hidden state }\xi\text{ and measurable }A.
\]
This minorization includes the real relative coordinates. Its
contraction gives a unique invariant law. Iterating~\eqref{eq:Foster}
and applying truncated Fatou along $P_{nT}(\xi,\cdot)\to\nu$ gives
\[
 \E_\xi\mathcal W(H_{nT})
 \le\rho_W^n\mathcal W(\xi)+\frac{C_W}{1-\rho_W},
 \qquad \nu\mathcal W\le\frac{C_W}{1-\rho_W}.
\]
For $s\ge0$, commutation gives
\[
 (\nu P_s)P_T=(\nu P_T)P_s=\nu P_s.
\]
Uniqueness of the block invariant law therefore implies $\nu P_s=\nu$.

Set $a(t)=\E_\nu\Delta_t$. The conditional law of future
increments depends only on the hidden state, so $a(s+t)=a(s)+a(t)$.
The ancestral block envelope gives
$\sup_{0\le t\le T}|a(t)|\le(C\nu\mathcal V)^{1/2}<\infty$.
Additivity and this local bound imply $a(t)=t\mathbf v$.
\end{proof}

\section{Genealogical homogenization and the signed current}\label{sec:homogenization}
Fix $L=2\pi$ and a smooth positive killing rate $k$ on the torus
$M=(\R/L\Z)^3$, with $0<k_{\min}\le k\le k_{\max}<\infty$.
The main construction uses the rate $k_N$ selected in~\eqref{eq:epsilon-choice}.
Take $h=L/m$ and place a ball at each center $z$
of a mesh cube $Q_z$ with
\begin{equation}\label{eq:holes}
 a_z=\frac{h^3k(z)}{2\pi},\qquad
 M_h=M\setminus\bigcup_z\overline B(z,a_z).
\end{equation}
Extend the radii periodically by $a_{z+L\ell}=a_z$, $\ell\in\Z^3$.
For small $h$ the concentric balls of radius $h^2$ are
disjoint. Write $D_h$ for the periodic Euclidean lift. The hidden invariant
law $\nu_h$ exists for each fixed $h$ by Proposition~\ref{prop:template-foster};
write $\pi_h$ for its phase projection. The radius in~\eqref{eq:holes} makes
the Brownian capacity of one ball equal to $h^3k(z)$. Thus absorption by
the holes approaches killing at rate $k$. To recover the signed current,
we follow the genealogy of Euclidean lifts. On a sufficiently fine grid,
principal phase increments recover the continuous Brownian increments.
An initial lattice offset follows its ancestral label through every copy.
Once the backward genealogy has one ancestor, all reconstructed positions
have the same lattice error. This common translation cancels from the
real relative positions and the reference displacement.
Figure~\ref{fig:capacity-marked-transfer} summarizes this passage from
capacity to the signed current.
\begin{figure}[htbp]
\centering
\includegraphics{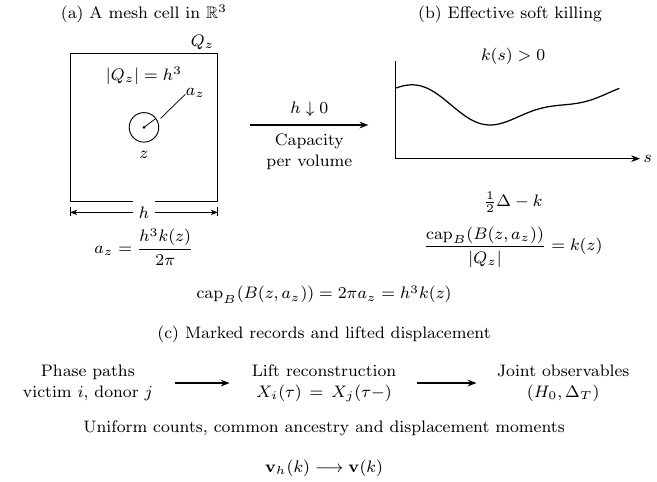}
\caption{Capacity scaling and marked transfer. Panel (a) is a section
of a cube of volume $h^3$, with an enlarged spherical hole. Brownian
capacity $\capB$ for $\tfrac12\Delta$ satisfies
$\capB(B(z,a_z))=2\pi a_z=h^3k(z)$, giving the soft killing rate in (b).
In (c), victim and donor marks guide reconstruction of the joint
observables $(H_0,\Delta_T)$ from phase records and the passage to the
stationary signed current}
\label{fig:capacity-marked-transfer}
\end{figure}

\begin{theorem}[Marked transfer principle]\label{thm:marked-transfer}
Fix $N\ge3$ and a smooth positive periodic rate $k$. Assume the uniform stationary death-count and common-ancestor estimates
\begin{equation}\label{eq:transfer-inputs}
 \begin{gathered}
 \E_{\pi_h}e^{\alpha K_t}\le Ce^{Ct},\\
 \sup_{x\in D_h^N}\Prob_x^h(\mathsf A_{0,nT_*}^{c})
 \le(1-\beta)^n.
 \end{gathered}
\end{equation}
for $t\ge0$, $n\ge1$ and all sufficiently small $h$, with
$\alpha,T_*>0$, $\beta\in(0,1)$ and $C<\infty$ independent of $h$.
Then the stationary vector currents satisfy
\begin{equation}\label{eq:velocity-transfer}
 \mathbf v_h(k)\longrightarrow\mathbf v(k).
\end{equation}
More precisely, the stationary phase laws satisfy $\|\pi_h-\pi_k\|_{\TV}\to0$. For every fixed $T>0$,
\begin{equation}\label{eq:stationary-target-transfer}
 \|\Law_{\nu_h}(H_0,\Delta_T)-\Law_{\nu_k}(H_0,\Delta_T)\|_{\TV}\to0.
\end{equation}
The complete stationary continuous record on $[0,T]$, jointly with
the initial hidden state and the real relative particle paths, converges
weakly. There is $h_0>0$ such that, for every $p>0$ and fixed $T>0$,
\[
 \sup_{h<h_0}\left(\nu_h\mathfrak D^p
                 +\E_{\nu_h}|\Delta_T|^p\right)<\infty.
\]

For every partition with $r$ roots and probability densities on $M_h^r$
whose zero extensions satisfy $f_h\to f$ strongly in $L^2(M^r)$,
the compressed records on any fixed finite grid converge in total
variation. Complete continuous
records on every fixed time window, and the first $m$ complete copying
records for every finite $m$, converge weakly.
\end{theorem}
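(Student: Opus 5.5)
The plan is to prove the theorem in three layers: first the one-particle and finite-record comparison, then the stationary phase laws, and finally the marked (Euclidean) observables through genealogical reconstruction.

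\emph{Step 1: comparison from a fixed root density.} Take a partition-root initial law with densities $f_h\to f$ in $L^2(M^r)$. Between copies, the compressed grid record is generated by killed Brownian transition kernels $p^{M_h}_t$, their exit densities on the spheres, and uniform donor marks. The capacity and kernel limits of Appendix~\ref{app:capacity} give two convergences for $p_t^{M_h}$ and the exit-time law. First, $p_t^{M_h}$ converges in $L^2$ (strongly on densities) to the Feynman--Kac kernel $e^{t(\frac12\Delta-k)}$. Second, the exit-time law converges, jointly with the survivors' positions, to the soft killing clock.

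The compressed record omits the victim's pre-death endpoint, so only these two objects enter. An induction on the number $m$ of copies then gives total variation convergence of the grid record restricted to $\{K_T\le m\}$. Each step integrates out the victim and copies a survivor, which preserves a root-density structure with one fewer root and $L^2$ convergence. The uniform exponential count bound in \eqref{eq:transfer-inputs} controls $\{K_T>m\}$ uniformly in $h$, which yields full total variation convergence on a fixed grid. Weak convergence of complete records then follows by refining the grid. Brownian moduli of continuity, uniform in $h$ because paths are Brownian away from the death instants, make the interpolated phase paths tight. The same argument handles the first $m$ complete copying records.

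\emph{Step 2: stationary phase laws.} The common-ancestor bound gives a Doeblin minorization uniform in $h$: after $T_*$ the configuration has one root. Combined with Step 1 applied from the product-of-root laws, this yields $\|P^h_{nT_*}-P^k_{nT_*}\|\to0$ for each $n$ on phase records. Geometric ergodicity with the uniform rate $(1-\beta)^n$ then gives $\|\pi_h-\pi_k\|_{\TV}\to0$. We write $\pi_h$ as a mixture started far back in time: start at $-nT_*$ from $\pi_h$, which produces a one-root density, and apply Step 1.

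\emph{Step 3: marked observables.} Run the stationary system from time $-nT_*$ with large fixed $n$. On $\mathsf A_{-nT_*,0}$, reconstruct Euclidean positions on $[-nT_*,T]$ from the phase grid record. Principal phase increments on a fine grid recover Brownian increments except on an event of probability $o(1)$ uniformly in $h$. Each copy inherits the donor's lift, and all labels descend from one ancestor, so the reconstruction differs from the truth by one common lattice vector. That vector cancels in $(H_0,\Delta_T)$, so $(H_0,\Delta_T)$ is a measurable function of the grid record off an event of probability at most $(1-\beta)^n+o_{\rm grid}(1)$. Total variation convergence \eqref{eq:stationary-target-transfer} follows by letting $h\to0$, then refining the grid, then letting $n\to\infty$.

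For the moment bounds, use the ancestral envelope \eqref{eq:ancestor-envelope}. Combined with the exponential count moments, the Brownian maxima and the geometric ancestor tail, this bounds $\nu_h\mathfrak D^p$: with probability at most $(1-\beta)^n$ the width is inherited from time $-nT_*$, and otherwise it is controlled by $K$ and $U$ over the last $n$ blocks. The same argument bounds $\E_{\nu_h}|\Delta_T|^p$. Uniform integrability plus the total variation convergence then gives $\mathbf v_h=T^{-1}\E_{\nu_h}\Delta_T\to\mathbf v(k)$.

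I expect the main obstacle to be Step 1's inductive comparison. Near copy times the hard system's survivor densities may lose regularity, so I would first establish $L^2$ continuity of the one-step map (killed semigroup, then copy) uniformly in $h$, using the appendix's norm-resolvent convergence for variable radii.
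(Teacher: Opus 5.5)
Your Step 2 has a genuine gap, and the stationary part of Step 3 depends on it. The event $\mathsf A_{0,T_*}$ is genealogical: every label at $T_*$ descends from one label at time $0$. The positions at $T_*$ are still spread by independent Brownian pieces, so the configuration does not have ``one root'' in the partition-root sense. Moreover, which label is the ancestor, and where it started, still depend on the initial state. The environment is periodic but not translation invariant, so common ancestry gives no minorization $P^h_{T_*}(x,\cdot)\ge\beta\mu$ with $\mu$ independent of $x$ and $h$. The hypotheses contain no such bound for the hard chain.

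A second problem is more fundamental. Your Step 1 compares the two systems only from $L^2$ root densities, and the comparison cannot be uniform in a point initial configuration. A hard label a few hole radii from a hole dies almost immediately with probability bounded below, while the soft label does not. So the perturbation bound $\|\pi_h-\pi_k\|\le C\sup_x\|P^h(x,\cdot)-P^k(x,\cdot)\|$ is unavailable. Nor can you feed $\pi_h$, or ``$\pi_h$ started at $-nT_*$'', into Step 1, because nothing gives $\pi_h$ a density with $L^2$ norm bounded in $h$.

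The missing idea is to regularize $\pi_h$ by looking back a short time $\delta$. Split $\pi_h$ according to whether a death occurred in $[-\delta,0]$. The no-death part has density $g_{h,\delta}(y)=\int\pi_h(dx)\prod_ip_h(\delta,x_i,y_i)\le G_\delta^N$, by domination by the free torus kernel. The remainder has mass at most $\Prob_{\pi_h}(K_\delta\ge1)\le\Lambda\delta$, and this is where the stationary count hypothesis is used.

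Since $\bar g_{h,\delta}$ varies with $h$, you need Step 1 uniformly over all densities with $\|g\|_2\le B$, not just along a sequence $f_h\to f$. That uniformity gives $\|\pi_h(\mathsf R_h-\mathsf R_k)\|_{\TV}\le o_h(1)+2\Lambda\delta$ for fixed-window record kernels. Phase-law convergence then follows from the Doeblin property of the soft kernel, $Q_k(t_0)\ge\beta_0\,dy/|M|^N$, through $\beta_0\|\pi_h-\pi_k\|_{\TV}\le\|\pi_h(Q_h(t_0)-Q_k(t_0))\|_{\TV}$. The same estimate gives the stationary record convergence on $[-nT_*,T]$ that your reconstruction in Step 3 needs.

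There is a smaller misuse of the same hypothesis in Step 1. You control $\{K_T>m\}$ from a root density by the count bound, but that bound holds only under $\pi_h$. Instead, keep the flag $\{K_T\ge m\}$ in the truncated record: its hard probability converges to the soft one, and the soft count is dominated by a Poisson tail. Your induction must also discard short waits and deaths near grid nodes, since the kernel and death-density limits hold only uniformly on $[\delta,T]$.
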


\begin{corollary}[Transfer at small contrast]\label{cor:small-contrast-transfer}
Fix $N\ge3$ and $1<\Theta<2/\sqrt3$. For every smooth positive periodic
rate satisfying $k_{\max}/k_{\min}\le\Theta$, all conclusions of
Theorem~\ref{thm:marked-transfer} hold for the holes~\eqref{eq:holes}.
\end{corollary}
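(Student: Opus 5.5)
The corollary follows once the two hypotheses \eqref{eq:transfer-inputs} of Theorem~\ref{thm:marked-transfer} are verified with constants independent of $h$. The plan is to rerun the Section~\ref{sec:fixed} machinery (the boundary-distance Lyapunov function $\Phi$, the copy contraction, and the protected-survivor construction) uniformly over the perforated domains $D_h$. The obstacle is that Section~\ref{sec:fixed} uses a fixed domain with a fixed collar, whereas here the holes shrink like $h^3$ and the collar width degenerates. So $\rho$ has to be replaced by a capacity-adapted barrier.

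\textbf{Step 1: the barrier.} In each cell, I would take the capacitary potential of the hole, $u_z(y)=1-a_z/|y-z|$, suitably cut off at radius of order $h$. I would then define $\rho_h$ as a smooth periodic function that equals $u_z$ near each hole, stays bounded above and below by positive constants away from the holes, and has $\tfrac12\Delta\rho_h$ controlled. The defect of $\rho_h$ from harmonicity sits in the annuli where the cutoff acts. I would correct it with a periodic cell-problem solution so that the correction stays $O(1)$ uniformly in $h$.

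Then, with $S=\sum_i\rho_h(x_i)^2$ and $\Phi_h=(NR_0^2/S)^{\gamma/2}$, I would redo the generator computation \eqref{eq:Phi-generator}. Near simultaneous contact, the ratio $|\nabla\rho_h|^2$ versus $\rho_h\Delta\rho_h$ no longer equals $1$ versus $0$ everywhere. The radial structure near a sphere gives the needed sign, with error terms of relative size comparable to $k_{\max}/k_{\min}$.

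I expect this is where the contrast hypothesis enters. The negative term needs roughly $N\,\cdot(\text{lower gradient bound})>(\beta+2)\cdot(\text{upper bound})$, after comparing hole radii across cells. The constraint $\Theta<2/\sqrt3$, that is $\Theta^2<4/3$, is what keeps the effective ellipticity ratio compatible with $4\gamma+2<N$ uniformly for all $N\ge3$. This step is the main obstacle, and I would first try to obtain \eqref{eq:Phi-drift} with $h$-independent $C_s,c_s$. The copy contraction \eqref{eq:copy-contraction} carries over verbatim, since it only uses $\rho_h=0$ at the victim and uniform donors.

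\textbf{Step 2: the death-count bound.} Given uniform \eqref{eq:Phi-drift}, the proof of Lemma~\ref{lem:all-death-budget} yields $\E_xe^{\alpha K_t}\le e^{Ct}\Phi_h(x)$ and the landing bound \eqref{eq:landing}. Applying the landing bound at time $1$ and the Markov property gives $\E_{\pi_h}e^{\alpha K_t}\le Ce^{Ct}$: by stationarity, $\pi_h\Phi_h=\pi_hP_1\Phi_h\le\sup_x\E_x\Phi_h(H_1)$.

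\textbf{Step 3: the ancestry bound.} For the common-ancestor estimate, I would adapt Lemma~\ref{lem:common-ancestor}. The landing bound gives, with probability at least $3/4$, a label with $\rho_h\ge c$, which is at distance of order $h^3$ or more from the holes. Instead of forcing each non-protected label into a hole at a prescribed time, I would use the soft-killing lower bound: a Brownian path in $D_h$ run for unit time is absorbed with probability at least $1-e^{-k_{\min}/2}-o(1)$, uniformly in the start. This follows from the capacity and kernel limits of Appendix~\ref{app:capacity}.

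The protected label survives a short window with probability bounded below, although it cannot be kept in a ball, because holes are dense. Instead, I would require that it survives and that each other label dies before it, choosing $p$ as donor. This has probability bounded below by a comparison with the soft system, where the analogous event has explicit probability as in Lemma~\ref{lem:soft-hidden-moments}. That yields $\Prob^h_x(\mathsf A_{0,T_*})\ge\beta$ uniformly in $x$ and small $h$. Iterating with the Markov property gives $(1-\beta)^n$, and Theorem~\ref{thm:marked-transfer} then supplies all the conclusions.
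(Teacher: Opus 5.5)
\textbf{Gap 1: Step~1 is left open, and the mechanism you sketch would fail.}

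For the cut-off capacitary potential $u_h=1-a_z|x-z|^{-1}\chi(|x-z|/h)$, one has $\Delta u_h=0$ exactly in each core $a_z<|x-z|\le h/8$. So no cell-problem corrector is needed: outside the cores $u_h\ge1/2$, hence $\mathsf S\ge1/4$, and a bounded Laplacian only costs a constant.

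The real difficulty is the gradient. In a core, $|\nabla u_h|^2=a_z^{-2}(1-u_h)^4$, which runs from order $h^{-6}$ at the wall to order $h^{2}$ at the core edge. There is no bounded ratio between the gradient weights of different labels. So the Section~\ref{sec:fixed} comparison of $N$ times a lower bound against $\beta+2$ times an upper bound is unavailable. The crude estimate $\sum_iu_i^2|\nabla u_i|^2\le\mathsf S\sum_i|\nabla u_i|^2$ loses the factor $N$ and gives the wrong sign.

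The missing idea is that the weight $w_i=(1-u_i)^4$ is a decreasing function of $u_i$. The sequences $u_i^2$ and $w_i$ are therefore oppositely ordered, and Chebyshev's sum inequality gives $N\sum_iu_i^2w_i\le\mathsf S\sum_iw_i$. With $|\nabla u_i|^2=a_{z_i}^{-2}w_i$ this yields \eqref{eq:gradient-sort}. The contrast then enters only through the hole-radius ratio $R_a\le k_{\max}/k_{\min}\le\Theta$. The bound $(\beta_s+2)R_a^2/N\le\frac{N+6}{4N}\Theta^2\le\frac34\Theta^2<1$ is exactly where $\Theta<2/\sqrt3$ is used.

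When $\mathsf S\le1/8$, every label lies in a core and $\sum_i|\nabla u_i|^2\ge ch^{-6}$. This gives the superlinear drift with $h$-independent constants. A generic corrector would destroy the exact relation between $|\nabla u_h|^2$ and $u_h$ on which this rearrangement rests.

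\textbf{Gap 2: Step~3 does not give uniformity in the starting configuration.}

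The ancestry input in \eqref{eq:transfer-inputs} is a supremum over all starting configurations, including labels at microscopic distance from the walls. Lemma~\ref{lem:compressed-record-tv}, however, compares records only for initial laws with uniformly $L^2$-bounded densities. A ``comparison with the soft system'' therefore gives no bound uniform in $x$. This is why the three-particle appendix first builds a regular root density by microscopic rescue.

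Survival of the protected label is also not enough. Every copy must land where $u_h\ge c'>0$, and each copied label must then survive. Otherwise a later death could choose a donor outside the protected lineage.

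The paper uses no record comparison at this step:
\begin{itemize}
\item It needs only the sup-norm bound $\|P_h(T)1\|_\infty\le q_0<1$, which your absorption estimate already supplies.
\item It keeps the protected path in $\{u_h\ge c'\}$ through the enlarged-hole barrier $u_h^\kappa=1-\kappa(1-u_h)$.
\item It applies $\Prob_x(\tau_h>T)\ge e^{-c_0T}u_h(x)$ to each copied label, conditionally on the protected path.
\end{itemize}
Everything is therefore uniform in $x$. Your Step~2 agrees with the paper.
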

\begin{proof}
The barrier estimates in Section~\ref{sec:uniform-small-contrast} verify
both bounds in~\eqref{eq:transfer-inputs}, uniformly as $h\downarrow0$.
Theorem~\ref{thm:marked-transfer} applies.
\end{proof}

\begin{corollary}[Arbitrary rates for three particles]\label{cor:three-particle-transfer}
For $N=3$ and every smooth positive periodic rate $k$, all conclusions
of Theorem~\ref{thm:marked-transfer} hold for the holes~\eqref{eq:holes}.
If $\mathbf v(k)\ne0$, a signed permutation of the coordinate axes
and a sufficiently fine mesh give a perforated domain with positive
longitudinal current.
\end{corollary}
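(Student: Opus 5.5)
The statement has two parts. The first is that all conclusions of Theorem~\ref{thm:marked-transfer} hold for $N=3$ and arbitrary smooth positive periodic $k$. The second is the sign-normalization claim. The plan is to verify the two hypotheses in~\eqref{eq:transfer-inputs} uniformly as $h\downarrow0$, and then invoke Theorem~\ref{thm:marked-transfer}. The small-contrast barrier of Corollary~\ref{cor:small-contrast-transfer} is unavailable here, because $k_{\max}/k_{\min}$ may be large and the holes may then have very different relative radii. I will therefore follow the route announced in the introduction: a lifetime potential for the count bound, and a microscopic rescue construction for the ancestry bound.

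\emph{Death counts.} For the exponential count moment I will build a potential from the one-particle killed problem, namely the expected residual lifetime $w_h(y)=\E^h_y\zeta$ in $D_h$. Uniformly in $h$, this function satisfies $\tfrac12\Delta w_h=-1$ and vanishes on the holes, with $w_h\le C(k_{\min})$; the bound should follow from capacity comparison with the soft rate $k_{\min}$, using Appendix~\ref{app:capacity}. For three particles I will combine the $w_h(x_i)$ into a functional, for instance a negative power of $\sum_i w_h(x_i)$, replacing $\Phi$ from~\eqref{eq:Phi}. Two things must then be checked. First, the continuous drift must give $\cL\Psi\le C-c\Psi^{1+\delta}$; this is where the gradient energy $\int|\nabla w_h|^2$ between deaths enters. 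Second, uniform copying must contract $\Psi$ in conditional expectation as in~\eqref{eq:copy-contraction}, now controlled by the donor squares added at each copy. With $N=3$ there are two survivors at each death, so at least one donor has $w_h$ comparable to the maximum; this should give a contraction factor $q<1$ independent of the radii. Running the supermartingale argument of Lemma~\ref{lem:all-death-budget} then yields $\E_x e^{\alpha K_t}\le e^{Ct}\Psi(x)$. Integrating against $\pi_h$, together with a landing bound $\sup_x\E_x\Psi(H_1)\le C$ proved exactly as in~\eqref{eq:landing}, gives the first input.

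\emph{Common ancestor.} Here I will adapt Lemma~\ref{lem:common-ancestor}. By the landing bound, with probability at least $3/4$ some label sits at $w_h$-depth bounded below by time $1$. From that state the rescue construction keeps this protected label in a region of uniformly positive density, and forces each of the other two labels to die with the protected one as donor. The killing probability needs a uniform lower bound: a free path run over a unit time in $D_h$ is absorbed with probability at least $1-e^{-ck_{\min}}$, again by capacity. A finite-record comparison with the soft system then turns the prescribed donor marks into a positive probability $\beta$ independent of $h$. Iterating with the Markov property gives $(1-\beta)^n$. I expect this step to be the main obstacle. The difficulty is to keep the protected particle alive, uniformly in $h$, while the holes are arbitrarily dense at the microscopic scale; this requires a uniformly regular root density from an arbitrary start, not merely a positive depth.

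\emph{Sign normalization.} Once transfer holds, $\mathbf v_h(k)\to\mathbf v(k)\neq0$. Choose a coordinate $j$ with $\mathbf v(k)^{(j)}\ne0$, and a signed permutation $P$ of the axes that sends $\pm e_j$ to $e_1$. The dynamics are invariant under the isometry $P$, and $P$ preserves $\Z^3$-periodicity. The rate $k\circ P^{-1}$ is therefore again smooth, positive and periodic, its holes are the images under $P$ of the holes~\eqref{eq:holes}, and its current is $P\mathbf v$, whose first coordinate is positive. Applying~\eqref{eq:velocity-transfer} to $k\circ P^{-1}$ gives $\mathbf v_h^{(1)}>0$ for all sufficiently small $h$; fixing one such mesh gives the required perforated domain.
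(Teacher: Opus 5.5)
Your sign-normalization step is fine and is what the paper does: a signed permutation preserves the cubic lattice, and the vector limit preserves the sign. The gap is in verifying \eqref{eq:transfer-inputs}, starting with the count bound. The functional you propose cannot satisfy the drift inequality. With $W=\sum_i w_h(X_i)$ and $\tfrac12\Delta w_h=-1$, between deaths
\[
 \cL W^{-p}=pNW^{-p-1}+\tfrac{p(p+1)}2W^{-p-2}\sum_i|\nabla w_h(X_i)|^2>0,
\]
since a decreasing convex function of a superharmonic function is subharmonic.

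The natural repair, a negative power of $\sum_i w_h(X_i)^2$ (the analogue of $\Phi$), brings back the contrast obstruction of \eqref{eq:gradient-sort}--\eqref{eq:uniform-coercivity}. At a wall of radius $a_z$ one has $|\nabla w_h|\asymp a_z^{-1}$. Put one label near a small hole and the others near large holes. Then $\sum_i w_i^2|\nabla w_i|^2/\sum_i w_i^2$ can be nearly $\sum_i|\nabla w_i|^2$, and the $(\beta+2)$ quadratic-variation term beats the coercive term for every $\gamma>0$. So neither $\cL\Psi\le C-c\Psi^{1+\delta}$ nor a landing bound ``exactly as in \eqref{eq:landing}'' is available at large contrast.

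The paper never seeks a pointwise Lyapunov inequality here. It uses the It\^o identity \eqref{d3:energy-identity} for the bounded functional $F=\sum_i r_h(X_i)^2$, with $r_h=\E(1-e^{-c\zeta_h})$. Because $0\le F\le3$, this gives $\E e^{A_t/8}\le e^{3/4+3ct/8}$ for the gradient energy plus donor squares $A_t$, uniformly in the radii (Lemma~\ref{ah:energy}). Deaths are then split into two kinds.
\begin{itemize}
\item Small deaths are counted by $(b_h/R_h)^\gamma$, built from exact clipped normal distances in a collar of width $\vartheta a_-$. This function is superharmonic in the core and contracts at copies. Its positive transition-band drift is at most $C_0\gamma\mathcal G_h\Phi_{h,\gamma}$, because $|\nabla r_h|\ge c_0/a_z$ there (Lemma~\ref{ah:gradient}).
\item Large deaths have a donor with $r_h^2\ge\kappa^2$ (Lemma~\ref{ah:donor}) and are paid for from $J_t$.
\end{itemize}
H\"older's inequality combines the two into \eqref{d3:interior-count}. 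The stationary input then comes from $\sup_h\pi_h\Phi_{h,\gamma}<\infty$, obtained from a regularized stationary Dynkin identity for $-\Phi_\eta$, not from any landing bound.

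Your ancestry step inherits the missing landing bound. The uniformly regular root density you rightly flag as the obstacle is exactly what is left unconstructed. The paper builds it at the microscopic scale in Lemma~\ref{ah:rescue}:
\begin{itemize}
\item In the thin corner, $Q_h=R_h^2$ has drift at least $3$ and only increases at copies. With probability at least $1/2$ it exits within time $16b_h^2/3$, leaving a label with clearance at least $2b_h/\sqrt3$.
\item A Brownian tube event in $(b_h,b_h^2)$-scaled coordinates, with prescribed donors, drives each thin label into its hole and onto that protected label. At $t_h=8b_h^2$ every label then has clearance at least $b_h/20$, with probability $p_0$ independent of $h$ and of the start.
\item A quiet interval of length $\delta$ gives a subdensity of mass at least $\beta_0$, with survival margin fixed by $\vartheta$ and $k_-/k_+$, and uniformly bounded $H^1$ norm.
\end{itemize}
Lemma~\ref{lem:compressed-record-tv}, which is uniform over $L^2$ root densities, then transfers the soft probability of an explicit two-copy word with donor $1$ to the hard system (Lemma~\ref{ah:coalescence}). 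Only a positive probability is needed at each stage, so no uniform landing estimate is required.
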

\begin{proof}
Lemmas~\ref{ah:corner} and~\ref{ah:coalescence} in
Appendix~\ref{app:three-particle-rates} verify~\eqref{eq:transfer-inputs}.
Choose a nonzero velocity component and orient its axis positively.
The vector limit preserves its sign; the signed permutation preserves
the cubic lattice.
\end{proof}

Finite-record comparison uses the one-particle kernels before the uniform
count and ancestry estimates enter. Those estimates then control
stationary lifts and the reconstruction error. This order also permits
finite-record comparison to establish common ancestry for arbitrary
three-particle rates.

\subsection{Forms, kernels and death densities}
Zero extension gives the limiting energy
\begin{equation}\label{eq:effective-forms}
 \mathcal E_k(u)=\tfrac12\int_M|\nabla u|^2+\int_Mk|u|^2,
 \qquad u\in H^1(M).
\end{equation}
The shell energy is $2\pi a_z/(1-a_z/h^2)=h^3k(z)(1+O(h))$.
The shells occupy vanishing volume and contribute the nonzero absorption
energy $\int_Mk|u|^2$. The energy outside them retains the full gradient
term. Appendix~\ref{app:capacity} proves the form limit and the associated
operator convergence.

Let $A_h=-\frac12\Delta$ on $L^2(M_h)$ with Dirichlet boundary conditions,
$A_k=-\frac12\Delta+k$, and let $J_h$ be zero extension. Put
$R_h=J_h(A_h+1)^{-1}J_h^*$ and $R_k=(A_k+1)^{-1}$.
The capacity calculation gives $\|R_h-R_k\|_{2\to2}\to0$.
Continuous functional calculus yields
\begin{equation}\label{eq:operator-limit}
 \|P_h(t)-P_k(t)\|_{2\to2}\to0,\qquad
 P_h(t)=J_he^{-tA_h}J_h^*,\quad P_k(t)=e^{-tA_k}.
\end{equation}
It also gives the time-derivative limit uniformly on each
$[\delta,T]$, $0<\delta<T$:
\[
 \sup_{\delta\le t\le T}
 \bigl(\|P_h(t)-P_k(t)\|_{2\to2}
       +\|\partial_tP_h(t)-\partial_tP_k(t)\|_{2\to2}\bigr)\to0.
\]

The kernels $p_h,p_k$ are dominated by the free torus kernel, giving
uniform Hilbert--Schmidt norms at positive times. With
$B_h=P_h(t/2)$ and $B_k=P_k(t/2)$,
\[
 \|B_h^2-B_k^2\|_{\rm HS}
 \le \|B_h-B_k\|_{2\to2}
       (\|B_h\|_{\rm HS}+\|B_k\|_{\rm HS}).
\]
This upgrades the operator limit to kernel $L^2(M^2)$ convergence.
For $\star\in\{h,k\}$, let $S_\star(t)=P_\star(t)1$ be the
one-particle survival function and $q_\star=-\partial_tS_\star$ its
death density. Then
\begin{equation}\label{eq:kernel-limit}
 \sup_{\delta\le t\le T}
 \big(\|p_h(t)-p_k(t)\|_{L^2(M^2)}
       +\|q_h(t)-q_k(t)\|_{L^2(M)}\big)=:a_h(\delta)\longrightarrow0.
\end{equation}
The derivative limit applied to $1$ gives the $q$ term. Moreover,
\[
 \begin{aligned}
 q_h(t)&=J_he^{-tA_h/2}A_he^{-tA_h/2}J_h^*1,\\
 \|q_h(t)\|_\infty
 &\le \|J_he^{-tA_h/2}\|_{2\to\infty}
       \|A_he^{-tA_h/2}\|_{2\to2}|M|^{1/2}\le C_\delta
       \quad(t\ge\delta).
 \end{aligned}
\]
Here heat-kernel domination bounds the first norm and the spectral
estimate $\|A_he^{-tA_h/2}\|_{2\to2}\le C/t$ bounds the second.
For the soft model,
\begin{equation}\label{eq:soft-density}
 q_k(t)=P_k(t)k,\qquad 0\le q_k(t,x)\le k_{\max},\quad t\ge0.
\end{equation}

\subsection{Uniform counts and common ancestry at small contrast}\label{sec:uniform-small-contrast}
The radial capacity profile also provides a boundary barrier whose
death-count constants are uniform in the shrinking-hole scale.
Its gradient weights decrease as its squared values increase.
Summing over labels exploits this opposite ordering and leaves a
coercive margin determined by the contrast of the radii.

Assume $k_{\max}/k_{\min}\le\Theta$, with $1<\Theta<2/\sqrt3$, and put
\begin{equation}\label{eq:contrast-margin}
 \delta_\Theta=1-\frac34\Theta^2>0.
\end{equation}
Choose a smooth cutoff $\chi$ taking values in $[0,1]$, equal to one on $[0,1/8]$ and zero on
$[1/4,\infty)$. On each disjoint annulus $a_z<|x-z|<h/4$ set
\[
 u_h(x)=1-\frac{a_z}{|x-z|}\chi(|x-z|/h),
\]
and set $u_h=1$ outside the radius-$h/4$ balls. This function vanishes
at the wall and lies in $(0,1]$ in the interior. In each core
$a_z<|x-z|\le h/8$,
\begin{equation}\label{eq:capacity-barrier}
 \Delta u_h=0,\qquad |\nabla u_h|^2=a_z^{-2}(1-u_h)^4.
\end{equation}
Outside the core, $u_h\ge1/2$, $|\nabla u_h|\le Ch$, and
$|\Delta u_h|\le C$, uniformly in small $h$. Thus
$\frac12\Delta u_h\ge-c_0u_h$. Let $\tau_h$ denote the exit time of
one Brownian particle from $D_h$. Stopped It\^o gives
\begin{equation}\label{eq:survival-lower}
 \Prob_x(\tau_h>T)\ge e^{-c_0T}u_h(x).
\end{equation}

Write $u_i=u_h(X_i)$, put $\mathsf S=\sum_i u_i^2$, and
$\Phi_h=(N/\mathsf S)^{\gamma/2}$, with $\gamma$ as in \eqref{eq:Phi}.
Formula~\eqref{eq:Phi-generator} applies with $u_h$ replacing $\rho$.
Let $\beta_s=s\gamma$, $s=1,2,4$, and $w_i=(1-u_i)^4$. Opposite monotonicity gives
\[
 N\sum_i u_i^2w_i-\Big(\sum_i u_i^2\Big)\Big(\sum_iw_i\Big)
       =\sum_{i<j}(u_i^2-u_j^2)(w_i-w_j)\le0.
\]
Put $a_{\min}=\min_z a_z$, $a_{\max}=\max_z a_z$ and
$R_a=a_{\max}/a_{\min}$. If all coordinates are in cores, then
\begin{equation}\label{eq:gradient-sort}
 \frac{\beta_s+2}{\mathsf S}\sum_i u_i^2|\nabla u_i|^2
       \le\frac{(\beta_s+2)R_a^2}{N}\sum_i|\nabla u_i|^2.
\end{equation}
Here $R_a\le k_{\max}/k_{\min}\le\Theta$. Since
$\gamma=(N-2)/16$ and $s\le4$,
\begin{equation}\label{eq:uniform-coercivity}
 \frac{\beta_s+2}{N}R_a^2
 \le\frac{N+6}{4N}\Theta^2
 \le\frac34\Theta^2=1-\delta_\Theta.
\end{equation}
Substitution into~\eqref{eq:Phi-generator} gives, in the cores,
\[
 \cL\Phi_h^s\le
 -\frac{\beta_s\delta_\Theta}{2}
       \frac{\Phi_h^s}{\mathsf S}\sum_i|\nabla u_i|^2.
\]
For mesh-cell centers $z_i$, set $g_i=a_{z_i}^{-2}(1-u_i)^4$.
The core identity and cutoff estimates give
\[
 \begin{aligned}
 X_i\text{ outside its core}
 &\ \Longrightarrow\ \mathsf S\ge\tfrac14,\quad
        |\nabla u_i|^2+g_i\le Ch^2,\quad |\Delta u_i|\le C,\\
 \cL\Phi_h^s
 &\le C_s-c_s\frac{\Phi_h^s}{\mathsf S}\sum_i g_i,\\
 \mathsf S\le\tfrac18
 &\ \Longrightarrow\ \sum_i g_i\ge ch^{-6}\ge c,\qquad
 \frac{\Phi_h^s}{\mathsf S}
       =\frac1N(\Phi_h^s)^{1+2/(s\gamma)}.
 \end{aligned}
\]
Hence
\[
 \cL\Phi_h^s\le C_s-c_s(\Phi_h^s)^{1+2/(s\gamma)},\qquad s=1,2,4.
\]
At a death, $\mathsf S\mapsto\mathsf S+u_j^2$;
\eqref{eq:copy-contraction} holds with the same $q<1$.
Lemma~\ref{lem:all-death-budget}, invariance and Tonelli give
\[
 \pi_h\Phi_h=\pi_hP_1^h\Phi_h\le C_0,\qquad
 \E_{\pi_h}e^{\alpha K_T}
 \le e^{CT}\pi_h\Phi_h\le C_0e^{CT}.
\]
Consequently,
\begin{equation}\label{eq:stationary-count}
 \begin{gathered}
 \pi_h\Phi_h\le C_0,\qquad
 \E_{\pi_h}e^{\alpha K_T}\le C_0e^{CT},\\
 \E_{\pi_h}K_t=\lambda_ht,\qquad \sup_h\lambda_h\le\Lambda<\infty.
 \end{gathered}
\end{equation}
The final identity follows from additivity of expected stationary counts;
the exponential estimate at $t=1$ bounds the slope.

By landing, choose a label $p$ measurably on
$\mathcal L=\{\max_i u_h(X_i(1))\ge c_*\}$, with
$\inf_x\Prob_x^h(\mathcal L)\ge1/2$. Enlarge all holes by
$\kappa=1+c_*/4$. Their barrier satisfies
\[
 u_h^\kappa=1-\kappa(1-u_h),\qquad
 u_h^\kappa(X_p(1))\ge3c_*/4,\qquad
 \tfrac12\Delta u_h^\kappa\ge-c_\kappa u_h^\kappa.
\]
The last inequality follows from harmonicity in the cores and uniform
positivity outside them. Let $\mathcal P$ require the label's free Brownian path to
avoid these enlarged holes on $[1,1+T]$. Stopped It\^o gives, on $\mathcal L$,
\[
 \begin{gathered}
 \Prob(\mathcal P\mid\mathcal F_1)\ge
 p_*:=\tfrac34c_*e^{-c_\kappa T}>0,\\
 \mathcal P\ \Longrightarrow\
 \inf_{1\le s\le1+T}u_h(X_p(s))\ge c':=1-\kappa^{-1}>0.
 \end{gathered}
\]

Factoring the semigroup at $T/2$, \eqref{eq:operator-limit} and the
uniform killed heat-row bound imply
\[
 \|P_h(T)1\|_\infty
 \le C(T/2)|M|^{1/2}e^{-k_{\min}T/2}+o_h(1).
\]
First choose $T$ large, then $h$ small, to make this at most $q_0<1$.
For each $i\ne p$, let $\mathcal E_i$ require its first death before
$1+T$, donor $p$, and survival thereafter. Conditional on the protected
path, the label primitives are independent and~\eqref{eq:survival-lower}
applies at every first-death time. Thus
\[
 \Prob\left(\bigcap_{i\ne p}\mathcal E_i
       \,\middle|\,\mathcal F_1,\text{protected path}\right)
 \ge\left[\frac{(1-q_0)p'}{N-1}\right]^{N-1},\qquad
 p'=c'e^{-c_0T},
\]
on $\mathcal L\cap\mathcal P$. Since
$\mathcal L\cap\mathcal P\cap\bigcap_{i\ne p}\mathcal E_i
\subset\mathsf A_{0,1+T}$, for $T_*=1+T$,
\begin{equation}\label{eq:uniform-ancestor}
 \inf_x\Prob_x^h(\mathsf A_{0,T_*})\ge
 \tfrac12p_*\left[\frac{(1-q_0)p'}{N-1}\right]^{N-1}=:\beta_*>0.
\end{equation}
The block construction and persistence of common ancestry give
\[
 \sup_x\Prob_x^h(\mathsf A_{0,nT_*}^c)\le(1-\beta_*)^n,
\]
uniformly in the starting offsets and small $h$.

\subsection{Stationary displacement moments}
Assume~\eqref{eq:transfer-inputs}. The soft process has the same type of
bounds: its count is dominated by a Poisson process of rate $Nk_{\max}$,
and prescribing exactly $N-1$ candidate events, with distinct victims
and one common donor, gives a block success probability at least
\[
 \beta_k=e^{-Nk_{\max}T_*}\frac1{(N-1)!}
                \left(\frac{k_{\min}T_*}{N-1}\right)^{N-1}>0.
\]
Each candidate is accepted with probability at least $k_{\min}/k_{\max}$.
Replacing $\beta$ by $\min\{\beta,\beta_k\}$ and enlarging the other
constants if necessary gives the same bounds for both models.
The bounded-rate construction following Lemma~\ref{lem:soft-hidden-moments}
provides $\nu_k$; Proposition~\ref{prop:template-foster} provides $\nu_h$.
All constants below may depend on $N,k$ and the constants
in~\eqref{eq:transfer-inputs}, and are independent of small $h$.

For either stationary hidden process, choose a representative at the
left endpoint of each finite window and generate its drivers and marks.
Common lattice translations preserve the relative paths and rewards,
making these window laws consistent. Set
\[
 \begin{gathered}
 E_m=\mathsf A_{-mT_*,0},
 \qquad q=1-\beta\in(0,1),\\
 A_m=4(K_{mT_*}+1)U_{mT_*},\qquad
 U_{mT_*}=\max_{i\le N}\sup_{-mT_*\le s\le0}
             |B_i(s)-B_i(-mT_*)|.
 \end{gathered}
\]
Here $K_{mT_*}$ counts copies on $[-mT_*,0]$, and $B_i$ is the
concatenated Brownian driver of label $i$. Put
$\mathfrak D_0=\mathfrak D(X_0)$ and $E_0=\varnothing$.
Ancestral identities are distinct at the left endpoint even when their
positions coincide. The ancestor estimate and persistence under copying
give
\[
 E_m\subset E_{m+1},\qquad \Prob(E_m^c)\le q^m,\qquad
 \Prob\Big(\bigcup_{m\ge1}E_m\Big)=1.
\]
On $E_m$, both ancestral paths start at the same position and use at
most $K_{mT_*}+1$ increments of norm at most $2U_{mT_*}$; hence
\[
 \mathfrak D_0\mathbf1_{E_m}\le A_m.
\]

For $t,u\ge0$ and $r>0$, the count bound and Brownian maximal inequality
imply
\[
 \begin{gathered}
 \Prob(K_t\ge u)\le\min\{1,Ce^{Ct-\alpha u}\},\qquad
 \E(K_t+1)^r\le C_r(1+t)^r,\\
 \E U_{mT_*}^r\le C_rm^{r/2}.
 \end{gathered}
\]
In particular, for every $p>0$,
\[
 \begin{aligned}
 \E A_m^{2p}
 &\le4^{2p}\bigl(\E(K_{mT_*}+1)^{4p}\bigr)^{1/2}
                 \bigl(\E U_{mT_*}^{4p}\bigr)^{1/2}\\
 &\le C_p(1+m)^{2p}m^p\le C_p(1+m)^{3p},\\
 \E\mathfrak D_0^p
 &=\sum_{m\ge1}\E[\mathfrak D_0^p;E_m\setminus E_{m-1}]\\
 &\le\sum_{m\ge1}(\E A_m^{2p})^{1/2}
                        \Prob(E_{m-1}^c)^{1/2}.
 \end{aligned}
\]
Consequently,
\begin{equation}\label{eq:uniform-lift-moments}
 \sup_h\nu_h\mathfrak D^p
 \le C_p\sum_{m\ge1}(1+m)^{3p/2}q^{(m-1)/2}<\infty
 \quad(0<p<\infty).
\end{equation}
The same bound holds for $\nu_k$. For the reference increment, tracing
its ancestry back to time zero gives
\begin{equation}\label{eq:reward-envelope}
 |\Delta_T|\le \mathfrak D_0+2(K_T+1)U_T,
\end{equation}
where $U_T=\max_i\sup_{0\le s\le T}|B_i(s)-B_i(0)|$. Hence
\[
 \E|\Delta_T|^p\le C_p\left[
  \E\mathfrak D_0^p+
  \bigl(\E(K_T+1)^{2p}\bigr)^{1/2}
  \bigl(\E U_T^{2p}\bigr)^{1/2}\right]\le C_{p,T},
\]
uniformly for the stationary hard and soft systems.

\subsection{Finite marked records in total variation}
Use the compressed record of Section~\ref{sec:dynamics}, with sampling
grid $G\supset\{0,T\}$ and post-copy states determined by the donor marks.

For phases $x=(x_1,\ldots,x_N)$ and $\star\in\{h,k\}$, define the
victim-$i$ next-death density and the quiet sampling kernel by
\begin{equation}\label{eq:event-kernel}
 \begin{aligned}
 \mathsf D_\star^i(w,x,dy)
 &=q_\star(w,x_i)\prod_{j\ne i}p_\star(w,x_j,y_j)\,dy,\\
 \mathsf K_\star^{\rm grid}(w,x,dy)
 &=\prod_jp_\star(w,x_j,y_j)\,dy.
 \end{aligned}
\end{equation}
In the first line $dy=\prod_{j\ne i}dy_j$. Let $R_{ij}$ insert the donor
coordinate in the victim slot. The marked death density is
\[
 \mathsf k_\star^{\rm death}(w,x)
 =\frac1{N-1}\sum_i\sum_{j\ne i}
       \delta_{(i,j)}\otimes(R_{ij})_\#\mathsf D_\star^i(w,x).
\]

\begin{lemma}\label{lem:compressed-record-tv}
For fixed $T$, a finite sampling grid and $B<\infty$, these record laws
converge in total variation, uniformly over probability densities $g$ on
$M_h^r$, extended by zero to $M^r$, with $\|g\|_2\le B$, for each
partition of the $N$ labels into $r$ roots. The soft system starts from
the same density.
\end{lemma}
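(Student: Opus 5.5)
The compressed record on $[0,T]$ has a density with respect to a reference measure. That density is an iterated product of the kernels in~\eqref{eq:event-kernel}, with the marked death densities inserted at the copy times and the quiet kernels at the grid times. I would prove total variation convergence by comparing these iterated integral expressions term by term. The plan is to write the law of the record on the event $\{K_T=m\}$ as the integral, over ordered times $0<\tau_1<\dots<\tau_m<T$ interleaved with the grid points, of alternating products of $\mathsf k_\star^{\rm death}(w,\cdot)$ and $\mathsf K_\star^{\rm grid}(w,\cdot)$, started from the root density $g$ pushed to $M^N$ by the partition map. Total variation between the hard and soft laws is then bounded by the sum over $m$ of the $L^1$ distances between these $m$-fold densities, together with the mass of $\{K_T>m_0\}$ in both models.

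The argument has three steps.

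\textbf{Step 1: truncation in the number of deaths.} I need a tail bound for $K_T$ that is uniform in $h$ and uses only the kernels, since the stationary count estimates are not yet available here. The soft count is dominated by $\mathrm{Poisson}(Nk_{\max}T)$. For the hard model, I would use $\|q_h(t)\|_\infty\le C_\delta$ for $t\ge\delta$, together with the bound $\|P_h(w)\|_{2\to\infty}\le Cw^{-3/4}$ coming from free torus domination.

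\textbf{Step 2: short waits.} I would cut out the event that some wait is shorter than $\delta$, or that a death lands within $\delta$ of a grid point. The hard death density near $t=0$ is controlled in $L^2$ because the density entering each stage is in $L^2$. For the first stage this follows from $\|g\|_2\le B$ and the partition structure. Coincident roots give a density on a diagonal, but the next one-particle kernels smooth it at positive times, and each label's death density is then integrated against a one-particle marginal. I would bound the probability of a death in $[s,s+\delta]$ from an $L^2$ one-particle density $f$ by $\langle f,1-S_h(\delta)\rangle\le\|f\|_2\|1-S_h(\delta)\|_2$. The quantity $\|1-S_h(\delta)\|_2$ tends to zero as $\delta\to0$ uniformly in small $h$, since the holes occupy volume $O(h^6)$ per cell and the barrier lower bound~\eqref{eq:survival-lower} gives survival close to $u_h$, which is close to $1$ in $L^2$. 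Iterating this at each copy, with the copied density again in $L^2$ after smoothing, makes the short-wait contribution $o_\delta(1)$ uniformly in $h$ and for $m\le m_0$.

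\textbf{Step 3: the main window.} On $\{K_T\le m_0\}$ with all waits at least $\delta$, every kernel factor converges in $L^2$ by~\eqref{eq:kernel-limit}, with error $a_h(\delta)$, and the limiting kernels are uniformly bounded in $L^2$ (Hilbert--Schmidt) for $t\ge\delta$. A telescoping argument replaces one factor at a time. Each replacement costs $a_h(\delta)$ times a product of operator norms, bounded by a constant depending on $\delta$, $m_0$, $N$, $|G|$ and $B$. Cauchy--Schwarz turns the $L^2$ estimates into $L^1$ estimates on the compact torus. The copy map $R_{ij}$ has the Jacobian of a coordinate insertion, so I would carry out the comparison in the survivor coordinates $y$ before insertion, as~\eqref{eq:event-kernel} does. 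The initial factor is the same density $g$ in both models, extended by zero, so it contributes no error. Note that the zero extension assigns no mass inside holes, and $J_h$ handles this automatically. Letting $h\to0$ and then $\delta\to0$, $m_0\to\infty$ completes the argument, and every constant depends on $g$ only through $B$, which gives the uniformity.

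I expect the main obstacle to be Step 2, together with the partition-root initial laws. Coincident initial roots and donor insertions produce densities that are not in $L^2(M^N)$, so I would always pass to one-particle marginals, which are images of $L^2$ densities under kernels at positive times, before using the $L^2$ bound. I would also need care to make the near-time-zero hard death mass uniformly small; the barrier $u_h$ and its $L^2$ closeness to $1$ are the tools I would try first.
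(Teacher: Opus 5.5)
Your overall architecture matches the paper's: truncate the count, excise short waits and deaths near grid nodes, and telescope kernel replacements on the retained event using \eqref{eq:kernel-limit} and one-particle marginals. Step~1, however, has a genuine gap. The bounds you cite, $\|q_h(t)\|_\infty\le C_\delta$ for $t\ge\delta$ and $\|P_h(w)\|_{2\to\infty}\le Cw^{-3/4}$, only see deaths that follow a wait of length at least $\delta$, and there are at most $T/\delta$ of those. A large hard count comes from cascades of short waits, for example a victim copied onto a donor that is itself close to a wall. Nothing in the one-particle kernels controls such cascades.

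The estimates of your Step~2 do not rescue this. They control short waits only among a fixed number of deaths, at a limiting cost of order $\delta^{1/4}$ per death. Forcing $\{K_T\ge m\}$ into a short-wait event with $\delta\approx T/m$ therefore gives a bound of order $m^{3/4}$, not a vanishing tail. A genuinely uniform-in-$h$ hard count bound needs the barrier or lifetime-potential arguments of Section~\ref{sec:uniform-small-contrast} and Appendix~\ref{app:three-particle-rates}, which require small contrast or $N=3$. The lemma is stated for an arbitrary smooth positive rate and every $N$, and the paper uses finite-record comparison before any uniform count estimate.

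You do not need such a bound. The event $\{K_T\ge m\}$ is a function of the record stopped at its $m$th death; the paper attaches a flag to it. Once Steps~2 and~3 give $\limsup_{h\downarrow0}\sup_g\|\mathsf L^m_{h,g}-\mathsf L^m_{k,g}\|_\TV=0$ for this truncated record at fixed $m$, you get
\[
 \limsup_{h\downarrow0}\sup_g\Prob_g^h(K_T\ge m)
 \le\sup_g\Prob_g^k(K_T\ge m)\le b_m
 =\Prob\{\mathrm{Poisson}(Nk_{\max}T)\ge m\}.
\]
Deleting the flagged parts then costs at most $4b_m$ in the limit, and letting $m\to\infty$ finishes the proof. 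In your formulation this just says that $\Prob_g^h(K_T\le m_0)$ converges together with the $m$-fold densities for $m\le m_0$. The hard tail is inherited from the soft Poisson bound through the comparison itself, not proved separately.

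One smaller point in Step~2: $\|1-S_h(\delta)\|_2\to0$ alone is not enough. After a wait known only to exceed $\delta$, the one-particle marginals at the next copy are only bounded by $C\delta^{-3/4}$ in $L^2$. You therefore need the linear rate $\|1-S_h(\delta)\|_2\le C\delta+o_h(1)$, which your barrier bound \eqref{eq:survival-lower} does supply. The paper avoids this bookkeeping in two ways. It chooses the thresholds $\delta_1,\delta_2,\ldots$ recursively, each small relative to the density bound produced by the earlier ones. It also bounds the hard prefix densities in survivor coordinates in sup norm rather than $L^2$.
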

\begin{proof}
Fix a count cutoff $m$. The truncated record stops at the $m$th death
and carries a flag if that death occurs before $T$; otherwise it records
the full window. Write $\tau_0=0$ and $w_j=\tau_j-\tau_{j-1}$ for the
complete inter-death waits, and let $G$ be the sampling grid. For positive
thresholds $\delta_1,\ldots,\delta_m,\varepsilon$, retain the event
\[
 \mathcal A=\left\{
 \begin{array}{l}
  w_j\ge\delta_j\text{ whenever }j\le m\text{ and }\tau_j\le T,\\
  \operatorname{dist}(\tau_j,G)\ge\varepsilon
       \text{ whenever }j\le m\text{ and }\tau_j\le T
 \end{array}\right\}.
\]
On $\mathcal A$, successive nodes have spacing at least
$\eta=\eta(\delta_1,\ldots,\delta_m,\varepsilon,G)>0$.
Constants below may depend on $k,m,T,N,G,\eta,B$, uniformly in $g,h$.

Put
$d_h(w,x)=\int|p_h(w,x,y)-p_k(w,x,y)|\,dy$.
Telescoping the products and using the subprobability masses of the
killed kernels gives
\begin{align}
 \|\mathsf K_h^{\rm grid}(w,x)-\mathsf K_k^{\rm grid}(w,x)\|_\TV
       &\le\sum_jd_h(w,x_j),\label{eq:grid-error}\\
 \|\mathsf k_h^{\rm death}(w,x)-\mathsf k_k^{\rm death}(w,x)\|_\TV
       &\le\sum_i|q_h(w,x_i)-q_k(w,x_i)|
             \notag\\
       &\hspace{1em}{}+(N-1)k_{\max}\sum_jd_h(w,x_j).\label{eq:death-error}
\end{align}
The second telescoping sum replaces $q_h$ first and then $p_h$;
$q_k\le k_{\max}$ by~\eqref{eq:soft-density}, and donor reset contracts $\TV$.

For a root partition $\mathcal P$, $x_i=x_{\mathcal P(i)}$.
Writing $g_a$ for its one-root marginals, we have
\[
 \max_a\|g_a\|_2\le |M|^{(r-1)/2}B,\qquad
 \sup_{\eta\le w\le T}\|d_h(w)\|_2
       \le |M|^{1/2}a_h(\eta).
\]
Thus the integrated first-transition error is at most
$C_{\eta,T,N}(B+1)a_h(\eta)$ by~\eqref{eq:grid-error}--\eqref{eq:death-error}.

At a subsequent death node, the hard prefix has a joint, unnormalized
density $r_{ij}(s,z)\,ds\,dz$, where $s$ is its absolute time and
$z\in M^{N-1}$ the survivors. Bounded $q_h,p_h$ at spacing $\eta$ and prefix mass
at most one give
\[
 0\le r_{ij}(s,z)\le C_{\eta,T,N,m}.
\]
Sampling-node densities satisfy the same bound. Each slot of $R_{ij}z$
projects a free coordinate, so for $\ell$ free coordinates and
$\eta\le w\le T$,
\begin{align*}
 \int_{M^\ell}|q_h(w,x_a)-q_k(w,x_a)|\,dz
       &\le |M|^{\ell-1/2}a_h(\eta),\\
 \int_{M^\ell}d_h(w,x_a)\,dz&\le |M|^\ell a_h(\eta).
\end{align*}
This includes $w=t_*-s$ for the next sampling time $t_*$; reset
duplicates input coordinates, while the output integration is unchanged.

For a prefix law $\mu(d\omega,dz)$ with marginal $\mu_z$, compared
transitions $K_h,K_k$, and a future subprobability kernel $Q$,
\[
 \begin{aligned}
 &\sup_{|F|\le1}\left|\int\mu(d\omega,dz)(K_h-K_k)(z,dz')QF(\omega,z,z')\right|\\
 &\quad\le\int\mu_z(dz)\|K_h(z,\cdot)-K_k(z,\cdot)\|_\TV.
 \end{aligned}
\]
Telescoping at most $m+|G|$ transitions gives the restricted-law estimate
\[
 \sup_{\|g\|_2\le B}
 \|\mathsf L_{h,g}^{m}|_{\mathcal A}
       -\mathsf L_{k,g}^{m}|_{\mathcal A}\|_\TV
 \le C_{\eta,T,N,m,G}(B+1)a_h(\eta)\longrightarrow0.
\]

The excluded mass is controlled in chronological order. Define
\[
 F_j=\{\tau_j\le T,\ w_j<\delta_j,
                  w_\ell\ge\delta_\ell\text{ for }\ell<j\}.
\]
For the first wait, independence of the label motions conditional on
the roots gives
\[
 \Prob_g^h(F_1)
 \le B\left\|1-\prod_iS_h(\delta_1,x_{\mathcal P(i)})
                          \right\|_{L^2(M^r)},\qquad
 \limsup_{h\to0}\sup_g\Prob_g^h(F_1)\le C B\delta_1.
\]
Use~\eqref{eq:operator-limit} and $1-S_k(t,x)\le k_{\max}t$.
For $j\ge2$, the preceding positive complete wait bounds its survivor
density by $B_j=B_j(T,N,k,\delta_1,\ldots,\delta_{j-1})<\infty$.
Integration over the free survivor coordinates, including repeated
slots via $1-S_h^a\le a(1-S_h)$, gives
\[
 \limsup_{h\to0}\sup_g\Prob_g^h(F_j)\le C B_j\delta_j,
 \qquad \sup_g\Prob_g^k(F_j)\le C B_j\delta_j.
\]
The soft first-wait bound is also $CB\delta_1$. Bounded death-time
densities after fixing the complete waits give
\[
 \sup_g\Prob_g^h\left\{
 \begin{array}{l}
  w_j\ge\delta_j\text{ for each death }j\le m,\\
  \operatorname{dist}(\tau_j,G)<\varepsilon
              \text{ for some death }j\le m
 \end{array}\right\}
 \le C_{T,N,m,G,\delta_1,\ldots,\delta_m}\varepsilon,
\]
and the same estimate holds in the soft system.

For $a>0$, choose thresholds recursively:
\[
 CB\delta_1<\frac am,\qquad
 CB_j(\delta_1,\ldots,\delta_{j-1})\delta_j<\frac am
 \ (2\le j\le m),\qquad C_{\boldsymbol\delta}\varepsilon<a.
\]
Since $\bigcup_jF_j$ contains every short complete wait,
\[
 \limsup_{h\to0}\sup_g
   \bigl[\Prob_g^h(\mathcal A^c)+\Prob_g^k(\mathcal A^c)\bigr]
 \le4a.
\]
Together with the retained-law bound, this yields
\[
 \limsup_{h\to0}\sup_g
 \|\mathsf L_{h,g}^m-\mathsf L_{k,g}^m\|_\TV\le4a
 \xrightarrow[a\downarrow0]{}0.
\]

Put $b_m=\Prob\{\operatorname{Poisson}(Nk_{\max}T)\ge m\}$.
The flag is exactly $\{K_T\ge m\}$, up to an atomless endpoint, and
soft Poisson domination therefore gives
\[
 \limsup_{h\to0}\sup_g\Prob_g^h(K_T\ge m)\le b_m,\qquad
 \sup_g\Prob_g^k(K_T\ge m)\le b_m.
\]
If $\mathsf L_{h,g}$ is the full compressed-record law, deletion of
the flagged parts costs at most twice their probabilities. Thus
\[
 \limsup_{h\to0}\sup_g
     \|\mathsf L_{h,g}-\mathsf L_{k,g}\|_\TV\le4b_m.
\]
Let $m\to\infty$.
\end{proof}

For the varying root laws of Theorem~\ref{thm:marked-transfer},
\[
 \begin{aligned}
 \|\mathsf L_{h,f_h}-\mathsf L_{k,f}\|_\TV
 &\le\|\mathsf L_{h,f_h}-\mathsf L_{k,f_h}\|_\TV+\|f_h-f\|_1\\
 &\le o_h(1)+|M|^{r/2}\|f_h-f\|_2\longrightarrow0.
 \end{aligned}
\]

\subsection{Convergence of continuous genealogical records}\label{sec:complete-record-bridge}
\begin{lemma}[Continuous record convergence]\label{lem:full-record-weak}
Under the finite-record hypotheses of Lemma~\ref{lem:compressed-record-tv}, the complete continuous records on $[0,T]$ converge weakly, including the victim's pre-death endpoint and the terminal incomplete pieces.
\end{lemma}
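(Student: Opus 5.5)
The plan is to upgrade the total variation convergence of compressed grid records from Lemma~\ref{lem:compressed-record-tv} to weak convergence of complete records. Two ingredients do the work: the grids can be refined, and a bridge decomposition holds between successive grid or death nodes. I will work on the truncated event $\{K_T<m\}$, whose complement has mass at most $b_m$ uniformly in $h$ by soft Poisson domination. I also retain the event $\mathcal A$ that all complete waits exceed $\delta_j$ and all death times avoid the grid by $\varepsilon$; its excluded mass is at most $4a$ as in that lemma. On this event, the complete record is a measurable function of three pieces of data: the compressed grid record; the victim's pre-death endpoint at each death; and, conditionally on these node values, independent path pieces between consecutive nodes (grid points and death times).

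The first step identifies the conditional laws of these pieces. For the soft system, a piece between two nodes for a surviving label is a Brownian bridge on the torus conditioned to survive the soft killing. Because $k$ is bounded, this is a bridge reweighted by $\exp(-\int k)$ divided by $p_k/p_{\rm free}$. A victim's piece is a Brownian path killed at rate $k$, conditioned on death at the given time; its endpoint has density proportional to $k$ times the killed kernel. For the hard system, the surviving pieces are Brownian bridges conditioned to avoid the holes. The victim's piece ends on a sphere, with harmonic-measure-type density $\partial_n p_h$. I will then prove that, for node spacing at least $\eta$, these bridge laws converge weakly uniformly in the endpoints, outside a set of endpoint configurations of small measure. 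For surviving bridges, the idea is to approximate conditioning by a $d_{\rm TV}$-small perturbation: the avoidance weight $\Prob(\text{bridge avoids holes})=p_h(w,x,y)/p_{\rm free}(w,x,y)$ converges in $L^2$ to $p_k/p_{\rm free}$ by \eqref{eq:kernel-limit}. On a fine sub-grid, the finite-dimensional marginals of the conditioned bridge are products of $p_h$ kernels, so Lemma~\ref{lem:compressed-record-tv} applied with a refined grid gives total variation convergence of those marginals. Weak convergence of the pieces then follows from finite-dimensional convergence together with tightness. Tightness comes from a uniform Brownian modulus of continuity, because hard-conditioned bridges with avoidance probability bounded below have moduli dominated by free Brownian ones. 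On the event that an avoidance weight drops below a threshold, I will use the smallness of that event's probability instead.

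The victim's endpoint needs separate handling, and I expect it to be the main obstacle. In the hard system it lies within distance $O(h^3)$ of a hole center, so in the path topology it converges to the position of the victim just before death, which in the soft limit is distributed with density $q_k$. I would handle this with a refined grid whose last node precedes the death time by $\varepsilon'$. Lemma~\ref{lem:compressed-record-tv} controls the record up to that node. Over the remaining time $\varepsilon'$, the piece has an oscillation that is small in probability, uniformly in $h$, by the modulus bound combined with the bounded conditional death densities. The endpoint therefore differs from the last grid phase by at most a small amount, uniformly in $h$, while the soft endpoint obeys the same estimate. Terminal incomplete pieces are surviving bridges without a right endpoint; they are treated in the same way using $S_h\to S_k$. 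Finally, I combine everything. Take bounded Lipschitz test functions $F$ in the product uniform topology. Approximate $F(\text{record})$ by a function of the refined grid record, with an error controlled by the moduli and the endpoint estimate, and apply total variation convergence. Then let $\varepsilon'$, the grid mesh, $a$ and $b_m$ tend to zero in that order.
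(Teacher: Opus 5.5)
Your argument works, but it is much longer than it needs to be. Its last paragraph is in fact the paper's entire proof.

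The paper never identifies conditional bridge laws. It fixes $m$ and a deterministic grid of mesh $\theta$, inserts the death times as nodes, and defines an explicit measurable reconstruction $\Psi_{m,\theta}$ of the complete record from the compressed one. Recorded endpoints of each phase piece are joined by shortest torus geodesics. The victim's last recorded value is extended constantly over its final interval, whose length is at most $\theta$.

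The fact that removes your whole first stage is this: in both models, and without any conditioning, every continuous piece is a segment of the concatenated label drivers $B_i$, which are free Brownian motions. Hence, on $\{\omega_T(2\theta)<L/4\}\cap\{K_T<m\}$, the reconstruction error is at most $C_m\omega_T(2\theta)$ pathwise. The modulus $\omega_T$ has the same $h$-independent law in both systems. With $\varepsilon_{m,T}(\theta)=\E[2\wedge C_m\omega_T(2\theta)]+2\Prob(\omega_T(2\theta)\ge L/4)$, this gives
\[
 d_{\rm BL}\bigl(\Law_g^h(\mathsf C),\Law_g^k(\mathsf C)\bigr)
 \le\|\mathsf L_{h,g}^{m}-\mathsf L_{k,g}^{m}\|_\TV
 +2\varepsilon_{m,T}(\theta)+2\Prob_g^h(K_T\ge m)+2\Prob_g^k(K_T\ge m).
\]
Taking $h\downarrow0$, then $\theta\downarrow0$, then $m\to\infty$ finishes the proof.

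In your plan, several steps serve only to produce this modulus control:
\begin{itemize}
\item the bridge decomposition;
\item the $L^2$ convergence of $p_h/p_{\rm free}$;
\item the harmonic-measure density of the hard victim endpoint;
\item uniform-in-endpoint convergence of conditioned bridges;
\item the appeal to bounded conditional death densities.
\end{itemize}
Since that control is available for free, all of these can be deleted. You also need not reintroduce the event $\mathcal A$, because Lemma~\ref{lem:compressed-record-tv} already gives total variation convergence of the full truncated records.

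If completed, your route would give something stronger: convergence of the conditional path laws given the node values. But it would require disintegration and uniformity estimates that the statement does not need.

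A few inaccuracies to fix:
\begin{itemize}
\item The refined grid must be deterministic, with mesh at most $\varepsilon'$. It cannot be placed relative to the random death times. Correspondingly, $\varepsilon'$ and the mesh are a single parameter, not two limits.
\item The soft victim endpoint has density proportional to $k(y)p_k(w,x,y)$, not $q_k$. The function $q_k$ is the death-time density as a function of the starting point.
\item The terminal pieces end at the grid node $T$, so they do have a recorded right endpoint.
\end{itemize}
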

\begin{proof}
Fix $m$ and a deterministic grid of mesh $\theta$. Insert every death
time, so adjacent nodes are at most $\theta$ apart. At a death the
survivor endpoints are recorded, and the victim's poststate is the donor's
position. Interpolate known endpoints of each continuous phase piece
by shortest torus geodesics. Extend the victim's last known value
constantly on its final interval, whose duration is at most $\theta$.
On the count flag, return a fixed zero-copy record. This defines the
common measurable reconstruction $\Psi_{m,\theta}$; write $\mathsf C$
for the complete record and $\mathsf R^{m,\theta}$ for its compressed input.

For the concatenated label drivers put
\[
 \omega_T(u)=\max_{i\le N}
       \sup_{\substack{s,t\in[0,T]\\|s-t|\le u}}|B_i(s)-B_i(t)|,
 \qquad \mathcal B_\theta=\{\omega_T(2\theta)<L/4\}.
\]
On $\mathcal B_\theta$ geodesics are unique. Interpolation and constant
extension have error at most $C\omega_T(2\theta)$, also at random death
times. Reparameterizing pieces to $[0,1]$ preserves the bound, so
\[
 d\bigl(\mathsf C,\Psi_{m,\theta}(\mathsf R^{m,\theta})\bigr)
       \le C_m\omega_T(2\theta)
       \quad\text{on }\mathcal B_\theta\cap\{K_T<m\},
\]
where $d$ is the product metric of the complete-record space. In either model,
\[
 \varepsilon_{m,T}(\theta)
   :=\E\bigl[2\wedge C_m\omega_T(2\theta)\bigr]
                 +2\Prob(\mathcal B_\theta^c)
       \longrightarrow0\qquad(\theta\downarrow0).
\]
For probability laws on a metric space $(E,d)$, write
\[
 d_{\rm BL}(\mu,\nu)
   =\sup_{\substack{\|F\|_\infty\le1\\\operatorname{Lip}_d(F)\le1}}
                 |\mu F-\nu F|.
\]
For each such $F$, the reconstruction in either model gives
\[
 \left|\E F(\mathsf C)
        -\E F\bigl(\Psi_{m,\theta}(\mathsf R^{m,\theta})\bigr)\right|
 \le\varepsilon_{m,T}(\theta)+2\Prob(K_T\ge m).
\]
Taking the supremum over $F$ and comparing the compressed records
yields
\[
 \begin{aligned}
 d_{\rm BL}\bigl(\Law_g^h(\mathsf C),\Law_g^k(\mathsf C)\bigr)
 &\le\|\mathsf L_{h,g}^{m}-\mathsf L_{k,g}^{m}\|_\TV
             +2\varepsilon_{m,T}(\theta)\\
 &\quad+2\Prob_g^h(K_T\ge m)+2\Prob_g^k(K_T\ge m).
 \end{aligned}
\]
Lemma~\ref{lem:compressed-record-tv} consequently implies
\[
 \limsup_{h\to0}\sup_g
 d_{\rm BL}\bigl(\Law_g^h(\mathsf C),\Law_g^k(\mathsf C)\bigr)
 \le2\varepsilon_{m,T}(\theta)+4b_m.
\]
Since $\varepsilon_{m,T}(\theta)\to0$ and $b_m\to0$, taking limits in
the order $h\downarrow0$, $\theta\downarrow0$, $m\to\infty$ gives weak
convergence on the Polish complete-record space.
\end{proof}

For the first $m$ complete records, apply the preceding comparison on
a deterministic horizon $T$. Soft inter-death waits have conditional
tail at most $e^{-Nk_{\min}s}$, since the total killing rate is at least
$Nk_{\min}$. If the sum of $m$ waits exceeds $T$, one exceeds $T/m$.
Thus, uniformly in the root law,
\[
 \begin{aligned}
 \Prob_g^k(\tau_m>T)&\le m e^{-Nk_{\min}T/m},\\
 \limsup_{h\to0}\sup_g\Prob_g^h(\tau_m>T)
             &\le m e^{-Nk_{\min}T/m}.
 \end{aligned}
\]
For the second line, the count flag $\mathsf f_m=\one_{\{K_T\ge m\}}$
is part of the truncated law, so
\[
 \Prob_g^h(\tau_m>T)=1-\E_g^h\mathsf f_m
 \le\Prob_g^k(\tau_m>T)
       +\|\mathsf L_{h,g}^m-\mathsf L_{k,g}^m\|_\TV.
\]
Selecting the first $m$ pieces is continuous on
$\{\tau_m<T\}$ outside deaths exactly at the horizon, an event of
limiting probability zero. The window limit therefore gives the
first-$m$ record limit with an error bounded by
$4m e^{-Nk_{\min}T/m}$. Taking $h\to0$ at fixed $T$ and then
$T\to\infty$ removes this error.

Complete-record convergence also yields strong convergence of marked
survivor densities. These densities remain in the $L^2$ input class
for a subsequent copying step.

\begin{lemma}[Strong marked copying operators]\label{lem:strong-marked-operator}
Fix a root partition $\mathcal P$ with $r$ roots, labels $v\ne d$,
$0<\delta<M_0<\infty$, and a bounded continuous function $G$ of the
first complete copying record $\mathsf C_1$. For $\star\in\{h,k\}$,
define the unnormalized output density $Q_\star^Gf$ on $M^{N-1}$ by
\[
 \begin{aligned}
 \int\psi(y)Q_\star^Gf(y)\,dy
 &=\int f(\xi)\E_{\mathcal P,\xi}^\star
 \left[G(\mathsf C_1)\psi((\operatorname{ph}X_j(\tau_1))_{j\ne v});
 \ E_{v,d}^{\delta,M_0}\right]d\xi,\\
 E_{v,d}^{\delta,M_0}
 &=\{\delta\le\tau_1\le M_0,\ (v_1,d_1)=(v,d)\}.
 \end{aligned}
\]
Hard outputs are extended by zero. For signed inputs $f_h$ supported
on $M_h^r$ whose zero extensions converge to $f$ in $L^2(M^r)$,
\begin{equation}\label{eq:strong-marked-copying}
 \|Q_h^Gf_h-Q_k^Gf\|_2\longrightarrow0,\qquad
 \|Q_\star^Gf\|_2\le C_\delta\|G\|_\infty\|f\|_2.
\end{equation}
For $G=1$, also $\|Q_\star^1f\|_{H^1(M^{N-1})}\le C_\delta\|f\|_2$.
\end{lemma}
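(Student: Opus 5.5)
The plan is to prove the bound first, then $L^2$ convergence on a dense class, then the general case by density.

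\emph{Kernel formula.} Conditioning on the first wait, $Q_\star^Gf$ is given by the marked death kernel $\mathsf k_\star^{\rm death}$ of \eqref{eq:event-kernel}, restricted to the victim--donor pair $(v,d)$ and to waits in $[\delta,M_0]$, and weighted by $G$. The point is that the record $\mathsf C_1$ also contains the continuous phase paths. So I will not write $Q_\star^G$ with an explicit kernel. Instead I treat it as a bounded linear map $L^2(M^r)\to L^2(M^{N-1})$ defined by duality against $\psi\in L^2$.

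\emph{The bound.} On $E^{\delta,M_0}_{v,d}$ the survivor output is integrated against free coordinates:
\[
\int f(\xi)\int_\delta^{M_0} q_\star(w,x_v)\prod_{j\ne v}p_\star(w,x_j,y_j)\,dw\,d\xi .
\]
By \eqref{eq:kernel-limit} and the sup bound on $q_\star$ for $t\ge\delta$, together with domination of $p_\star(w)$ by the free torus kernel (which is bounded for $w\ge\delta$), we get $|Q_\star^Gf(y)|\le C_\delta\|G\|_\infty\|f\|_1\le C_\delta\|G\|_\infty|M|^{r/2}\|f\|_2$ pointwise. This gives the $L^2$ bound. The repeated root coordinates from $\mathcal P$ are harmless, because the output integrates each label's kernel in its own variable $y_j$.

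\emph{The $H^1$ bound for $G=1$.} Here the output is an explicit integral of products of $p_\star(w,\cdot,y_j)$. The factorization $p_\star(w)=p_\star(w/2)\,p_\star(w/2)$ and the bound $\|\nabla e^{-sA}\|_{2\to2}\le Cs^{-1/2}$ (from the form, uniformly in $h$ since $A_h\ge0$) give $\|\nabla_{y_j}p_\star(w,x,\cdot)\|_{L^2}\le C_\delta$ uniformly in $x$ for $w\ge\delta$. This bounds the gradient in each $y_j$.

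\emph{Convergence.} Write
\[
Q_h^Gf_h-Q_k^Gf=Q_h^G(f_h-f)+(Q_h^G-Q_k^G)f,
\]
where $f$ is restricted to $M_h^r$ (this changes $f$ by an $L^2$-small amount). The first term is at most $C_\delta\|G\|_\infty\|f_h-f\|_2\to0$ by the uniform bound. For the second term, by density and the uniform bound it suffices to take $f$ a bounded probability density times a constant; signed $f$ are split into positive and negative parts. For such $f$, Lemma~\ref{lem:full-record-weak} gives weak convergence of the complete records from the root law $f$. The event $E^{\delta,M_0}_{v,d}$ has boundary $\{\tau_1\in\{\delta,M_0\}\}$, which is null because death times are atomless. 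Since $G$ is bounded and continuous and $\psi\circ\operatorname{ph}X(\tau_1)$ is continuous in the record, $\int\psi\,Q_h^Gf\to\int\psi\,Q_k^Gf$ for continuous $\psi$. This is weak convergence of the densities.

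\emph{Main obstacle: weak to strong.} I expect the upgrade from weak to strong $L^2$ convergence to be the hardest step. The plan is compactness in two stages. First I treat $G=1$, where the uniform $H^1$ bound and Rellich on the compact torus $M^{N-1}$ give precompactness in $L^2$; combined with the weak limit, this gives strong convergence. Alternatively, for $G=1$ one can read strong convergence directly from the kernel convergence in \eqref{eq:kernel-limit} via the telescoping in \eqref{eq:death-error}, applied in $L^2$ rather than in $\TV$. For general $G$ there is no $H^1$ bound. I will approximate $G$ by functions depending only on the compressed record on a fine grid, as in the reconstruction $\Psi_{m,\theta}$ of Lemma~\ref{lem:full-record-weak}, at an error small in $L^1$ of the law. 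Such grid-functionals factor into kernel products with intermediate grid values, and for these strong convergence follows from the $L^2$ kernel limits exactly as in Lemma~\ref{lem:compressed-record-tv}. The remaining approximation error is controlled in $L^2$ through the pointwise density bound combined with the $L^1$ smallness, using the interpolation $\|g\|_2^2\le\|g\|_\infty\|g\|_1$.
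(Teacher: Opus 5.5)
Your proposal is correct, and its decisive step for general $G$ is exactly the paper's proof. You replace $G(\mathsf C_1)$ by the grid reconstruction $G(\Psi_\theta^{(1)}\mathsf R)$, compare the reconstructed outputs in $L^1$ through the total-variation estimate of Lemma~\ref{lem:compressed-record-tv}, and upgrade to $L^2$ using $|Q_\star^Gf|\le C_\delta\|G\|_\infty\|f\|_1$ together with $\|g\|_2^2\le\|g\|_\infty\|g\|_1$. The one point you leave implicit is that the reconstruction cost vanishes uniformly in $h$ and in the roots; this follows from tightness of the rescaled first pieces (durations in $[\delta,M_0]$, free Brownian moduli) and uniform continuity of $G$ on compacts. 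Your separate weak-limit and Rellich argument for $G=1$ is valid but redundant, since the same reconstruction argument covers $G=1$ directly.
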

\begin{proof}
See Appendix~\ref{app:marked-output-proof}.
\end{proof}

\subsection{Stationary records and the winding reconstruction}
The first bound in~\eqref{eq:transfer-inputs} gives
$\sup_h\E_{\pi_h}K_1<\infty$. Stationarity and additivity of expected
counts then give $\E_{\pi_h}K_t=\lambda_ht$, with
$\sup_h\lambda_h\le\Lambda<\infty$.
For $0<\delta<[2(\Lambda+1)]^{-1}$, split the invariant law according
to whether a death occurred in $[-\delta,0]$:
\[
 \begin{gathered}
 \pi_h(dy)=g_{h,\delta}(y)\,dy+\rho_{h,\delta}(dy),\qquad
 g_{h,\delta}(y)=\int\pi_h(dx)\prod_{i=1}^Np_h(\delta,x_i,y_i),\\
 0\le g_{h,\delta}\le G_\delta^N,\qquad
 \rho_{h,\delta}(M^N)\le\Prob_{\pi_h}(K_\delta\ge1)
                                      \le\Lambda\delta.
 \end{gathered}
\]
Here $G_\delta=\sup_{x,y}p_0(\delta,x,y)$ for the free torus kernel
$p_0$. If $a_{h,\delta}=\int g_{h,\delta}$ and
$\bar g_{h,\delta}=g_{h,\delta}/a_{h,\delta}$, then
\[
 a_{h,\delta}\ge\tfrac12,\qquad
 \|\bar g_{h,\delta}\|_2\le
             2G_\delta^N|M|^{N/2}=:B_\delta.
\]
For any fixed-window compressed-record kernels $\mathsf R_h,\mathsf R_k$,
Lemma~\ref{lem:compressed-record-tv}, with $r=N$, consequently gives
\[
 \begin{aligned}
 \|\pi_h(\mathsf R_h-\mathsf R_k)\|_\TV
 &\le a_{h,\delta}
       \|\bar g_{h,\delta}(\mathsf R_h-\mathsf R_k)\|_\TV
                         +2\rho_{h,\delta}(M^N)\\
 &\le o_{h,\delta}(1)+2\Lambda\delta,
 \end{aligned}
\]
where $o_{h,\delta}(1)\to0$ at every fixed $\delta>0$. The uniformity
over $L^2$ root laws is essential here, because
$\bar g_{h,\delta}$ varies with $h$.

Write $Q_h(t),Q_k(t)$ for the hard and soft phase transition kernels.
For a fixed $t_0>0$, absence of all candidate deaths and positivity of
the free heat kernel give
\[
 Q_k(t_0,x,dy)\ge e^{-Nk_{\max}t_0}
                    \prod_i p_0(t_0,x_i,y_i)\,dy
                  \ge\beta_0\,\frac{dy}{|M|^N},
 \qquad \beta_0>0.
\]
Thus $Q_k(t_0)$ contracts differences of probability measures by
$1-\beta_0$. Invariance and the terminal-state projection of the
record comparison imply
\[
 \begin{aligned}
 \pi_h-\pi_k
 &=\pi_h\bigl(Q_h(t_0)-Q_k(t_0)\bigr)
                     +(\pi_h-\pi_k)Q_k(t_0),\\
 \beta_0\|\pi_h-\pi_k\|_\TV
 &\le\|\pi_h(Q_h(t_0)-Q_k(t_0))\|_\TV
                   \le o_{h,\delta}(1)+2\Lambda\delta.
 \end{aligned}
\]
Taking $h\downarrow0$ at fixed $\delta$ and then $\delta\downarrow0$
proves phase-law convergence. For a fixed window the compressed-record laws
then satisfy
\[
 \begin{aligned}
 \|\pi_h\mathsf R_h-\pi_k\mathsf R_k\|_\TV
 &\le\|\pi_h(\mathsf R_h-\mathsf R_k)\|_\TV
                                +\|\pi_h-\pi_k\|_\TV\\
 &\le o_{h,\delta}(1)+2\Lambda\delta+\|\pi_h-\pi_k\|_\TV.
 \end{aligned}
\]
Hence, first $h\downarrow0$ and then $\delta\downarrow0$ give compressed-record
convergence. Lemma~\ref{lem:full-record-weak}, with the stationary count tail, gives
\[
 \limsup_{h\to0}
 d_{\rm BL}\bigl(\Law_{\pi_h}(\mathsf C),
                 \Law_{\pi_k}(\mathsf C)\bigr)
 \le2\varepsilon_{m,T}(\theta)+Ce^{CT-\alpha m}.
\]
The soft Poisson tail is absorbed in $C,\alpha$; let $\theta\downarrow0$
and then $m\to\infty$.

Fix integers $n,m,\ell\ge1$ and a deterministic grid on $[-W,T]$,
where $W=nT_*$, containing $-W,0,T$ and having mesh at most
$(W+T)/\ell$ and at most $\ell+2$ nodes.
Use the ancestral colors $\mathfrak a_{-W,t}(i)$ and define
\[
 \begin{gathered}
 \mathcal A_n=\mathsf A_{-W,0},\qquad
 \mathcal C_m=\{K_{[-W,T]}<m\},\\
 \mathcal B_{W,\ell}=
 \left\{\max_{i\le N,\ a\le3}
 \sup_{\substack{s,t\in[-W,T]\\|s-t|\le (W+T)/\ell}}
            |B_i^{(a)}(s)-B_i^{(a)}(t)|<\pi\right\}.
 \end{gathered}
\]
For the hard and soft stationary systems, with one common $q<1$,
\[
 \Prob(\mathcal A_n^c)\le Cq^n,\qquad
 \Prob(\mathcal C_m^c)\le Ce^{C(W+T)-\alpha m}.
\]
The reflection principle, applied on pairs of adjacent grid intervals,
also gives
\begin{equation}\label{eq:modulus}
 \Prob(\mathcal B_{W,\ell}^c)
       \le CN(\ell+1)e^{-c\ell/(W+T)}.
\end{equation}

Let $\mathsf R_{n,m,\ell}$ be the truncated compressed record on this grid.
Initialize virtual positions $\widehat X_i(-W)$ at canonical
representatives of the initial phases. For consecutive nodes $s,t$,
update each survivor by
\[
 \widehat X_i(t)=\widehat X_i(s)
       +\operatorname{princ}\bigl(\operatorname{ph}X_i(t)
                                  -\operatorname{ph}X_i(s)\bigr),
\]
where $\operatorname{princ}\in[-\pi,\pi)^3$ coordinatewise; at a death
$(i,j)$ then set $\widehat X_i=\widehat X_j$.
On $\mathcal B_{W,\ell}$ principal increments equal Brownian increments,
so induction from $\widehat X_i(-W)-X_i(-W)=Lz_i$, $z_i\in\Z^3$, gives
\[
 \widehat X_i(t)-X_i(t)=Lz_{\mathfrak a_{-W,t}(i)}
 \quad\text{at every record node }t.
\]
On $\mathcal A_n$, one common ancestral color persists through $T$;
the common lattice error cancels, as illustrated in
Figure~\ref{fig:genealogy-reconstruction}:
\[
 \begin{gathered}
 \operatorname{ph}\widehat X_N(0)=\operatorname{ph}X_N(0),\qquad
 \widehat X_i(0)-\widehat X_N(0)=X_i(0)-X_N(0),\\
 \widehat X_N(T)-\widehat X_N(0)=X_N(T)-X_N(0).
 \end{gathered}
\]
\begin{figure}[tbp]
\centering
\includegraphics{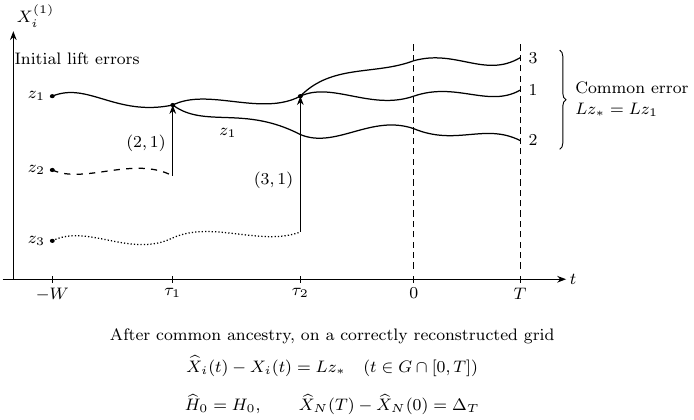}
\caption{Common ancestry removes the initial lift ambiguity.
Line styles encode three initial roots with errors $Lz_i$, $z_i\in\Z^3$.
Copies $(2,1)$ and $(3,1)$ transfer the donor's root and lattice error;
later Brownian pieces remain independent. Write $\widehat X$ for the
grid reconstruction and $\widehat H_0=H(\widehat X(0))$.
On a correctly reconstructed grid $G$ and $\mathsf A_{-W,0}$, the common
error $Lz_1$ cancels from $H_0$ and $\Delta_T$}
\label{fig:genealogy-reconstruction}
\end{figure}

Define $\Psi_{n,m,\ell}$ to return this reconstructed $(H_0,\Delta_T)$
on unflagged records and a fixed value on flagged records. For
$Z=(H_0,\Delta_T)$, the preceding identities prove
\[
 Z=\Psi_{n,m,\ell}(\mathsf R_{n,m,\ell})
        \quad\text{on }\mathcal A_n\cap\mathcal B_{W,\ell}\cap\mathcal C_m.
\]
Pushforward contraction and the three bad-event bounds give
\[
 \begin{aligned}
 &\|\Law_{\nu_h}(Z)-\Law_{\nu_k}(Z)\|_\TV\\
 &\quad\le\|\Law_{\pi_h}(\mathsf R_{n,m,\ell})
                   -\Law_{\pi_k}(\mathsf R_{n,m,\ell})\|_\TV\\
 &\qquad+Cq^n+Ce^{C(W+T)-\alpha m}
                   +CN(\ell+1)e^{-c\ell/(W+T)}.
 \end{aligned}
\]
At fixed $n,m,\ell$ the record-law difference vanishes. Thus
\[
 \lim_{n\to\infty}\lim_{m\to\infty}\lim_{\ell\to\infty}
 \limsup_{h\downarrow0}
 \|\Law_{\nu_h}(Z)-\Law_{\nu_k}(Z)\|_\TV=0,
\]
proving~\eqref{eq:stationary-target-transfer}.

Fix $T>0$ and denote the target-law distance by $d_h^Z(T)$.
The moment estimates give
$\sup_h\E_{\nu_h}|\Delta_T|^2\le C_T$ and
$\E_{\nu_k}|\Delta_T|^2\le C_T$.
For $\chi_R(s)=\max\{-R,\min\{R,s\}\}$, truncation and stationary
increment additivity give, for each coordinate $a\le3$,
\[
 \begin{aligned}
 T|\mathbf v_h(k)^{(a)}-\mathbf v(k)^{(a)}|
 &=|\E_{\nu_h}\Delta_T^{(a)}-\E_{\nu_k}\Delta_T^{(a)}|\\
 &\le|\E_{\nu_h}\chi_R(\Delta_T^{(a)})
                  -\E_{\nu_k}\chi_R(\Delta_T^{(a)})|
       +\frac{2C_T}{R}\\
 &\le R d_h^Z(T)+\frac{2C_T}{R}.
 \end{aligned}
\]
First let $h\downarrow0$ at fixed $R$, then $R\to\infty$.
This proves~\eqref{eq:velocity-transfer}.

For the joint relative-path limit, retain the complete continuous
record on $[-W,T]$. On $\mathcal A_n$, lift its phase pieces
continuously and copy the donor's lift at each death. After coalescence
all possible initial lift choices differ by one common lattice vector;
it cancels from $H_0$ and every relative path on $[0,T]$.
For convergent records on a fixed count component with no death at
$0,T$, let $\lambda_n:[0,T]\to[0,T]$ be piecewise linear, fixing
the endpoints and satisfying $\lambda_n(\tau_r)=\tau_r^{(n)}$ at every
copying time in $(0,T)$. Uniform convergence
of the rescaled pieces and continuity of local lifts give
\[
 \|\lambda_n-\id\|_\infty\to0,\qquad
 \|\mathsf Y_T^{(n)}\circ\lambda_n-\mathsf Y_T\|_\infty\to0,
 \qquad H_0^{(n)}\to H_0.
\]
Thus the output is continuous into the $J_1$ relative-path space.
Local lifts give the same relative output across fundamental-cell boundaries.
Deaths at $0,T$ have probability zero in the limiting soft law.
The continuous-record limit and the continuous mapping theorem thus
give, for the joint output laws $\mathsf J_h,\mathsf J_k$,
\[
 \limsup_{h\to0}d_{\rm BL}(\mathsf J_h,\mathsf J_k)
       \le Cq^n+Ce^{C(W+T)-\alpha m}.
\]
Here the joint output consists of the complete record on $[0,T]$,
the initial hidden state and the real relative paths. First let
$m\to\infty$ at fixed $W=nT_*$, then $n\to\infty$. This proves
the claimed joint weak convergence.

\begin{corollary}[Integer winding transfer]\label{cor:integer-winding-transfer}
Under Theorem~\ref{thm:marked-transfer}, let $c:M\to[0,L)^3$ be the
canonical representative, put $\theta_t=c(\operatorname{ph}X_N(t))$,
and define the componentwise integer winding
\[
 W_T=\left\lfloor\frac{X_N(T)}L\right\rfloor
       -\left\lfloor\frac{X_N(0)}L\right\rfloor
     =\left\lfloor\frac{\theta_0+\Delta_T}L\right\rfloor\in\Z^3.
\]
For each fixed $T>0$, the joint laws of $(H_0,\Delta_T,W_T)$ converge
in total variation. All mixed polynomial moments of $(\Delta_T,W_T)$
and every absolute moment of $W_T$ converge, and for $\star\in\{h,k\}$,
\begin{equation}\label{eq:integer-winding-mean}
 \E_{\nu_\star}W_T=\frac{T\mathbf v_\star(k)}L.
\end{equation}
\end{corollary}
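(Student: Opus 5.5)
The plan is to express $W_T$ as a measurable function of $\mathcal Z_T=(H_0,\Delta_T)$ and transfer everything from Theorem~\ref{thm:marked-transfer}. The phase $\theta_0=c(\operatorname{ph}X_N(0))$ is a measurable function of the first component of $H_0$. Hence $W_T=F(H_0,\Delta_T)$ with
\[
 F(\xi,\delta)=\left\lfloor\frac{c(\xi^{\rm ph})+\delta}{L}\right\rfloor,
\]
which is Borel (not continuous). This is exactly why total variation, rather than weak convergence, is needed. The identity in the statement uses $X_N(0)=Lz+\theta_0$ with $z=\lfloor X_N(0)/L\rfloor$, so that $\lfloor X_N(T)/L\rfloor=z+\lfloor(\theta_0+\Delta_T)/L\rfloor$. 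Write $G(\xi,\delta)=(\xi,\delta,F(\xi,\delta))$. The pushforward contraction $\|G_\#\mu-G_\#\nu\|_\TV\le\|\mu-\nu\|_\TV$, applied to \eqref{eq:stationary-target-transfer}, gives total variation convergence of the joint laws of $(H_0,\Delta_T,W_T)$.

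For moments, we first bound $|W_T|\le|\Delta_T|/L+1$ componentwise, since $\theta_0\in[0,L)^3$. For any polynomial $P$ of degree $d$ in $(\Delta_T,W_T)$, we then have $|P|\le C(1+|\Delta_T|)^d$. The uniform moments $\sup_{h<h_0}\E_{\nu_h}|\Delta_T|^p<\infty$ from Theorem~\ref{thm:marked-transfer}, together with the soft bound from the displacement moment subsection, give uniform integrability of $P$ and of every $|W_T|^p$. Truncating at level $R$ then proceeds as in the proof of \eqref{eq:velocity-transfer}:
\[
 |\E_{\nu_h}P-\E_{\nu_k}P|\le R\,\|\Law_{\nu_h}-\Law_{\nu_k}\|_\TV+2\sup_\star\E_{\nu_\star}\bigl[|P|;|P|>R\bigr].
\]
We let $h\downarrow0$ first and then $R\to\infty$.

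For \eqref{eq:integer-winding-mean}, set $J_t=\lfloor X_N(t)/L\rfloor$. The difference $X_N(t)-LJ_t=c(\operatorname{ph}X_N(t))$ is a bounded hidden-state function. Stationarity of $\nu_\star$ for the continuous-time hidden process (Proposition~\ref{prop:template-foster} for the hard system and Lemma~\ref{lem:soft-hidden-moments} for the soft system) gives $\E_{\nu_\star}\theta_T=\E_{\nu_\star}\theta_0$. Then
\[
 L\,\E_{\nu_\star}W_T=\E_{\nu_\star}\Delta_T-\E_{\nu_\star}(\theta_T-\theta_0)=T\mathbf v_\star(k).
\]
This is the vector version of Corollary~\ref{cor:soft-speed}, with integrability supplied by the $\Delta_T$ moments.

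The main point requiring care is the first step. The map $F$ is discontinuous, and $H_0$ lives on a noncompact hidden space. We must therefore rely only on the total variation statement \eqref{eq:stationary-target-transfer} and measurability, and not on the weak record convergence. Everything else is routine uniform integrability.
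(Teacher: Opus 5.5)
Your proposal is correct and follows essentially the same route as the paper. You use pushforward contraction of \eqref{eq:stationary-target-transfer} under the common measurable floor map, the bound $|W_T|\lesssim 1+|\Delta_T|/L$ together with the uniform displacement moments for uniform integrability, and the identity $LW_T=\Delta_T-(\theta_T-\theta_0)$ with hidden stationarity for \eqref{eq:integer-winding-mean}. Your explicit truncation estimate simply spells out the uniform-integrability step that the paper states in one line.
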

\begin{proof}
The displayed floor map is measurable and common to both models, so
\eqref{eq:stationary-target-transfer} contracts to the asserted joint
law. Componentwise,
\[
 W_T-\frac{\Delta_T}L=\frac{\theta_0-\theta_T}L,
 \qquad |W_T|\le\sqrt3+\frac{|\Delta_T|}L.
\]
The higher-order uniform displacement moments imply uniform
integrability of every fixed-order mixed or absolute moment.
Hidden stationarity gives
$\E\theta_0=\E\theta_T$ and $\E\Delta_T=T\mathbf v_\star(k)$,
proving~\eqref{eq:integer-winding-mean}.
\end{proof}

\begin{corollary}[A periodic domain with positive current]\label{cor:positive-template}
For the rate $k_N$ of~\eqref{eq:epsilon-choice}, one can choose and fix a mesh $h=L/m$ for which
\begin{equation}\label{eq:positive-hard}
 v_h:=\mathbf v_h(k_N)^{(1)}\ge v_{N,\epsilon_N}/2>0.
\end{equation}
For this fixed domain and every $T\ge0$, $\E_{\nu_h}\Delta_T^{(1)}=Tv_h$.
\end{corollary}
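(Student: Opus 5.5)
The plan is to combine the positive soft current from Theorem~\ref{thm:copying-current} with the vector transfer of Corollary~\ref{cor:small-contrast-transfer}, and then read off linearity of the stationary mean displacement from Proposition~\ref{prop:template-foster}.

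First, check that $k_N$ lies in the small-contrast class. By \eqref{eq:ratio}, the choice \eqref{eq:epsilon-choice} gives $k_{\max}/k_{\min}<\Theta$ with $1<\Theta<2/\sqrt3$. The rate $k_N$ is smooth, positive and $L$-periodic, since it depends only on $x^{(1)}$ through $\phi_*$. Hence Corollary~\ref{cor:small-contrast-transfer} applies to the holes~\eqref{eq:holes}. It yields both inputs~\eqref{eq:transfer-inputs} uniformly as $h\downarrow0$, and so all conclusions of Theorem~\ref{thm:marked-transfer}. In particular \eqref{eq:velocity-transfer} gives $\mathbf v_h(k_N)\to\mathbf v(k_N)$ along $h=L/m$, $m\to\infty$.

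Second, identify the first component of the soft limit. Corollary~\ref{cor:soft-speed} gives $\E_{\nu_k}\Delta_T=T\mathbf v(k)$. The first component of $\mathbf v(k_N)$ is therefore the longitudinal stationary reference velocity $v_N(k_N)=v_{N,\epsilon_N}$. The remainder bound in Theorem~\ref{thm:copying-current} and the third constraint in~\eqref{eq:epsilon-choice} give $v_{N,\epsilon_N}\ge\mathfrak c_N\epsilon_N^3/2>0$. Convergence of first components then yields $m_0$ such that $\mathbf v_{L/m}(k_N)^{(1)}\ge v_{N,\epsilon_N}/2$ for all $m\ge m_0$. We fix one such $m$ once and for all, together with the small-$h$ threshold needed for the concentric $h^2$-balls to be disjoint.

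Third, for this fixed mesh, $D_h$ is a periodic perforated domain of the form~\eqref{eq:template}: there are finitely many holes per cell, and they have positive separation. Proposition~\ref{prop:template-foster} with zero perturbation gives a unique invariant hidden law $\nu_h$ and a vector $\mathbf v$ with $\E_{\nu_h}\Delta_t=t\mathbf v$ for all $t\ge0$. This $\mathbf v$ coincides with $\mathbf v_h(k_N)=\E_{\nu_h}\Delta_1$ by definition, and taking first components gives $\E_{\nu_h}\Delta_T^{(1)}=Tv_h$; the case $T=0$ is trivial.

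The only delicate point is consistency of definitions. One must confirm that the $\mathbf v_h$ appearing in Theorem~\ref{thm:marked-transfer} is the stationary current of the same hidden law $\nu_h$ supplied by Proposition~\ref{prop:template-foster}. The integrability used there is guaranteed by the uniform moment bound $\sup_h\E_{\nu_h}|\Delta_T|^p<\infty$. Beyond this, the argument is a direct assembly of the cited results.
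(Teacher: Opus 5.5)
Your proposal is correct and follows the paper's own route. You check the contrast condition via \eqref{eq:ratio}, then use Corollary~\ref{cor:small-contrast-transfer} to carry the positive soft current $v_{N,\epsilon_N}\ge\mathfrak c_N\epsilon_N^3/2$ to a fixed fine mesh $h=L/m$, and finally obtain the linear identity from stationary increment additivity (Proposition~\ref{prop:template-foster}). Your identification of $\mathbf v(k_N)^{(1)}$ with $v_{N,\epsilon_N}$ simply makes explicit the paper's remark that the first-coordinate projection of the three-dimensional soft model is the line model of Section~\ref{sec:copying-current}.
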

\begin{proof}
The first-coordinate projection of the three-dimensional soft model with $k_N(x)=k_N(x^{(1)})$ is exactly the line model of Section~\ref{sec:copying-current}. Its contrast satisfies~\eqref{eq:ratio}, so Corollary~\ref{cor:small-contrast-transfer} preserves its positive signed velocity. Stationary increment additivity gives the identity for every $T$.
\end{proof}
The mesh is fixed from this point onwards. All subsequent constants for this domain and perturbation thresholds may depend on it.

\section{Boundary restarts and ballistic escape}\label{sec:boundary-restarts}
Fix the periodic domain with positive current $D=D_h$ selected in
\eqref{eq:positive-hard}, and use the functions
$\Phi,\mathcal V,\mathcal W$ from Section~\ref{sec:fixed}.
Constants in this section may depend on this fixed mesh.

Foster sublevel sets allow several labels to approach absorbing walls.
The bound on $\Phi$ keeps at least one label in the interior; the others
restart through a rapid cascade of copies. Uniform donor choice determines
the limiting law. Each label that joins an interior ancestor becomes a
possible donor, producing the P\'olya reinforcement below.

\subsection{A short-time boundary estimate}
\begin{lemma}\label{lem:short-wall}
For a continuous label segment starting at wall distance $r$, let
$\tau_{\partial D}$ be its first wall hit. Uniformly over the small
perturbation class \eqref{eq:perturbation},
\begin{equation}\label{eq:wall}
 \Prob(\tau_{\partial D}>t)\le\min(1,Cr/\sqrt t),
 \qquad 0<t\le1.
\end{equation}
The estimate also holds conditionally at an actual stopping time, when
the coefficients depend on the other labels' histories.
\end{lemma}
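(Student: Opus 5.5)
The plan is to reduce the estimate to a one-dimensional statement about the signed distance to the nearest wall. I would compare that distance with a Brownian motion with bounded drift and use the reflection principle. The fixed domain $D=D_h$ is a periodic union of complements of finitely many closed balls per cell with positive separation. It therefore has a uniform tubular neighbourhood of some width $r_0>0$, on which the boundary distance $\rho$ is smooth with $|\nabla\rho|=1$ and $|D^2\rho|\le C$. If $r\ge r_0/2$, the bound $\min(1,Cr/\sqrt t)$ is trivial after choosing $C\ge 2/r_0$, because $t\le1$. So assume $r<r_0/2$, and let $\sigma$ be the exit time of the label from the collar $\{\rho<r_0\}$.

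On $[0,\tau_{\partial D}\wedge\sigma]$, It\^o's formula applied to $\rho(X_i)$ with the coefficients from \eqref{eq:perturbation} gives
\[
 \rho(X_i(s))=r+\int_0^s\nabla\rho^{\mathsf T}\sigma_i\,dW_i+\int_0^s\cL_i\rho\,du
 =r+M_s+A_s .
\]
Here $|A_s|\le C s$, and $\langle M\rangle_s=\int_0^s g_i\,du$ with $g_i\in[a_-,a_+]$ for small $\eta$; this holds uniformly, independently of coefficient derivatives, since only $\sup\|\sigma_i-I\|$ and $\sup|b_i|$ enter. By Dambis--Dubins--Schwarz, $M_s=\beta_{\langle M\rangle_s}$ for a standard Brownian motion $\beta$. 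This representation holds conditionally at a stopping time by the strong Markov property of the full driving noise, even though the coefficients depend on the other labels.

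The key event is that $\rho$ stays positive on $[0,t]$ and the label does not leave the collar. This forces $\inf_{v\le a_-t}\beta_v>-r-Ct$. Exiting the collar first requires $\beta$ to rise by about $r_0/2$ within time $a_+t$. Hence
\[
 \Prob(\tau_{\partial D}>t)\le\Prob\Bigl(\inf_{v\le a_-t}\beta_v>-r-Ct\Bigr)+\Prob\Bigl(\sup_{v\le a_+t}|\beta_v|\ge r_0/4\Bigr),
\]
valid once $Ct\le r_0/4$. The reflection principle bounds the first term by $C(r+Ct)/\sqrt t$, and the second by $Ce^{-c/t}\le C'\sqrt t$ for $t\le1$.

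\textbf{Main obstacle.} The drift contribution $Ct/\sqrt t=C\sqrt t$ and the collar-exit term $C\sqrt t$ are not of order $r/\sqrt t$ when $r\ll t$. I would try to remove them as follows. Choose the collar test function as $\psi(X_i)$, where $\psi$ is $\rho$ corrected by a Girsanov-type change or replaced by a harmonic-type barrier (as with $u_h$ in Section~\ref{sec:uniform-small-contrast}), so that $\psi(X_i)$ is a supermartingale-free-of-drift up to constants. The cleanest route is a Girsanov change of measure that removes the bounded drift $b_i$, with density moments bounded on $[0,1]$. Under the new measure, $\rho(X_i)$ is a time-changed Brownian motion plus a curvature drift $\tfrac12 a_i:D^2\rho$. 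This drift is bounded and has no definite sign near a spherical hole, because the boundary is convex toward the domain. To handle it I would use the radial function $1-a/|x-z|$ near each hole, which is exactly harmonic for the Laplacian, in place of $\rho$. The perturbation $\sigma_i\ne I$ then produces a drift of size $\eta|D^2|$, which the same Girsanov argument absorbs. Finally, the collar-exit term is controlled by observing that before leaving the collar the barrier must pass level $r_0/2$. By optional stopping for the near-martingale barrier, this has probability at most $Cr$, and $Cr\le Cr/\sqrt t$. Combining with Hölder against the Girsanov density gives $\Prob(\tau_{\partial D}>t)\le Cr/\sqrt t$. I expect this Hölder step to be the delicate point: Hölder costs a power, and I would compensate by applying the reflection bound to the lower tail of the martingale part directly under the original measure, treating $\int\cL_i\rho$ as a $C\eta$-small perturbation of the barrier's harmonic part.
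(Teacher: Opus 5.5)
There is a genuine gap. Your reduction to the distance process, the DDS representation and the trivial case $r\ge r_0/2$ are fine, and you diagnose correctly why your first pass fails. But the proposal never repairs that failure.

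Your first pass produces two additive errors of order $\sqrt t$. One comes from replacing the drift by its total $Ct$ over the whole window. The other comes from the Gaussian bound on leaving the collar. Neither is $O(r/\sqrt t)$ as $r\downarrow0$ at fixed $t$. The remedies you sketch do not remove them:
\begin{itemize}
\item H\"older against a Girsanov density degrades $r/\sqrt t$ to $(r/\sqrt t)^{1/p}$.
\item Treating $\int\cL_i\rho$ as a small perturbation of a harmonic barrier reproduces the original defect. Any drift, however small, that is absorbed as a shift of the level over all of $[0,t]$ still costs an additive $C\eta\sqrt t$.
\item The radial function $1-a/|x-z|$ is not a supermartingale under the perturbed coefficients. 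So optional stopping does not by itself give the $Cr$ collar-exit bound you want.
\end{itemize}

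The missing idea is to measure the drift against the DDS clock instead of real time. Suppose $|\cL_i\rho|\le B_0$ on the collar. Then $|A_s|\le B_0s\le(B_0/a_-)\langle M\rangle_s$. Hence on $\{\tau_{\partial D}\wedge\sigma>t\}$ one has $r+\beta_u+\mu u>0$ for all $u\le a_-t$, with $\mu=B_0/a_-$. The explicit first-passage law of Brownian motion with constant drift gives
\[
 \Prob\Bigl(\inf_{u\le T}(\beta_u+\mu u)>-r\Bigr)\le\sqrt{\frac{2}{\pi T}}\,r+2\mu r .
\]
So the drift costs only $O(r)$, namely the probability $1-e^{-2\mu r}$ of never reaching $-r$, not $O(\sqrt t)$. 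With $T=a_-t$ and $t\le1$ this is at most $Cr/\sqrt t$.

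For the collar exit, take the concave barrier $G(\rho)=(1-e^{-k\rho})/(1-e^{-kr_0})$ with $k\ge2B_0/a_-$. Its drift is at most $G'(\rho)(B_0-\tfrac{k}{2}a_-)\le0$. Thus $G(\rho(X_i))$ is a bounded supermartingale until $\rho$ leaves $(0,r_0)$. Reaching $r_0$ before the wall therefore has probability at most $G(r)\le Cr$.

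Adding the two bounds proves \eqref{eq:wall}. No change of measure, harmonic barrier or sign information on the curvature term is needed. This is exactly the paper's two-step argument.
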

\begin{proof}
In a fixed tubular neighborhood the distance process has the decomposition
$R_t=r+M_t+\int_0^t\zeta_s\,\dd s$, with $|\zeta_s|\le B_0$ and
$c_-\dd t\le\dd\langle M\rangle_t\le c_+\dd t$. Stop on reaching
distance zero or the outer edge $r_0$ of the tube. For
\[
 g(r)=\frac{1-e^{-kr}}{1-e^{-kr_0}},\qquad k\ge2B_0/c_-,
\]
$g(R_t)$ is a supermartingale along this stopped process. Hence the probability
of reaching $r_0$ first is at most $Cr$.

On the event of remaining in the tube until $t$, the
Dambis--Dubins--Schwarz time change gives a Brownian motion $\beta$ with
$r+\beta_u+(B_0/c_-)u>0$ for every $u\le c_-t$.
The reflection formula for Brownian motion with positive drift bounds
this probability by $C(r/\sqrt t+r)$. Adding the outer-edge probability
and using $t\le1$ gives \eqref{eq:wall}. For starting points outside the
tube, increase $C$ so that its right-hand side is one.
The drift and quadratic variation bounds persist conditionally at
stopping times, giving the conditional estimate.
\end{proof}

\subsection{The P\'olya restart law}
Let $x\in\overline D^N$ have interior-label set $I$ of size $k_0\ge1$ and
wall-label set $B$ of size $b=N-k_0$. For $a:B\to I$, put
$a_j=|a^{-1}(j)|$, and let $R_ax$ replace $x_i$ by $x_{a(i)}$ for
$i\in B$. Write $(k)_b=k(k+1)\cdots(k+b-1)$ and $(k)_0=1$.

\Needspace{13\baselineskip}
\begin{theorem}[P\'olya boundary restart]\label{thm:polya-restart}
Suppose interior configurations $x^{(n)}$ converge to $x$ and admissible
perturbation amplitudes $\eta_n$ tend to zero. For every $T>0$, the joint
law of the hidden endpoint and actual reference displacement converges
weakly to
\begin{equation}\label{eq:restart-endpoint}
 \sum_{a:B\to I}p(a)\,
 \Law_{R_ax}^0\bigl(H_T,\Delta_T+(R_ax)_N-x_N\bigr),
 \qquad p(a)=\frac{\prod_{j\in I}a_j!}{(k_0)_b}.
\end{equation}
Here $\Delta_T=X_N(T)-(R_ax)_N$ in the baseline system started at $R_ax$.
The weights are unchanged by an adaptive order of wall deaths.
\end{theorem}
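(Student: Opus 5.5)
The proof splits into a short initial cascade phase and a smooth remainder. We choose a small time $s_n\downarrow0$, slowly enough that $s_n\gg$ the wall distances of the $B$-labels of $x^{(n)}$, yet with $s_n\to0$. The argument has three steps.

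\textbf{Cascade on $[0,s_n]$.} First we show that by time $s_n$ every label of $B$ has died at least once, with probability tending to one, and that no label of $I$ has died. The wall-label part follows from Lemma~\ref{lem:short-wall}, applied conditionally at each stopping time. A label starting at distance $r_n\to0$ survives past $s_n$ with probability at most $Cr_n/\sqrt{s_n}\to0$. A label freshly copied onto another near-wall label restarts at the same small distance, so the same bound applies again. The number of such iterations is controlled by the exponential count bound~\eqref{eq:count}, since $\Phi(x^{(n)})$ stays bounded on $\{\Phi\le R\}$ only after the cascade. Because of that, we use instead the crude bound that the cascade has at most polynomially many steps before every label is an $I$-descendant. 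The interior labels sit at a distance at least $d>0$ from $\partial D$, and Brownian small-ball estimates keep them at distance at least $d/2$ on $[0,s_n]$ with probability tending to one. Combining these, with probability $1-o(1)$ the configuration at time $s_n$ is within $o(1)$ of $R_ax$ for a random assignment $a:B\to I$. Here $a$ is obtained by following ancestry, $a(i)=\mathfrak a_{0,s_n}(i)$. Brownian displacements over time $s_n$ and perturbation drifts of order $\eta_n s_n$ are negligible.

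\textbf{Identifying the weights $p(a)$.} This is the combinatorial heart. At each death the victim chooses uniformly among the $N-1$ other labels. When the victim is a wall label whose donor is another still-unrestarted wall label, it returns to the wall and dies again almost immediately. The first death that lands it on a label already carrying an interior ancestor fixes its colour permanently on this time scale. Conditioning on eventually landing on a coloured label, the choice is uniform among the currently coloured labels. If $c_j$ labels currently carry colour $j$, the colour is $j$ with probability $c_j/\sum_\ell c_\ell$. This is exactly the P\'olya urn started with one ball per interior label. The urn law after $b$ draws is exchangeable, and the probability of any fixed colour sequence with counts $(a_j)$ is $\prod_j a_j!/(k_0)_b$. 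This depends neither on the order in which wall labels are drawn nor on the adaptive choice of which wall label dies next, because each wall label is used exactly once. I expect this step to be the main obstacle. Near-simultaneous wall deaths and the random victim order must be shown to leave the transition probabilities exactly uniform over coloured donors. We will handle this with the strong Markov property at each death time, together with the marks $U_n$ of Section~\ref{sec:dynamics}. Labels that die while uncoloured are simply retried, a geometric series that factors out of the conditional law.

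\textbf{Continuity on $[s_n,T]$.} From the configuration at $s_n$, close to $R_ax$, which is interior, we compare the perturbed system with the baseline system started at $R_ax$. The comparison uses the same drivers on the event that both stay away from walls outside genuine deaths. Finite-prefix continuity of killed-diffusion transition and death densities gives weak convergence of $(H_T,X_N(T))$ as $\eta_n\to0$ and the initial point converges; this parallels the kernel arguments behind Lemma~\ref{lem:compressed-record-tv}, with Lemma~\ref{lem:all-death-budget} truncating the count. The reference displacement decomposes as $X_N(T)-x^{(n)}_N=(X_N(T)-X_N(s_n))+(X_N(s_n)-x_N^{(n)})$. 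If $N\in B$, the second term converges to $(R_ax)_N-x_N$, which is the shift appearing in~\eqref{eq:restart-endpoint}. Summing over $a$ with the P\'olya weights then gives the stated limit mixture.
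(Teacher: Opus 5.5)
Your overall architecture (a fast cascade, P\'olya weights from the uniform donor marks, then continuity from an interior restart) matches the paper's, but Step~1 fails as written. You claim that a label copied onto a still-uncoloured wall label ``restarts at the same small distance''. This is false. That donor has been diffusing since time $0$, or since its own last restart, so at the copy time $\tau$ its wall distance is of order $r_n+\sup_{s\le\tau}|M(s)|\approx r_n+\sqrt\tau$, not $r_n$. On a single window $[0,s_n]$, Lemma~\ref{lem:short-wall} over the remaining time then gives only a survival bound of order $(r_n+\sqrt\tau)/\sqrt{s_n-\tau}$. That bound is not small when $\tau$ is comparable to $s_n$.

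The paper handles this with nested windows $t_j=r^{2^{-(j-1)}}$, chosen so that $\sqrt{t_{j-1}}/\sqrt{t_j}=r^{2^{-j}}\to0$. BDG over at most $\ell+1$ ancestral pieces gives $\E[\one_{A_{j-1}}R_{j-1}]\le r+C_\ell\sqrt{t_{j-1}}+Ct_{j-1}$. The conditional wall estimate at $\tau_{j-1}$ then bounds the failure of the $j$th attempt by $C_\ell r^{2^{-j}}$, see~\eqref{eq:cascade-wait}. A single scale can be saved only with a quantitative bound on the donor's distance at the random copy time, for instance BDG at that stopping time combined with the tail $\Prob(\tau>u)\le Cr/\sqrt u$; you would have to supply it.

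The iteration must also be capped at $\ell$ attempts, and your count discussion does not achieve this. In fact $\Phi(x^{(n)})$ is uniformly bounded, since the interior labels keep $S$ bounded below. There is also no deterministic polynomial bound on the number of cascade steps. What caps the cascade is that an uncoloured victim picks a coloured donor with conditional probability $k_n/(N-1)\ge1/(N-1)$, so the completion attempt $J$ has the geometric tail~\eqref{eq:restart-attempts}. Finally, until completion you must protect every coloured label, not only those in $I$; the paper does this with the stopping time $\sigma$ and the event $E_{\ell,r}$.

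In Step~2, urn exchangeability covers a fixed colouring order. The remark that ``each wall label is used exactly once'' does not by itself justify adapted victim orders. The paper's bounded martingale does. Let $Z_n^a$ be the indicator of agreement with $a$ times $\prod_j(n_j(n))_{r_j(n)}/(k_n)_{w_n}$. The victim is revealed in $\mathcal G_{n+1}^-$ before its uniform mark, so $\E(Z_{n+1}^a\mid\mathcal G_{n+1}^-)=Z_n^a$ for every adapted order, and failed attempts leave $Z_n^a$ unchanged. Optional stopping at $J$ then gives $\Prob(\text{final assignment}=a)=Z_0^a=p(a)$. Unique victims come from Lemma~\ref{lem:simultaneous-wall-polarity}.

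In Step~3, the kernel comparison behind Lemma~\ref{lem:compressed-record-tv} is not available for nonperiodic, configuration-dependent perturbations. Rely instead on the common-driver coupling you mention, that is, Lemma~\ref{lem:interior-prefix-continuity}, transferred to your restart time by Lemma~\ref{lem:random-interior-restart}.
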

Figure~\ref{fig:polya-restart} displays the assignment weights for two
interior roots and two wall labels.
\begin{figure}[tbp]
\centering
\includegraphics{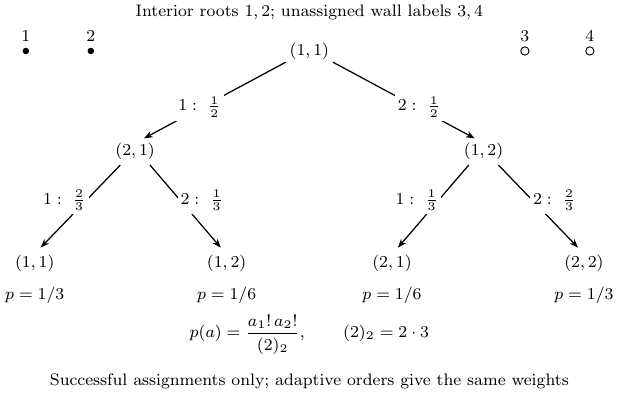}
\caption{A P\'olya restart with $I=\{1,2\}$ and $B=\{3,4\}$.
The tree records successful assignments in the order $3$, then $4$;
copies of unassigned donors are omitted. Internal node pairs give the
populations of the two ancestral roots, including their initial labels.
Leaves give $(a(3),a(4))$ and its probability. The same final weights
hold for adaptive victim orders chosen before the next donor mark is
revealed}
\label{fig:polya-restart}
\end{figure}

\begin{proof}
When $b=0$, this is interior continuity from
Lemma~\ref{lem:interior-prefix-continuity}. Suppose $b>0$.
Give the $k_0$ interior labels distinct colors. In an ideal assignment
process a wall label stays unassigned when it copies an unassigned label,
and acquires a color when it copies an assigned label. After $n$ attempts,
let $\mathcal I_n$ be the assigned set, $k_n=|\mathcal I_n|$, and
$n_j(n)$ its color populations, $j\in I$. Let $\mathcal G_n$ be the
information after attempt $n$. Before completion,
$\mathcal G_{n+1}^-\supset\mathcal G_n$ also reveals the next victim
$i_{n+1}\notin\mathcal I_n$, before its donor mark is revealed.
The victim can depend on all information in this sigma-field. The new
mark is uniform among the other $N-1$ labels, so
\begin{equation}\label{eq:adapted-assignment}
 \begin{split}
 \Prob(\text{new color }j\mid\mathcal G_{n+1}^-)
      &=\frac{n_j(n)}{N-1},\\
 \Prob(\text{no assignment}\mid\mathcal G_{n+1}^-)
      &=\frac{N-1-k_n}{N-1}.
 \end{split}
\end{equation}
Conditional on success, the color law is therefore $n_j(n)/k_n$.

Fix a target assignment $a:B\to I$. Let $r_j(n)$ count its still
unassigned targets of color $j$, and put $w_n=N-k_n$. Define
\[
 Z_n^a=\one_{\{\text{assignments through }n\text{ agree with }a\}}
       \frac{\prod_{j\in I}(n_j(n))_{r_j(n)}}{(k_n)_{w_n}}.
\]
If $a(i_{n+1})=j$ and there has been no mismatch, the three possible
updates are
\[
 Z_{n+1}^a=
 \begin{cases}
  (k_n/n_j(n))Z_n^a,&\text{donor has color }j,\\
  0,&\text{donor has another color},\\
  Z_n^a,&\text{donor is unassigned}.
 \end{cases}
\]
Thus, by \eqref{eq:adapted-assignment},
\[
 \E(Z_{n+1}^a\mid\mathcal G_{n+1}^-)
 =\frac{n_j(n)}{N-1}\frac{k_n}{n_j(n)}Z_n^a
   +\frac{N-1-k_n}{N-1}Z_n^a=Z_n^a.
\]
The Vandermonde identity gives $0\le Z_n^a\le1$, for every adapted
victim order.
Let $J=\inf\{n:k_n=N\}$ be the completion attempt. At each population
size $k$, the conditional success probability is $p_k=k/(N-1)$.
Iteration of this conditional law gives
\begin{equation}\label{eq:restart-attempts}
 \E e^{sJ}
 =\prod_{k=k_0}^{N-1}
 \frac{p_ke^s}{1-(1-p_k)e^s}<\infty
\end{equation}
for sufficiently small $s>0$. Since $Z_J^a$ is the indicator of the
final assignment $a$, bounded optional stopping yields
\[
 \Prob(\text{final assignment}=a)=\E Z_J^a=Z_0^a
       =\frac{\prod_j a_j!}{(k_0)_b}=p(a).
\]
Summing over the $b!/\prod_j a_j!$ assignments of each count vector gives
\[
 \sum_{a:B\to I}p(a)
 =\binom{b+k_0-1}{k_0-1}\frac{b!}{(k_0)_b}=1.
\]

For the actual cascade, let $\mathcal I(t)$ consist of the original
interior labels and their colored descendants. Stop at
$\theta=\inf\{t:|\mathcal I(t)|=N\}$ or the first assigned-label death
$\sigma$. Let $\tau_0=0$ and let $\tau_j$ be the successive unassigned
death times before $\theta\wedge\sigma$, including the completing death;
set subsequent $\tau_j=\infty$. For all large $n$, choose $d>0$ and
$r\downarrow0$ with
\[
 \min_{i\in I}\dist(x_i^{(n)},\partial D)\ge d,
 \qquad \max_{i\in B}\dist(x_i^{(n)},\partial D)\le r.
\]
For an attempt cap $\ell$, put
\[
 t_j=r^{2^{-(j-1)}}\quad(1\le j\le\ell),\qquad t_0=0.
\]
For small $r$, $t_j-t_{j-1}\ge t_j/2$. Using the continuous label
martingales $M_i$ from Section~\ref{sec:fixed}, define
\[
 E_{\ell,r}=\left\{2(\ell+1)\max_i\sup_{s\le t_\ell}|M_i(s)|
                       +Ct_\ell<d/2\right\}.
\]
Up to $t_\ell\wedge\tau_\ell\wedge\theta\wedge\sigma$, each ancestral
displacement is bounded by the expression in braces. Consequently
\begin{equation}\label{eq:cascade-protected}
 \Prob(E_{\ell,r}^c)\le C_{\ell,d}t_\ell,
 \qquad E_{\ell,r}\Longrightarrow
       \sigma>t_\ell\wedge\tau_\ell\wedge\theta.
\end{equation}
Define
\[
 A_{j-1}=\{\tau_{j-1}\le t_{j-1},\
                \tau_{j-1}<\theta\wedge\sigma\}.
\]
On this event select an unassigned label measurably at $\tau_{j-1}$,
and denote its distance by $R_{j-1}$. Its ancestral root has initial
distance at most $r$. The Lipschitz distance function and BDG give
\begin{equation}\label{eq:cascade-wait}
 \begin{aligned}
 \E[\one_{A_{j-1}}R_{j-1}]
     &\le r+C_\ell\sqrt{t_{j-1}}+Ct_{j-1},\\
 \Prob(A_{j-1},\tau_j>t_j,\sigma>t_j)
     &\le \frac{C}{\sqrt{t_j}}
                       \E[\one_{A_{j-1}}R_{j-1}]\\
     &\le C_\ell r^{2^{-j}},\qquad 1\le j\le\ell.
 \end{aligned}
\end{equation}
The first bound uses at most $\ell+1$ ancestral martingale pieces.
For the second, apply \eqref{eq:wall} conditionally at $\tau_{j-1}$,
using $t_j-\tau_{j-1}\ge t_j/2$ on $A_{j-1}$.

Couple the ideal process to the actual victim order and marks until
$\theta\wedge\sigma$, then continue it with an arbitrary adapted order
and fresh marks. Let
\[
 G_{\ell,r}=\{J\le\ell,\ \tau_j\le t_j\ (1\le j\le J),\
                                      \sigma>\tau_J\}.
\]
On $G_{\ell,r}$ the assignments agree and $\theta=\tau_J\le t_\ell$.
By \eqref{eq:cascade-protected}, \eqref{eq:cascade-wait} and
\eqref{eq:restart-attempts},
\begin{equation}\label{eq:restart-time}
 \begin{aligned}
 \Prob(G_{\ell,r}^c)
 &\le\Prob(J>\ell)+\Prob(E_{\ell,r}^c)
       +\sum_{j=1}^\ell
          \Prob(A_{j-1},\tau_j>t_j,\sigma>t_j)\\
 &\le C_Ne^{-c_N\ell}+C_{\ell,d}r^{2^{-\ell}}.
 \end{aligned}
\end{equation}
On $G_{\ell,r}$, $\theta_n\le t_\ell$ and the actual assignment $a_n$
agrees with the ideal assignment. BDG on the ancestral pieces gives
\[
 \E\left[\one_{G_{\ell,r}}
       |X(\theta_n)-R_{a_n}x|^2\right]
 \le C_\ell\bigl(|x^{(n)}-x|^2+t_\ell\bigr).
\]
Together with \eqref{eq:restart-time}, first $n\to\infty$ and then
$\ell\to\infty$ give, for every $\varepsilon>0$,
\[
 \begin{aligned}
 &\Prob(\theta_n\ge\sigma_n)
  +\Prob\left(\theta_n<\sigma_n,\
       \theta_n+|X(\theta_n)-R_{a_n}x|>\varepsilon\right)\to0,\\
 &\Prob(\theta_n<\sigma_n,\ a_n=a)\to p(a).
 \end{aligned}
\]

Put $S_n=(\theta_n\wedge\sigma_n)\wedge T/2$ and
$G_n=\{\theta_n<\sigma_n,\ \theta_n<T/2\}\in\mathcal F_{S_n}$.
The cascade limits give
\[
 \begin{aligned}
 \Prob(G_n^c)&\to0,\\
 (S_n,X(S_n),X_N(S_n)-x_N^{(n)})
 &\Rightarrow\sum_a p(a)
       \delta_{(0,R_ax,(R_ax)_N-x_N)}.
 \end{aligned}
\]
Each $R_ax$ is interior. Lemma~\ref{lem:random-interior-restart}
therefore proves \eqref{eq:restart-endpoint}, retaining the cascade's
reference jump.
\end{proof}

\subsection{Uniform continuity on Foster sublevel sets}
\begin{lemma}\label{lem:sublevel-stability}
If $f$ is continuous on the interior hidden space and
$|f|\le C(1+\sqrt{\mathcal V})$, then for every finite $A,T>0$,
\begin{equation}\label{eq:reward-stability}
 \lim_{\eta\downarrow0}\ \sup_{\mathcal V(H(x))\le A}
 \left|\E_x^\eta[\Delta_T^{(1)}+f(H_T)]
              -\E_x^0[\Delta_T^{(1)}+f(H_T)]\right|=0.
\end{equation}
The supremum includes all allowed smooth coefficient systems and all
absolute lattice translates of the starting configuration.
\end{lemma}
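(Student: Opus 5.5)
We argue by contradiction combined with compactness, and use the P\'olya restart theorem to handle configurations near the walls. Suppose \eqref{eq:reward-stability} fails. Then there are $\varepsilon>0$, amplitudes $\eta_n\downarrow0$, admissible coefficient systems $(\sigma^{(n)},b^{(n)})$ and starting configurations $x^{(n)}$ with $\mathcal V(H(x^{(n)}))\le A$ for which the difference of expectations stays above $\varepsilon$.

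Both expectations are invariant under common lattice translation, and so is the admissible coefficient class (translate the coefficients with the configuration). So we may assume $\operatorname{ph}x_N^{(n)}$ lies in a fixed fundamental cell. The bound $\mathcal V\le A$ gives $\mathfrak D(x^{(n)})\le\sqrt A$ and $\Phi\le A$, so the $x^{(n)}$ lie in a compact subset of $\overline D^N$. Passing to a subsequence, $x^{(n)}\to x$. Since $\Phi(x)\le A$, at least one label of $x$ is interior, so $x$ has interior set $I\ne\varnothing$ and wall set $B$, possibly empty. Theorem~\ref{thm:polya-restart} then applies to both sequences, the perturbed one with amplitudes $\eta_n$ and the baseline one with amplitude $0$. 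Each joint law of $(H_T,\Delta_T)$ converges weakly to the same mixture \eqref{eq:restart-endpoint}. Here it matters that the theorem allows arbitrary admissible coefficients and adaptive death orders, so the full-configuration dependence of the coefficients causes no trouble. The case $b=0$ is covered by interior continuity.

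It remains to upgrade weak convergence to convergence of the expectations of the unbounded functional $F=\Delta_T^{(1)}+f(H_T)$. The functional $F$ is continuous on the interior hidden space. The limit law puts no mass on wall configurations at the fixed time $T$, since deaths at $T$ have zero probability, so the continuous mapping theorem applies. For uniform integrability, it suffices to bound second moments uniformly in $n$. The block envelope \eqref{eq:block-envelope} gives $\E_x\mathfrak X_T^2\le C\mathcal V(x)\le CA$ uniformly over the small perturbation class, which controls $|\Delta_T^{(1)}|^2$. For the second term, $|f|^2\le C(1+\mathcal V)$, and the Foster inequality \eqref{eq:Foster} (iterated, or applied at time $T$ after adjusting the block length) gives $\E_x\mathcal V(H_T)\le\rho_V\mathcal V(x)+C_V\le A+C_V$. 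This bound is uniform over the perturbation class, which is how \eqref{eq:Foster} is stated in Proposition~\ref{prop:template-foster}. For $T$ not a multiple of the block length, we use the landing estimate \eqref{eq:landing} for $\Phi$ and the ancestral envelope \eqref{eq:ancestor-envelope} for $\mathfrak D^2$ at arbitrary times. Truncating $F$ at level $R$ then gives convergence of the truncated expectations, with a tail error of $O(R^{-1})$ uniformly in $n$. Both expectations therefore converge to the same mixture value, which contradicts the gap $\varepsilon$.

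The main obstacle is making sure the P\'olya restart limit is uniform enough: along the subsequence, some labels of $x^{(n)}$ approach the wall while others converge in the interior. Theorem~\ref{thm:polya-restart} is stated exactly for interior $x^{(n)}\to x$ with $\eta_n\to0$, so the contradiction argument only needs that sequential statement, provided we check two things. First, the starting configurations must be interior, which holds because they are genuine starting configurations. Second, translating to a fixed cell must keep the coefficients admissible, which holds because the class in \eqref{eq:perturbation} is not required to be periodic.
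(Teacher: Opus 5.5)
Your proof is correct and follows essentially the same route as the paper. You translate to a fundamental cell and extract convergent subsequences of starting points, whose limits keep an interior label because $\Phi\le A$. You then identify a common joint weak limit for the perturbed and baseline systems via Lemma~\ref{lem:interior-prefix-continuity} or Theorem~\ref{thm:polya-restart}, and conclude with uniform second-moment bounds. The only difference is cosmetic: you use the continuous mapping theorem with a discontinuity set that is null under the limit, while the paper inserts explicit cutoffs $\chi_{r,R}$ together with the uniform near-wall estimate~\eqref{eq:terminal-interior}, which accomplishes the same thing.
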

\begin{proof}
Translate the initial reference position to a fundamental cell and
translate the coefficient field with it. This preserves the amplitude
bound even for nonperiodic coefficients. The canonical starting
configurations with $\mathcal V\le A$ have compact closure.
Moreover, $\Phi\le A$ implies
$\max_i\rho_i\ge R_0A^{-1/\gamma}$, so every limiting configuration has
an interior coordinate.

Along convergent subsequences, Lemma~\ref{lem:interior-prefix-continuity}
or Theorem~\ref{thm:polya-restart} identifies the same joint weak limit
for both systems. For each baseline component of this limit, atomless
Brownian exit waits, conditioning at numbered deaths and nonexplosion give
\[
 \Prob\{r_{\min}(H_T)=0\}
 \le\Prob\{\exists k:\tau_k=T\}
 \le\sum_{k\ge1}\Prob(\tau_k=T)=0,
\]
where $r_{\min}(H)=\min_i\rho_i$, extending $\rho$ by zero to $D^c$.

Portmanteau on the ambient hidden space therefore gives
\begin{equation}\label{eq:terminal-interior}
 \lim_{r\downarrow0}\limsup_{\eta\downarrow0}
 \sup_{\mathcal V\le A}\Prob_x^\eta(r_{\min}(H_T)\le r)=0.
\end{equation}
A violating sequence would put positive limiting mass on $r_{\min}=0$,
contradicting the displayed identity. The same proof covers the baseline.

Fix a sufficiently small amplitude bound $\eta_0>0$. The count moments,
ancestral envelopes and \eqref{eq:landing} give
\begin{equation}\label{eq:sublevel-ui}
 \sup_{\eta\le\eta_0,\ \mathcal V\le A}
 \E_x^\eta\bigl(|\Delta_T|^2+|f(H_T)|^2
                         +\mathfrak D(H_T)^2\bigr)\le C_{A,T,f}.
\end{equation}
Let $\chi_{r,R}$ be an ambient continuous cutoff, equal to one when
$r_{\min}\ge2r$ and $\mathfrak D\le R$, and zero when
$r_{\min}\le r$ or $\mathfrak D\ge2R$. Its support is an interior
compact hidden set. With
$[v]_K=\max(-K,\min(v,K))$, the retained reward
\[
 g_{r,R,K}(\xi,d)=[d^{(1)}]_K+\chi_{r,R}(\xi)f(\xi)
\]
is bounded and continuous on the ambient joint space, extending the
second term by zero. Cauchy--Schwarz and \eqref{eq:sublevel-ui} give
\begin{equation}\label{eq:sublevel-cutoff-error}
 \begin{aligned}
 &\left|\E_x^\eta[\Delta_T^{(1)}+f(H_T)]
           -\E_x^\eta g_{r,R,K}(H_T,\Delta_T)\right|\\
 &\quad\le\E_x^\eta[|\Delta_T|;|\Delta_T|>K]
        +\E_x^\eta[|f(H_T)|;r_{\min}(H_T)<2r]\\
 &\qquad{}+\E_x^\eta[|f(H_T)|;\mathfrak D(H_T)>R]\\
 &\quad\le C_{A,T,f}\left(K^{-1}+R^{-1}
           +\Prob_x^\eta\{r_{\min}(H_T)<2r\}^{1/2}\right).
 \end{aligned}
\end{equation}
For fixed $r,R,K$, the compact-subsequence argument already used for
the joint weak limit gives
\[
 \lim_{\eta\downarrow0}\sup_{\mathcal V\le A}
 \left|\E_x^\eta g_{r,R,K}(H_T,\Delta_T)
             -\E_x^0 g_{r,R,K}(H_T,\Delta_T)\right|=0.
\]
Apply \eqref{eq:sublevel-cutoff-error} to both systems, then let
$\eta\downarrow0$, $r\downarrow0$ and $R,K\uparrow\infty$.
Equation~\eqref{eq:terminal-interior} removes the wall term and proves
\eqref{eq:reward-stability}.
\end{proof}

\subsection{Stability of ballistic escape}\label{sec:stable-currents}
A positive stationary velocity yields persistent escape of the entire
population. The estimate below keeps the reference coordinate growing
linearly and the cloud diameter small in proportion, uniformly in time
and under the perturbations required by Kelvin inversion.

The Poisson equation turns the stationary mean into a constant expected
increment. A Foster penalty preserves positive drift at large
$\mathcal V$, and the block envelope carries the estimate between
observations. From a high translated seed, the resulting escape
probability tends to one.

For the translation-equivariant baseline kernel $P=P_T$ with invariant
law $\nu$, put
\[
 \begin{gathered}
 \mathcal C_{\mathcal V}
 =\left\{f\in C(H(D^N)):
       \sup_\xi\frac{|f(\xi)|}{1+\sqrt{\mathcal V(\xi)}}<\infty\right\},\\
 b(\xi)=\E_\xi^0\Delta_T^{(1)},\qquad m=\nu b=v_hT>0.
 \end{gathered}
\]
The block hypotheses are
\[
 \begin{gathered}
 P(\xi,\cdot)\ge\beta\mu(\cdot),\quad \beta>0,\qquad
 \nu P=\nu,\qquad \mathcal V^2\le C\mathcal W,\qquad \nu\mathcal W<\infty,\\
 b\in\mathcal C_{\mathcal V},\qquad
 P\mathcal C_{\mathcal V}\subset\mathcal C_{\mathcal V},
 \end{gathered}
\]
with probability measures $\mu,\nu$ and interior continuous, locally
bounded $\mathcal V,\mathcal W\ge1$. Estimates \eqref{eq:Foster},
\eqref{eq:block-envelope} and \eqref{eq:reward-stability} hold uniformly
over the perturbation class and absolute lattice translates, the last
for every $f\in\mathcal C_{\mathcal V}$. Translated seed neighborhoods
have bounded initial $\mathcal V,\mathcal W$. These conditions follow
from Proposition~\ref{prop:template-foster} and
Lemmas~\ref{lem:sublevel-stability} and
\ref{lem:interior-prefix-continuity} for the fixed domain.

The block envelope gives $|b(\xi)|^2\le C\mathcal V(\xi)$.
Let $(\Xi_n,\widetilde\Xi_n)$ be the minorization coupling of the
baseline block chains with initial laws $\delta_\xi$ and $\nu$, and
let $\tau_c$ be their meeting time. For some $q\in(0,1)$,
\[
 \Prob(\tau_c>n)\le q^n,\qquad
 \E\mathcal V(\Xi_n)\le\rho_V^n\mathcal V(\xi)
                                      +\frac{C_V}{1-\rho_V},\qquad
 \E\mathcal V(\widetilde\Xi_n)=\nu\mathcal V<\infty.
\]
Cauchy--Schwarz gives the summable estimate
\begin{equation}\label{eq:Poisson-tail}
 \begin{aligned}
 |P^nb(\xi)-m|
 &=\left|\E[(b(\Xi_n)-b(\widetilde\Xi_n))
                                   \one_{\{\tau_c>n\}}]\right|\\
 &\le q^{n/2}
         \left(2\E[|b(\Xi_n)|^2+|b(\widetilde\Xi_n)|^2]\right)^{1/2}\\
 &\le Cq^{n/2}
          \bigl(1+\rho_V^{n/2}\sqrt{\mathcal V(\xi)}\bigr).
 \end{aligned}
\end{equation}
Hence
\begin{equation}\label{eq:Poisson}
 u(\xi)=\sum_{n\ge0}(P^nb(\xi)-m),\qquad
 (I-P)u=b-m,\qquad |u(\xi)|\le C(1+\sqrt{\mathcal V(\xi)}).
\end{equation}
Local boundedness of $\mathcal V$ makes the series locally uniformly
summable, so kernel continuity gives interior continuity of $u$.
Moreover,
\[
 \sum_{n\ge0}P|P^nb-m|(\xi)
 \le C\sum_{n\ge0}q^{n/2}
       \bigl(1+\rho_V^{n/2}\sqrt{P\mathcal V(\xi)}\bigr)<\infty.
\]
Thus $Pu=\sum_{n\ge0}(P^{n+1}b-m)$, proving the identity in
\eqref{eq:Poisson}.

With $\Delta G=G(X_T)-G(x)$, the Poisson identity and Foster bounds give
\[
 \begin{aligned}
 G(x)&=x_N^{(1)}+u(H(x)),\\
 \E_x^0\Delta G&=b(H(x))+Pu(H(x))-u(H(x))=m,\\
 \E_x^\eta\Delta G&\ge-C_G(1+\sqrt{\mathcal V(H(x))}).
 \end{aligned}
\]
For a sublevel $A$, define the comparison error
\[
 \omega_A(\eta)=\sup_{\substack{\mathcal V(H(x))\le A\\
                         \text{amplitude}\le\eta}}
       |\E_x^\eta\Delta G-m|.
\]
Lemma~\ref{lem:sublevel-stability}, with $f=u$, gives
$\omega_A(\eta)\to0$ as $\eta\downarrow0$.

Choose the penalty and sublevel in the order
\begin{equation}\label{eq:drift-thresholds}
 \begin{gathered}
 0<a\le\frac{m}{8(C_V+1)},\\
 a(1-\rho_V)s-C_G(1+\sqrt s)-aC_V\ge m/4
                           \quad(s>A),\\
 \omega_A(\eta_*)\le m/4.
 \end{gathered}
\end{equation}
The second condition holds for a sufficiently large $A$, and the third
for a sufficiently small $\eta_*>0$. Set
\[
 F(x)=x_N^{(1)}+u(H(x))-a\mathcal V(H(x)).
\]
Write $\Delta F=F(X_T)-F(x)$ and $s=\mathcal V(H(x))$.
For every perturbation with amplitude at most $\eta_*$, Foster gives
\begin{equation}\label{eq:Poisson-Foster-drift}
 \begin{aligned}
 \E_x^\eta\Delta F
 &\ge\E_x^\eta\Delta G+a(1-\rho_V)s-aC_V\\
 &\ge
 \begin{cases}
  m-\omega_A(\eta_*)-aC_V\ge5m/8,&s\le A,\\
  a(1-\rho_V)s-C_G(1+\sqrt s)-aC_V\ge m/4,&s>A.
 \end{cases}
 \end{aligned}
\end{equation}
The linear Foster term dominates the square-root loss as
$\mathcal V\to\infty$. The block envelope and
$\mathcal V^2\le C\mathcal W$ give
\begin{equation}\label{eq:all-state-drift}
 \begin{aligned}
 \E_x^\eta\Delta F&\ge\kappa:=m/4>0,\\
 \E_x^\eta|\Delta F|^2
 &\le C\E_x^\eta\bigl[|\Delta_T|^2+1
       +\mathcal V(H_T)^2+\mathcal V(H(x))^2\bigr]\\
 &\le C\mathcal W(H(x)).
 \end{aligned}
\end{equation}
The threshold $\eta_*$ is fixed by the baseline block estimates, before
choosing the cloud ratio or seed height.

\begin{theorem}[Stability of a positive current]\label{thm:stable-current}
There are $\eta_*,c_*>0$ such that, for every $\delta>0$, there is
$C_\delta>0$ with the following property. From all particles at
$z+Lj e_1$, where $z\in D$ is fixed and $j$ is a sufficiently large
integer, every allowed perturbation of amplitude at most $\eta_*$
satisfies
\begin{equation}\label{eq:escape}
 \Prob\left\{\forall t\ge0:\ X_N^{(1)}(t)\ge c_*(j+t),\
                  \mathfrak D(X_t)\le\frac\delta4X_N^{(1)}(t)\right\}
       \ge1-C_\delta/j.
\end{equation}
The conclusion is uniform on a fixed sufficiently small product
neighborhood of each translated seed. More generally, it holds under
the block hypotheses above whenever $\nu b>0$ and the translated seeds
have initial reference coordinate $Lj+O(1)$.
\end{theorem}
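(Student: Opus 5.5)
The plan is to sample the process at block times $nT$ and study the Lyapunov-type functional $F$ built in \eqref{eq:all-state-drift}. Put $F_n=F(X_{nT})$ and $\mathcal W_n=\mathcal W(H_{nT})$, write $\mathcal G_n$ for the information at time $nT$, and decompose
\[
 F_n=F_0+\sum_{r<n}d_r+M_n,\qquad d_r=\E[F_{r+1}-F_r\mid\mathcal G_r]\ge\kappa,
\]
where $M_n$ is a martingale. The drift bound $d_r\ge\kappa$ holds for every perturbation of amplitude at most $\eta_*$. This is because the block estimates in \eqref{eq:all-state-drift} apply conditionally at each block time, uniformly over coefficient systems and lattice translates; the Markov property holds for the full real configuration, and translation equivariance moves each block back to a fundamental cell. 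The conditional increments of $M$ have second moment at most $C\mathcal W_r$. Iterating \eqref{eq:Foster} gives $\E\mathcal W_r\le\rho_W^r\mathcal W_0+C$, which is bounded uniformly because the seed neighbourhood has bounded $\mathcal W$.

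\textbf{Step 1 (lower linear bound).} I will apply Doob's $L^2$ inequality to $M_n$ on dyadic blocks $2^k\le n<2^{k+1}$, or equivalently use a Kolmogorov-type inequality with weights $(j+n)^{-1}$. This gives
\[
 \Prob\bigl(\exists n:\ |M_n|>\tfrac\kappa2 n+\tfrac{c}2 j\bigr)\le C\sum_k\frac{2^k}{(j+2^k)^2}\le C/j.
\]
Outside this event, $F_n\ge F_0+\tfrac\kappa2 n-\tfrac c2 j$. Since $F_0=Lj+O(1)$, this is at least $c(j+n)$. To pass from $F$ to $X_N^{(1)}$ I use $|u|+a\mathcal V$: the penalty term $-a\mathcal V$ only helps, and $|u|\le C(1+\sqrt{\mathcal V_n})$.

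\textbf{Step 2 (controlling $\mathcal V_n$ and $\mathfrak D$).} I need $\sqrt{\mathcal V_n}$, and the cloud diameter, to be $o(j+n)$ uniformly in $n$ with probability $1-C/j$. For this I use Chebyshev summed over $n$:
\[
 \sum_n\Prob\bigl(\mathcal W_n>\epsilon^4(j+n)^4\bigr)\le\sum_n\frac{C}{\epsilon^4(j+n)^4}\le C_\epsilon j^{-3}.
\]
Since $\mathfrak D^4\le\mathcal W$, this gives $\mathfrak D(X_{nT})\le\epsilon(j+n)$. For times between observations I use the block envelope $\mathfrak X_T$. Its conditional fourth moment is controlled by $\mathcal W$ through the count moments and BDG, via the same ancestral estimate that gave \eqref{eq:block-envelope}, now at power four. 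A union bound over $n$ then gives
\[
 \sup_{nT\le t\le(n+1)T}\bigl(|X_N(t)-X_N(nT)|+\mathfrak D(X_t)\bigr)\le\epsilon(j+n)
\]
off an event of probability $C_\epsilon j^{-3}$.

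\textbf{Step 3 (conclusion).} I choose $\epsilon$ small relative to $\delta c_*$ and to $c$, and set $c_*=c/(2\max(1,T))$. Combining Steps 1 and 2 yields both inequalities in \eqref{eq:escape} for all $t\ge0$, with failure probability $C_\delta/j$. Uniformity over the product neighbourhood of seeds holds because only the bounds on $\mathcal W_0$ and $F_0-Lj$ entered. The general version under the block hypotheses follows in the same way, since only those hypotheses were used.

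The main obstacle is Step 2, specifically the fourth-moment version of the block envelope, uniform over the perturbation class. \eqref{eq:block-envelope} is stated only at order two, so I would first redo it with $p=4$. It requires $\E(K_T+1)^8$ from \eqref{eq:count-moment}, $\E U_T^8$ from BDG, and $(1+\log\Phi)^4\le C\Phi^2$. If this fails, the fallback is to use only second moments and run the martingale bound on $\sqrt{\mathcal W_n}$ more carefully. That weaker route still gives a probability that is $o(1)$, but not obviously $O(1/j)$.
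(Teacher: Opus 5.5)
Your proposal is correct and follows essentially the paper's route: the block martingale decomposition of $F$, Doob's inequality on doubled intervals for the $C/j$ bound, and Chebyshev bounds on the block excursions summed over $n$. Two simplifications are available. First, the penalty already gives the pointwise bound $X_N^{(1)}=F-u+a\mathcal V\ge F-C-C^2/(4a)$, so you never need to control $\mathcal V_n$ probabilistically. Second, your ``main obstacle'' is not one, and your assessment of the fallback is mistaken: the second-moment envelope \eqref{eq:block-envelope} together with $\sup_n\E\mathcal V(H_{nT})\le C$ gives $\sum_n C\varepsilon^{-2}(j+n)^{-2}\le C_\varepsilon/j$, exactly the required rate. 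That second-moment route is also what the general block hypotheses actually supply, since a fourth-moment envelope is not among them.
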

\begin{proof}
Use the full-state block filtration and put
\[
 \begin{gathered}
 F_n=F(X_{nT}),\qquad
 A_n=\E(F_{n+1}-F_n\mid\mathcal F_{nT})\ge\kappa,\\
 M_n=F_n-F_0-\sum_{r<n}A_r.
 \end{gathered}
\]
Iteration of Foster and \eqref{eq:all-state-drift} yield
\[
 \begin{aligned}
 \sup_n\E\mathcal W(H_{nT})
 &\le\mathcal W(H_0)+\frac{C_W}{1-\rho_W}\le C,\\
 \E M_n^2&=\sum_{r<n}\E|M_{r+1}-M_r|^2\le Cn,
 \end{aligned}
\]
uniformly in the translated seeds. For $c>0$, Doob's inequality on
successively doubled intervals gives
\begin{equation}\label{eq:escape-martingale-error}
 \begin{aligned}
 \Prob\{\exists n:|M_n|>c(j+n)\}
 &\le\Prob\{\max_{n\le2j}|M_n|>cj\}
     +\sum_{r\ge1}\Prob\{\max_{n\le2^{r+1}j}|M_n|>c2^rj\}\\
 &\le\frac{C}{c^2j}
       +\sum_{r\ge1}\frac{C2^{r+1}j}{c^2(2^rj)^2}
 \le\frac{C_c}{j}.
 \end{aligned}
\end{equation}
Choose $c<\min(L,\kappa)/4$. On the complementary event,
\[
 F_n\ge F_0+\kappa n-c(j+n)
      \ge(L-c)j+(\kappa-c)n-C\ge a_0(j+n)
\]
for some $a_0>0$ and all sufficiently large $j$.

The penalty absorbs the corrector pointwise:
\begin{equation}\label{eq:completed-square}
 X_N^{(1)}=F-u+a\mathcal V
 \ge F-C-C\sqrt{\mathcal V}+a\mathcal V
 \ge F-C-\frac{C^2}{4a}.
\end{equation}
Consequently $X_N^{(1)}(nT)\ge a_1(j+n)$ for some $a_1>0$ and all
$n$, after increasing the minimum seed height.

For interpolation and cloud width, evaluate the block excursion on the
shifted path:
\[
 B_n=\mathfrak X_T\bigl((X_{nT+s})_{s\ge0}\bigr).
\]
The block envelope and iterated Foster bound give
\begin{equation}\label{eq:escape-block-error}
 \sup_n\E B_n^2\le C,\qquad
 \Prob\{\exists n:B_n>\varepsilon(j+n)\}
 \le\frac C{\varepsilon^2}\sum_{n\ge0}(j+n)^{-2}
 \le\frac C{\varepsilon^2j}.
\end{equation}
Take $\varepsilon\le\min(a_1/2,\delta a_1/8)$. Outside the two
exceptional events in \eqref{eq:escape-martingale-error} and
\eqref{eq:escape-block-error}, every $t\in[nT,(n+1)T]$ satisfies
\[
 \begin{aligned}
 X_N^{(1)}(t)&\ge a_1(j+n)-B_n
              \ge\tfrac12a_1(j+n)
              \ge\frac{a_1}{2(1+T)}(j+t),\\
 \mathfrak D(X_t)&\le B_n\le\varepsilon(j+n)
              \le\frac{2\varepsilon}{a_1}X_N^{(1)}(t)
              \le\frac\delta4X_N^{(1)}(t).
 \end{aligned}
\]
Their total exceptional probability is at most $C_\delta/j$, proving
\eqref{eq:escape} with $c_*=a_1/[2(1+T)]$, independent of $\delta$.
The same initial moment bounds and $F_0\ge Lj-C$ hold on the stated
product neighborhoods. Every conditional estimate uses the full real
configuration, so it applies to the nonperiodic perturbations as well.
\end{proof}

\begin{corollary}[Asymptotic escape from every state]\label{cor:asymptotic-escape}
Under the block hypotheses and perturbation threshold of
Theorem~\ref{thm:stable-current}, every interior initial configuration
satisfies
\[
 \liminf_{t\to\infty}\frac{X_N^{(1)}(t)}t\ge\frac\kappa T>0,
 \qquad \frac{\mathfrak D(X_t)}t\longrightarrow0
 \quad\text{almost surely}.
\]
In particular, the cloud diameter divided by the reference coordinate
tends to zero almost surely.
\end{corollary}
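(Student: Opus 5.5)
Since the block chain $\Xi_n=H_{nT}$ has a global minorization, I will reduce the statement to the martingale argument from the proof of Theorem~\ref{thm:stable-current}, now run from an arbitrary interior start instead of a high seed. First, a remark: the drift bound \eqref{eq:all-state-drift} holds at every hidden state and is not tied to the seed height. So I would work with the same function $F(x)=x_N^{(1)}+u(H(x))-a\mathcal V(H(x))$ and the Doob decomposition $F_n=F_0+\sum_{r<n}A_r+M_n$, where $A_r\ge\kappa$. The point that changes is that there is no longer a uniform bound on $\mathcal W(H_0)$. Instead, $\mathcal W(H(x))<\infty$ is finite for every fixed interior configuration, because $\mathcal W$ is locally bounded on the interior hidden space. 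Iterating \eqref{eq:Foster} then gives $\sup_n\E_x\mathcal W(H_{nT})\le\mathcal W(H(x))+C_W/(1-\rho_W)<\infty$. Hence $\E_x|M_{n+1}-M_n|^2\le C_x$ uniformly in $n$.

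Next, I will obtain almost sure statements in place of the $1/j$ probability bounds. The series $\sum_n n^{-1}(M_{n+1}-M_n)$ is an $L^2$-bounded martingale, since $\sum_n C_x/n^2<\infty$. By Kronecker's lemma $M_n/n\to0$ almost surely. Consequently $\liminf_n F_n/n\ge\kappa$ almost surely. By the completed-square bound \eqref{eq:completed-square}, $X_N^{(1)}(nT)\ge F_n-C-C^2/(4a)$, so $\liminf_n X_N^{(1)}(nT)/n\ge\kappa$.

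For the interpolation and the cloud width I will use the block excursions $B_n$ from \eqref{eq:escape-block-error}. These satisfy $\sup_n\E_xB_n^2\le C\sup_n\E_x\mathcal V(H_{nT})<\infty$ by \eqref{eq:block-envelope}. Chebyshev's inequality gives $\sum_n\Prob(B_n>\varepsilon n)\le C\varepsilon^{-2}\sum_n n^{-2}<\infty$ for every $\varepsilon>0$. Borel--Cantelli along a countable sequence $\varepsilon\downarrow0$ then gives $B_n/n\to0$ almost surely. For $t\in[nT,(n+1)T]$ we have $X_N^{(1)}(t)\ge X_N^{(1)}(nT)-B_n$ and $\mathfrak D(X_t)\le B_n$. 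Therefore $\liminf_t X_N^{(1)}(t)/t\ge\kappa/T$ and $\mathfrak D(X_t)/t\to0$. The ratio statement follows by dividing the two.

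The main obstacle is justifying the uniform-in-$n$ second-moment bounds from a general interior start, including starts near the wall where $\Phi$ is large. My first attempt would be to check that the landing estimate \eqref{eq:landing} makes $\mathcal W(H_T)$ integrable after one block, uniformly in the start. That would remove any dependence on $\mathcal W(H(x))$ beyond the first block. The first block itself is handled by local boundedness and by the fact that the supermartingale bounds hold under the perturbation threshold $\eta_*$. One further point needs care. The conditional drift bound $A_n\ge\kappa$ must be used with the full-state filtration, so that it remains valid for nonperiodic perturbations. As in Theorem~\ref{thm:stable-current}, every estimate is conditional on the full real configuration at time $nT$, which resolves this.
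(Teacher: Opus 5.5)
Your proposal is correct and follows the paper's proof essentially step for step. Both arguments use the fixed-start Foster iteration to get $\sup_n\E_x\mathcal W(H_{nT})<\infty$, Kronecker's lemma for $M_n/n\to0$, the completed-square bound to pass to $X_N^{(1)}(nT)$, and Chebyshev plus Borel--Cantelli over rational $\varepsilon$ for $B_n/n\to0$, followed by block interpolation. The landing-estimate detour in your last paragraph is unnecessary, since finiteness of $\mathcal W(H(x))$ at the fixed interior start already suffices.
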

\begin{proof}
For each fixed interior $x$, Foster gives
$\sup_n\E_x\mathcal W(H_{nT})<\infty$.
Consequently, for every $\varepsilon>0$,
\[
 \sum_{n\ge1}\frac{\E_x|M_n-M_{n-1}|^2}{n^2}<\infty,
 \qquad
 \sum_{n\ge1}\Prob_x(B_n>\varepsilon n)
       \le\frac{C_x}{\varepsilon^2}\sum_{n\ge1}n^{-2}<\infty.
\]
The martingale series converges almost surely, and Kronecker's lemma
gives $M_n/n\to0$. Thus $\liminf_nF_n/n\ge\kappa$;
\eqref{eq:completed-square} transfers this to the reference coordinate.
Borel--Cantelli applied to positive rational $\varepsilon$ gives
$B_n/n\to0$. Interpolation over the blocks proves both assertions.
\end{proof}

\section{Kelvin inversion and explosion in bounded domains}\label{sec:kelvin}
\begin{theorem}[Common-clock conversion]\label{thm:kelvin-explosion}
Let $D$ be a periodic perforated domain of the form \eqref{eq:template} with
positive longitudinal stationary velocity. Suppose its small smooth
perturbations satisfy the block hypotheses of
Theorem~\ref{thm:stable-current}. For every sufficiently high cutting plane
$\{y^{(1)}=R\}$ missing all closed holes, the inverted domain
\[
 \Omega_R=\mathcal I(D\cap\{y^{(1)}>R\}),\qquad
 \mathcal I(y)=y/|y|^2,
\]
admits the Brownian explosion event \eqref{eq:main-event} from every
interior configuration before every positive time.
\end{theorem}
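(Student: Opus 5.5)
The plan is to realize the inverted Fleming--Viot system as a time change of an operational system in the cut periodic domain, and then use Theorem~\ref{thm:stable-current} to obtain escape there. Fix a high plane $R$ and consider the operational system on $D\cap\{y^{(1)}>R\}$. In the region where all labels are high and the cloud is narrow relative to $Y_N^{(1)}$, use the coefficients $\sigma_i^K,b_i^K$ of the introduction. Extend them smoothly elsewhere so that they stay within the amplitude $\eta_*$ of Theorem~\ref{thm:stable-current}. This is possible because $|Y_i|^2/|Y_N|^2-1=O(\mathfrak D/|Y_N|)$ and $|b_i^K|=O(1/|Y_N|)$ when $\mathfrak D\le\frac\delta4 Y_N^{(1)}$ and $Y_N^{(1)}$ is large. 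The plane $y^{(1)}=R$ is an extra absorbing wall, but on the escape event of \eqref{eq:escape} with seed height $j\gg R$ it is never reached. Because the two systems coincide until the first exit from the narrow high region, it suffices to run the perturbed periodic system on all of $D$.

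Next comes the common-clock identification. With $t(u)=\int_0^u|Y_N|^{-4}\dd s$ and $Z_i(t(u))=\mathcal I(Y_i(u))$, apply It\^o's formula to $\mathcal I(Y_i)$. The Jacobian of $\mathcal I$ is $|y|^{-2}$ times an orthogonal matrix, and $\Delta\mathcal I^{(a)}(y)=-2y^{(a)}/|y|^4$ in $\R^3$. Hence the diffusion coefficient $|Y_i|^2/|Y_N|^2$ makes $\dd\langle Z_i\rangle=|Y_N|^{-4}I_3\,\dd u=I_3\,\dd t$, and the drift $b_i^K$ cancels the second-order inversion term. The time-changed martingales are therefore independent standard Brownian motions in the clock $t$; cross-variations vanish because the $W_i$ are independent. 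Deaths occur at the same physical times for all labels because the clock is common, and boundaries map to boundaries. Copying commutes with $\mathcal I$, since $\mathcal I(Y_j)$ replaces $\mathcal I(Y_i)$. The donor marks $U_n$ stay uniform and independent of the drivers when we use the enlarged filtration $\widehat{\mathcal F}_t$. A martingale-problem uniqueness argument for each finite prefix, via Lemma~\ref{lem:finite-prefix}, then identifies the law of $(Z_i)$ with the classical Brownian Fleming--Viot system in $\Omega_R$ up to $t(\infty)$. This identification of the joint law of drivers and marks under a random, configuration-dependent time change is the step I expect to be most delicate. I would handle it prefix by prefix, stopping at replacement times and using the strong Markov property.

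On the escape event we have $|Y_N(u)|\ge c_*(j+u)$, so $t(\infty)\le\int_0^\infty c_*^{-4}(j+u)^{-4}\dd u=O(j^{-3})$, and every $|Y_i|\to\infty$, hence $Z_i\to0$. I would also show that infinitely many replacements occur before $t(\infty)$, i.e.\ $\tau_\infty=t(\infty)$. The operational system is nonexplosive by Lemma~\ref{lem:all-death-budget}, and deaths recur linearly in $u$ by the stationary count identity and escape, since holes are met along the way. Alternatively, point polarity in $\R^3$ shows that a Brownian path cannot reach $0$ without infinitely many wall hits near the accumulating spheres. Thus from seeds $z+Lje_1$, mapped to configurations near $0$, we get $\Prob(\tau_\infty<\varepsilon,\ Z_i\to0)\ge1-C_\delta/j>0$ for any $\varepsilon>0$ once $j$ is large. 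The same holds uniformly on a small product neighborhood $\mathcal N_j$ of the seed, by the uniformity clause of Theorem~\ref{thm:stable-current}.

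Finally, I would transfer the result to an arbitrary interior $z\in\Omega_R^N$ and arbitrary $T>0$. Since $\Omega_R$ is connected and open, there are tubes from each $z_i$ to the image neighborhood $\mathcal I(\mathcal N_j)$. With positive probability all labels follow their tubes within time $T/2$ with no deaths, by positivity of Brownian tube probabilities and independence. The strong Markov property at time $T/2$ combined with the previous step, with $j$ chosen so that $t(\infty)<T/2$, yields \eqref{eq:main-event}.
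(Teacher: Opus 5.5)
Your proposal is correct and takes essentially the same route as the paper. The shared steps are Kelvin coefficients smoothly cut off to amplitude at most $\eta_*$, stable escape from high translated seeds, and It\^o cancellation of the inversion drift under the common clock $t(u)=\int_0^u|Y_N(s)|^{-4}\dd s$ with the donor marks in the initial sigma-field; then the $O(j^{-3})$ lifetime bound, point polarity together with operational nonexplosion to get $\tau_\infty=t_\infty$, and Brownian tubes plus the Markov property for arbitrary starts. The paper identifies the law by one global time change (extending the driver past the clock endpoint by an independent Brownian motion and applying L\'evy's characterization) rather than prefix by prefix, and it proves infinitely many copies by the polarity argument alone, so you can drop the stationary-count heuristic for recurring deaths, which as stated only controls expectations.
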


At each operational copy, the victim inherits the donor's position at
that same time. We use one physical clock for all labels and choose
operational coefficients whose inverted covariances coincide.
This preserves the copying genealogy under inversion. Stable escape
keeps the coefficients close to Brownian ones and makes the common
clock integrable.

\subsection{Diffusion coefficients for a common clock}
Let $Y$ be the auxiliary diffusion system, with operational time $u$
and independent label Brownian drivers $W_i$. On high, narrow
configurations define
\begin{equation}\label{eq:Kelvin-coefficients}
 \sigma_i^K(Y)=\frac{|Y_i|^2}{|Y_N|^2}I,\qquad
 b_i^K(Y)=-\frac{|Y_i|^2Y_i}{|Y_N|^4}.
\end{equation}
When $Y_N^{(1)}>R$ and $\mathfrak D(Y)\le\delta|Y_N|$, these satisfy
$\|\sigma_i^K-I\|\le C\delta$ and $|b_i^K|\le C/R$.

Put $Q(Y)=\sum_{i<N}|Y_i-Y_N|^2$. Choose flat smooth cutoffs
$0\le\chi_1,\chi_2\le1$, defining $\chi_2$ on $Y_N^{(1)}>R$, with
\[
 \begin{aligned}
 \chi_1&=0\quad(Y_N^{(1)}\le R),&
 \chi_1&=1\quad(Y_N^{(1)}\ge2R),\\
 \chi_2&=1\quad(Q\le\delta^2|Y_N|^2/16),&
 \chi_2&=0\quad(Q\ge\delta^2|Y_N|^2/4).
 \end{aligned}
\]
With $\chi=\chi_1\chi_2$, define
\begin{equation}\label{eq:Kelvin-extension}
 \sigma_i=I+\chi(\sigma_i^K-I),\qquad b_i=\chi b_i^K,
\end{equation}
using $(I,0)$ directly on $Y_N^{(1)}\le R$.
The cutoffs give a smooth global extension with
\[
 \sup_Y\max_i(\|\sigma_i-I\|+|b_i|)
 \le C(\delta+R^{-1})\le\eta_*,
\]
by choosing $\delta\le1$ small and then $R$ large. Here we used
$|Y_i-Y_N|\le\delta|Y_N|/2$ and $|Y_N|\ge R$ on
$\operatorname{supp}\chi$.

For the main construction take $D=D_h$ from
Corollary~\ref{cor:positive-template} and choose $R\in h\Z$.
The centers have first coordinates $h(k+1/2)$ and radii $a_z<h/4$,
so every such plane misses the closed holes. More generally, any cutting
plane in the theorem separates all holes into those strictly above and
those strictly below it. The cut domain
$U=D\cap\{y^{(1)}>R\}$ is connected: a finite polygonal path in the
half-space meets only finitely many balls, and can be modified around
them within the half-space. In the main construction write
\begin{equation}\label{eq:inverted-domain}
 \Omega_N=\mathcal I(D_h\cap\{y^{(1)}>R\})=\Omega_R.
\end{equation}
The periodic domain and cutting plane determine the domain.

Apply Theorem~\ref{thm:stable-current} with cloud ratio
$\delta/\sqrt N$, and denote the resulting constants by $c,C$.
For a sufficiently large seed $j$ with $cj>3R$, the event
\[
 \mathcal E_j=\left\{\forall u\ge0:\
       Y_N^{(1)}(u)\ge c(j+u),\quad
       \mathfrak D(Y_u)\le\frac{\delta}{4\sqrt N}Y_N^{(1)}(u)\right\}
\]
has probability at least $1-C/j$. On this event, for every operational
time,
\[
 Q\le\frac{N-1}{16N}\delta^2(Y_N^{(1)})^2
       <\delta^2|Y_N|^2/16,\qquad Y_N^{(1)}>3R.
\]
Moreover,
\begin{equation}\label{eq:all-labels-high}
 Y_i^{(1)}
 \ge(1-\delta/(4\sqrt N))Y_N^{(1)}>R.
\end{equation}
Thus all labels stay in $U$, the cutoffs equal one, and
\eqref{eq:Kelvin-coefficients} gives their exact coefficients throughout
$\mathcal E_j$.

\subsection{The stopped Brownian law}
Up to the exit $S$ specified below, define the common clock and inverted
positions by
\begin{equation}\label{eq:common-clock}
 t(u)=\int_0^u|Y_N(s)|^{-4}\,\dd s,\qquad
 Z_i(t(u))=\mathcal I(Y_i(u)).
\end{equation}
The inversion identities in dimension three are
\[
 D\mathcal I(y)=|y|^{-2}\mathcal R(y),\qquad
 \mathcal R(y)=I-2yy^{\mathsf T}/|y|^2,\qquad
 \Delta\mathcal I(y)=-2y/|y|^4.
\]
Since $\mathcal R\mathcal R^{\mathsf T}=I$, It\^o's formula in the
exact-coefficient region gives
\[
 D\mathcal I\,b_i^K=Y_i/|Y_N|^4,\qquad
 \tfrac12(\sigma_i^K(\sigma_i^K)^{\mathsf T}):D^2\mathcal I
       =-Y_i/|Y_N|^4.
\]
The drifts cancel and the label noises are
$\mathcal R(Y_i)|Y_N|^{-2}\,\dd W_i$. Inversion commutes with copying,
preserving each victim and donor under the common clock.

For this law identification, let $\mathcal F_u$ be the natural filtration
enlarged initially by the independent donor sequence
$\mathsf m_k\sim\mathrm{Unif}\{1,\ldots,N-1\}$, independent of $W$.
The original joint driver $W$ remains Brownian in this filtration.

Stop at the first exit $S$ from the high, narrow region
\[
 \mathcal C=\{Y\in U^N:Y_N^{(1)}>2R,\
                    Q<\delta^2|Y_N|^2/16\}.
\]
Normal hole-wall hits are followed by their copies as usual.
The stop includes a copy whose poststate leaves $\mathcal C$, or a hit
of the new half-space boundary. Before it, put
\[
 L_i(u)=\int_0^{u\wedge S}
       \frac{\mathcal R(Y_i(s-))}{|Y_N(s-)|^2}\,\dd W_i(s),
 \qquad A_u=t(u\wedge S).
\]
Predictable left-limit integrands give the joint covariance
\[
 \langle L_i,L_j\rangle_u=\delta_{ij}A_uI_3,\qquad
 \langle L\rangle_u=A_uI_{3N},\quad L=(L_i)_{i\le N}.
\]

Let $a_t=\inf\{u:A_u>t\}$, with empty infimum $\infty$, and use the
time-changed filtration $\mathcal G_t=\mathcal F_{a_t}$ with usual
augmentation and
$\mathcal F_\infty=\sigma(\bigcup_u\mathcal F_u)$.
For $\xi_m=S\wedge m$, continuity and strict increase before $S$ give
\[
 \{a_t<u\}=\{A_u>t\},\qquad
 \{A_{\xi_m}\le t\}=\{\xi_m\le a_t\}\in\mathcal F_{a_t}.
\]
Thus $\theta_m=A_{\xi_m}$ and $\tau^*=\lim_m\theta_m$ are physical
stopping times, with
\[
 \{a_t=\infty\}=\{\tau^*\le t\},\qquad
 A_{a_t\wedge m}\uparrow t\wedge\tau^*.
\]
For $n\ge m$,
\[
 \begin{aligned}
 \E|L(a_t\wedge m)|^2&=3N\,\E A_{a_t\wedge m}\le3Nt,\\
 \E|L(a_t\wedge n)-L(a_t\wedge m)|^2
 &=3N\,\E(A_{a_t\wedge n}-A_{a_t\wedge m})
       \xrightarrow[m,n\to\infty]{}0.
 \end{aligned}
\]
Localizing $L$ at bracket level $n$ gives
\[
 \sup_u\E\left|L\left(u\wedge\inf\{v:A_v\ge n\}\right)\right|^2
 \le3Nn.
\]
Martingale convergence therefore supplies
$L_\infty=\lim_{u\to\infty}L(u)$ on $\{\tau^*<\infty\}$, including
$S=\infty$ with a finite clock. Time change gives a continuous endpoint
extension, constant after $\tau^*$:
\[
 \begin{gathered}
 M_t=\lim_{m\to\infty}^{L^2}L(a_t\wedge m),\qquad
 M_t=L_\infty\quad(\tau^*\le t),\\
 \langle M\rangle_t=(t\wedge\tau^*)I_{3N}.
 \end{gathered}
\]
For $s<t$, $E\in\mathcal G_s$ and $m\ge k$,
$E\cap\{a_s\le k\}\in\mathcal F_{a_s\wedge m}$.
Bounded optional sampling and the $L^2$ limits give
\[
 \begin{aligned}
 \E\bigl[\one_{E\cap\{a_s\le k\}}
       (L(a_t\wedge m)-L(a_s\wedge m))\bigr]&=0,\\
 \E\bigl[\one_{E\cap\{a_s\le k\}}(M_t-M_s)\bigr]&=0
                                  &&(m\to\infty),\\
 \E\bigl[\one_{E\cap\{s<\tau^*\}}(M_t-M_s)\bigr]&=0
                                  &&(k\to\infty).
 \end{aligned}
\]
On $\{\tau^*\le s\}$, $M_t=M_s=L_\infty$. Hence
$\E(M_t\mid\mathcal G_s)=M_s$, including the case $a_s=\infty$.

Let $\widehat B$ be an independent $3N$-dimensional Brownian motion and
augment $\mathcal G_t\vee\sigma(\widehat B_r:r\le t)$. Set
\[
 \widetilde B_t
   =M_t+\int_0^t\one_{\{r>\tau^*\}}\,\dd\widehat B_r,\qquad
 \langle\widetilde B\rangle_t=tI_{3N}.
\]
L\'evy's characterization gives a joint Brownian driver in a filtration
whose initial sigma-field contains every donor mark. Successive
conditioning with the Brownian exponential martingale gives, for
$0=t_0<\cdots<t_q$, vectors
$\vartheta_r\in\R^{3N}$ and bounded $g(\mathsf m)$,
\begin{align}\label{eq:driver-marks}
 &\E\left[g(\mathsf m)\exp\left(i\sum_{r=1}^q
      \vartheta_r\cdot(\widetilde B_{t_r}-\widetilde B_{t_{r-1}})
                           \right)\right]\notag\\
 &\hspace{12mm}=\E g(\mathsf m)
       \exp\left(-\tfrac12\sum_{r=1}^q
                      |\vartheta_r|^2(t_r-t_{r-1})\right).
\end{align}
Thus the physical label drivers are independent of one another and of
the donor sequence. The earlier donor estimates retain their natural
filtration, revealing each mark at its death.

Lemma~\ref{lem:finite-prefix} identifies every finite prefix with
Brownian Fleming--Viot evolution in $\Omega_R$. Operational nonexplosion
gives bounded paths on $[0,S\wedge m]$; if $R_m$ bounds their norms,
\begin{equation}\label{eq:finite-clock}
 \begin{gathered}
 R_m^{-4}(u_2-u_1)\le t(u_2)-t(u_1)
       \le(2R)^{-4}(u_2-u_1),\\
 0\le u_1<u_2\le S\wedge m.
 \end{gathered}
\end{equation}
The clock is a continuous, strictly increasing bijection on this
interval. With operational copy times $u_k$ and physical times
$\tau_k=t(u_k)$ before $S$,
\[
 \#\{k:\tau_k<\theta_m\}
 =\#\{k:u_k<S\wedge m\}<\infty.
\]
Hence the finite-prefix identifications are consistent up to $\tau^*$.

Operational time is recovered from the physical stopped path by
\[
 u(t)=\int_0^t|Z_N(r)|^{-4}\,\dd r.
\]
Let $\mathcal E_{j,m}$ require $S>m$ and the escape inequalities up to
operational time $m$. The recovered clock makes these events measurable
from the stopped physical path. Therefore
\[
 \mathcal E_{j,m}\downarrow\mathcal E_j,\qquad
 \Prob^Z(\mathcal E_j)
 =\lim_{m\to\infty}\Prob^Y(\mathcal E_{j,m})
 =\Prob^Y(\mathcal E_j)\ge1-C/j>0.
\]

\Needspace{18\baselineskip}
\subsection{Explosion at the clock endpoint}
\begin{lemma}[Finite-clock criterion]\label{lem:finite-clock-criterion}
On any event $E\subset\{S=\infty\}$ for which
\[
 t_\infty=\int_0^\infty|Y_N(u)|^{-4}\,\dd u<\infty,
 \qquad |Y_i(u)|\to\infty\quad(i\in[N]),
\]
the Brownian system satisfies, almost surely,
\[
 \tau_\infty=t_\infty,\qquad
 \lim_{t\uparrow\tau_\infty}Z_i(t)=0\quad(i\in[N]).
\]
For $c,j>0$, the reference growth bound
$|Y_N(u)|\ge c(j+u)^\alpha$, $\alpha>1/4$, suffices for the
finite-clock condition, with
\[
 t_\infty\le\frac{c^{-4}}{4\alpha-1}j^{1-4\alpha}.
\]
\end{lemma}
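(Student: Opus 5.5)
The proof splits into three parts: identify the physical system with the operational one up to time $t_\infty$, show that infinitely many replacements occur before $t_\infty$ (so $\tau_\infty\le t_\infty$), and exclude earlier accumulation (so $\tau_\infty\ge t_\infty$). The growth bound is a direct integral estimate.

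\textbf{Identification.} On $E\subset\{S=\infty\}$ the clock $t(u)$ is continuous and strictly increasing on $[0,\infty)$ with limit $t_\infty$. By the finite-prefix identification \eqref{eq:finite-clock} and the joint Brownian driver from \eqref{eq:driver-marks}, the stopped physical path on $[0,\tau^*)$ is the Brownian Fleming--Viot system in $\Omega_R$, with $\tau^*=t_\infty$ and $Z_i(t(u))=\mathcal I(Y_i(u))$.

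\textbf{Convergence and upper bound.} Since $|Y_i(u)|\to\infty$, we get $|Z_i(t)|=|Y_i(u(t))|^{-1}\to0$ as $t\uparrow t_\infty$ for every label. Each physical label path is continuous between copies and converges to the boundary point $0$. I expect this step to be the main obstacle: showing that infinitely many copies occur before $t_\infty$. If only finitely many occurred, then after the last copy each label would be a Brownian motion, run on the finite interval up to $t_\infty$, with a limit at $t_\infty$ equal to $0$. This is an event of probability zero by point polarity in three dimensions. More precisely, conditional on $\mathcal G_{\tau_k}$ (restarting at each physical copy time by the strong Markov property of the Brownian driver $\widetilde B$), a Brownian motion started at $z\ne0$ a.s. never hits $0$ on any finite time interval. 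The constraint $\lim_{t\uparrow t_\infty}Z_i(t)=0$ would, by path continuity of the Brownian extension through $t_\infty$ (since $\langle M\rangle$ keeps growing linearly via $\widetilde B$), force a hit of $0$ at the random time $t_\infty$. Taking a union over the countably many possible last-copy indices $k$ and labels gives probability zero, so infinitely many copies occur on $E$ and $\tau_\infty\le t_\infty$. This needs some care because $t_\infty$ is not a stopping time. To handle this, compare with the free continuation $\widetilde B$ past $\tau^*$: the label's path after its last copy coincides with $Z_i(\tau_k)+\widetilde B_i-\widetilde B_i(\tau_k)$ on $[\tau_k,t_\infty)$, and that continuous process a.s. avoids $0$ on $[\tau_k,\infty)$.

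\textbf{Lower bound.} Each operational interval $[0,m]$ contains finitely many copies by operational nonexplosion, and it maps to $[0,t(m)]$ with $t(m)<t_\infty$. Hence replacements cannot accumulate before $t_\infty$, so $\tau_\infty\ge t_\infty$, and therefore $\tau_\infty=t_\infty$.

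\textbf{Sufficient growth condition.} Under $|Y_N(u)|\ge c(j+u)^\alpha$ with $\alpha>1/4$,
\[
 \int_0^\infty|Y_N|^{-4}\,\dd u\le c^{-4}\int_0^\infty(j+u)^{-4\alpha}\,\dd u=\frac{c^{-4}j^{1-4\alpha}}{4\alpha-1},
\]
which is the stated bound on $t_\infty$.
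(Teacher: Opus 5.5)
Your proposal is correct and follows the paper's proof. Inversion gives $Z_i\to0$; point polarity of three-dimensional Brownian motion, applied after each numbered copy (and at time zero) with a countable union, excludes finitely many copies on $E$; operational nonexplosion makes the physical copy times increase exactly to $t_\infty$; and the growth bound is the same integral estimate. Your explicit use of the free continuation $\widetilde B$ past $\tau^*$ to handle the fact that $t_\infty$ is not a stopping time makes rigorous what the paper leaves implicit in the phrase ``would extend continuously to zero in finite physical time''.
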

\begin{proof}
Inversion gives $Z_i(t)\to0$ at the clock endpoint.
For three-dimensional Brownian motion the harmonic function $|x|^{-1}$
gives
\[
 \Prob_x(\tau_a<\infty)=a/|x|\longrightarrow0
       \quad(a\downarrow0,\ 0<a<|x|),
\]
where $\tau_a$ is the first hit of the ball of radius $a$. Thus a Brownian
segment started away from zero cannot reach that point.
If only finitely many copies occurred, after some fixed numbered copy
a free Brownian label segment would extend continuously to zero in
finite physical time. Applying point polarity after each numbered
copy, and also at time zero, excludes this by a countable union.

The infinitely many operational copy times tend to infinity by
nonexplosion. Their physical images therefore increase to exactly
$t_\infty$. Together with \eqref{eq:finite-clock}, this proves
$\tau_\infty=t_\infty$ on $E$. The growth bound gives
\[
 \int_0^\infty|Y_N(u)|^{-4}\,\dd u
 \le c^{-4}\int_0^\infty(j+u)^{-4\alpha}\,\dd u
 =\frac{c^{-4}}{4\alpha-1}j^{1-4\alpha}.
\]
\end{proof}

On the transferred escape event, \eqref{eq:all-labels-high} gives
$|Y_i(u)|\to\infty$ for every label, and
\begin{equation}\label{eq:finite-lifetime}
 t_\infty\le C\int_0^\infty(j+u)^{-4}\,\dd u
       =C'j^{-3}<\infty.
\end{equation}
Lemma~\ref{lem:finite-clock-criterion} therefore gives explosion at
zero at time $t_\infty$.

\begin{proposition}\label{prop:all-starts}
For each fixed inverted domain $\Omega_R$ above, and in particular
$\Omega_N$, the event \eqref{eq:main-event} has positive probability
from every interior configuration and before every $t_0>0$.
\end{proposition}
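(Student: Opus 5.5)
The plan is to combine the escape event from high seeds, already transferred to the physical system, with positive-probability transport of an arbitrary interior configuration into a small neighbourhood of an inverted seed, using the strong Markov property at a deterministic small time.

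\textbf{Step 1: a short seed lifetime.} Fix $t_0>0$ and an interior configuration $z\in\Omega_R^N$. Choose the seed height $j$ so large that three conditions hold:
\begin{itemize}
\item Theorem~\ref{thm:stable-current} applies uniformly on a fixed small product neighbourhood $V_j$ of the translated seed $(y_*+Lje_1)^N$;
\item the transferred escape probability satisfies $\Prob^Y(\mathcal E_j)\ge1-C/j\ge1/2$ uniformly over starts in $V_j$;
\item the clock bound \eqref{eq:finite-lifetime} gives $t_\infty\le C'j^{-3}<t_0/2$.
\end{itemize}
The theorem is stated uniformly on such neighbourhoods, and on $V_j$ the cutoffs equal one. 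Hence the law identification of the previous subsection applies to every physical start $w\in\mathcal I(V_j)^N$. Lemma~\ref{lem:finite-clock-criterion} then gives
\[
\Prob_w\bigl(\tau_\infty<t_0/2,\ Z_i\to0\ \forall i\bigr)\ge1/2
\quad\text{for all } w\in\mathcal I(V_j)^N .
\]
The set $\mathcal I(V_j)^N$ is an open product of small balls inside $\Omega_R^N$.

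\textbf{Step 2: a Brownian tube.} The domain $\Omega_R$ is connected, being the image of the connected cut domain $U$. For each label $i$, join $z_i$ to the target ball $\mathcal I(V_j)$ (the image of the seed neighbourhood) by a continuous path in $\Omega_R$. These paths have a positive-distance tube around them. Consider the event $G$ on which every label follows its tube within the tube radius during $[0,t_0/2]$ and ends in $\mathcal I(V_j)$.
\begin{itemize}
\item On $G$ no boundary hit occurs, so no replacement occurs and the labels are independent Brownian motions.
\item $\Prob_z(G)>0$ by the Stroock--Varadhan support theorem for Brownian motion, or by elementary small-ball estimates concatenated along a chain of balls, combined with independence of the labels.
\item Lemma~\ref{lem:finite-prefix} guarantees that the prefix on $[0,t_0/2]$ is well defined.
\end{itemize}

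\textbf{Step 3: gluing.} Apply the strong Markov property of the classical system at the deterministic time $t_0/2$. On $G$ no death has occurred and $Z(t_0/2)\in\mathcal I(V_j)^N$, so
\[
\Prob_z\bigl(\tau_\infty<t_0,\ Z_i(t)\to0\bigr)\ \ge\ \tfrac12\,\Prob_z(G)\ >\ 0 .
\]

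\textbf{Main obstacle.} The delicate point is the uniformity in Step 1: the escape bound and the common-clock identification must hold for every start in an open set, not just at the exact seed, so that the Markov property can be applied with a random landing point. I would argue this directly from the stated neighbourhood uniformity of Theorem~\ref{thm:stable-current}. I would also check that the identification of the stopped law (the time change, the driver--mark independence \eqref{eq:driver-marks}, and the finite-prefix consistency) uses no special property of the seed beyond lying in the high, narrow region $\mathcal C$. A second point to verify is measurability of the explosion event as a function of the initial point, which follows from its description through finite prefixes.
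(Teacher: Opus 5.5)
Your proposal is correct and follows essentially the same route as the paper's proof. The paper also takes the inverted seed neighbourhoods from Theorem~\ref{thm:stable-current}, with the clock bound \eqref{eq:finite-lifetime}, to obtain a uniform lower bound $1-C/j$ on explosion before $t_0/2$. It then reaches a product ball inside those neighbourhoods with no death, using connectedness and Brownian tube probabilities, and concludes by the Markov property at $t_0/2$.
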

\begin{proof}
Write $\mathcal B_t=\{\tau_\infty<t,\ \lim_{r\uparrow\tau_\infty}Z(r)=0\}$.
Fix $z\in\Omega_R^N$ and put $s=t_0/2$. The inverted seed neighborhoods
of Theorem~\ref{thm:stable-current} and \eqref{eq:finite-lifetime} give
open sets $A_j\subset\Omega_R^N$ with
\[
 \inf_{w\in A_j}\Prob_w(\mathcal B_s)
 \ge p_j:=1-C/j>0,\qquad C'j^{-3}<s,
\]
for sufficiently large $j$.
Choose a product ball $\prod_i B_i\subset A_j$. Connectedness gives
polygonal paths from $z_i$ to $B_i$ with positive clearance from the
complement. Brownian tube probabilities and independence give
\[
 \begin{aligned}
 q_{z,j}&:=\Prob_z\{\tau_1>s,\ Z(s)\in A_j\}\\
 &\ge\prod_i\Prob_{z_i}\{\tau_{\Omega_R}>s,\ W(s)\in B_i\}>0,
 \end{aligned}
\]
where $\tau_{\Omega_R}$ is a single Brownian path's exit time.
The Markov property at $s$ now yields
\[
 \Prob_z(\mathcal B_{t_0})
 \ge\E_z\left[\one_{\{\tau_1>s,\ Z(s)\in A_j\}}
                   \Prob_{Z(s)}(\mathcal B_s)\right]
 \ge q_{z,j}p_j>0.
\]
\end{proof}

\subsection{Geometry of the explosion domain}\label{sec:geometry}
The construction produces a ball with spherical holes accumulating at
the explosion point, as shown in Figure~\ref{fig:inversion-geometry}.
Inversion maps the half-space
$\{y^{(1)}>R\}$ onto
\[
 B_R=B\left(\frac{e_1}{2R},\frac1{2R}\right).
\]
An original ball with center $z$ and radius $a_z$ lying above the cutting
plane maps to a ball with
\begin{equation}\label{eq:inverted-balls}
 c_z=\frac{z}{|z|^2-a_z^2},\qquad
 \rho_z=\frac{a_z}{|z|^2-a_z^2}.
\end{equation}
Since $R\in h\Z$ and $a_z<h/4$, these are exactly the balls with
$z^{(1)}>R$. Consequently
\begin{equation}\label{eq:explicit-domain}
 \Omega_N=B_R\setminus
  \bigcup_{\substack{z\in h(\Z^3+(1/2,1/2,1/2))\\z^{(1)}>R}}
                    \overline B(c_z,\rho_z).
\end{equation}
Inversion preserves disjointness and places each closed inner ball
strictly inside $B_R$. The radial bounds are
\[
 \frac1{|z|+a_z}\le|x|\le\frac1{|z|-a_z}
 \quad\bigl(x\in\partial B(c_z,\rho_z)\bigr).
\]
Thus a compact set away from zero meets only finitely many spheres,
and $|c_z|+\rho_z=(|z|-a_z)^{-1}\to0$. The sole accumulation point
is $0\in\partial B_R$, giving
\[
 \partial\Omega_N=\partial B_R\cup
                  \bigcup_{z^{(1)}>R}\partial B(c_z,\rho_z).
\]
The removed union is relatively closed in $B_R$, so $\Omega_N$ is open;
\eqref{eq:inverted-domain} gives boundedness and connectedness. Its
boundary is locally a separated sphere away from zero, hence smooth.

\begin{proposition}\label{prop:boundary-area}
The domain \eqref{eq:explicit-domain} has finite perimeter and satisfies
\eqref{eq:main-area}. Its total boundary area is
\begin{equation}\label{eq:exact-boundary-area}
 \HH^2(\partial\Omega_N)=\frac\pi{R^2}
       +4\pi\sum_{z^{(1)}>R}\rho_z^2<\infty.
\end{equation}
\end{proposition}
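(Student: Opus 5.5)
The plan starts from the explicit description \eqref{eq:explicit-domain}. We have $\partial\Omega_N=\partial B_R\cup\bigcup_{z^{(1)}>R}\partial B(c_z,\rho_z)$, and these spheres are pairwise disjoint, disjoint from $\partial B_R$ away from $0$, and accumulate only at the point $0$. Since a point has zero $\HH^2$-measure, countable additivity gives
\[
 \HH^2(\partial\Omega_N)=4\pi(2R)^{-2}+4\pi\sum_{z^{(1)}>R}\rho_z^2 ,
\]
which is \eqref{eq:exact-boundary-area}. For finiteness, use \eqref{eq:inverted-balls} with $a_z\le Ch^3$ fixed and $|z|\ge R$, so that $\rho_z\le C a_z|z|^{-2}$. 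The centres form a lattice of spacing $h$ in the half-space $\{z^{(1)}>R\}$, and we compare the sum with an integral:
\[
 \sum_{z^{(1)}>R}|z|^{-4}\le C h^{-3}\int_{y^{(1)}>R-h}|y|^{-4}\,\dd y\le C h^{-3}\int_{R-h}^\infty\int_{\R^2}(s^2+|w|^2)^{-2}\,\dd w\,\dd s .
\]
The inner integral is of order $s^{-2}$, so the total is finite. For finite perimeter, note that $\Omega_N$ is a ball minus a countable disjoint union of closed balls. Truncating to finitely many holes gives sets $\Omega^{(n)}$ with smooth boundary, whose perimeter is at most $\HH^2(\partial\Omega_N)$. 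These sets converge to $\Omega_N$ in $L^1$, because the removed volume tail $\sum\rho_z^3$ tends to zero. Lower semicontinuity of perimeter then gives $P(\Omega_N)\le\HH^2(\partial\Omega_N)<\infty$. Alternatively, one can apply Federer's criterion directly, since $\HH^2(\partial\Omega_N)<\infty$.

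For the local estimate \eqref{eq:main-area} I would first treat the outer sphere. Since $\partial B_R$ passes through $0$ and is smooth there, $\HH^2(\partial B_R\cap B(0,r))=\pi r^2$ exactly (a spherical cap cut out by a ball centred on the sphere). This is $O(r)$, so the lower bound must come from the holes. By the radial bounds, $\partial B(c_z,\rho_z)$ lies in $B(0,r)$ when $|z|-a_z>1/r$, and misses $B(0,r)$ when $|z|+a_z<1/r$. Hence, up to boundary layers of thickness $O(h)$ in $|z|$,
\[
 \HH^2\bigl(\partial\Omega_N\cap B(0,r)\bigr)=O(r^2)+4\pi\sum_{\substack{z^{(1)}>R\\ |z|\gtrsim 1/r}}\rho_z^2 .
\]
On this range, $\rho_z^2\asymp a_z^2|z|^{-4}$ with $a_z\asymp h^3$, since $k_N$ is bounded above and below. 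The contrast bound \eqref{eq:ratio} gives uniform comparability of the radii. It then suffices to show
\[
 h^{-3}\int_{\{y^{(1)}>R,\ |y|>\rho\}}|y|^{-4}\,\dd y\asymp \rho^{-1}\qquad(\rho=1/r\to\infty).
\]
For $\rho\gg R$, the domain $\{y^{(1)}>R,|y|>\rho\}$ contains the region $\{|y|>\rho\}$ intersected with a fixed-aperture cone around $e_1$, and is contained in $\{|y|>\rho\}$. In polar coordinates both regions give $\int_\rho^\infty s^{-2}\,\dd s=\rho^{-1}$ times constants. This yields $c\,r\le\cdots\le C r$ for small $r$, once the $O(r^2)$ term is absorbed into the upper bound.

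The main obstacle is making the lattice-sum versus integral comparison rigorous, with uniform constants, near the cutting plane and near the boundary layer $|z|\approx 1/r$. For the cutting plane, I would use the fact that each cube of side $h$ around a centre has $|y|$ comparable to $|z|$ once $|z|\ge R\ge h$, so $|z|^{-4}\le C h^{-3}\int_{Q_z}|y|^{-4}\,\dd y$ and the reverse inequality also holds. For spheres only partly inside $B(0,r)$, I would include them in the upper bound and exclude them from the lower bound. Since these only shift $\rho$ by $O(h)$, they do not affect the $\asymp\rho^{-1}$ asymptotics, and taking $r_N^{\rm geom}$ small absorbs all lower-order errors.
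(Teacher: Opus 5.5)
Your proposal is correct and follows essentially the same route as the paper. The shared steps are countable additivity over the disjoint inverted spheres for the exact area, the radial bounds $1/(|z|+a_z)\le|x|\le1/(|z|-a_z)$ to decide which spheres lie in $B(0,r)$, and truncation to finitely many holes plus lower semicontinuity for the perimeter; the only difference is in the lattice count giving $\sum_{|z|\ge s}\rho_z^2\asymp 1/s$, where you compare with $\int|y|^{-4}\,\dd y$ while the paper counts dyadic shells for the upper bound and a conical shell $2/r\le|z|\le4/r$, $z^{(1)}\ge|z|/2$, for the lower bound.
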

\begin{proof}
For the fixed mesh and potential, $0<a_-\le a_z\le a_+<\infty$.
For large $|z|$,
\[
 4\pi\rho_z^2\asymp |z|^{-4},
\]
with constants depending on that mesh and potential.
A lattice shell $s\le|z|\le2s$ contains at most $Cs^3$ centers, so
\[
 \sum_{\substack{|z|\ge s\\z^{(1)}>R}}4\pi\rho_z^2
 \le C\sum_{n\ge0}(2^ns)^{-1}\le C/s.
\]
The spheres are disjoint; their countable area sum gives
\eqref{eq:exact-boundary-area}.

The radial bounds imply
\[
 \partial B(c_z,\rho_z)\cap B(0,r)\ne\varnothing
 \ \Longrightarrow\ |z|\ge r^{-1}-a_+\ge(2r)^{-1}.
\]
Consequently, for small $r$,
\[
 \HH^2(\partial B_R\cap B(0,r))=\pi r^2,
\]
and hence
\[
 \HH^2(\partial\Omega_N\cap B(0,r))
 \le\pi r^2+\sum_{\substack{|z|\ge(2r)^{-1}\\z^{(1)}>R}}
                     4\pi\rho_z^2\le Cr.
\]

For the lower bound let $\mathcal Z_r$ be the lattice centers satisfying
\[
 2/r\le|z|\le4/r,\qquad z^{(1)}\ge|z|/2.
\]
For small $r$ this solid-angle shell lies above the cutting plane,
and every corresponding inverted sphere lies in $B(0,r)$.
Let $\Sigma=\{y:2\le|y|\le4,\ y^{(1)}\ge|y|/2\}$.
Only $O_h(r^{-2})$ lattice cells meet the sector boundary, so
\[
 \#\mathcal Z_r=h^{-3}|\Sigma|r^{-3}+O_h(r^{-2})\ge cr^{-3}.
\]
Their disjoint areas give
\[
 \HH^2(\partial\Omega_N\cap B(0,r))
 \ge\sum_{z\in\mathcal Z_r}4\pi\rho_z^2
 \ge cr^{-3}r^4=cr.
\]

Enumerate the inner balls and remove the first $m$ to obtain
$\Omega_{N,m}$. Then $\one_{\Omega_{N,m}}\to\one_{\Omega_N}$ in $L^1$,
and lower semicontinuity of perimeter gives
\[
 \operatorname{Per}(\Omega_N)
 \le\liminf_{m\to\infty}\operatorname{Per}(\Omega_{N,m})
 =\HH^2(\partial B_R)+\sum_{z^{(1)}>R}4\pi\rho_z^2<\infty.
\]
The point zero has zero $\HH^2$ measure, so these spheres account for
the whole boundary area.
\end{proof}

\begin{proof}[Proof of Theorem~\ref{thm:bounded-explosion}]
Fix $N\ge3$. The selected rate $k_N$ has positive longitudinal velocity
and a strict small-contrast margin. Corollary~\ref{cor:small-contrast-transfer}
gives a fixed mesh with $v_h>0$ and $a_z<h/4$.
The uniform estimates in the fixed domain and boundary restart comparison verify
the block hypotheses for this mesh. Choose
\[
 0<\delta\le1,\qquad R\in h\Z,\qquad
 C(\delta+R^{-1})\le\eta_*.
\]
Theorem~\ref{thm:kelvin-explosion} applies to the resulting domain
\eqref{eq:explicit-domain}, giving explosion at zero from every interior
configuration before every positive target time in this fixed domain.
Proposition~\ref{prop:boundary-area} supplies the boundary assertions.
\end{proof}

\begin{corollary}[An open family of constructions]\label{cor:open-family}
For every fixed $N\ge3$, there is an open uniform-norm neighborhood of
$k_N$ within the smooth periodic rates such that each rate in that
neighborhood selects a periodic perforated domain with positive
longitudinal current and hence a bounded Brownian Fleming--Viot
explosion domain with all properties in
Theorem~\ref{thm:bounded-explosion}.
\end{corollary}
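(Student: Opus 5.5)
The plan is to run the chain of the proof of Theorem~\ref{thm:bounded-explosion} again, with $k_N$ replaced by a nearby rate, and to make every choice depend only on that rate. First, Corollary~\ref{cor:soft-open-family} gives a uniform-norm neighbourhood $\mathcal U_0$ of $k_N$ on which the soft longitudinal velocity $v_N(k)$ depends continuously on $k$. Since $v_N(k_N)\ge\mathfrak c_N\epsilon_N^3/2>0$, shrinking $\mathcal U_0$ gives $v_N(k)>0$ for every $k$ in it. I would also intersect with the open condition $k_{\max}/k_{\min}<\Theta$. This condition holds at $k_N$ by \eqref{eq:ratio}, and it persists under small uniform perturbations because $k_{\min}\ge\lambda_N/2$ there. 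Call the resulting neighbourhood $\mathcal U$ and restrict to smooth positive periodic rates inside it.

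For a fixed $k\in\mathcal U$, Corollary~\ref{cor:small-contrast-transfer} applies to the holes \eqref{eq:holes} built from $k$. It gives $\mathbf v_h(k)\to\mathbf v(k)$. The first component of $\mathbf v(k)$ is $v_N(k)$ whenever $k$ depends only on $x^{(1)}$; in general it is simply the signed first-coordinate current, which is positive by the choice of $\mathcal U$. We can therefore fix a mesh $h=L/m$ with $\mathbf v_h(k)^{(1)}\ge v_N(k)/2>0$, exactly as in Corollary~\ref{cor:positive-template}. The mesh depends on $k$, but only one mesh is needed for each rate. For small $h$ we have $a_z<h/4$, so planes $R\in h\Z$ miss the holes.

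Next we verify the block hypotheses of Theorem~\ref{thm:stable-current} for this fixed periodic domain. All of Sections~\ref{sec:fixed} and \ref{sec:boundary-restarts} were proved for an arbitrary domain of the form \eqref{eq:template}, with constants depending on the domain. This covers Proposition~\ref{prop:template-foster}, Lemma~\ref{lem:sublevel-stability}, Theorem~\ref{thm:polya-restart} and the Poisson--Foster construction. None of them uses any property of $k_N$ beyond the fact that $D_h$ is of that form and has $\nu b>0$. Theorem~\ref{thm:kelvin-explosion} then yields explosion in $\Omega_R=\mathcal I(D_h\cap\{y^{(1)}>R\})$ from every configuration before every positive time, once $\delta$ is chosen small and $R\in h\Z$ large. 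Proposition~\ref{prop:boundary-area} holds verbatim. It uses only $0<a_-\le a_z\le a_+$ and the lattice structure, both of which hold because $k$ is positive and bounded.

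The step most likely to need care is the claim that the perforated domain has positive longitudinal current for rates that are not longitudinal. There the symmetry reduction of the line model is unavailable, and we must read the first component of the vector current directly. I would handle this through the continuity proof of Corollary~\ref{cor:soft-open-family}, applied to the three-dimensional soft model. The total-variation bound on the block kernels and the bound $|j_k-j_{\widetilde k}|\le C_Nd\,\mathfrak D$ are dimension-free. Hence $\mathbf v(k)^{(1)}=\nu_kj_k^{(1)}$ is continuous at $k_N$ in the uniform norm. Positivity near $k_N$ then follows, and the rest of the argument is the same as in the proof of Theorem~\ref{thm:bounded-explosion}.
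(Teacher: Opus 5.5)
Your proposal is correct and follows the paper's proof essentially step for step. Corollary~\ref{cor:soft-open-family} and the open contrast condition $k_{\max}/k_{\min}<\Theta$ give the neighbourhood, Corollary~\ref{cor:small-contrast-transfer} fixes a mesh with positive hard current for each rate, and the fixed-domain estimates, Theorem~\ref{thm:kelvin-explosion} and Proposition~\ref{prop:boundary-area} complete the construction. Your extra concern about non-longitudinal rates is already covered by the paper, since Corollary~\ref{cor:soft-open-family} is stated for general smooth periodic rates with $v_N(k)=\nu_kj_k^{(1)}$, which is exactly the dimension-free continuity argument you sketch.
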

\begin{proof}
Corollary~\ref{cor:soft-open-family} preserves the positive velocity.
The selected rate has a strict contrast margin, so shrink the
neighborhood to retain $k_{\max}/k_{\min}<\Theta$, where $\Theta$ is
fixed in \eqref{eq:ratio}.
For each rate, Corollary~\ref{cor:small-contrast-transfer} selects a
mesh with positive hard current. Its uniform estimates in the fixed domain and
boundary comparison give stable escape, and
Theorem~\ref{thm:kelvin-explosion} converts this to explosion.
The sphere formulas and area estimates above apply to each resulting
fixed mesh.
\end{proof}

\bmhead*{Supplementary material}
The supplementary material includes Lean code formalizing selected parts of the core theory.

\clearpage

\begingroup
\raggedright
\setlength{\bibsep}{6pt}
\def\encodingdefault{T1}
\def\rmdefault{lmr}
\def\sfdefault{lmss}
\def\ttdefault{lmtt}
\fontencoding{T1}\fontfamily{lmr}\selectfont
\def\PreBibitemsHook{\raggedright\interlinepenalty=10000}
\bibliography{references,preprints}
\endgroup

\clearpage

\appendix
\section*{Appendices}
\section{Exact algebra for the cubic current}\label{app:algebra}
The cubic coefficient in Theorem~\ref{thm:copying-current} follows from
a finite polynomial recurrence for every $N\ge3$. A neutral-resolvent
calculation gives a second derivation at $N=3$ and an explicit bound
for the fourth-order remainder.

\subsection{The general-population recurrence}\label{app:general-recurrence}

Retain first-order $q$-data in the dual polynomial ring $\mathbb Q[y][q]/(q^2)$,
$y=(N-1)^{-1}$. A tagged list
$\kappa=((a_j,m_j))_{j=1}^n$ represents
\[
 E_\kappa(q)=E_n(a_1q+m_1,\ldots,a_nq+m_n),\qquad
 a_j\in\{0,1\},\quad m_j\in\Z,\quad\sum_ja_j=1.
\]
Sort lists, delete $(0,0)$ entries, and collect equal lists. The root is
$\kappa_*=((1,0))$. Mode insertion is the operator $\cA_\ell$ in
\eqref{eq:mode}, with coefficients $y$ and $1-(n-1)y$; tagged exponents
add componentwise.

Let $\Pi$ retain the root coefficient and its root basis element, and
put $\Pi^\perp=\id-\Pi$. On non-root lists define the diagonal and merger operators
\[
 \begin{aligned}
 d_0&=\tfrac12\sum_jm_j^2+\tfrac12 n(n-1)>0,
 &d_1&=\sum_ja_jm_j,\\
 \mathcal D E_\kappa&=(d_0+d_1q)E_\kappa,\\
 \mathcal M E_\kappa&=\tfrac12\sum_{a\ne c}E_{\kappa^{a\to c}},
 \\
 \mathcal D^{-1}E_\kappa
   &=\left(d_0^{-1}-d_1d_0^{-2}q\right)E_\kappa.
 \end{aligned}
\]
Set $\mathcal D^{-1}E_{\kappa_*}=\mathcal M E_{\kappa_*}=0$.
The merger $\kappa^{a\to c}$ adds entry $a$ to entry $c$ and removes
entry $a$. Thus $\Pi^\perp\mathcal M\mathcal D^{-1}$ lowers rank. For a finite
expression $g$ of maximal rank $r$, triangular elimination is
\[
 \begin{aligned}
 \mathcal Rg&=\mathcal D^{-1}
       \sum_{j=0}^{r-1}(\Pi^\perp\mathcal M\mathcal D^{-1})^j\Pi^\perp g,\\
 \mathcal Sg\,E_1(q)&=\Pi(g+\mathcal M\mathcal Rg),\\
 (s_0-\cL_0)\mathcal Rg&=g-\mathcal Sg\,E_1(q).
 \end{aligned}
\]
The sum terminates because $(\Pi^\perp\mathcal M\mathcal D^{-1})^r\Pi^\perp g=0$.
Its coefficient division is exactly~\eqref{eq:dual}; applying insertion
and this elimination along a word gives~\eqref{eq:finite-recursion}.

The first insertion illustrates the elimination. For $\ell\ne0$,
\[
 \cA_\ell E_1(q)=E_2(\ell,q)-E_1(q+\ell).
\]
With $D_\ell=\ell^2/2+1$, elimination gives
\begin{equation}\label{ar:first-elimination}
 \begin{aligned}
 \mathcal R\cA_\ell E_1(q)
 &=\frac{E_2(\ell,q)}{D_\ell}
 -\frac{\ell^2E_1(q+\ell)}{2D_\ell(\ell^2/2+\ell q)},\\
 \mathcal S\cA_\ell E_1(q)&=0.
 \end{aligned}
\end{equation}
Differentiation gives the first corrector in~\eqref{eq:real-h1}.
Up to cubic order the maximal rank is four. Every rank-four coefficient
contains $(1-y)(1-2y)$, so at $N=3$ these zero terms are deleted before
the symmetric functions are interpreted. All divisions have $d_0>0$;
the resulting correctors are finite Fourier sums with linear coordinate
factors, giving~\eqref{eq:corrector-growth}.

\subsection{Ordered resonances and the cubic coefficient}\label{app:general-resonance}
Write $\rho_N(w)$ for the unweighted root derivative of an ordered word,
as in Lemma~\ref{lem:cubic}. For the quadratic pairs, the operations
above give
\[
 \begin{array}{c|c}
 (1,-1)&-(y+2)/3\\
 (-1,1)&(y+2)/3\\
 (2,-2)&-4(y+5)/75\\
 (-2,2)&4(y+5)/75
 \end{array}
\]
and thus exact cancellation of the two quadratic pairs. At cubic order,
the three distinct permutations of $(1,1,-2)$ give
\begin{center}
\begin{tabular}{cc}
\toprule
Ordered word $w$ & $\rho_N(w)$\\
\midrule
$(1,1,-2)$ & $(11y^2+35y-34)/150$\\
$(1,-2,1)$ & $4(17857+2466y-919y^2)/121275$\\
$(-2,1,1)$ & $2(170y^2-23571y-20258)/121275$\\
\bottomrule
\end{tabular}
\end{center}
The simultaneous sign reversal of all frequencies gives the opposite
root derivatives. For $\phi_*$, the common Fourier weight of the three
displayed words is $i/8$, and that of their negatives is $-i/8$.
Their combined contribution to $s_3'(0)$ is consequently $iW_N/4$.
The exact sum is
\begin{multline*}
 \frac14\left[\frac{11y^2+35y-34}{150}
 +\frac{4(17857+2466y-919y^2)}{121275}\right.\\
 \left.{}+\frac{2(170y^2-23571y-20258)}{121275}\right]
 =\frac{(1-y)(2282-3705y)}{323400}.
\end{multline*}
Thus
\begin{equation}\label{ar:general-cubic-certificate}
 \frac14W_N=\mathfrak c_N
 =\frac{(1-y)(2282-3705y)}{323400}.
\end{equation}
Substituting $y=(N-1)^{-1}$ gives the positive population factorization
in~\eqref{eq:population-current}. The polynomial identity holds for
every fixed $N\ge3$.

\subsection{Dependence on the neutral copying rate}
The neutral rate is part of the cubic response. If instead its per-particle value is $N-1$, each ordered merger has rate one. In the finite recurrence this changes the diagonal to $\frac12\sum_jm_j^2+n(n-1)$ and passes the full resolved coefficient to each merger. Repeating the displayed operations for $\cos s+\sin2s$ gives
\[
 \mathfrak c_N^{\rm alt}
 =\frac{(N-2)(2495N-8103)}{1445850(N-1)^2}.
\]
This coefficient is negative at $N=3$ and positive at every integer $N\ge4$. Replacing the second harmonic by $b_N\sin2s$, with $b_3=-1$ and $b_N=1$ for $N\ge4$, selects a positive cubic term for that neutral rate. The normalization~\eqref{eq:intro-rate} makes the common profile $\cos s+\sin2s$ positive for every $N\ge3$ and agrees with the three-particle neutral rate one.

\subsection{A resolvent derivation for three particles}
\label{app:three-particle-verification}
For the remainder of the appendix, $N=3$ and the neutral killing rate is
one per particle. Write $L_0$ for the neutral generator
in~\eqref{eq:normalized-generator}, and put $\mu_0=\nu_{3,0}$,
$Q=(x_1+x_2+x_3)/3$, and $S=\sum_{i<j}(x_i-x_j)^2$.
Here $x_i=X_i^{(1)}$, and $e_r$ denotes the $r$th coordinate vector
in the three-dimensional label space.
This derivation computes the cubic coefficient directly from the
neutral relative law and its resolvent.

\subsubsection{The neutral relative law}
Directly $L_0S=6-S$, and the moment argument of
Lemma~\ref{lem:soft-hidden-moments} justifies its Dynkin equation.
For $n\in\R^3$ with $\sum_a n_a=0$, the characteristic function is
\begin{equation}\label{eq:n3-neutral-characteristic}
 \mathcal H_0(n)=\mu_0e^{in\cdot x}
 =\frac{\sum_{a=1}^3(1+n_a^2)^{-1}}
        {3+\frac12\sum_{a=1}^3n_a^2},\qquad \sum_an_a=0.
\end{equation}
Put $\mathcal H_2(t)=\mu_0e^{it(x_1-x_2)}$ for $t\in\R$.
Exchangeability and the generator applied to this pair test give
$(1+t^2)\mathcal H_2(t)=1$. For the three-frequency test, each directed
copy merges two frequencies, giving two pair tests per remaining
coordinate with combined rate one. Stationarity yields
$(3+|n|^2/2)\mathcal H_0(n)=\sum_a\mathcal H_2(n_a)$ and hence
\eqref{eq:n3-neutral-characteristic}. Finite first relative moments
justify directional derivatives along $e_r-e_3$. Integer-frequency
moments of nonzero total frequency vanish by common translation invariance.

\subsubsection{Finite Fourier spaces and the neutral resolvent}
\label{resp:resolvent}

We use the hidden state~\eqref{eq:hidden-state}, the neutral generator $L_0$,
its invariant law $\mu_0$, and its semigroup $P_t^0$.
All functions below are invariant under
$x\mapsto x+2\pi(1,1,1)$. The relative coordinates retain their real
values.
For $n\in\mathbb Z^3$, set
\[
 E_{n,0}(x)=e^{\mathrm i n\cdot x},\qquad
 E_{n,r}(x)=\mathrm i(x_r-x_3)e^{\mathrm i n\cdot x},
 \quad r=1,2.
\]
For a finite set $\mathcal F\subset\mathbb Z^3$, put
\[
 \mathcal U(\mathcal F)=\operatorname{span}_{\mathbb C}
       \{E_{n,r}:n\in\mathcal F,\ r=0,1,2\}.
\]
The functions at
distinct frequencies, with their scalar and two linear coefficients,
are linearly independent: first separate the integer Fourier modes of
$x_3$, and then use independence of polynomial exponentials in the two
real relative coordinates.

Let $T_{ij}=C_{ij}^{\mathsf T}$, where $C_{ij}$ replaces coordinate $i$
by coordinate $j$. Explicitly,
\[
 (T_{ij}n)_i=0,\qquad (T_{ij}n)_j=n_i+n_j,\qquad
 (T_{ij}n)_\ell=n_\ell\quad(\ell\ne i,j).
\]
Copying reassigns each original frequency component to one label. Thus
the closure of $n$ is contained in
\[
 \left\{n^\alpha:
    n_j^\alpha=\sum_{\ell:\alpha(\ell)=j}n_\ell,
    \quad\alpha:[3]\to[3]\right\},
 \qquad |\{n^\alpha\}|\le3^3.
\]
We include zero in each finite closure, so that constants belong to its space.

\begin{lemma}[Finite neutral resolvent]\label{ar:finite-resolvent}
Suppose that $\mathcal F$ is finite, contains zero, and is closed under
all $T_{ij}$. Then $L_0$ preserves $\mathcal U(\mathcal F)$, its kernel
on this space consists precisely of constants, and
\[
 \operatorname{Ran}(-L_0|_{\mathcal U(\mathcal F)})
       =\{p\in\mathcal U(\mathcal F):\mu_0p=0\}.
\]
Consequently, for every $p\in\mathcal U(\mathcal F)$ there is a unique
$R_0p\in\mathcal U(\mathcal F)$ satisfying
\begin{equation}\label{ar:normalized-resolvent}
 -L_0R_0p=p-\mu_0p,\qquad \mu_0R_0p=0.
\end{equation}
This definition is unchanged upon enlarging the finite closed space.
The operator $R_0$ commutes with common translations, reflection, and
permutations of the labels.
\end{lemma}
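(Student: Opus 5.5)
The argument has four steps: show that $L_0$ maps $\mathcal U(\mathcal F)$ into itself, identify its kernel, show that the range has codimension one, and then define $R_0$ and check uniqueness and equivariance.

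\textbf{Invariance.} I will compute $L_0$ on the basis functions directly. The Laplacian part acts on $E_{n,0}$ by $-\tfrac12|n|^2E_{n,0}$. On $E_{n,r}$ it gives
\[
 -\tfrac12|n|^2E_{n,r}+\mathrm i(n_r-n_3)E_{n,0},
\]
because $\nabla(x_r-x_3)\cdot\nabla(n\cdot x)=n_r-n_3$ and the linear factor is harmonic. The copying part uses $f(C_{ij}x)$. For the exponential, $e^{\mathrm i n\cdot C_{ij}x}=E_{T_{ij}n,0}$. For the linear factor, $(C_{ij}x)_r-(C_{ij}x)_3$ is again a linear combination of $x_1-x_3$ and $x_2-x_3$. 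Both claims follow from the definition of $C_{ij}$: it replaces $x_i$ by $x_j$, so any difference of coordinates stays a difference of coordinates. Hence every term lies in the span at frequency $T_{ij}n$, which belongs to $\mathcal F$ by closure. Every function involved is also invariant under $x\mapsto x+2\pi(1,1,1)$.

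\textbf{Kernel.} Suppose $p\in\mathcal U(\mathcal F)$ and $L_0p=0$. The growth bound $|p|\le C(1+\mathfrak D)$ and the moments from Lemma~\ref{lem:soft-hidden-moments} justify Dynkin's formula, so $P_t^0p=p$ for all $t$. The global minorization of that lemma makes the neutral hidden process ergodic with geometric convergence in weighted norm. Therefore $P_t^0p(x)\to\mu_0p$, and $p$ is the constant $\mu_0p$. This is the step needing care: convergence has to hold for functions growing linearly in $\mathfrak D$. I will handle it with the Foster contraction for $\mathfrak D^p$ in that proof together with a coupling argument like \eqref{eq:Poisson-tail}.

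\textbf{Range.} Since $\mathcal U(\mathcal F)$ is finite-dimensional and $\ker L_0$ is one-dimensional, rank–nullity gives that the range has codimension one. Stationarity, again justified by Dynkin and the moments, gives $\mu_0L_0p=0$. So the range lies in the hyperplane $\{\mu_0p=0\}$, and by dimension count it equals that hyperplane; constants are in $\mathcal U(\mathcal F)$ because $0\in\mathcal F$. The hyperplane is proper, since it misses the nonzero constant $1$, so the dimensions match.

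\textbf{Definition, uniqueness and equivariance.} For $p\in\mathcal U(\mathcal F)$, pick any preimage $g$ with $-L_0g=p-\mu_0p$. Set $R_0p=g-\mu_0g$; it is unique because preimages differ by kernel elements, which are constants.
\begin{itemize}
\item \emph{Enlarging $\mathcal F$.} If $\mathcal F\subset\mathcal F'$, the solution found in the smaller space also solves the problem in the larger one. By uniqueness in the larger space, the two definitions agree.
\item \emph{Symmetries.} Common translations, reflection and label permutations commute with $L_0$ and preserve $\mu_0$, by the symmetry of the neutral dynamics and uniqueness of the invariant law. They also map closed frequency spaces to closed frequency spaces. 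Conjugating the defining equation by such a symmetry and applying uniqueness shows that $R_0$ commutes with it.
\item \emph{Relabelling the reference.} Label permutations move the reference label $3$, but $\mathcal U$ is unchanged: $x_r-x_{3}$ expressed through another reference is still a linear combination of coordinate differences, so the span is the same.
\end{itemize}
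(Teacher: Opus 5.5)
Your proposal is correct and follows essentially the paper's route. That route is explicit basis formulas for invariance, then Dynkin plus convergence $P_t^0u\to\mu_0u$ for linearly growing $u$ to identify the kernel, then rank--nullity for the range, and finally uniqueness in a common closed space for enlargement and symmetries; the only difference is that the paper controls the tails by truncation with the explicit second moment $\E_hS(H_t)=6+e^{-t}(S(h)-6)$, while your Foster--coupling bound does the same job. One harmless slip: because $E_{n,r}$ carries the factor $\mathrm i$, the Laplacian cross term is $-(n_r-n_3)E_{n,0}$, not $+\mathrm i(n_r-n_3)E_{n,0}$, which does not affect invariance.
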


\begin{proof}
Put $d_r=e_r-e_3$. Since $T_{ij}d_r$ has coordinate sum zero, define the
integer matrix $M_{ij}$ by
\[
 T_{ij}d_r=\sum_{q=1}^2(M_{ij})_{qr}d_q;
 \qquad (M_{ij})_{qr}=(T_{ij}d_r)_q.
\]
Direct differentiation and substitution give
\begin{align}
 \tfrac12\Delta E_{n,0}&=-\tfrac12|n|^2E_{n,0},
       \label{ar:neutral-basis}\\
 \tfrac12\Delta E_{n,r}
   &=-\tfrac12|n|^2E_{n,r}-(n_r-n_3)E_{n,0},\nonumber\\
 E_{n,0}(C_{ij}x)&=E_{T_{ij}n,0}(x),\nonumber\\
 E_{n,r}(C_{ij}x)
   &=\sum_{q=1}^2(M_{ij})_{qr}E_{T_{ij}n,q}(x).\nonumber
\end{align}
These formulas prove invariance.

For $u\in\mathcal U(\mathcal F)$, differentiation and copying give
\[
 \begin{aligned}
 |u|+|L_0u|&\le C_u(1+\sqrt S),\\
 \sum_a|\partial_a u|^2
       +\sum_{i\ne j}|u(C_{ij}x)-u(x)|^2
       &\le C_u(1+S),\qquad S(C_{ij}x)\le2S(x).
 \end{aligned}
\]
Let $N_T$ count neutral copies and
$U_T=\max_i\sup_{t\le T}|B_i(t)|$ for the longitudinal Brownian drivers.
Copying cannot increase their range, so
\[
 \begin{gathered}
 \sup_{t\le T}S(H_t)
 \le3[\mathfrak D(x)+4(N_T+1)U_T]^2,\\
 N_T\sim\mathrm{Poisson}(3T),\quad N_T\perp U_T,\qquad
 \E_h\sup_{t\le T}S(H_t)\le C_T(1+S(h)).
 \end{gathered}
\]
This bound removes localization from Dynkin's equation for $S$.
Solving $L_0S=6-S$ gives
\begin{equation}\label{ar:neutral-spread}
 \mathbb E_hS(H_t)=6+e^{-t}(S(h)-6),\qquad
 \mu_0S=6.
\end{equation}
The gradient and jump bounds now give integrable quadratic variation
on each finite interval, hence
\begin{equation}\label{ar:neutral-dynkin}
 P_t^0u(h)-u(h)=\int_0^tP_s^0L_0u(h)\,\dd s,
 \qquad \mu_0L_0u=0.
\end{equation}
Lemma~\ref{lem:soft-hidden-moments} gives total variation convergence.
For $R\ge1$, truncation and~\eqref{ar:neutral-spread} give
\[
 \begin{aligned}
 |P_t^0u(h)-\mu_0u|
 &\le C_u(1+R)\|P_t^0(h,\cdot)-\mu_0\|_{\TV}\\
 &\quad+C_u[P_t^0S(h)+\mu_0S](R^{-2}+R^{-1})
 \longrightarrow0
 \end{aligned}
\]
by taking $t\to\infty$ and then $R\to\infty$.
If $L_0u=0$, Dynkin's equation gives $u=P_t^0u\to\mu_0u$.
Thus $\ker L_0=\mathbb C1$. Put
$\mathcal U_0=\{u\in\mathcal U(\mathcal F):\mu_0u=0\}$. Then
\[
 \begin{gathered}
 \operatorname{Ran}(-L_0)\subset\mathcal U_0,\qquad
 \dim\operatorname{Ran}(-L_0)=\dim\mathcal U_0
                  =\dim\mathcal U(\mathcal F)-1,\\
 -L_0|_{\mathcal U_0}:\mathcal U_0\longrightarrow\mathcal U_0
                   \quad\text{is invertible}.
 \end{gathered}
\]
This proves~\eqref{ar:normalized-resolvent}. Solutions in two finite
closed spaces agree because their difference is centered and harmonic
in the finite closed sum. Each stated symmetry preserves $L_0$ and
$\mu_0$; uniqueness in a common closure gives commutation with $R_0$.
\end{proof}

Copying preserves the total frequency $\sum_jn_j$. On a closed space
consisting only of nonzero total frequencies, all neutral means vanish
by common translation invariance, and the preceding kernel argument
shows that $-L_0$ is directly invertible. This is the case for both
resolvents in each ordered resonance calculated below.

\begin{lemma}[Coefficient rules]\label{ar:coefficient-rules}
For a nonzero integer $b$, define
\[
 B_bf=\frac12\sum_{i\ne j}e^{\mathrm i bx_i}
                         \bigl(f(C_{ij}x)-f(x)\bigr),
 \qquad
 A_b=\frac16\sum_{i\ne j}e^{\mathrm i bx_i}(x_j-x_i).
\]
In addition to~\eqref{ar:neutral-basis}, the finite matrix rules are
\begin{align}
 B_bE_{n,0}
  &=\frac12\sum_{i\ne j}
       (E_{T_{ij}n+be_i,0}-E_{n+be_i,0}),\label{ar:mode-rules}\\
 B_bE_{n,r}
  &=\frac12\sum_{i\ne j}
       \left(\sum_{q=1}^2(M_{ij})_{qr}E_{T_{ij}n+be_i,q}
                         -E_{n+be_i,r}\right).\nonumber
\end{align}
Write $[p]_{n,r}$ for the coefficient of $E_{n,r}$ in a basis expansion
of $p$. The only nonzero coefficients of $\mathrm i A_b$ are
\begin{equation}\label{ar:source-coefficients}
 [\mathrm i A_b]_{be_i,r}
       =\frac16(1-3\one_{\{i=r\}}),\qquad r=1,2.
\end{equation}
For integer $n$ of coordinate sum zero, put
$D(n)=3+|n|^2/2$ and $U(n)=\sum_j(1+n_j^2)^{-1}$. Then
\begin{align}
 \mu_0E_{n,0}&=\mathcal H_0(n)=\frac{U(n)}{D(n)},
       \label{ar:neutral-expectations}\\
 \mu_0E_{n,r}
  &=\frac{D(n)\left[-2n_r/(1+n_r^2)^2
                         +2n_3/(1+n_3^2)^2\right]
                  -U(n)(n_r-n_3)}{D(n)^2}.\nonumber
\end{align}
Both means are zero when $\sum_jn_j\ne0$. Thus the generator matrices,
source vectors, neutral expectations, and normalized resolvents have
rational entries in these bases.
\end{lemma}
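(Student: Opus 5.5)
The statement to prove is Lemma~\ref{ar:coefficient-rules}. I will prove it as a sequence of direct computations in the bases $E_{n,r}$, using Lemma~\ref{ar:finite-resolvent} and the neutral characteristic function~\eqref{eq:n3-neutral-characteristic}. There are three pieces: the mode rules~\eqref{ar:mode-rules}, the source coefficients~\eqref{ar:source-coefficients}, and the neutral expectations~\eqref{ar:neutral-expectations}.

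\textbf{Mode rules and source coefficients.} For the mode rules, I substitute the copying identities already recorded in~\eqref{ar:neutral-basis}. These are $E_{n,0}(C_{ij}x)=E_{T_{ij}n,0}(x)$ and $E_{n,r}(C_{ij}x)=\sum_q(M_{ij})_{qr}E_{T_{ij}n,q}(x)$. I then multiply by $e^{\mathrm i bx_i}$. Multiplying by this factor shifts the frequency by $be_i$ and leaves the linear prefactor $\mathrm i(x_r-x_3)$ untouched, since that prefactor is evaluated on the copied configuration before multiplication. This gives both lines of~\eqref{ar:mode-rules} immediately.

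For the source coefficients, I expand $\mathrm iA_b=\frac{\mathrm i}6\sum_{i\ne j}e^{\mathrm i bx_i}(x_j-x_i)$. For fixed $i$, I sum over $j\ne i$ to get $\frac{\mathrm i}6 e^{\mathrm i bx_i}\bigl(\sum_jx_j-3x_i\bigr)$. Next I rewrite $\sum_jx_j-3x_i=\sum_{r=1}^2(1-3\one_{\{i=r\}})(x_r-x_3)$. To check this, note that both sides are invariant under common translation, so it suffices to compare the coefficients of $x_1$ and $x_2$ with $x_3$ set to zero. The result is the asserted coefficient at frequency $be_i$ with $r=1,2$.

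\textbf{Neutral expectations.} The identity $\mu_0E_{n,0}=\mathcal H_0(n)$ is~\eqref{eq:n3-neutral-characteristic}. The vanishing for nonzero total frequency follows from invariance of $\mu_0$ under common integer-lattice translations in the hidden representation: shifting $x_3$ multiplies the mean by $e^{\mathrm i L\sum n}$ for every phase. More precisely, the phase of $x_3$ is uniform under $\mu_0$, because the reference phase is the projection of a Brownian motion with independent relative coordinates under common shifts. For $E_{n,r}$, I will differentiate the characteristic function along the direction $d_r=e_r-e_3$. The function $t\mapsto\mu_0e^{\mathrm i(n+td_r)\cdot x}$ has derivative $\mu_0E_{n,r}$ at $t=0$, by dominated convergence using $\mu_0 S<\infty$ from~\eqref{ar:neutral-spread}. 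Extending $\mathcal H_0$ to real arguments of zero sum is legitimate, since the proof of~\eqref{eq:n3-neutral-characteristic} was given for real $n$ with $\sum n_a=0$. Differentiating $U/D$ along $d_r$ is routine: $\partial U=-2n_r/(1+n_r^2)^2+2n_3/(1+n_3^2)^2$ and $\partial D=n_r-n_3$. This yields the displayed quotient.

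Rationality of all entries then follows, because every matrix, source vector and mean has rational entries. The resolvent is the inverse of a rational matrix on the centered subspace given by Lemma~\ref{ar:finite-resolvent}. I expect the only delicate step to be justifying the derivative of the characteristic function at real frequencies off the integer lattice. For this I would rely on the first-moment bound and the real-$n$ derivation of~\eqref{eq:n3-neutral-characteristic}.
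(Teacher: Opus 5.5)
Your proposal follows the paper's proof step by step. The mode rules come from the copying identities in \eqref{ar:neutral-basis}, with the pre-copy factor $e^{\mathrm i bx_i}$ adding $be_i$ after $T_{ij}$ has acted. The source coefficients come from summing the two donor differences for each victim. The relative means come from differentiating $U/D$ along $e_r-e_3$; your dominated-convergence justification via $\mu_0S=6$ and the real zero-sum form of \eqref{eq:n3-neutral-characteristic} is exactly what is needed. Rationality comes from inverting the centred rational matrix.

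The one step that does not work as written is the vanishing of the means at nonzero total frequency.
\begin{itemize}
\item Integer-lattice translations prove nothing. For integer $n$ the factor $e^{\mathrm i L\sum_jn_j}$ equals one, and such a translation does not even change the hidden state.
\item Uniformity of the marginal phase of $x_3$ is not enough either. Since $E_{n,r}=e^{\mathrm i(\sum_jn_j)x_3}\,g(x_1-x_3,x_2-x_3)$ for $r=0,1,2$, you need the phase to be uniform conditionally on the relative coordinates.
\item Your heuristic that the reference phase is the projection of a Brownian motion is also not right: $x_3$ jumps whenever label $3$ copies.
\end{itemize}

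The correct argument uses continuous common shifts $x\mapsto x+c(1,1,1)$ with $c\in\R$. The constant-rate neutral dynamics commute with these shifts. Hence the image of $\mu_0$ under the induced hidden-space map (rotate the reference phase by $c$, fix the relative coordinates) is again invariant. Uniqueness of the invariant law (Lemma~\ref{lem:soft-hidden-moments}) then gives that $\mu_0$ itself is shift-invariant. It follows that $\mu_0E_{n,r}=e^{\mathrm i c\sum_jn_j}\mu_0E_{n,r}$ for every real $c$. This forces $\mu_0E_{n,r}=0$ whenever $\sum_jn_j\ne0$, which is what the paper means by ``common translation invariance.''
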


\begin{proof}
The factor $e^{\mathrm i bx_i}$ is evaluated at the position before
copying, so it shifts the gain frequency by $be_i$ after $T_{ij}$ has
acted. This gives~\eqref{ar:mode-rules}. Summing the two donor
differences for each victim gives~\eqref{ar:source-coefficients}.
The expectation formulas follow from the calculation of~\eqref{eq:n3-neutral-characteristic} by
differentiating $U/D$ along $e_r-e_3$. These directions have coordinate
sum zero, as required by that characteristic function. The vanishing
outside total frequency zero follows from common translation invariance.
Finally, inversion of the centered finite rational matrix gives a
rational resolvent; its mean normalization also uses only the displayed
rational moments.
\end{proof}

For a real mean-zero trigonometric polynomial
$\phi=\sum_{a\in\mathcal F}\widehat\phi_a e^{\mathrm i ax}$,
$\mathcal F\subset\Z\setminus\{0\}$ finite, define
\[
 \cA_\phi f=\frac12\sum_{i\ne j}\phi(x_i)
                       [f(C_{ij}x)-f(x)],\qquad
 \cL_{\epsilon,\phi}=L_0+\epsilon\cA_\phi.
\]
Write $\nu_{\epsilon,\phi}$ for the hidden invariant law of the rate
$1+\epsilon\phi$, and $v_{3,\epsilon,\phi}$ for its longitudinal velocity.

\begin{proposition}[Finite-profile response]\label{ar:finite-profile-response}
For $|\epsilon|\|\phi\|_\infty\le1/2$, the centered neutral correctors
\begin{equation}\label{ar:three-particle-correctors}
 \begin{aligned}
 h_1^\phi&=R_0(\cA_\phi Q),&
 h_2^\phi&=R_0(\cA_\phi h_1^\phi),\\
 \mathcal C_3(\phi)&=\mu_0\cA_\phi h_2^\phi,&
 h_3^\phi&=R_0(\cA_\phi h_2^\phi).
 \end{aligned}
\end{equation}
exist as real finite Fourier-linear functions. The first two response
coefficients vanish, and
\begin{equation}\label{ar:finite-profile-expansion}
 \begin{gathered}
 \mu_0\cA_\phi Q=\mu_0\cA_\phi h_1^\phi=0,\\
 v_{3,\epsilon,\phi}
   =\epsilon^3\mathcal C_3(\phi)
       +\epsilon^4\nu_{\epsilon,\phi}\cA_\phi h_3^\phi,\\
 |\nu_{\epsilon,\phi}\cA_\phi h_3^\phi|\le M_\phi<\infty.
 \end{gathered}
\end{equation}
In terms of the single-mode operators and sources in
Lemma~\ref{ar:coefficient-rules},
\begin{equation}\label{ar:finite-profile-triads}
 \mathcal C_3(\phi)
 =\sum_{\substack{a,b,c\in\mathcal F\\a+b+c=0}}
 \widehat\phi_a\widehat\phi_b\widehat\phi_c\,
       \mu_0B_cR_0B_bR_0A_a.
\end{equation}
\end{proposition}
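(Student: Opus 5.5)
The plan is to follow the scheme of Theorem~\ref{thm:copying-current}, with the general finite-rank elimination replaced by the neutral resolvent $R_0$ of Lemma~\ref{ar:finite-resolvent}.

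\emph{Existence and reality of the correctors.} First note that $Q$ is the root-derivative object. We have $\cA_\phi Q=\sum_a\widehat\phi_a A_a$, which by \eqref{ar:source-coefficients} lies in $\mathcal U(\mathcal F_1)$ for the $T_{ij}$-closure $\mathcal F_1$ of $\{ae_i\}\cup\{0\}$. By \eqref{ar:mode-rules}, each application of $\cA_\phi=\sum_b\widehat\phi_bB_b$ maps a finite closed space into another finite space, which we then close again. Lemma~\ref{ar:finite-resolvent} therefore defines $h_1^\phi,h_2^\phi,h_3^\phi$ as finite Fourier-linear functions, provided each argument is in the range of $-L_0$ up to its mean. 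By that lemma, $R_0$ subtracts the mean automatically, so the correctors satisfy
\[
 -L_0h_1^\phi=\cA_\phi Q-\mu_0\cA_\phi Q,
\]
and similarly at the next two orders. Reality follows because $\phi$ is real, the coefficients in \eqref{ar:mode-rules} are real, and $R_0$ commutes with the conjugation-plus-frequency-reversal symmetry. Translation invariance under $x\mapsto x+2\pi(1,1,1)$ is built into the basis, so the correctors are hidden-state functions.

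\emph{Vanishing of the first two coefficients.} $\mu_0\cA_\phi Q=\sum_a\widehat\phi_a\mu_0A_a$, and each $A_a$ has total frequency $a\neq0$, so it has zero neutral mean by common translation invariance (Lemma~\ref{ar:coefficient-rules}). For $\mu_0\cA_\phi h_1^\phi=\sum_{a,b}\widehat\phi_a\widehat\phi_b\,\mu_0B_bR_0A_a$, only the pairs $b=-a$ survive, again by total-frequency conservation. I would then pair the words $(a,-a)$ and $(-a,a)$. Reflection $x\mapsto -x$ preserves $L_0$ and $\mu_0$, commutes with $R_0$, maps $B_b$ to $B_{-b}$, and maps $A_a$ to $-A_{-a}$, because $A$ carries a linear factor. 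Hence
\[
 \mu_0B_{-a}R_0A_a=-\mu_0B_aR_0A_{-a}.
\]
Both words carry the weight $|\widehat\phi_a|^2$, so the paired contributions cancel; this mirrors \eqref{eq:word-reflection}. With both coefficients zero, the corrector equations become exact,
\[
 L_0h_1^\phi=-\cA_\phi Q,\qquad L_0h_2^\phi=-\cA_\phi h_1^\phi,\qquad L_0h_3^\phi=\mathcal C_3(\phi)-\cA_\phi h_2^\phi.
\]
Setting $G_\epsilon=Q+\epsilon h_1^\phi+\epsilon^2h_2^\phi+\epsilon^3h_3^\phi$, we obtain
\[
 \cL_{\epsilon,\phi}G_\epsilon=\epsilon^3\mathcal C_3(\phi)+\epsilon^4\cA_\phi h_3^\phi.
\]
Expanding $\mathcal C_3=\mu_0\cA_\phi R_0\cA_\phi R_0\cA_\phi Q$ by multilinearity, and keeping only the triads with $a+b+c=0$ (all other terms have nonzero total frequency and hence zero mean), gives \eqref{ar:finite-profile-triads}.

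\emph{Remainder and the main obstacle.} As in the proof of Theorem~\ref{thm:copying-current}, $G_\epsilon-x_3$ is an integrable hidden function, so its stationary increment is zero. Dynkin's formula under $\nu_{\epsilon,\phi}$ then gives the velocity identity in \eqref{ar:finite-profile-expansion}. The step I expect to be the main obstacle is justifying Dynkin's formula under the perturbed law and bounding the remainder uniformly in $\epsilon$. The rate $1+\epsilon\phi$ lies in $[1/2,3/2]$, so the bounded-rate remark after Lemma~\ref{lem:soft-hidden-moments} supplies minorization and moment constants uniform in $\epsilon$. The correctors and $\cA_\phi h_3^\phi$ obey the growth bound $C(1+\mathfrak D)$, and their gradients and copy jumps are square-controlled by $C(1+\mathfrak D^2)$, since $\mathfrak D$ does not increase under copying. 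Localizing at $\mathfrak D\le m$ and removing the stopping with the Poisson and Brownian maximal moments, exactly as in that proof, then yields
\[
 |\nu_{\epsilon,\phi}\cA_\phi h_3^\phi|\le M_\phi:=B_\phi(1+A/\beta).
\]
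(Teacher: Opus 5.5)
Your proposal is correct and follows essentially the paper's route. You obtain the correctors from finite closed Fourier spaces via Lemma~\ref{ar:finite-resolvent}, get the vanishing of the first two coefficients from total-frequency selection plus reflection, and control the remainder through stationary Dynkin identities under the uniform bounded-rate moment bounds. The only differences are cosmetic: the paper writes three separate stationary identities for the hidden correctors together with $v_{3,\epsilon,\phi}=\epsilon\,\nu_{\epsilon,\phi}\cA_\phi Q$ instead of your combined $G_\epsilon$, and it phrases the cancellation of the quadratic coefficient as translation/reflection invariance of that quadratic form rather than your word-by-word pairing.
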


\begin{proof}
The source $\cA_\phi Q=\sum_a\widehat\phi_aA_a$ is hidden.
Frequency shifts followed by copying closures remain finite, so
Lemma~\ref{ar:finite-resolvent} supplies each corrector.
Real inputs have real centered solutions by uniqueness.
Nonzero total frequencies have zero neutral means, giving
$\mu_0\cA_\phi Q=0$.
Put $\mathcal C_2(\phi)=\mu_0\cA_\phi R_0\cA_\phi Q$.
Commutation of $R_0$ with translations and reflection gives
\[
 \mathcal C_2(\phi(\cdot+\theta))=\mathcal C_2(\phi),\qquad
 \mathcal C_2(\phi(-\cdot))=-\mathcal C_2(\phi).
\]
Translation restricts this quadratic Fourier polynomial to products
$\widehat\phi_a\widehat\phi_{-a}=|\widehat\phi_a|^2$.
Reflection fixes every such product; hence $\mathcal C_2=0$.
The resolvent equations are therefore
\[
 \begin{gathered}
 L_0h_1^\phi=-\cA_\phi Q,\qquad
 L_0h_2^\phi=-\cA_\phi h_1^\phi,\\
 L_0h_3^\phi=\mathcal C_3(\phi)-\cA_\phi h_2^\phi.
 \end{gathered}
\]
Finite Fourier-linear functions, their first two derivatives and copying
increments are bounded by $C_\phi(1+\sqrt S)$.
Lemma~\ref{lem:soft-hidden-moments}, for rates in $[1/2,3/2]$,
thus justifies stationary Dynkin equations and gives
\[
 \begin{aligned}
 \nu_{\epsilon,\phi}\cA_\phi Q
   &=\epsilon\nu_{\epsilon,\phi}\cA_\phi h_1^\phi,\\
 \nu_{\epsilon,\phi}\cA_\phi h_1^\phi
   &=\epsilon\nu_{\epsilon,\phi}\cA_\phi h_2^\phi,\\
 \nu_{\epsilon,\phi}\cA_\phi h_2^\phi
   &=\mathcal C_3(\phi)
       +\epsilon\nu_{\epsilon,\phi}\cA_\phi h_3^\phi.
 \end{aligned}
\]
Since $v_{3,\epsilon,\phi}=\epsilon\nu_{\epsilon,\phi}\cA_\phi Q$,
this proves~\eqref{ar:finite-profile-expansion} and its uniform remainder.
Indeed $\sqrt S\le\sqrt3\,\mathfrak D$ and
\[
 |\nu_{\epsilon,\phi}\cA_\phi h_3^\phi|
 \le C_\phi\left(1+\sqrt3
       \sup_{|\eta|\|\phi\|_\infty\le1/2}\nu_{\eta,\phi}\mathfrak D\right)
 =:M_\phi<\infty.
\]
Expanding the three perturbations gives~\eqref{ar:finite-profile-triads}:
copying preserves total frequency, while each $B_a$ adds $a$;
only $a+b+c=0$ survives neutral averaging.
\end{proof}

\subsubsection{A five-dimensional calculation of the resonant coefficient}
\label{resp:chirality}

The ordered contributions can be calculated in the label-symmetric
subspace. This reduces the first resolvent to dimension two and the
second to dimension at most five. For nonzero integers $a,b$, define
\begin{align*}
 F_a&=\sum_i e^{\mathrm i ax_i},&
 Z_a&=\sum_i\mathrm i(x_i-Q)e^{\mathrm i ax_i},\\
 T_{a,b}&=\sum_{i\ne j}e^{\mathrm i(ax_i+bx_j)},&
 U_{a,b}&=\sum_{i\ne j}\mathrm i(x_i-Q)e^{\mathrm i(ax_i+bx_j)},\\
 V_{a,b}&=\sum_{i\ne j}\mathrm i(x_j-Q)e^{\mathrm i(ax_i+bx_j)}.
\end{align*}
Thus $T_{a,b}=T_{b,a}$ and $V_{a,b}=U_{b,a}$; when $a=b$ the last
two functions coincide. These functions are defined on the hidden space,
since every linear factor is relative to $Q$. The source obeys
$\mathrm i A_a=-Z_a/2$.

Diffusion and copying act on the first two functions as
\begin{equation}\label{ar:first-system}
 L_0F_a=-\frac{a^2}{2}F_a,\qquad
 L_0Z_a=-\frac{a^2+2}{2}Z_a-\frac{2a}{3}F_a.
\end{equation}
The diffusion cross term is $-(2a/3)F_a$ and the net copying term is
$-Z_a$. Since $a\ne0$, both means vanish, and~\eqref{ar:first-system} gives
\begin{equation}\label{ar:first-resolvent}
 R_0(\mathrm i A_a)=p_aF_a+q_aZ_a,\qquad
 p_a=\frac4{3a(a^2+2)},\qquad q_a=-\frac1{a^2+2}.
\end{equation}

For $s=a+b\ne0$, set
\[
 \alpha=\frac{s^2}{2},\qquad \beta=\frac{s^2+2}{2},\qquad
 j=\frac{a^2+b^2+2}{2},\qquad k=j+1.
\]
Put $\boldsymbol\psi_{a,b}=(F_s,Z_s,T_{a,b},U_{a,b},V_{a,b})^{\mathsf T}$.
The basis rules~\eqref{ar:neutral-basis} give
\begin{equation}\label{ar:second-system}
 \begin{gathered}
 L_0\boldsymbol\psi_{a,b}=-G_{a,b}^{\mathsf T}\boldsymbol\psi_{a,b},\\
 G_{a,b}=\begin{pmatrix}
 \alpha&2s/3&-2&0&0\\
 0&\beta&0&-1&-1\\
 0&0&j&(2a-b)/3&(2b-a)/3\\
 0&0&0&k&1\\
 0&0&0&1&k
 \end{pmatrix}.
 \end{gathered}
\end{equation}
Mutual copies merge $a,b$; the remaining copies preserve their multiset.
Diffusion contributes the three off-diagonal frequency terms in $G_{a,b}$.
The next perturbation has source
\begin{equation}\label{ar:second-source}
 B_bF_a=-F_s+\frac12T_{a,b},\qquad
 B_bZ_a=-Z_s+\frac13U_{a,b}+\frac23V_{a,b}.
\end{equation}

Since $s\ne0$ and $k>1$, the resolvent coefficients are
\begin{equation}\label{ar:back-substitution}
 \begin{gathered}
 \mathbf b_{a,b}=(-p_a,-q_a,p_a/2,q_a/3,2q_a/3)^{\mathsf T},\\
 \mathbf w_{a,b}=(f,z,t,u,v)^{\mathsf T}
       =G_{a,b}^{-1}\mathbf b_{a,b},\\
 R_0B_bR_0(\mathrm i A_a)
       =\boldsymbol\psi_{a,b}^{\mathsf T}\mathbf w_{a,b},\qquad
 \det G_{a,b}=\alpha\beta j(k^2-1)>0.
 \end{gathered}
\end{equation}
When $a=b$, $U_{a,a}=V_{a,a}$; the redundant representation reduces by
\begin{equation}\label{ar:equal-frequency}
 P(f,z,t,u,v)^{\mathsf T}=(f,z,t,u+v)^{\mathsf T},\qquad
 u+v=\frac{q_a}{k+1}.
\end{equation}
The matrix is invertible even in this redundant representation; $P\mathbf w_{a,a}$
gives the unique coefficients in the four-function basis.

Let $\delta_r=e_r-(1,1,1)/3$. For $c=-s$, define the terminal row and
its pair and triple moments by
\[
 \begin{gathered}
 \boldsymbol\lambda_{a,b}=(\lambda_F,\lambda_Z,\lambda_T,\lambda_U,\lambda_V)
 =\mu_0B_{-s}\boldsymbol\psi_{a,b}^{\mathsf T},\\
 h_r=\frac1{1+r^2},\quad g_r=\frac{2r}{(1+r^2)^2},\quad
 H=\mathcal H_0(a,b,-s),\quad
 D_r=\partial_{\delta_r}\mathcal H_0(a,b,-s).
 \end{gathered}
\]
The coefficient rules~\eqref{ar:mode-rules} give
\begin{equation}\label{ar:terminal-row}
 \begin{aligned}
 \lambda_F&=-3s^2h_s,&\lambda_Z&=g_s,\\
 \lambda_T&=6(h_s+H-h_a-h_b),\\
 \lambda_U&=-g_s+3(g_a-g_b)-6D_2,\\
 \lambda_V&=-g_s+3(g_b-g_a)-6D_1.
 \end{aligned}
\end{equation}
For $a=b$, $\lambda_U=\lambda_V$, so
$\boldsymbol\lambda_{a,a}\mathbf w_{a,a}
 =\boldsymbol\lambda_{a,a}^{\rm red}P\mathbf w_{a,a}$ with
$\boldsymbol\lambda_{a,a}^{\rm red}=(\lambda_F,\lambda_Z,\lambda_T,\lambda_U)$.
For the first two entries use~\eqref{ar:second-source} at $(s,-s)$ and
\begin{align*}
 F_0&=3,&Z_0&=0,&\mathcal H_0(r,-r,0)&=h_r,\\
 \partial_{\delta_1}\mathcal H_0(r,-r,0)
       &=-g_r/2,&
 \partial_{\delta_2}\mathcal H_0(r,-r,0)
       &=g_r/2.
\end{align*}
For $T,U$, group the copies by the occupied pair and its complement.
The pair gain, pair losses, and triple terms give, per ordered pair,
\[
 \lambda_T=3(2h_s-2h_a-2h_b+2H),\qquad
 \lambda_U=3(-g_s/3+g_a-g_b-2D_2).
\]
The factor $3$ is six ordered pairs times rate $1/2$;
interchanging $a,b$ gives $\lambda_V$. All derivatives are rational by
\[
 \partial_d\mathcal H_0(n)
 =\frac{D(n)\sum_r[-2n_rd_r/(1+n_r^2)^2]
                    -U(n)(n\cdot d)}{D(n)^2},
 \qquad \sum_rd_r=0.
\]

\begin{proposition}[The resonant constant]\label{ar:resonance}
For a real profile with frequencies in $\{-2,-1,1,2\}$,
\begin{equation}\label{ar:cubic-chirality}
 \mathcal C_3(\phi)
   =\kappa\,\operatorname{Im}
              (\widehat\phi_1^2\overline{\widehat\phi_2}),
 \qquad \kappa=\frac{859}{161700}>0.
\end{equation}
\end{proposition}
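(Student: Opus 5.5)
We evaluate the triad formula \eqref{ar:finite-profile-triads} of Proposition~\ref{ar:finite-profile-response} for frequencies in $\{-2,-1,1,2\}$. The only triples $(a,b,c)$ with $a+b+c=0$ are the three orderings of $\{1,1,-2\}$ and their sign reversals. The orderings are $(1,1,-2)$, $(1,-2,1)$ and $(-2,1,1)$. Each ordering of $\{1,1,-2\}$ carries the Fourier weight $z=\widehat\phi_1^2\widehat\phi_{-2}=\widehat\phi_1^2\overline{\widehat\phi_2}$, and each reversed ordering carries $\overline z$.

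Write $\mathsf w(a,b,c)=\mu_0B_cR_0B_bR_0A_a$. Reflection $x\mapsto-x$ commutes with $R_0$ by Lemma~\ref{ar:finite-resolvent}, preserves $\mu_0$, and sends $B_b$ to $B_{-b}$ and $A_a$ to $-A_{-a}$. Hence $\mathsf w(-a,-b,-c)=-\overline{\mathsf w(a,b,c)}$, and in fact each $\mathsf w$ is purely imaginary, because $\mathrm iA_a$ has rational coefficients in the rational bases of Lemma~\ref{ar:coefficient-rules}. Setting $W=\mathsf w(1,1,-2)+\mathsf w(1,-2,1)+\mathsf w(-2,1,1)$, the sum becomes $zW+\overline z(-\overline W)$. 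If $W=-\mathrm i\omega$ with $\omega$ real, this equals $-\mathrm i\omega(z-\overline z)=2\omega\operatorname{Im}z$. So $\kappa=2\omega$, and the whole task reduces to computing the three rational numbers $\mathrm i\,\mathsf w(a,b,c)$.

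For each ordered triple we use the five-dimensional reduction. First write $R_0(\mathrm iA_a)=p_aF_a+q_aZ_a$ by \eqref{ar:first-resolvent}. Next apply $B_b$ via \eqref{ar:second-source} to obtain the source vector $\mathbf b_{a,b}$, and solve $G_{a,b}\mathbf w_{a,b}=\mathbf b_{a,b}$. The matrix $G_{a,b}$ is upper triangular except for the lower $2\times2$ block in $(U,V)$, so the solve is explicit back-substitution. For $(a,b)=(1,1)$ we use the reduced form \eqref{ar:equal-frequency}. Finally we pair with the terminal row $\boldsymbol\lambda_{a,b}$ from \eqref{ar:terminal-row}. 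This pairing needs $H=\mathcal H_0(a,b,-s)$ and the directional derivatives $D_1,D_2$, computed from \eqref{ar:neutral-expectations} at $n=(1,1,-2)$, $(1,-2,1)$ and $(-2,1,1)$, with $h_r$ and $g_r$ at $r=1,2,3$ as needed.

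A consistency check is available. Since the rank-four terms vanish at $y=1/2$, the three values should match the table in Appendix~\ref{app:general-resonance} at $y=1/2$ (up to the normalization factor $i$ and the per-pair rate). Summing that table at $y=1/2$ gives $W_3=4\mathfrak c_3=859/323400$. The sign is then fixed by comparing with $s_3'(0)=\mathrm i\mathfrak c_3\mathcal C(\phi)$ with $\mathcal C=8\operatorname{Im}z$, which predicts $\kappa=8\mathfrak c_3=859/161700$. This agrees with the claim.

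The main obstacle is the exact rational bookkeeping in the terminal row. The pair and triple moments require care with the ordering conventions $\delta_r=e_r-(1,1,1)/3$, with the $a=b$ degeneracy, and with the per-pair factor $3$. A single slip there changes $\kappa$. I would therefore carry out the three ordered evaluations independently, and cross-check each against the corresponding general-$N$ entry of Appendix~\ref{app:general-resonance} at $y=1/2$ before summing.
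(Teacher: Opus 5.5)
Your skeleton matches the paper's. Frequency selection leaves the three orderings of $(1,1,-2)$ and their reversals, and reflection relates the two groups. Then $\kappa$ is twice the real number $\omega=\sum\Gamma_{a,b,c}$, where $\Gamma_{a,b,c}=\mu_0B_cR_0B_bR_0(\mathrm iA_a)=\mathrm i\,\mathsf w(a,b,c)$ is evaluated through the five-dimensional system.

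There is a sign slip in the symmetry step. The facts you list are that reflection commutes with $R_0$ and fixes $\mu_0$, with $\sigma B_b=B_{-b}\sigma$ and $\sigma A_a=-A_{-a}$. They give $\mathsf w(-a,-b,-c)=-\mathsf w(a,b,c)$, with no conjugate. Complex conjugation separately gives $\mathsf w(-a,-b,-c)=\overline{\mathsf w(a,b,c)}$, which is also the cleanest proof that $\mathsf w$ is purely imaginary. With your stated relation $-\overline{\mathsf w}$ and $W=-\mathrm i\omega$ one would have $-\overline W=W$, and the sum would be $-2\mathrm i\omega\operatorname{Re}z$. Your next line, $-\mathrm i\omega(z-\overline z)$, silently uses the correct relation, so the two errors cancel.

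The real gap is that the proposition's content is the value $\kappa=859/161700$, and you never compute $\omega$. The paper's proof is exactly that computation. It records $P\mathbf w_{1,1}$, $\mathbf w_{1,-2}$, $\mathbf w_{-2,1}$ and the rows $\boldsymbol\lambda^{\rm red}_{1,1}$, $\boldsymbol\lambda_{1,-2}$, $\boldsymbol\lambda_{-2,1}$. From these it obtains $\Gamma_{1,1,-2}=-11/120$, $\Gamma_{1,-2,1}=25147/40425$ and $\Gamma_{-2,1,1}=-21334/40425$, which sum to $859/323400$.

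Instead, you take the number, and even its sign, from the general-population table at $y=1/2$, ``up to the normalization factor $\mathrm i$ and the per-pair rate''. As written this is not an argument. There $\rho_N(w)$ is defined through the dual-number elimination $\partial_q\mathcal S\cA\mathcal R\cdots E_1(q)$, not through $R_0$ and $\mu_0$, and you have not shown the two agree.

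The comparison can be made rigorous without computation. At $N=3$ one has $\lambda_3=1$ and pair rate $1/2$. So Proposition~\ref{ar:finite-profile-response} and \eqref{eq:general-chirality} expand the same velocity $v_{3,\epsilon,\phi}$ with bounded $\epsilon^4$ remainders. Uniqueness of the cubic coefficient then gives $\mathcal C_3(\phi)=\mathfrak c_3\mathcal C(\phi)$, i.e.\ $\kappa=8\mathfrak c_3$.

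That route, however, makes the proposition a corollary of the general-$N$ calculation, whereas the appendix presents it as an independent second derivation. To prove it on its own terms you must carry out the three back-substitutions and pairings. For your cross-check, the table entries at $y=1/2$ coincide exactly with the three values of $\Gamma$, with no extra rate factor.
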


\begin{proof}
Define the ordered neutral contribution
\[
 \Gamma_{a,b,c}
   =\mu_0B_cR_0B_bR_0(\mathrm i A_a),\qquad
 \Gamma_{a,b,-a-b}=\boldsymbol\lambda_{a,b}\mathbf w_{a,b}.
\]
For the three permutations of $(1,1,-2)$, the partial sums $a$ and
$a+b$ are nonzero, so the preceding inverses require no mean
subtraction. Matrix inversion~\eqref{ar:back-substitution}, followed by
the projection~\eqref{ar:equal-frequency} when $a=b$, gives the vectors below.
For the first vector the coordinates are $(f,z,t,u+v)$; for the others
they are $(f,z,t,u,v)$:
\begin{align*}
 P\mathbf w_{1,1}
   &=\left(-\frac{11}{72},\frac1{12},\frac18,-\frac1{12}\right),\\
 \mathbf w_{1,-2}
   &=\left(-\frac{2224}{4851},\frac2{11},\frac{76}{1617},
                         -\frac{10}{693},-\frac{32}{693}\right),\\
 \mathbf w_{-2,1}
   &=\left(\frac{1462}{4851},\frac1{11},-\frac{17}{1617},
                         -\frac5{693},-\frac{16}{693}\right).
\end{align*}
The expectation rows~\eqref{ar:terminal-row}, in the same coordinates,
are
\begin{align*}
 \boldsymbol\lambda_{1,1}^{\rm red}
   &=\left(-\frac{12}{5},\frac4{25},-\frac{18}{5},\frac{13}{50}\right),\\
 \boldsymbol\lambda_{1,-2}
   &=\left(-\frac32,-\frac12,0,\frac{41}{25},-\frac{53}{50}\right),\\
 \boldsymbol\lambda_{-2,1}
   &=\left(-\frac32,-\frac12,0,-\frac{53}{50},\frac{41}{25}\right).
\end{align*}
Their scalar products yield
\begin{equation}\label{ar:ordered-contributions}
 \Gamma_{1,1,-2}=-\frac{11}{120},\qquad
 \Gamma_{1,-2,1}=\frac{25147}{40425},\qquad
 \Gamma_{-2,1,1}=-\frac{21334}{40425}.
\end{equation}
Thus the sum of these three ordered contributions is
\begin{equation}\label{ar:positive-sum}
 W=-\frac{11}{120}+\frac{25147-21334}{40425}
   =-\frac{11}{120}+\frac{1271}{13475}
   =\frac{859}{323400}>0.
\end{equation}
The integer identity
$120\cdot1271-11\cdot13475=4295$ verifies its positive sign exactly.

The factor $-\mathrm i$ comes from $A_a=-\mathrm i(\mathrm i A_a)$.
Frequency selection leaves
only the three words just evaluated and their negatives. Define the
reflection operator by $\sigma f(x)=f(-x)$. It
gives
$\Gamma_{-a,-b,-c}=-\Gamma_{a,b,c}$: indeed
$\mathrm i A_{-a}=-\sigma(\mathrm i A_a)$,
$B_{-b}\sigma=\sigma B_b$, and $R_0$ commutes with $\sigma$.
Putting $z=\widehat\phi_1^2\widehat\phi_{-2}$, the two sets of words
therefore contribute
\[
 \mathcal C_3(\phi)=-\mathrm iWz+\mathrm iW\overline z
                 =2W\,\operatorname{Im}z.
\]
Since $\widehat\phi_{-2}=\overline{\widehat\phi_2}$, this proves
\eqref{ar:cubic-chirality} with $\kappa=2W$.
\end{proof}

For the chosen profile $\phi_*(s)=\cos s+\sin2s$, the phase parameters
are $(a,b,\alpha,\beta)=(1,1,0,-\pi/2)$. Thus
$\mathcal C_3(\phi_*)=\kappa/8=\mathfrak c_3>0$.
More generally,~\eqref{ar:cubic-chirality} and
\eqref{eq:chiral-invariant} give
$\mathcal C_3(\phi)=\mathfrak c_3\mathcal C(\phi)$, in agreement with
the general-population calculation.

\subsection{The fourth-order remainder for three particles}
\label{app:quantitative-three-certificate}
For $N=3$, retain the profile $\phi_*$ and put $D_4=\cA h_3$.
Using the basis~\eqref{ar:neutral-basis}, write
\[
 D_4=\sum_n\left(s_nE_{n,0}+\sum_{r=1}^2t_{n,r}E_{n,r}\right),\qquad
 \|z\|_1=|\operatorname{Re}z|+|\operatorname{Im}z|,
\]
and define its scalar and relative coefficient norms by
\[
 N_0=\sum_n\|s_n\|_1,\qquad
 N_1=\sum_n\sum_{r=1}^2\|t_{n,r}\|_1.
\]
Successive application of the labelled coefficient rules
\eqref{ar:neutral-basis} and~\eqref{ar:mode-rules} gives the rational bounds
\begin{equation}\label{ar:remainder-norm-certificate}
 N_0<\frac{14}{5},\qquad N_1<\frac{31}{20},\qquad N_0+4N_1<9.
\end{equation}
Constants added to a corrector leave $D_4$ unchanged, since $\cA1=0$.

For $S=\sum_{i<j}(x_i-x_j)^2$, each directed copy obeys
$|S(C_{ij}x)-S(x)|\le S(x)$. Hence
\[
 \begin{gathered}
 \cL_0S=6-S,\qquad |\cA S|\le6S,\\
 \cL_\epsilon S\le6-(1-6|\epsilon|)S,\qquad
 |\epsilon|\le1/12\quad\Longrightarrow\quad\nu_{3,\epsilon}S\le12.
 \end{gathered}
\]
The stationary Dynkin equation is justified by
Lemma~\ref{lem:soft-hidden-moments}. Since $|x_r-x_3|\le\sqrt S$,
\begin{equation}\label{ar:quantitative-three-remainder}
 \begin{aligned}
 |D_4|&\le N_0+N_1\sqrt S,\\
 |\nu_{3,\epsilon}D_4|
 &\le N_0+N_1\sqrt{\nu_{3,\epsilon}S}
 \le N_0+\sqrt{12}\,N_1<N_0+4N_1<9.
 \end{aligned}
\end{equation}
This supplies the remainder bound in
Corollary~\ref{cor:quantitative-three-current}.

\section{Capacity and marked copying operators}\label{app:capacity}
The capacity lower bound must retain two energies: the free gradient
energy carried outside the microscopic shells and the absorption energy
concentrated inside them. We use the shell radius $\beta_h=h^2$ and the
cell comparison radius $\rho_h=h/4$.
\subsection{Convergence of Dirichlet forms and resolvents}

\begin{lemma}[Forms and heat operators]\label{b:capacity}
Let $A_h$ be the Dirichlet operator of $-\frac12\Delta$ on $L^2(M_h)$, and extend its resolvent and killed heat operators by zero to $L^2(M)$.  Let $A_k=-\frac12\Delta+k$ on $M$.  The Dirichlet forms converge in the Mosco sense to~\eqref{eq:effective-forms}.  For every $\lambda,t>0$,
\begin{equation}\label{b:operator-convergence}
 \|(A_h+\lambda)^{-1}_{0}-(A_k+\lambda)^{-1}\|_{2\to2}\to0,
 \qquad \|P_t^h-P_t^k\|_{2\to2}\to0.
\end{equation}
The subscript $0$ means that the Dirichlet resolvent is extended by zero on the holes.
\end{lemma}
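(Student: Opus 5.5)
The plan is the standard Cioranescu--Murat route on the torus: prove Mosco convergence of the zero-extended forms $\mathcal E_h(u)=\frac12\int_{M_h}|\nabla u|^2$ (set to $+\infty$ off $H^1_0(M_h)$) to $\mathcal E_k$, and then use the fact that Mosco convergence with uniformly compact embeddings gives norm-resolvent convergence. The heat-operator statement then follows from continuous functional calculus.

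For the Mosco \emph{limsup}, I would take $u\in C^\infty(M)$ as recovery data and set $u_h=w_hu$, where $w_h$ is the capacitary profile around each hole. On $a_z<|x-z|<\beta_h=h^2$ it is
\[
 w_h=\frac{1-a_z/|x-z|}{1-a_z/\beta_h},
\]
and $w_h=1$ elsewhere. One then expands $\frac12\int|\nabla(w_hu)|^2$. The cross terms vanish because $\int|\nabla w_h|\to0$ and $\|1-w_h\|_{L^2}\to0$. The term $\frac12\int u^2|\nabla w_h|^2$ equals the shell energy $2\pi a_z/(1-a_z/h^2)=h^3k(z)(1+O(h))$ times $u(z)^2+O(h)$. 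Summing over the cells gives a Riemann sum for $\int k u^2$. Density of smooth functions in $H^1(M)$ finishes this direction.

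For the \emph{liminf}, take $u_h\rightharpoonup u$ weakly in $L^2$ with bounded energy, so that $u_h\to u$ strongly by Rellich on $M$. Split $\frac12\int|\nabla u_h|^2$ into the part outside the balls $B(z,\rho_h)$, $\rho_h=h/4$, and the part in the annuli $\beta_h<|x-z|<\rho_h$. The gradient term comes from the outside part: those balls occupy volume fraction $O(1)$, not $o(1)$, so instead I would use the shells of radius $\beta_h$ only and apply lower semicontinuity to $\nabla u_h\one_{\{w_h=1\}}$, whose complement has vanishing volume. The absorption term comes from comparing $u_h$ on the annulus with the radial capacitary minimizer. Since $u_h=0$ on $\partial B(z,a_z)$, the energy in $B(z,\beta_h)\setminus B(z,a_z)$ is at least $\operatorname{cap}\cdot(\text{mean of }u_h\text{ on }\partial B(z,\beta_h))^2(1-o(1))$. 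Poincaré-type trace estimates on the annulus $\beta_h<|x-z|<\rho_h$ show that these sphere averages are close, in a weighted $\ell^2$ sense, to the cell averages of $u_h$. Those converge to $u$ in $L^2$, which yields $\liminf\ge\mathcal E_k(u)$.

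I expect the main obstacle to be this orthogonality between the two energies: showing that the gradient energy spent near the holes does not double count against the free gradient term. The standard trick is to bound $\int_{\{|x-z|<\beta_h\}}|\nabla u_h|^2$ from below by the capacity term and $\int$ over the complement by the weakly lower semicontinuous free energy. Because the two regions are disjoint, the sum of the liminfs is a lower bound.

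Given Mosco convergence, each resolvent $(A_h+\lambda)^{-1}_0f$ converges strongly to $(A_k+\lambda)^{-1}f$. The family $\{(A_h+\lambda)^{-1}_0f:\|f\|_2\le1\}$ is bounded in $H^1(M)$ after zero extension, hence collectively compact. Collective compactness together with selfadjointness upgrades strong convergence to norm convergence. Finally $e^{-tA}=g((A+\lambda)^{-1})$ with $g$ continuous on $[0,\lambda^{-1}]$ and $g(0)=0$, so uniform polynomial approximation of $g$ gives $\|P_t^h-P_t^k\|_{2\to2}\to0$ for every $t>0$.
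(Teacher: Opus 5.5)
Your proposal is correct and follows essentially the same route as the paper. It uses the capacitary profile on shells of radius $\beta_h=h^2$ as the recovery sequence, and the spherical-averaging bound $\frac12\int_{a_z<|x-z|<\beta_h}|\nabla u_h|^2\ge c_z m_z^2$ together with a radial-mean and cube Poincar\'e/trace comparison of $m_z$ with the cell means; the paper makes this comparison quantitative as $\sum_z h^3|m_z-\bar u_z|^2\le Ch\|\nabla u_h\|_2^2$, using $h^3/\beta_h=h\to0$. It then uses lower semicontinuity of the gradient off the vanishing-volume shells (your initial split through the balls $B(z,\rho_h)$ is rightly discarded, exactly as the paper does), and finishes with collective compactness plus self-adjointness and polynomial approximation of $s\mapsto e^{-t(s^{-1}-\lambda)}$ on $[0,\lambda^{-1}]$.
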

\begin{proof}
The common form domain for the $h$th form is the set of $H^1(M)$ functions vanishing almost everywhere on the holes, with value $+\infty$ assigned to other $L^2$ functions.  Its finite energy is $\mathcal E_h(u)=\frac12\int|\nabla u|^2$.  Let $Q_z$ be a mesh cube, $\bar u_z$ its volume mean, and $m_z$ the surface mean on $r=\beta_h$.

The derivative of the radial mean and weighted Cauchy--Schwarz give
\[
 |m_{\beta_h}-m_{h/4}|^2
 \le C\left(\int_{\beta_h}^{h/4}r^{-2}\,dr\right)
        \int_{Q_z}|\nabla u|^2
 \le C\beta_h^{-1}\int_{Q_z}|\nabla u|^2.
\]
Scaling the cube to a unit cube, the Poincar\'e inequality and the trace estimate on its fixed interior sphere give
$|m_{h/4}-\bar u_z|^2\le Ch^{-1}\int_{Q_z}|\nabla u|^2$.  Consequently
\begin{equation}\label{b:trace-means}
 \sum_z h^3|m_z-\bar u_z|^2
 \le Ch\int_{M}|\nabla u|^2.
\end{equation}
The trace theorem makes these means well defined for every zero extended
form function in $H^1$.

The radial minimizer with values zero and one on the inner and outer shell spheres is
\[
 w_z(r)=\frac{1-a_z/r}{1-a_z/\beta_h},\qquad a_z<r<\beta_h.
\]
Its energy is $c_z=2\pi a_z/(1-a_z/\beta_h)$. For a general shell
function, spherical averaging decreases radial energy and removes
nonnegative tangential energy. The averaged function is zero at $a_z$
and has value $m_z$ at $\beta_h$. The radial variational principle gives
\begin{equation}\label{b:shell-energy}
 \tfrac12\int_{a_z<|x-z|<\beta_h}|\nabla u_h|^2\ge c_zm_z^2.
\end{equation}

Suppose $u_h\rightharpoonup u$ in $L^2$ and $\sup_h\mathcal E_h(u_h)<\infty$.
The sequence is bounded in $H^1(M)$, hence every subsequence has a
further subsequence converging strongly in $L^2$ to $u$. Cube Poincar\'e
and~\eqref{b:trace-means} give
\[
 \begin{aligned}
 \left\|\sum_z\bar u_z\one_{Q_z}-u_h\right\|_2
       &\le Ch\|\nabla u_h\|_2\longrightarrow0,\\
 \left\|\sum_z(m_z-\bar u_z)\one_{Q_z}\right\|_2^2
       &\le Ch\|\nabla u_h\|_2^2\longrightarrow0.
 \end{aligned}
\]
Thus both step functions converge strongly to $u$. Moreover,
\[
 \left\|\sum_zk(z)\one_{Q_z}-k\right\|_\infty\longrightarrow0,
 \qquad c_z=h^3k(z)(1+O(h))
\]
uniformly in $z$, so
\[
 \sum_zc_zm_z^2\longrightarrow\int_{M}ku^2.
\]
The union $S_h$ of the capacity shells has volume $O(h^3)$. For every
$q\in L^2(M;\R^3)$,
\[
 \left|\int_{S_h}\nabla u_h\cdot q\right|
 \le\|\nabla u_h\|_2\|q\one_{S_h}\|_2\longrightarrow0.
\]
Hence $\one_{M\setminus S_h}\nabla u_h\rightharpoonup\nabla u$ in $L^2$.
The two energies are retained in the decomposition
\[
 \mathcal E_h(u_h)
 \ge\frac12\int_{M\setminus S_h}|\nabla u_h|^2
           +\sum_zc_z|m_z|^2.
\]
Lower semicontinuity of the first term and convergence of the second give
\[
 \liminf_{h\downarrow0}\mathcal E_h(u_h)
 \ge\frac12\int_M|\nabla u|^2+\int_Mk|u|^2.
\]
This proves the Mosco lower inequality.

For smooth $u$, take $u_h=uw_h$, where $w_h=w_z$ in the shells, zero inside the holes and one outside all shells.  Then $u_h\to u$ strongly in $L^2$.  The total shell volume tends to zero, the capacities have bounded sum, and
\[
 \sum_z\int_{a_z<|x-z|<\beta_h}|\nabla w_z|\,dx
 =\sum_z\frac{4\pi a_z(\beta_h-a_z)}{1-a_z/\beta_h}=O(h^2).
\]
The energy expansion satisfies
\[
 \begin{aligned}
 \left|\int_Muw_h\nabla u\cdot\nabla w_h\right|
       &\le C_u\|\nabla w_h\|_1=O(h^2),\\
 \left|\frac12\int_Mu^2|\nabla w_h|^2
                         -\sum_zu(z)^2c_z\right|
       &\le C_u\beta_h\sum_zc_z=o(1),\\
 \frac12\int_Mw_h^2|\nabla u|^2
       &\longrightarrow\frac12\int_M|\nabla u|^2.
 \end{aligned}
\]
Since $\sum_zu(z)^2c_z\to\int_Mku^2$, we obtain
$\mathcal E_h(uw_h)\to\mathcal E_k(u)$ for smooth $u$.
Smooth approximation in $H^1$, followed by a diagonal choice, gives
the recovery sequence for every $u\in H^1(M)$.

Write $\mathsf R_h=(A_h+\lambda)^{-1}_0$ and
$\mathsf R_k=(A_k+\lambda)^{-1}$. For fixed $f\in L^2$, the zero extended resolvent $\mathsf R_hf$ uniquely minimizes
$\mathcal E_h(u)+\lambda\|u\|_2^2-2\langle f,u\rangle$.  The lower and recovery inequalities identify every weak limit of these minimizers with the minimizer for $\mathcal E_k$.  They also give convergence of the minimum values.  Coercivity and the variational equations then give strong $L^2$ convergence $\mathsf R_hf\to\mathsf R_kf$.

The resolvent equation gives
\[
 \begin{gathered}
 \mathcal E_h(\mathsf R_hf)+\lambda\|\mathsf R_hf\|_2^2
 =\langle f,\mathsf R_hf\rangle,\\
 \|\mathsf R_hf\|_2\le\lambda^{-1}\|f\|_2,
 \quad
 \|\nabla\mathsf R_hf\|_2\le(2/\lambda)^{1/2}\|f\|_2.
 \end{gathered}
\]
Thus the unit-input images are collectively compact in $L^2$. If norm
convergence failed, there would be unit $f_h$ and $\varepsilon>0$ with
$\|(\mathsf R_h-\mathsf R_k)f_h\|_2\ge\varepsilon$. Pass to
$f_h\rightharpoonup f$ and $\mathsf R_hf_h\to g$ strongly.
Self adjointness and fixed-input convergence give, for every $z\in L^2$,
\[
 \langle g,z\rangle=\lim_h\langle f_h,\mathsf R_hz\rangle
 =\langle f,\mathsf R_kz\rangle=\langle\mathsf R_kf,z\rangle.
\]
The operator $\mathsf R_k$ is compact, so $\mathsf R_kf_h\to\mathsf R_kf$ strongly as well.  This contradicts the norm defect and proves resolvent norm convergence.

On $[0,\lambda^{-1}]$, the function
\[
 F_t(s)=\begin{cases}e^{-t(s^{-1}-\lambda)},&s>0,\\0,&s=0\end{cases}
\]
is continuous. Spectral calculus gives $P_t^h=F_t(\mathsf R_h)$ and
$P_t^k=F_t(\mathsf R_k)$, including the zero extension on the holes.
For every polynomial $p$,
\[
 \begin{aligned}
 \|P_t^h-P_t^k\|_{2\to2}
 &\le2\|F_t-p\|_{\infty,[0,\lambda^{-1}]}
       +\|p(\mathsf R_h)-p(\mathsf R_k)\|_{2\to2},\\
 \limsup_{h\downarrow0}\|P_t^h-P_t^k\|_{2\to2}
 &\le2\|F_t-p\|_{\infty,[0,\lambda^{-1}]}.
 \end{aligned}
\]
Uniform polynomial approximation proves the heat operator conclusion
for every $t>0$.
\end{proof}

\subsection{Strong convergence of marked copying operators}\label{app:marked-output-proof}
We prove Lemma~\ref{lem:strong-marked-operator}. Keep its root partition,
victim and donor, duration interval and bounded continuous record mark.
\begin{proof}
Let $\mu_{\star,x}$ be the one-particle lifetime measure. Independence
before the first death gives
\[
 Q_\star^1f(y)=\frac1{N-1}\int f(\xi)
 \int_{[\delta,M_0]}\prod_{j\ne v}
 p_\star(t,\xi_{\mathcal P(j)},y_j)
 \,\mu_{\star,\xi_{\mathcal P(v)}}(dt)\,d\xi.
\]
The lifetime measure has mass at most one. Heat-kernel domination and
the spectral gradient estimate give, uniformly for $t\ge\delta$,
\[
 \sup_x\left(\|p_\star(t,x,\cdot)\|_\infty
             +\|p_\star(t,x,\cdot)\|_{H^1(M)}\right)\le C_\delta.
\]
Indeed, split the heat row at $t/2$ and use
$\|A_\star^{1/2}e^{-tA_\star/2}\|_{2\to2}\le C_\delta$;
the form controls its gradient. Tensor products and Minkowski yield
\begin{equation}\label{eq:marked-output-domination}
 \|Q_\star^1f\|_{H^1}\le C_\delta\|f\|_1,
 \qquad |Q_\star^Gf|\le Q_\star^1|f|\,\|G\|_\infty
 \le C_\delta\|G\|_\infty\|f\|_1.
\end{equation}

Stop the compressed record at its first death on $[0,M_0]$ and
reconstruct the first continuous piece on a grid of mesh $\theta$,
as in Lemma~\ref{lem:full-record-weak}. Denote this reconstruction by
$\Psi_\theta^{(1)}$; it retains the exact survivor endpoints and duration.
On $\{\delta\le\tau_1\le M_0\}$,
\[
 d(\mathsf C_1,\Psi_\theta^{(1)}\mathsf R)
 \le C\omega_{M_0}(2\theta)
 \quad\text{if }\omega_{M_0}(2\theta)<L/4.
\]
Durations in $[\delta,M_0]$ and free Brownian moduli give compact
sets of rescaled paths uniformly in the roots and the model.
Uniform continuity near these compact sets gives the following limit,
with $\xi\in M_h^r$ in the hard model:
\[
 e_G(\theta):=\sup_{\star,\xi}\E_{\mathcal P,\xi}^\star
 \left[|G(\mathsf C_1)-G(\Psi_\theta^{(1)}\mathsf R)|;
                  \delta\le\tau_1\le M_0\right]\longrightarrow0.
\]
Write $Q_{\star,\theta}^G$ for the reconstructed marked operator.
Lemma~\ref{lem:compressed-record-tv}, its truncated-record proof,
and contraction under bounded weighting and endpoint projection imply
\[
 \|Q_{h,\theta}^Gf_h-Q_{k,\theta}^Gf\|_1\to0
 \quad(\theta>0\text{ fixed}),\qquad
 \|Q_\star^Gf-Q_{\star,\theta}^Gf\|_1
 \le e_G(\theta)\|f\|_1.
\]
For signed inputs use their positive and negative parts, which converge
in $L^2$; changes of input cost at most
$\|G\|_\infty\|f_h-f\|_1$ in the soft model.
Taking $h\downarrow0$ and then $\theta\downarrow0$ gives
$\|Q_h^Gf_h-Q_k^Gf\|_1\to0$. Finally,~\eqref{eq:marked-output-domination}
and boundedness of $\|f_h\|_1$ give
\[
 \|Q_h^Gf_h-Q_k^Gf\|_2^2
 \le C_{\delta,G}\left(\sup_h\|f_h\|_1+\|f\|_1\right)
                  \|Q_h^Gf_h-Q_k^Gf\|_1\longrightarrow0.
\]
\end{proof}

\section{Boundary polarity and restart continuity}\label{app:local-boundary}
\subsection{Polarity of simultaneous boundary hits}
\begin{lemma}\label{lem:simultaneous-wall-polarity}
Every finite wall hit of a smooth auxiliary system satisfying~\eqref{eq:perturbation}, for sufficiently small amplitude, has almost surely a unique boundary victim.\end{lemma}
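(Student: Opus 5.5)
The plan is to reduce the claim to a two-label statement and prove, for each ordered pair $i\ne j$, that with probability one the two labels never sit on the wall at the same time during a continuous segment. At the segment start every label is interior; we may take the post-copy state at a stopping time and use the strong Markov property. Finitely many segments occur before any finite death index, so a countable union finishes the argument. Fix $i\ne j$ and a continuous segment. Before the first wall hit, the process solves \eqref{eq:perturbation} with smooth coefficients. Write $\rho_i=\rho(X_i)$, using the distance function from Section~\ref{sec:fixed}. In the tubular collar, It\^o's formula gives $d\rho_i=\nabla\rho(X_i)^{\mathsf T}\sigma_i\,dW_i+\cL_i\rho\,du$. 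The martingale part has quadratic variation rate $g_i\in[a_-,a_+]$, with $a_\pm$ close to $1$ for small $\eta$. The drift is bounded, and since the $W_i$ are independent, $d\langle\rho_i,\rho_j\rangle=0$.

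The key step is a two-dimensional polarity estimate for the pair $(\rho_i,\rho_j)$ near the corner $\{0\}^2$. I would use the radial quantity $S_{ij}=\rho_i^2+\rho_j^2$ together with the test function $\psi=-\log S_{ij}$. This is the two-label analogue of $\Phi$ in \eqref{eq:Phi}, with exponent $\gamma\to0$. When both labels are in the collar, \eqref{eq:Phi-generator} gives
\[
 \cL\psi=-\frac{g_i+g_j}{S_{ij}}+\frac{2(\rho_i^2g_i+\rho_j^2g_j)}{S_{ij}^2}
 -\frac{2(\rho_i\cL_i\rho+\rho_j\cL_j\rho)}{S_{ij}}.
\]
The first two terms together are at most $(a_+-a_-)/S_{ij}$, and the last is $O(S_{ij}^{-1/2})$. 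Exact Brownian motion is the critical case: $\log$ of a planar radius is harmonic, which is why we work with $\log$ rather than a power. For the perturbed system, the error $(a_+-a_-)/S_{ij}\le C\eta/S_{ij}$ is still singular, so I would replace $\psi$ by $\psi_\kappa=S_{ij}^{-\kappa}$ with small $\kappa>0$. Then
\[
 \cL\psi_\kappa\le\kappa\psi_\kappa S_{ij}^{-1}\bigl[-2a_-+(2\kappa+2)a_++O(\sqrt{S_{ij}})\bigr]+\dots
\]
This needs $a_+(1+\kappa)<a_-$, which fails, so a positive power does not work. The correct choice is the Lyapunov function $\psi=(-\log S_{ij})$ composed with a slowly growing function $F$. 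The obstacle is exactly this near-criticality: the dimension of the corner equals two, the critical dimension.

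To handle it, I would use instead that $(\rho_i,\rho_j)$ is locally a time change of a planar process whose two coordinates have \emph{independent} martingale parts. Separate time changes by $\int g_i$ and $\int g_j$ give two one-dimensional processes, each a Brownian motion with bounded drift. By Girsanov on a finite horizon, each is mutually absolutely continuous with Brownian motion. So the question becomes whether two independent one-dimensional Brownian motions $\beta^i,\beta^j$ hit zero at times $A_i^{-1}(t)$ and $A_j^{-1}(t)$ coinciding for some $t$.

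The zero sets of $\beta^i$ and $\beta^j$ are independent perfect sets of dimension $1/2$. Their images under the bi-Lipschitz adapted clocks $A_i,A_j$ are still independent, once we condition on the continuous coefficient path. The intersection of two independent dimension-$1/2$ random closed sets in $\R$ is a.s. empty: there is a first-moment covering estimate $P(\text{both zero sets meet an interval of length }\epsilon)\le C\epsilon$, summed over $\epsilon^{-1}$ intervals. The covering estimate follows from Lemma~\ref{lem:short-wall} applied conditionally, since each label's probability of nearing the wall in a window is $O(\sqrt\epsilon)$.

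The delicate point is that the clocks depend on the full configuration, so the two events are not literally independent. I would control this by conditioning interval by interval with Lemma~\ref{lem:short-wall} at stopping times. That lemma holds uniformly over coefficient histories, and its square-root bounds multiply to give $C\epsilon$ per interval, which makes the union bound summable to zero.
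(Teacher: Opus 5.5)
\textbf{The gap: critical corner polarity without the normalization that makes the Lyapunov function work.} Your first route stalls exactly where the paper's proof does its real work. You correctly see that the corner is critical, and you even name the right repair: a slowly growing function of $-\log S_{ij}$, i.e.\ $\log\log$. It cannot succeed with the bounds you carry, because you use only the amplitude bound. Then $g_i-g_j=O(\eta)$ need not vanish at the corner.

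With $u=-\log S_{ij}$ and $w_i=\rho_i^2/S_{ij}$, your own formula shows how the terms compete. The anisotropy term $(w_i-w_j)(g_i-g_j)/S_{ij}$ enters $\cL F(u)$ with weight $F'(u)$, while the only gain is $\tfrac12F''(u)\Gamma(u,u)\approx 2F''(u)/S_{ij}$. You would need $|g_i-g_j|\lesssim|F''(u)|/F'(u)$. For $F=\log$ this means $|g_i-g_j|=O(1/\log(1/S_{ij}))$, and no smallness of $\eta$ provides it.

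The missing idea is to use \emph{smoothness} of the coefficients rather than their smallness. Normalize each distance by its own normal coefficient, $\zeta_a=\rho_a/\sqrt{g_a}$, with $g_a$ evaluated at the full configuration. The noise is block diagonal, and every derivative of $g_a^{-1/2}$ is multiplied by $\rho_a$. Hence $\Gamma(\zeta_a,\zeta_b)=\delta_{ab}+O(r)$ and $|\cL\zeta_a|\le C$ locally, where $r^2=\zeta_i^2+\zeta_j^2$. Then $f(r)=\log\log(1/r)$ has principal part $\tfrac12(f''+f'/r)=-1/(2r^2\log^2(1/r))$. This dominates the errors $C/(r\log(1/r))$ coming from the $O(r)$ covariance defect and the bounded drift. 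Optional stopping then bounds the probability of reaching $\{r=\varepsilon\}$ before $T$ (and before the localization exit) by $(f(r_{\rm start})+CT)/\log\log(1/\varepsilon)\to0$. This is the paper's argument.

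\textbf{The fallback time-change argument fails on three counts.}
\begin{enumerate}
\item Knight's theorem does give independent Dambis--Dubins--Schwarz motions. But the clocks and drifts are functionals of both motions and of the other labels, so the time-changed zero sets are not independent. Conditioning on the coefficient path destroys their Brownian laws. The principle you invoke is in fact false for bi-Lipschitz adapted clocks. Take $dX=(1+\eta\cos^2\theta)^{1/2}dB^1$ and $dY=(1+\eta\sin^2\theta)^{1/2}dB^2$, with $\theta$ the polar angle of $(X,Y)$; the coefficients are discontinuous only at the origin. Each coordinate is a Brownian motion run on such a clock, and the two are independent by Knight. Yet in the clock $\int dt/(X^2+Y^2)$, the process $\log(X^2+Y^2)$ has drift $-\eta\cos^22\theta$ while $\theta$ is ergodic on the circle. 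So the origin is reached in finite time and the zero sets meet almost surely; an argument using only such clocks proves too much.
\item Even granting $C\epsilon$ per interval, the sum over $\epsilon^{-1}$ intervals is $C$, not $0$. The two codimensions $\tfrac12$ sum to one, the critical case in which first-moment covering is inconclusive.
\item Lemma~\ref{lem:short-wall} bounds the probability of \emph{not} hitting the wall from above. It therefore cannot give the upper bounds on hitting in a window that the covering needs.
\end{enumerate}

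\textbf{A simpler route you discarded.} Within a segment only simultaneous \emph{first} hits matter, i.e.\ reaching the corner from inside the open quadrant. That problem is not critical. Reaching the corner forces $\int r^{-2}\,dt=\infty$, and in that clock the angle has rate bounded below and bounded drift, so it leaves $(0,\pi/2)$ first. This needs only ellipticity and bounded drift. Your passage to full zero sets threw this structure away.
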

\begin{proof}
Localize one continuous segment in a spatial compact set and choose
two distinct labels in smooth wall-distance coordinates $d_1,d_2$.
For smooth $f,g$, write $\Gamma(f,g)=\sum_i\nabla_i f^{\mathsf T}a_i\nabla_i g$, where $a_i=\sigma_i\sigma_i^{\mathsf T}$. Let $a_j^{\rm nor}=\Gamma(d_j,d_j)$ and $\zeta_j=d_j/\sqrt{a_j^{\rm nor}}$.
Local coefficient smoothness and the block-diagonal noise give
\[
 \Gamma(\zeta_j,\zeta_k)=\delta_{jk}+O(r),\qquad
 |\cL\zeta_j|\le C,\qquad r=(\zeta_1^2+\zeta_2^2)^{1/2}.
\]
At $d_1=d_2=0$, normalization makes the leading covariance the identity;
derivatives of the normalization are multiplied by a vanishing distance.
Constants here may depend on the fixed smooth coefficient system.

For $0<r<e^{-2}$ take $f(r)=\log\log(1/r)$. Its two-dimensional radial
principal term is
\[
 \tfrac12(f''+f'/r)=-\frac1{2r^2\log^2(1/r)}.
\]
Covariance errors and bounded drift are at most
$C/[r\log(1/r)]$. For a sufficiently small chart radius $r_0$,
\[
 \cL f(r)\le-\frac1{2r^2\log^2(1/r)}
               +\frac{C}{r\log(1/r)}
 \le-\frac1{4r^2\log^2(1/r)},\qquad 0<r<r_0.
\]
After a smooth cutoff away from zero and adding a constant, $f\ge0$ and
$\cL f\le C$. Let $\tau_\varepsilon$ be the first hit of
$r=\varepsilon$, $\chi$ the spatial localization exit, and $\tau$
the first wall hit of the segment. For
$\theta=\tau_\varepsilon\wedge\chi\wedge\tau\wedge T$, stopped Itô gives
\[
 \E f(r_\theta)\le f(r_{\rm start})+CT.
\]
Nonnegativity then gives
\[
 \Prob\{\tau_\varepsilon\le\chi\wedge\tau\wedge T\}
 \le\frac{f(r_{\rm start})+CT}{\log\log(1/\varepsilon)}\longrightarrow0.
\]
If necessary begin this estimate on entry into the small distance chart.
Countable local entries, a finite chart cover of each spatial compact set,
increasing compact sets, and the finitely many label pairs exclude a
simultaneous wall hit in the current segment.

At the first hit there is one victim and all survivors are interior.
Copying an interior donor starts the next segment in $D^N$. Conditional
induction and a countable union over numbered copies give unique victims
for every finite prefix.

\end{proof}

\subsection{Interior finite-prefix continuity}
\begin{lemma}\label{lem:interior-prefix-continuity}
Fix a periodic perforated domain $D$, an interior compact set $K\subset D^N$, and $0<T_-<T_+<\infty$. If $x_n\in K$ tends to $x$, $T_n\to T\in[T_-,T_+]$, and the perturbation amplitudes tend to zero, then the joint endpoint and actual reference-displacement laws at $T_n$ converge weakly to the baseline law at $T$. The convergence is uniform over the allowed coefficient systems and absolute lattice translates. The baseline block mean is interior continuous, and its hidden kernel preserves interior continuous functions bounded by $C(1+\sqrt{\mathcal V})$.
\end{lemma}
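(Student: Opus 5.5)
The plan is to prove continuity by induction over numbered deaths, coupling the perturbed and baseline systems through common Brownian drivers and common donor marks, and using uniform control of death counts to truncate.

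\emph{Step 1: reduction and truncation.} First translate each $x_n$ into a fixed fundamental cell, translating the coefficient field with it. This preserves the amplitude bound and removes the dependence on absolute lattice translates. On the compact set $K$, the function $\Phi$ is bounded. By \eqref{eq:count}, $\Prob(K_{T_+}\ge m)\le Ce^{-\alpha m}$ uniformly in $n$. It therefore suffices to prove convergence of the joint law of the endpoint and displacement restricted to $\{K_{T_n}<m\}$ for each fixed $m$.

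\emph{Step 2: continuity of one segment.} Drive each system by the same $W_i$. Before the first death, BDG and Gronwall give
\[
 \sup_{t\le T_+}|X^n(t)-X^0(t)|\to0
\]
in probability, since $\|\sigma_i-I\|+|b_i|\le\eta_n\to0$ and $x_n\to x$.

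The first exit time of the baseline from $D^N$ is a continuity point of the path-to-exit map, almost surely. The reason is that a Brownian path started inside a smooth periodic domain crosses the boundary immediately after hitting it, because every boundary point is regular. Hence the exit times and exiting labels converge in probability. Lemma~\ref{lem:simultaneous-wall-polarity} gives a unique victim, and Lemma~\ref{lem:finite-prefix} rules out ties. Using the same donor mark, the post-copy configurations also converge in probability. At a death, the victim takes the donor's position, which lies at positive distance from the wall, so the restart is interior.

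\emph{Step 3: iterate the segments.} Iterate Step 2 using the strong Markov property at each numbered death, with fresh drivers after each death. This gives convergence in probability of all death times, marks and positions up to the $m$th death. Since the baseline has no death exactly at time $T$ almost surely, and $T_n\to T$ with continuous paths, the endpoint $X^n(T_n)$ converges in probability to $X^0(T)$. The actual reference displacement is a continuous functional of the lifted path, so it converges as well. The convergence is uniform over coefficient systems because every estimate uses only the amplitude bound. For the uniformity claim I would argue by contradiction along sequences of worst-case coefficients.

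\emph{Step 4: the final assertions.} Continuity of the block mean $b$ and of $Pf$ for $f$ satisfying $|f|\le C(1+\sqrt{\mathcal V})$ follows from the weak convergence in Steps 1--3, applied with $\eta=0$, together with uniform integrability from \eqref{eq:sublevel-ui}-type bounds. Specifically, \eqref{eq:block-envelope} and \eqref{eq:landing} bound second moments of $\Delta_T$ and of $\sqrt{\mathcal V(H_T)}$ locally uniformly. The growth bound on $Pf$ is then the iterated Foster estimate \eqref{eq:Foster}. Continuity of $f$ is needed only at interior points. The endpoint law puts no mass on the wall, since deaths at time $T$ have probability zero, so the boundary term vanishes as in \eqref{eq:terminal-interior}.

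\emph{Main obstacle.} I expect the hardest step to be Step 2's continuity of the exit time under the coupled perturbation, uniformly in the coefficients. For this I would first establish a uniform estimate: after a wall approach to distance $r$, the perturbed segment exits within time $o(1)$ with probability tending to one. This follows from Lemma~\ref{lem:short-wall}. The matching one-sided bound, that the perturbed path cannot exit much earlier, comes from uniform path closeness combined with positive baseline clearance before its exit time.
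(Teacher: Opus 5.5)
Your proposal is correct and follows essentially the same route as the paper. The shared steps are: common Brownian primitives and donor marks; a BDG comparison of each segment with the free Brownian path; exit-time convergence from the baseline's immediate entry into a removed ball after its wall hit; iteration over numbered deaths with the exponential count truncation and no baseline death at time $T$; uniformity by contradiction along sequences; and the landing, Foster and ancestral-envelope moments for the mean and kernel assertions.

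One small correction: drop the Gronwall step. Because the baseline is exactly Brownian, the difference $(x_n-x)+\int(\sigma_i-I)\,dW_i+\int b_i\,du$ is controlled by BDG alone. Gronwall would need Lipschitz constants of the coefficients, which the allowed class does not control uniformly, so it would undercut the claimed uniformity.
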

\begin{proof}
Use common Brownian primitives and donor marks. For free paths $Y_n,Y$
started at $y_n,y$, the difference has drift bounded by $\eta_nt$ and
martingale bracket bounded by $C\eta_n^2t$. Thus BDG gives, whenever
$|y_n-y|+\eta_nT_+\le\varepsilon/2$,
\[
 \sup_{\text{allowed coefficients}}
 \Prob\left\{\sup_{s\le T_+}|Y_n(s)-Y(s)|>\varepsilon\right\}
 \le C\eta_n^2T_+/\varepsilon^2\longrightarrow0.
\]

For a baseline segment with first wall hit $\tau$, for every
$0<\delta<\tau$,
\[
 \inf_{s\le\tau-\delta}\dist(Y(s),\partial D)>0.
\]
At $\tau$ the free Brownian continuation immediately enters the removed
ball. In a signed-distance chart, its normal coordinate has a continuous
martingale part with quadratic variation $t+o(t)$ and bounded drift.
DDS and the Brownian law of the iterated logarithm give negative values
arbitrarily soon after zero. Consequently, for every $\delta>0$ there
is $s\in(\tau,\tau+\delta)$ with $Y(s)$ strictly inside a removed ball.
Uniform approximation on these compact intervals gives the exit-time
and exit-position limits
\[
 \tau_n\xrightarrow{\Prob}\tau,
 \qquad Y_n(\tau_n)\xrightarrow{\Prob}Y(\tau)
 \quad\text{on }\{\tau<\infty\}.
\]
Baseline labels have independent atomless exit waits. Their first finite
minimum therefore has a unique victim, whose label is unchanged with
probability tending to one. Using the common donor mark, the copied
survivor positions converge as well.

Repeat the argument after each fixed numbered copy with fresh primitives.
It gives convergence of every finite prefix, including all survivor
endpoints and reference jumps. On compact sets of initial states,
\[
 \sup_{n,x\in K}\Prob_x(K_{T_+}^{(n)}>m)
 \le e^{CT_+}\sup_{x\in K}\Phi(x)e^{-\alpha m}
 \longrightarrow0.
\]
Here the superscript $(n)$ denotes the $n$th perturbed system. Each
numbered baseline death differs from the deterministic time $T$ almost
surely, by conditioning at the preceding death and using the atomless
exit wait. Thus the last segment converges at durations tending to $T$.
First pass to the limit for each fixed prefix, then let $m\to\infty$
using the displayed bound.

To obtain uniformity, a contrary sequence has a convergent subsequence
of initial points and times, to which the same argument applies. Lattice
translation translates the coefficient field and preserves its amplitude
bound. By~\eqref{eq:landing}, \eqref{eq:Foster}, and the ancestral envelope,
\[
 \sup_{n,x\in K,\ t\in[T_-,T_+]}
 \E_x\bigl[|\Delta_t^{(n)}|^2+1+\mathcal V(H_t^{(n)})\bigr]<\infty.
\]
For a continuous $f$ with $|f|\le C(1+\sqrt{\mathcal V})$, these bounds
give
\[
 \begin{aligned}
 \sup_n\E\bigl[|f(H_{T_n}^{(n)})|;
                       |f(H_{T_n}^{(n)})|>R\bigr]&\le C/R,\\
 \sup_n\E\bigl[|\Delta_{T_n}^{(n)}|;
                       |\Delta_{T_n}^{(n)}|>R\bigr]&\le C/R.
 \end{aligned}
\]
The common coupling gives convergence in probability to the interior
baseline endpoint. Therefore, for $[v]_R=\max(-R,\min(v,R))$,
\[
 [f(H_{T_n}^{(n)})]_R\xrightarrow{\Prob}[f(H_T)]_R,\qquad
 [\Delta_{T_n}^{(n)}]_R\xrightarrow{\Prob}[\Delta_T]_R,
\]
with the vector truncation taken coordinatewise. Bounded convergence
in probability and the uniform tail bounds give
\[
 \E f(H_{T_n}^{(n)})\longrightarrow\E f(H_T),\qquad
 \E\Delta_{T_n}^{(n)}\longrightarrow\E\Delta_T.
\]
This proves continuity of the baseline mean and kernel action.
For the block kernel, Jensen and \eqref{eq:Foster} also give
\[
 |P_Tf(\xi)|\le C\bigl(1+\sqrt{P_T\mathcal V(\xi)}\bigr)
 \le C'\bigl(1+\sqrt{\mathcal V(\xi)}\bigr),
\]
which verifies preservation of the stated growth class.
\end{proof}

\subsection{Random interior restarts}
\begin{lemma}\label{lem:random-interior-restart}
Let $X^{(n)}$ start at $x_n\in D^N$ with perturbation amplitudes
$\eta_n\to0$, and let $\sigma_n$ be finite stopping times. Suppose
\[
 \begin{gathered}
 \sigma_n\xrightarrow{\Prob}0,\qquad \Prob(Y\in D^N)=1,\\
 (Y_n,D_n):=(X^{(n)}(\sigma_n),
                X_N^{(n)}(\sigma_n)-(x_n)_N)
       \Rightarrow(Y,D).
 \end{gathered}
\]
For each $T>0$, the joint endpoint and displacement then converge to
\[
 \begin{aligned}
 &\Law(H_T^{(n)},\Delta_T^{(n)})\\
 &\quad\Rightarrow
 \int\Law_y^0(H_T,\Delta_T+d)\,\Law(Y,D)(\dd y,\dd d).
 \end{aligned}
\]
\end{lemma}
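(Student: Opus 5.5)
The plan is to apply the strong Markov property at $\sigma_n$ and combine it with the interior finite-prefix continuity of Lemma~\ref{lem:interior-prefix-continuity}, which is uniform over initial points in interior compact sets, over durations in compact intervals, and over allowed coefficient systems and lattice translates. Write $T_n=T-\sigma_n$ on $\{\sigma_n<T/2\}$. Conditionally on $\mathcal F_{\sigma_n}$, the post-$\sigma_n$ process is a perturbed system started at $X^{(n)}(\sigma_n)$. Its coefficients have amplitude at most $\eta_n$, and they are evaluated along the full configuration, so the conditional estimates still apply. The endpoint satisfies $H_T^{(n)}=H(X^{(n)}(T))$ and
\[
 \Delta_T^{(n)}=D_n+\bigl(X_N^{(n)}(T)-X_N^{(n)}(\sigma_n)\bigr).
\]
Hence it suffices to show that for bounded Lipschitz $g$ on the joint space,
\[
 \E\bigl[\Psi_n(Y_n,D_n,T_n)\bigr]\longrightarrow\int\Psi(y,d)\,\Law(Y,D)(\dd y,\dd d),
\]
where $\Psi_n(y,d,t)=\E^{\eta_n}_y g(H_t,\Delta_t+d)$ and $\Psi(y,d)=\E_y^0g(H_T,\Delta_T+d)$. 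The conditional expectation takes this form because the coefficient field is only shifted by lattice translation, which Lemma~\ref{lem:interior-prefix-continuity} allows.

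The first step is tightness and localization. Since $Y$ lies in $D^N$ almost surely, for every $\varepsilon>0$ there is an interior compact $K\subset D^N$, which we may take modulo common lattice translations, together with a bound $R_\varepsilon$, such that $\Prob(Y\in K^\circ,|D|\le R_\varepsilon)\ge1-\varepsilon$. Weak convergence then gives $\Prob(Y_n\in K,|D_n|\le R_\varepsilon)\ge1-2\varepsilon$ for large $n$, up to lattice translation. We also have $\Prob(\sigma_n>\delta)\to0$ for every $\delta$, so $T_n\in[T/2,T]$ with probability tending to one. Because $g$ is bounded, the exceptional events cost $O(\varepsilon)$.

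The second step is joint continuity of the reduced functional. On $K\times[-R_\varepsilon,R_\varepsilon]^3\times[T/2,T]$ I claim $\sup|\Psi_n(y,d,t)-\Psi(y,d,t)|\to0$, and that $\Psi(\cdot,\cdot,t)$ is continuous. This is exactly the uniform conclusion of Lemma~\ref{lem:interior-prefix-continuity}: argue by contradiction along a sequence $(y_n,d_n,t_n)$, extract a convergent subsequence, and use the weak convergence of $(H_{t_n},\Delta_{t_n})$ under the perturbed laws to the baseline law at the limit point. The shift $+d$ and Lipschitz continuity of $g$ absorb the convergence $d_n\to d$. Continuity in $t$ at $T$ also comes from that lemma, applied with baseline perturbations. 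Then
\[
 \bigl|\E\Psi_n(Y_n,D_n,T_n)-\E\Psi(Y_n,D_n,T)\bigr|\to0,
\]
up to $O(\varepsilon)$. Because $\Psi(\cdot,\cdot,T)$ is bounded and continuous on a neighbourhood of the support, the continuous mapping theorem (Portmanteau) gives $\E\Psi(Y_n,D_n,T)\to\E\Psi(Y,D,T)$. Letting $\varepsilon\downarrow0$ finishes the argument.

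I expect the main obstacle to be the handling of the random restart state $Y_n$ near the boundary of the localization set. $Y_n$ may lie close to walls even though $Y$ is interior, so I would use a continuous cutoff that equals one on $K$ and is supported in a slightly larger interior compact set; the uniform lemma covers that larger set. A second point needs care: the conditional law after $\sigma_n$ must be the perturbed system from $Y_n$ with a possibly history-dependent coefficient field. I would address this by noting that Lemma~\ref{lem:interior-prefix-continuity} is uniform over all allowed coefficient systems, so conditioning on $\mathcal F_{\sigma_n}$ produces an admissible system and the supremum bound applies pathwise.
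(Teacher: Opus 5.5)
Your proposal is correct and follows essentially the same route as the paper: the strong Markov property at $\sigma_n$, uniform convergence of the restarted functional to its baseline version on interior compacts, bounded displacements and $s\in[T/2,T]$ via Lemma~\ref{lem:interior-prefix-continuity}, continuity of the baseline functional, tightness, and continuous mapping. The differences are minor: you get uniformity in $d$ by a subsequence argument where the paper uses a finite net, and your appeal to history-dependent or lattice-shifted coefficient fields is unnecessary, because the coefficients in \eqref{eq:perturbation} depend only on the current configuration, so the post-$\sigma_n$ process is the same perturbed system restarted at $Y_n$.
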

\begin{proof}
For a bounded Lipschitz $F$, put
$Q_s^\eta F(y,d)=\E_y^\eta F(H_s,\Delta_s+d)$.
The full-state strong Markov property, with donor marks revealed at
their deaths, gives
\[
 \left|\E F(H_T^{(n)},\Delta_T^{(n)})
 -\E\left[\one_{\{\sigma_n<T\}}
              Q_{T-\sigma_n}^{\eta_n}F(Y_n,D_n)\right]\right|
 \le\|F\|_\infty\Prob(\sigma_n\ge T).
\]
Lemma~\ref{lem:interior-prefix-continuity} and a finite net in the
displacement variable give, for $K\Subset D^N$ and $R<\infty$,
\[
 \sup_{\substack{y\in K,\ |d|\le R\\ T/2\le s\le T}}
 |Q_s^{\eta_n}F(y,d)-Q_s^0F(y,d)|\to0.
\]
The same lemma makes $Q_s^0F(y,d)$ continuous for $y$ interior and
$s>0$. Tightness and $\Prob(Y\in D^N)=1$ allow restriction to these
compact sets with arbitrarily small lost mass. Since
$(Y_n,D_n,\sigma_n)\Rightarrow(Y,D,0)$,
\[
 \E F(H_T^{(n)},\Delta_T^{(n)})
       \longrightarrow\E Q_T^0F(Y,D).
\]
Bounded Lipschitz tests determine the asserted joint weak limit.
\end{proof}

\section{Homogenization at arbitrary killing rates for three particles}\label{app:three-particle-rates}
For three particles, a lifetime potential supplies one energy estimate
for holes of arbitrary relative radii. Its It\^o identity controls both
the gradient energy between deaths and the donor squares added at copies.
A corner contraction controls small deaths, while a donor with positive
lifetime potential controls large deaths. Combining these estimates
yields uniform exponential counts. Microscopic rescue and finite-record
comparison then give the common-ancestor bound required for
arbitrary-rate transfer.

Throughout this appendix $N=3$ and $0<k_-\le k\le k_+<\infty$ on
$M=(\R/L\Z)^3$. The radii and domains are those of~\eqref{eq:holes};
the outer radius of each local annulus is $\rho_h=h/4$.
All constants may depend on $L,k_-,k_+$ and the fixed rate, and are
uniform over sufficiently small $h$.

\subsection{Lifetime energy and the normal corner}
Fix $c>0$. For the single-particle lifetime $\zeta_h$ define
\begin{equation}\label{d3:lifetime}
 r_h(x)=\E_x(1-e^{-c\zeta_h}),\qquad
 \tfrac12\Delta r_h=c(r_h-1),\qquad r_h|_{\partial M_h}=0.
\end{equation}
Set $a_- =\min_z a_z$, $a_+=\max_z a_z$ and $b_h=\vartheta a_-$ for a fixed $0<\vartheta<1/100$. Let $d_h$ be the spherical boundary distance up to $2b_h$, smoothly flattened to $5b_h/2$ beyond $3b_h$, as specified below. Put
\[
 \begin{gathered}
 R_h=(\sum_{i=1}^3d_h(X_i)^2)^{1/2},\quad
 \mathcal G_h=\sum_i|\nabla r_h(X_i)|^2,\\
 J_t=\sum_{s\le t}^{\rm deaths}r_h(X_{d,s})^2,\quad A_t=\int_0^t\mathcal G_h(X_s)\dd s+J_t.
 \end{gathered}
\]
Let $\ell(1)=0$, $\ell'(s)=\chi_0(s)/s$, where $\chi_0$ is smooth,
equals one on $[0,1]$, vanishes on $[2,\infty)$ and takes values in $[0,1]$.
For $0<\gamma\le1/2$ define $\Phi_{h,\gamma}=e^{-\gamma\ell(R_h/b_h)}$.
In the core it is $(b_h/R_h)^\gamma$. The normal-coordinate generator gives
\begin{equation}\label{d3:corner-generator}
 \frac{\cL\Phi_{h,\gamma}}{\Phi_{h,\gamma}}
 =\frac{\gamma(\gamma-1)}{2R_h^2}
 -\frac{\gamma}{R_h^2}\sum_i\frac{d_h(X_i)}{a_{z_i}+d_h(X_i)}\le0,
 \qquad R_h\le b_h.
\end{equation}
For $F=\sum_i r_h(X_i)^2$, It\^o with the positive donor jumps gives
\begin{equation}\label{d3:energy-identity}
 F_t=F_0+M_t+A_t-2c\int_0^t\sum_i r_h(X_i)(1-r_h(X_i))\dd s,
 \qquad \langle M\rangle_t\le4A_t.
\end{equation}
The two estimates used below are
\begin{equation}\label{d3:corner-energy}
 \cL\Phi_{h,\gamma}\le C_0\gamma\mathcal G_h\Phi_{h,\gamma},
\end{equation}
and, for a sufficiently small fixed $\gamma$,
\begin{equation}\label{d3:interior-count}
 \E_xe^{\alpha K_{T\wedge\tau_m}}
 \le C_1\Phi_{h,\gamma}(x)^{1/4}e^{C_2T}.
\end{equation}
Here $\tau_m$ is the $m$th death time, and $X_{d,s}$ in $J_t$ denotes
the donor position just before the copy at time $s$. The corner exponent
$\gamma$ is chosen below for this appendix.

\subsection{Single particle lifetime estimates}

\begin{lemma}[Capture and lifetime]\label{ah:lifetime}
Let $A=k_+/(2\pi)$. If $R\ge2$ and $x$ lies outside all balls
$B(z+L\ell,Ra_z)$, $\ell\in\Z^3$, then
\begin{equation}\label{ah:capture}
 \mathbb P_x(\zeta_h\le t)\le R^{-1}+CA(t+h^2),\qquad 0<t\le1.
\end{equation}
If $x$ lies on a sphere $|x-z|=\rho_h$, the $R^{-1}$ term can be omitted.  There are $t_0,m_0>0$, independent of small $h$ and of $z$, such that
\begin{equation}\label{ah:outer-lifetime}
 r_h(x)\ge m_0\qquad (|x-z|=\rho_h).
\end{equation}
The same lower bound holds at points whose distance from every centre is at least $\rho_h$.
\end{lemma}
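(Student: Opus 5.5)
The capture bound \eqref{ah:capture} comes from a union bound: the near-field hit of the closest hole, plus the mean number of far holes reached in time $t$. The outer-sphere lifetime bound \eqref{ah:outer-lifetime} then follows from the capture bound at a fixed small time.

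\textbf{Near field.} Let $z$ be the centre nearest $x$, with $|x-z|\ge Ra_z$. In $\R^3$ the hitting probability of $\overline B(z,a_z)$ is $a_z/|x-z|\le R^{-1}$. This term is absent when $|x-z|=\rho_h$, because then $a_z/\rho_h=4h^2k(z)/(2\pi)\le CAh^2$, which is absorbed into the $CAh^2$ term.

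\textbf{Far field.} For every other hole $B(w,a_w)$, use the first-hitting-time decomposition of the occupation time of the concentric ball $B(w,2a_w)$. This gives
\[
 \Prob_x(\text{hit }B(w,a_w)\text{ before }t)
 \le C a_w\int_0^{t+a_w^2}p_s(x,w)\,\dd s
 + C a_w\,\Prob_x\bigl(|W_{t+a_w^2}-w|\le 2a_w\bigr)/a_w^{3}\cdot a_w^{2}.
\]
A simpler route is the standard bound
\[
 \Prob_x(T_{B(w,a)}\le t)\le \frac{C}{a}\,\E_x\int_0^{t+a^2}\one_{B(w,2a)}(W_s)\,\dd s,
\]
which holds because after hitting, Brownian motion spends expected time $\ge ca^2$ in the doubled ball during the next $a^2$ units of time.

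Sum this over all holes other than the nearest. Since $a_w\le Ah^3$ and the balls $B(w,2a_w)$ are disjoint, the total is at most
\[
 \frac{C}{a}\cdot\frac{\sum_w |B(w,2a_w)|}{\text{vol}}\,(t+h^2)\cdot\sup\text{density}.
\]
More carefully, the occupation density of the union has $L^\infty$-averaged mass per cell comparable to $a_w^3/h^3$. Time $\int_0^{t+a^2}$ times the density of cells gives $\sum_w a_w^{-1}a_w^3\,h^{-3}(t+h^2)\cdot h^{3}/h^3\asymp A(t+h^2)$.

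The technical point is to control the heat kernel near the starting cell. The free kernel is not uniformly bounded for small $s$. Cells within distance $\sqrt{s}$ contribute at most $\int_0^{t}\sum_{w\ne z}a_w\,p_s(x,w)\,\dd s$. With the lattice spacing $h$ and the excluded nearest centre, the sum $\sum_{w\ne z}a_w|x-w|^{-1}\lesssim Ah^3\sum_{w\ne z}|x-w|^{-1}$ over the cells in a unit neighbourhood is $\lesssim A$. This is the Green-function form of the bound, with the time cutoff handled by the Gaussian factor. Periodic images contribute $CAt$ by the same count on the torus. This summation is the main obstacle, and I would do it via the Green function $\int_0^t p_s\le C|x-w|^{-1}$ together with a Riemann-sum comparison.

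\textbf{Lifetime bound.} For \eqref{ah:outer-lifetime}, take $x$ with $|x-z|=\rho_h$, or more generally at distance $\ge\rho_h$ from every centre. Then \eqref{ah:capture} without the $R^{-1}$ term gives $\Prob_x(\zeta_h\le t_0)\le CA(t_0+h^2)\le 1/2$ for a fixed small $t_0$ and all small $h$. Hence
\[
 r_h(x)\ge \E_x\bigl[(1-e^{-c\zeta_h})\one_{\{\zeta_h>t_0\}}\bigr]\ge \tfrac12(1-e^{-ct_0})=:m_0.
\]
Both $t_0$ and $m_0$ depend only on $A$ and $c$, uniformly in $z$ and in small $h$.
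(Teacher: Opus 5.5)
Your architecture matches the paper's: a union bound over all lifted holes, with the nearest ball contributing $a_z/|x-z|\le R^{-1}$ (or $a_z/\rho_h\le 4Ah^2$ on the sphere $|x-z|=\rho_h$), the other holes contributing $CA(t+h^2)$, and then a fixed $t_0$ with $\mathbb P_x(\zeta_h\le t_0)\le 1/2$. Your last step, $r_h(x)\ge(1-e^{-ct_0})\mathbb P_x(\zeta_h>t_0)$, is correct and equivalent to the paper's Laplace-integral bound $m_0=ct_0e^{-ct_0}/2$.

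For a single ball, the paper uses the exact formula $\mathbb P_x(\tau_{B(w,a)}\le t)=\frac ar\operatorname{erfc}\bigl(\frac{r-a}{\sqrt{2t}}\bigr)$, which holds because $ru$ solves the one-dimensional heat equation on $r>a$. Your occupation-time comparison is a valid substitute, but the prefactor must be $C/a^2$, not $C/a$: the expected time spent in $B(w,2a)$ after hitting is of order $a^2$. With that correction it gives, for $r=|x-w|\ge h/2$, the bound $\frac{Ca}{r}\exp\{-r^2/(8(t+a^2))\}$, which is all that is needed.

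The step that fails as written is the far-field summation, which you rightly call the crux. The time-uniform bound $\int_0^tp_s(x,w)\,\dd s\le C|x-w|^{-1}$ throws away the factor $t$.
\begin{itemize}
\item Over a unit neighbourhood it yields $Ah^3\sum_w|x-w|^{-1}\asymp A$, which is not small. Since $A=k_+/(2\pi)$ is fixed, an $O(A)$ bound cannot give $\mathbb P_x(\zeta_h\le t_0)\le 1/2$.
\item Over the whole periodic lift, $\sum_w a_w|x-w|^{-1}$ diverges.
\item Comparing $\sum_w a_wp_s(x,w)$ with a Riemann integral is also invalid for $s\lesssim h^2$.
\end{itemize}
The smallness must come from the Gaussian factor, and the clean way to extract it is a direct lattice count, as in the paper. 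Every centre other than the nearest lies at distance at least $h/2$, and $a_w\le Ah^3$, $r_w-a_w\ge r_w/2$. Group the centres into shells of width $h$. Shell $j$ contains at most $C(j+1)^2$ centres, at distance at least $c_1jh$ beyond finitely many initial shells. Each centre in those initial shells contributes at most $4Ah^2$. Using $\operatorname{erfc}(s)\le e^{-s^2}$,
\[
 \sum_{w\ne z}\frac{a_w}{r_w}\operatorname{erfc}\Bigl(\frac{r_w-a_w}{\sqrt{2t}}\Bigr)
 \le CAh^2\Bigl(1+\sum_{j\ge1}j\,e^{-c_2j^2h^2/t}\Bigr)\le CA(t+h^2).
\]
The same estimate holds with $t$ replaced by $t+h^2$ in your occupation-time version. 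Heuristically, holes within distance $\sqrt t$ contribute $Ah^3\sum_{r_w\le\sqrt t}r_w^{-1}\asymp At$, and the rest are exponentially suppressed.

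Replace your Green-function paragraph with this count and drop the dimensionally inconsistent heuristic sum. The proof then goes through.
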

\begin{proof}
For one ball of radius $a$ and $r=|x-z|>a$, the radial heat equation gives
\begin{equation}\label{ah:single-ball}
 \mathbb P_x(\tau_{B(z,a)}\le t)
 =\frac ar\operatorname{erfc}\left(\frac{r-a}{\sqrt{2t}}\right).
\end{equation}
For $u(t,r)=\mathbb P_x(\tau_{B(z,a)}\le t)$, the function $v(t,r)=ru(t,r)$
satisfies
\[
 \partial_tv=\tfrac12\partial_{rr}v\quad(r>a),\qquad
 v(t,a)=a,\qquad v(0,r)=0.
\]
The reflected one-dimensional heat kernel gives~\eqref{ah:single-ball}.

Use the union bound over the entire periodic lattice of centres.  There is at most one centre at distance less than $h/4$, and its contribution is at most $R^{-1}$.  For all other centres, $a_z\le Ah^3$ and $r-a_z\ge r/2$ for small $h$.  Group centres in lattice shells of width $h$.  There are at most $C(j+1)^2$ centres in the $j$th shell; apart from a fixed number of initial shells their distances are at least $c_1jh$.  The inequality $\operatorname{erfc}(s)\le e^{-s^2}$ therefore bounds their total contribution by
\[
 CAh^2\left(1+\sum_{j\ge1}j e^{-c_2j^2h^2/t}\right)
 \le CA(t+h^2).
\]
On $|x-z|=h/4$ the nearest contribution is also at most $CAh^2$.
The same argument applies if every centre is at least $h/4$ away.

Choose $t_0\in(0,1)$ so that $CAt_0\le1/4$, then take $h$ small enough that $CAh^2\le1/4$.  At the stated points $\mathbb P_x(\zeta_h>t_0)\ge1/2$.  Since
\[
 r_h(x)=c\int_0^\infty e^{-ct}\mathbb P_x(\zeta_h>t)\,dt,
\]
we may take $m_0=ct_0e^{-ct_0}/2$.
\end{proof}

\begin{lemma}[Normal gradients]\label{ah:gradient}
For $a_z<r=|x-z|\le\rho_h/2$,
\begin{equation}\label{ah:radial-gradient}
 \partial_r r_h(x)\ge\frac{m_0a_z}{r^2}
                         -C(\rho_h^{-1}+c\rho_h).
\end{equation}
Consequently, if $Ra_z\le\rho_h/2$ and
\begin{equation}\label{ah:growing-condition}
 CR^2a_z(\rho_h^{-1}+c\rho_h)\le m_0/2,
\end{equation}
then $|\nabla r_h|\ge m_0/(2R^2a_z)$ on $a_z<r\le Ra_z$.  In particular,
\begin{equation}\label{ah:fixed-collar-gradient}
 \begin{gathered}
 |\nabla r_h(x)|\ge c_0/a_z
 \quad (a_z<r\le a_z+2b_h),\\
 \mathcal G_h(X)\ge c_1/a_+^2\quad(R_h(X)\le2b_h).
 \end{gathered}
\end{equation}
\end{lemma}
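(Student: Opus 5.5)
The plan is a radial comparison argument for the single-ball lifetime potential. Near the hole at $z$, write $r_h=\bar r+w$, where $\bar r$ is the radial function solving the same equation $\tfrac12\Delta\bar r=c(\bar r-1)$ on the annulus $a_z<r<\rho_h$. It has boundary values $0$ at $r=a_z$ and, at $r=\rho_h$, the spherical mean of $r_h$ over $|x-z|=\rho_h$. By Lemma~\ref{ah:lifetime} this mean is at least $m_0$, since \eqref{ah:outer-lifetime} holds pointwise there. Since $c\rho_h^2\to0$, the radial ODE is a small perturbation of the harmonic profile $A-B/r$. Writing $\bar r(r)=\alpha(1-a_z/r)+O(c\rho_h^2)$ with $\alpha\ge m_0(1+o(1))$, one obtains $\partial_r\bar r\ge m_0a_z/r^2-Cc\rho_h$, where the correction comes from integrating $r^{-2}\partial_r(r^2\bar r')=2c(\bar r-1)$, bounded by $2c$, across the annulus.

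The nonradial part $w$ vanishes on the inner sphere and has zero spherical mean on the outer sphere. It satisfies $\tfrac12\Delta w=cw$ and $|w|\le1$. I would estimate $\partial_r w$ on $r\le\rho_h/2$ by a gradient bound at scale $\rho_h$. Expanding $w$ in spherical harmonics of degree $\ell\ge1$, each radial mode is a combination of $r^\ell$ and $r^{-\ell-1}$ (up to $c\rho_h^2$ corrections), and it vanishes at $a_z$. Hence, for $r\le\rho_h/2$, the mode's derivative is at most $C\ell 2^{-\ell}\rho_h^{-1}$ times its outer coefficient. The outer coefficients are bounded because $|w|\le1$, and summing over $\ell$ gives $|\nabla w|\le C\rho_h^{-1}$ there. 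The smallness of $a_z/\rho_h=O(h^2)$ makes the inner boundary correction harmless. Altogether this yields \eqref{ah:radial-gradient}.

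For the consequences, on $a_z<r\le Ra_z$ one has $m_0a_z/r^2\ge m_0/(R^2a_z)$. Condition \eqref{ah:growing-condition} then absorbs the error into half of this, giving $|\nabla r_h|\ge\partial_rr_h\ge m_0/(2R^2a_z)$. Taking $R=1+2\vartheta$ and using $b_h=\vartheta a_-\le\vartheta a_z$, the collar $a_z<r\le a_z+2b_h$ lies in $r\le Ra_z$. Condition \eqref{ah:growing-condition} holds for small $h$ because $a_z\rho_h^{-1}=O(h^2)$. This gives the first bound in \eqref{ah:fixed-collar-gradient} with $c_0=m_0/(2R^2)$. If $R_h(X)\le2b_h$, every label lies in such a collar, so $\mathcal G_h\ge3c_0^2/a_+^2$.

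The main obstacle is controlling the nonradial part uniformly in $h$. The outer-sphere data are only known to be bounded and at least $m_0$, while neighbouring holes perturb $r_h$ on the scale $\rho_h$. If the harmonic expansion is awkward, a fallback is interior gradient estimates for $\tfrac12\Delta w=cw$ on balls of radius comparable to $\rho_h$, combined with a boundary Harnack or reflection across the tiny sphere $r=a_z$ (Kelvin transform) to handle points near the hole.
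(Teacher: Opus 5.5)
Your proof is correct, but it takes a different route from the paper. The paper never splits $r_h$ by spherical averaging. It fills in the hole: it extends the forcing $c(1-r_h)$ into $B(z,a_z)$ and solves $-\tfrac12\Delta u=f$ on the whole ball $B(z,\rho_h)$ with the outer data of $r_h$. The maximum principle then gives $u\ge m_0$, and rescaled Poisson and interior $W^{2,p}$ estimates give $|\nabla u|\le M=C(\rho_h^{-1}+c\rho_h)$ together with a H\"older bound on $\nabla u$. The hole is handled through the harmonic difference $v=u-r_h$. This is written as $u(z)$ times the capacitary profile plus a remainder with inner data $u(z+a_z\omega)-u(z)=O(a_zM)$, which is controlled by harmonic estimates after scaling by $a_z$.

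You instead use the exact rotational symmetry of the shell. The spherical mean solves the radial ODE exactly, which gives the capacitary term with its true flux, and the nonradial part needs only bounded outer data and separation of variables. This is more elementary and fully explicit, and it avoids any question of elliptic constants being uniform in $a_z/\rho_h$. The paper's argument uses an exact profile only for the harmonic correction and treats $c(1-r_h)$ as an arbitrary bounded forcing. It would therefore survive a nonconstant zero-order coefficient, which your spherical averaging would not, at the cost of invoking $W^{2,p}$, H\"older and boundary estimates.

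When you write yours out, a few steps should be made precise.
\begin{itemize}
\item Obtain the exact coefficient from monotonicity rather than from an $\alpha(1+o(1))$ expansion. Since $\bar r\le1$, the quantity $r^2\bar r'$ is nonincreasing. Hence $a_z^2\bar r'(a_z)\ge\bar m a_z/(1-a_z/\rho_h)$ and $r^2\bar r'(r)\ge\bar m a_z-2cr^3/3$, with $\bar m\ge m_0$.
\item In the $\ell$-sum, include the factor $\|\Pi_\ell g\|_\infty\le\sqrt{2\ell+1}\,\|g\|_\infty$ for the degree-$\ell$ projection; the factor $2^{-\ell}$ absorbs it.
\item For $c>0$ no Bessel asymptotics are needed. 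The mode normalized to equal $1$ on the outer sphere is dominated by the harmonic mode with the same boundary values, and $r^2f_\ell'$ is nondecreasing. Together these give $0\le f_\ell'(r)\le C\ell 2^{-\ell}\rho_h^{-1}$ on $r\le\rho_h/2$, uniformly in $c$.
\end{itemize}
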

\begin{proof}
The following estimates are local to one sphere, with constants fixed
by the scaled ball and annulus estimates.
Extend $c(1-r_h)$ from the annulus to the inner ball, with values in
$[0,c]$, and solve $-\frac12\Delta u=f$ on $B(z,\rho_h)$ with the outer
boundary values of $r_h$. The maximum principle and
\eqref{ah:outer-lifetime} give
$m_0\leq u\leq1+c\rho_h^2/3$. Rescale this ball to the unit ball.
The Poisson representation of its bounded boundary data and interior
$W^{2,p}$ estimates for its bounded forcing, with a fixed $p>3$, give
\[
 \sup_{B(z,\rho_h/2)}|\nabla u|
 \le M=C(\rho_h^{-1}+c\rho_h).
\]
They also give
$[\nabla u]_{C^\alpha(B(z,\rho_h/2))}\leq C\rho_h^{-\alpha}M$
for a fixed $0<\alpha<1-3/p$. Since $a_z/\rho_h=O(h^2)$, the
$C^{1,\alpha}$ norm on the unit sphere of
$u(z+a_z\omega)-u(z)$ is at most $Ca_zM$. All constants here are
independent of $h,z$ and of the extension of the forcing.

The difference $v=u-r_h$ is harmonic on the annulus, vanishes on the outer sphere, and equals $u(a_z\omega)$ on the inner sphere.  Decompose it as
\[
 v=u(z)\frac{a_z/r-a_z/\rho_h}{1-a_z/\rho_h}+v_e.
\]
The maximum principle gives $\|v_e\|_\infty\leq Ca_zM$ on the whole annulus.
After scaling by $a_z$, its inner boundary is the unit sphere and its
outer radius is at least four. The local harmonic boundary estimate on
$1\leq r/a_z\leq2$ therefore has a constant independent of that outer
radius and gives $|\nabla v_e|\leq CM$. For
$2a_z\leq r\leq\rho_h/2$, an interior ball of radius comparable to $r$
lies in the annulus, so
$|\nabla v_e|\leq C a_zM/r$. Combining the two regions gives
\[
 |\nabla v_e(x)|\le
 \begin{cases}
 CM,&a_z<r\le2a_z,\\
 Ca_zM/r,&2a_z\le r\le\rho_h/2,
 \end{cases}
 \qquad \sup_{a_z<r\le\rho_h/2}|\nabla v_e|\le CM.
\]
Consequently
\[
 \partial_r r_h=\partial_r u-\partial_r v
 \ge\frac{u(z)a_z}{(1-a_z/\rho_h)r^2}-CM,
\]
which proves~\eqref{ah:radial-gradient} after enlarging $C$.

Condition~\eqref{ah:growing-condition} makes the error at most half the radial main term.  In a fixed ratio collar $r\le a_z+2b_h$, the ratios $r/a_z$ are uniformly bounded and $a_z/\rho_h=O(h^2)$.  This proves the first assertion in~\eqref{ah:fixed-collar-gradient}.  When $R_h\le2b_h$, each clipped distance is an actual spherical distance at most $2b_h$, which proves its second assertion.
\end{proof}

Choose a smooth nonincreasing function $\chi$ on $[2,3]$, equal to one near $2$ and to zero near $3$, with integral $1/2$.  The clipped distance used throughout is the actual spherical distance $s$ for $s\le2b_h$, is
$b_h(2+\int_2^{s/b_h}\chi(u)\,du)$ for $2b_h<s<3b_h$, and is $5b_h/2$ thereafter.  The collars are disjoint.  Hence $d_h$ is smooth on $M_h$, nondecreasing with the actual distance within each collar, and never exceeds that distance.

\begin{lemma}[A donor with clearance]\label{ah:donor}
There is a uniform $\kappa>0$ such that
\begin{equation}\label{ah:donor-bound}
 d_h(x)\ge b_h/(2\sqrt3)\quad\Longrightarrow\quad r_h(x)\ge\kappa.
\end{equation}
\end{lemma}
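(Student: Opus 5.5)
The plan is to split according to where $x$ sits relative to the nearest hole, using Lemma~\ref{ah:lifetime} far away and the radial gradient bound of Lemma~\ref{ah:gradient} near the wall. Suppose $d_h(x)\ge b_h/(2\sqrt3)$. Since the clipped distance never exceeds the actual spherical distance, the actual distance $s$ of $x$ to the nearest sphere $\partial B(z,a_z)$ satisfies $s\ge b_h/(2\sqrt3)=\vartheta a_-/(2\sqrt3)\ge c_\vartheta a_z/R_a$, where $R_a=a_+/a_-\le k_+/k_-$ is bounded uniformly in $h$. Put $r=|x-z|=a_z+s$. We treat three regimes: $r\ge\rho_h$; $Ra_z\le r<\rho_h$ for a fixed large $R$; and $a_z+b_h/(2\sqrt3)\le r<Ra_z$.

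\emph{Far regime.} If every centre lies at distance at least $\rho_h$, Lemma~\ref{ah:lifetime} gives $r_h(x)\ge m_0$ directly.

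\emph{Intermediate regime.} For $Ra_z\le r<\rho_h$ I use the capture bound~\eqref{ah:capture}, since $x$ lies outside all balls $B(z+L\ell,Ra_z)$. With $R\ge4$ fixed, choose $t_1\le t_0$ so that $CAt_1\le1/8$, and take $h$ small so that $CAh^2\le1/8$. Then $\Prob_x(\zeta_h>t_1)\ge1/2$, and the Laplace representation $r_h(x)=c\int_0^\infty e^{-ct}\Prob_x(\zeta_h>t)\,dt$ gives $r_h(x)\ge ct_1e^{-ct_1}/2$.

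\emph{Near regime.} This is where the real work lies: $a_z+b_h/(2\sqrt3)\le r<Ra_z$. Capture from there is not small, so the bound must come from the gradient. I integrate~\eqref{ah:radial-gradient} along the radial segment from the wall point at radius $a_z$, where $r_h=0$, out to $x$:
\[
 r_h(x)\ge\int_{a_z}^{r}\Bigl(\frac{m_0a_z}{\rho^2}-C(\rho_h^{-1}+c\rho_h)\Bigr)d\rho
 \ge m_0\Bigl(1-\frac{a_z}{r}\Bigr)-CRa_z(\rho_h^{-1}+c\rho_h).
\]
The error term is $O(h^2)$ because $a_z/\rho_h=O(h^2)$. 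The main term satisfies $1-a_z/r\ge s/(a_z+s)\ge c'\vartheta/(R_a+\vartheta)$, which is a positive constant independent of $h$. Hence $r_h(x)\ge m_0c'\vartheta/(2(R_a+1))$ for small $h$. Lemma~\ref{ah:gradient} requires $r\le\rho_h/2$; this holds because $Ra_z\le\rho_h/2$ for small $h$.

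The main obstacle is ensuring that the constant in~\eqref{ah:radial-gradient} is uniform across $z$ and across the possible radius ratios. This uniformity is what the bound $R_a\le k_+/k_-$ provides. Finally, set $\kappa$ equal to the minimum of the three lower bounds obtained in the three regimes.
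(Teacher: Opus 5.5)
Your proof is correct, but it treats the annulus $a_z<|x-z|<\rho_h$ differently from the paper.

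\textbf{The paper's route.} It handles the whole annulus in one step. It uses the explicit radial harmonic measure of the outer sphere, $(1-a_z/|x-z|)/(1-a_z/\rho_h)\ge s_*$, and the strong Markov property at the annulus exit time $\eta$. It then applies $r_h\ge m_0$ on $|x-z|=\rho_h$ from \eqref{ah:outer-lifetime}, and uses $\E_x\eta\le\rho_h^2/3$ to control the discount $e^{-c\eta}$. This gives $\kappa=m_0s_*/2$.

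\textbf{Your route.} You split the annulus at $Ra_z$. Near the wall you integrate \eqref{ah:radial-gradient} outward from the wall; at intermediate radii you use the capture bound \eqref{ah:capture}. Both steps hold:
\begin{itemize}
\item The integrated error $CRa_z(\rho_h^{-1}+c\rho_h)$ is $O(h^2)$.
\item The main term $m_0s/(a_z+s)$ is bounded below uniformly through $a_+/a_-\le k_+/k_-$.
\item The range $r<Ra_z\le\rho_h/2$ lies where Lemma~\ref{ah:gradient} applies. That lemma's proof uses only \eqref{ah:outer-lifetime}, so there is no circularity.
\item With $R\ge4$ and $CA(t_1+h^2)\le1/4$, the capture bound gives survival probability at least $1/2$ up to time $t_1$.
\end{itemize}

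\textbf{Why the split, and what each route buys.} The split is forced on you. The additive gradient error, integrated over a radial length comparable to $\rho_h$, is of order one rather than small. The paper's probabilistic argument has no additive error, so it needs no split. It also needs only the elementary profile $1-a_z/r$ and Lemma~\ref{ah:lifetime}, not the Poisson-kernel and $W^{2,p}$ estimates behind Lemma~\ref{ah:gradient}. Lemma~\ref{ah:gradient} is itself proved by comparing $r_h$ with that same radial profile. So your near-wall bound $m_0(1-a_z/r)$ recovers the paper's harmonic-measure bound by a longer path.

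\textbf{A small misattribution.} Uniformity of the constant $C$ in \eqref{ah:radial-gradient} over $z$ is already part of Lemma~\ref{ah:gradient}. The contrast bound $a_+/a_-\le k_+/k_-$ is what you need for the main term, not for that constant.
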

\begin{proof}
If $x$ is outside all the $\rho_h$ spheres, Lemma~\ref{ah:lifetime} applies.  Otherwise it is in one annulus $a_z<|x-z|<\rho_h$, which contains no other hole.  Let $\eta$ be its exit time.  The harmonic radial formula gives
\[
 \mathbb P_x(\text{outer exit})
 =\frac{1-a_z/|x-z|}{1-a_z/\rho_h}
 \ge s_0:=\frac{b_h/(2\sqrt3)}{a_++b_h/(2\sqrt3)}>0.
\]
The last expression is at least the fixed number
$s_*=\vartheta k_-/(2\sqrt3 k_++\vartheta k_-)>0$.
Also $\mathbb E_x\eta\le\rho_h^2/3$, by comparison with exit from the outer ball.  The strong Markov decomposition of $r_h$ gives
\[
 r_h(x)\ge m_0\mathbb E_x[e^{-c\eta};\text{outer exit}]
 \ge m_0(s_*-c\rho_h^2/3)\ge m_0s_*/2
\]
for small $h$ such that $c\rho_h^2/3\le s_*/2$.  Take the fixed constant $\kappa=m_0s_*/2$.
\end{proof}

\subsection{Boundary energy and the number of deaths}

The cutoff definition gives $\Phi_{h,\gamma}\ge2^{-\gamma}$ everywhere.
Between deaths, in $R_h\le2b_h$, let $z_i$ be the centre of the sphere
whose collar contains $X_i$. The normal coordinates satisfy
\begin{equation}\label{ah:normal-generator}
 |\nabla d_h(X_i)|=1,\qquad
 \tfrac12\Delta d_h(X_i)=\frac1{a_{z_i}+d_h(X_i)}.
\end{equation}
The noises in these three normals have covariance the identity, as the Brownian drivers of different labels are independent.  At a death the victim has normal coordinate zero, so copying gives
\begin{equation}\label{ah:normal-copy}
 R_h(X(s))^2=R_h(X(s-))^2+d_h(X_{d,s})^2.
\end{equation}
Thus every copy decreases $\Phi_{h,\gamma}$. Radial differentiation
gives~\eqref{d3:corner-generator}. In the transition band the curvature
term is nonpositive, and Lemma~\ref{ah:gradient} gives
\[
 \begin{aligned}
 (\cL\Phi_{h,\gamma})^+
 &\le C\gamma b_h^{-2}\Phi_{h,\gamma}\\
 &\le C\gamma(a_+/b_h)^2\mathcal G_h\Phi_{h,\gamma}
 \le C_0\gamma\mathcal G_h\Phi_{h,\gamma},
 \quad b_h<R_h<2b_h.
 \end{aligned}
\]
The ratio $a_+/b_h\le k_+/(\vartheta k_-)$ is fixed. In the core the
drift is nonpositive, and for $R_h\ge2b_h$ it is zero. This proves
\eqref{d3:corner-energy}, uniformly in $h$ and $\gamma\in(0,1/2]$.

Use the natural filtration, revealing each donor uniform at its death.
Lemma~\ref{lem:finite-prefix} gives a unique victim at every finite copy
and $R_h>0$ throughout each finite prefix. For fixed $h$, elliptic
regularity gives $r_h\in C^1(\overline M_h)$, so the local It\^o formulas
extend to each wall hit and include the donor jump.

For a fixed prefix $\tau_m$, put
\[
 \sigma_{n,\varepsilon}=\inf\{t:R_h(X_t)\le\varepsilon b_h
       \text{ or }A_t\ge n\text{ or }\max_i|X_i(t)|\ge n\}.
\]
Localize exponential processes additionally by their quadratic variations.
Bounded positions and a positive minimum of $R_h$ on each finite prefix
give, for fixed $h,m,T$,
\[
 \lim_{\varepsilon\downarrow0}\lim_{n\to\infty}
       (T\wedge\tau_m\wedge\sigma_{n,\varepsilon})
       =T\wedge\tau_m\quad\text{almost surely}.
\]
The bounds below are uniform in $m,n,\varepsilon$ and small $h$.
Fatou removes the auxiliary stops at fixed $h,m$; the count bound then
removes $m$.

\begin{lemma}[Stopped lifetime-energy estimate]\label{ah:energy}
For each interior initial configuration and each finite death prefix,
\begin{equation}\label{ah:energy-exp}
 \mathbb E_x\exp(A_{T\wedge\tau_m}/8)
 \le\exp(3/4+3cT/8).
\end{equation}
The same bound holds at every stopping time $S\le T\wedge\tau_m$,
with $A_S$ including the donor square when $S$ is a death.
\end{lemma}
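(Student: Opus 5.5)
We derive the bound from the It\^o identity \eqref{d3:energy-identity} for $F=\sum_i r_h(X_i)^2$ through an exponential supermartingale. Since $0\le r_h\le1$, we have $0\le F\le 3$. The drift term $-2c\int\sum_i r_h(1-r_h)\,ds$ is nonpositive, and its absolute value is at most $2c\cdot\frac34 t=\frac32ct$, because $r(1-r)\le1/4$ for each of the three labels. Rearranging the identity gives
\[
 A_t=F_t-F_0-M_t+2c\int_0^t\sum_i r_h(X_i)(1-r_h(X_i))\,\dd s
 \le 3-M_t+\tfrac32 ct .
\]
The task is therefore to control $\E\exp(-M_S/8)$, using $\langle M\rangle\le4A$.

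We work at the stopping time $S\wedge\sigma_{n,\varepsilon}$, with $S\le T\wedge\tau_m$. On this interval $A$ is bounded by $n$, all positions are bounded, and the It\^o formula holds, including the donor jumps at deaths; this was noted just before the lemma, using $r_h\in C^1(\overline M_h)$. We must be careful that $M$ contains a jump part from copies. At a death the jump of $F$ is exactly $r_h(X_{d,s})^2\ge0$, because the victim's value $0$ is replaced by the donor's value. We have put this jump into $A$ through $J$. The compensated martingale part at copies is then whatever remains after subtracting the predictable compensator, so it is cleaner to read \eqref{d3:energy-identity} with $M$ the continuous martingale $\sum_i\int 2r_h\nabla r_h\cdot\dd W_i$ and the jump treated as the increasing process $J$. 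With that reading $M$ is continuous and $\dd\langle M\rangle=4\sum_i r_h^2|\nabla r_h|^2\,\dd t\le 4\mathcal G_h\,\dd t\le 4\,\dd A$. I would verify this bookkeeping first, since the whole estimate depends on it.

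With $M$ continuous, let $\lambda=1/8$. The process $\exp(-\lambda M-\frac{\lambda^2}{2}\langle M\rangle)$ is a positive local martingale, hence a supermartingale, and it is a true martingale after localization. Write
\[
 \lambda A_S\le 3\lambda+\tfrac32c\lambda T-\lambda M_S,
 \qquad
 -\lambda M_S=\bigl(-\lambda M_S-2\lambda^2\langle M\rangle_S\bigr)+2\lambda^2\langle M\rangle_S .
\]
Since $2\lambda^2\langle M\rangle_S\le 8\lambda^2A_S=\lambda A_S/8$, Cauchy--Schwarz applied to $\exp(\lambda A_S)=\exp(\cdots)$ gives a bound of the form $\E e^{\lambda A_S}\le C\,(\E e^{\lambda A_S/4})^{1/2}$ up to constants. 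Because $A_S\le n$ is finite after localization, we can absorb the right side into the left. A cleaner route is Hölder with exponents $p$ and $q$. Take the exponential $\exp(-2\lambda M-2\lambda^2\langle M\rangle\cdot 2)\le$ supermartingale with parameter $2\lambda$, whose expectation is at most one. This gives
\[
 \E e^{\lambda A_S}\le e^{3\lambda+\frac32c\lambda T}\,
 \bigl(\E e^{4\lambda^2\langle M\rangle_S}\bigr)^{1/2}
 \le e^{3/8+3cT/16}\,\bigl(\E e^{2\lambda A_S}\bigr)^{1/4}\cdots .
\]
The main obstacle is closing this loop with the exact constants $3/4+3cT/8$. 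The target constants equal $2(3\lambda+\frac32c\lambda T)$ with $\lambda=1/8$. This suggests the intended closure is Cauchy--Schwarz with $\langle M\rangle\le 4A$, giving $\E e^{\lambda A}\le e^{(3+\frac32cT)\lambda}(\E e^{8\lambda^2\cdot\ldots})^{1/2}$ with $16\lambda^2=2\lambda$. That yields $X\le K X^{1/2}$ for $X=\E e^{\lambda A_S}$, which is finite by localization, and hence $X\le K^2$ with $K=e^{3/8+3cT/16}$, matching \eqref{ah:energy-exp}. I would tune the exponent split so that the self-referential term reappears exactly as $\E e^{A_S/8}$.

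Finally, the bound is uniform in $n,\varepsilon$. We let $n\to\infty$ and then $\varepsilon\downarrow0$, and Fatou's lemma together with the almost sure convergence of the stopping times to $S$ gives the estimate at $S$, in particular at $T\wedge\tau_m$.
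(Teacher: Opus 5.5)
Your final argument is correct and gives exactly the stated constant. It uses the same ingredients as the paper:
\begin{itemize}
\item the identity \eqref{d3:energy-identity}, with $M$ the continuous martingale and the donor jumps placed in $J$;
\item the bound $A_t\le 3+\tfrac32ct-M_t$ and the comparison $\langle M\rangle\le 4A$;
\item an exponential supermartingale.
\end{itemize}

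The two proofs differ only in the closing step. The paper takes $\lambda=1/4$ and notes that $Z_t=\exp(-\lambda M_t-\lambda^2\langle M\rangle_t/2)$ dominates $\exp\{(\lambda-2\lambda^2)A_t-\lambda(3+3ct/2)\}$ pointwise. This gives $\E e^{A_S/8}\le e^{3/4+3cT/8}\E Z_S$ in one line, with no need to know in advance that $\E e^{A_S/8}$ is finite.

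Your route uses $\lambda=1/8$, the split $e^{-\lambda M}=e^{-\lambda M-\lambda^2\langle M\rangle}e^{\lambda^2\langle M\rangle}$, and Cauchy--Schwarz. The first factor is the Dol\'eans exponential with parameter $2\lambda$, so you get $X\le e^{3/8+3cT/16}X^{1/2}$ for $X=\E e^{A_S/8}$. Absorbing then requires $X<\infty$. The auxiliary stop supplies this, because $A\le n+1$ there: the overshoot at a death is at most one donor square, bounded by one. Fatou removes the stop afterwards.

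Both routes give the same constant $e^{(3+3cT/2)/4}$. The paper's is shorter and avoids the a priori finiteness step.

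In a write-up, delete your intermediate H\"older display. The split using $e^{2\lambda^2\langle M\rangle}$ produces a factor $(\E e^{2\lambda A_S})^{1/2}$ with a larger exponent, which does not close. Also correct the line $8\lambda^2A_S=\lambda A_S/8$: at $\lambda=1/8$ it should read $8\lambda^2A_S=\lambda A_S$.
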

\begin{proof}
Apply It\^o's formula on each Brownian piece to $F=\sum_i r_h(X_i)^2$.  Since
$\frac12\Delta(r_h^2)=|\nabla r_h|^2-2cr_h(1-r_h)$ and the copy adds the donor square, one obtains~\eqref{d3:energy-identity}.  The continuous local martingale is
\[
 \begin{gathered}
 M_t=2\sum_i\int_0^t r_h(X_i)\nabla r_h(X_i)\cdot dB_i,\\
 \langle M\rangle_t=4\int_0^t\sum_i r_h(X_i)^2|\nabla r_h(X_i)|^2\,ds\le4A_t.
 \end{gathered}
\]
Use $0\le F\le3$ and $\sum_i r_i(1-r_i)\le3/4$ to obtain
$A_t\le3+3ct/2-M_t$.  For $0<\lambda<1/2$, the nonnegative exponential local martingale
\[
 Z_t=\exp(-\lambda M_t-\lambda^2\langle M\rangle_t/2)
\]
has expectation at most one.  The preceding inequalities imply
\[
 Z_t\ge\exp\{(\lambda-2\lambda^2)A_t
                         -\lambda(3+3ct/2)\}.
\]
Take $\lambda=1/4$ and $t=S\le T\wedge\tau_m$ at the auxiliary stops.
Then
\[
 \mathbb E_xe^{A_S/8}
 \le e^{3/4+3cT/8}\mathbb E_xZ_S
 \le e^{3/4+3cT/8}.
\]
The localization convention above and Fatou give~\eqref{ah:energy-exp}.
\end{proof}

\begin{lemma}[Death counts]\label{ah:count}
The estimate~\eqref{d3:interior-count} holds from every interior state, with uniform constants $\alpha,C_1,C_2>0$.  The processes in $D_h$ and in their phase quotients are nonexplosive.  The estimate holds after removing the count stopping.
\end{lemma}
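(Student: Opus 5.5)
The plan is to combine the corner function $\Phi_{h,\gamma}$ with the lifetime energy $A_t$, in the spirit of Lemma~\ref{lem:all-death-budget}. The drift bound \eqref{d3:corner-energy} only controls $\cL\Phi_{h,\gamma}$ by $C_0\gamma\mathcal G_h\Phi_{h,\gamma}$, which is not bounded. I would therefore work with the compensated process
\[
 Y_t=e^{\alpha K_t-C_0\gamma A_t^{\rm c}}\Phi_{h,\gamma}(X_t)^{1/2},
 \qquad A_t^{\rm c}=\int_0^t\mathcal G_h(X_s)\,\dd s,
\]
or a variant in which the full $A_t$ appears, so that the donor squares are also charged. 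Itô's formula on each Brownian piece, together with \eqref{d3:corner-energy} applied with exponent $\gamma/2$, makes the continuous part of $Y$ a local supermartingale up to the stops $\sigma_{n,\varepsilon}$.

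At a death, \eqref{ah:normal-copy} gives the multiplicative jump $\Phi^+/\Phi^-=(1+d_h(X_d)^2/R_h^2)^{-\gamma/2}$. I would split deaths into two kinds. The first kind, \emph{small deaths}, happen with $R_h\le2b_h$. There at least one survivor carries a definite share of $R_h^2$, because the victim has $d_h=0$ and the other two labels share $R_h^2$. Averaging the convex chord bound over the uniform donor gives a conditional contraction by some $q<1$, exactly as in \eqref{eq:copy-contraction}. Choosing $e^{\alpha}q\le1$ absorbs these deaths.

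The second kind, \emph{large deaths}, happen with $R_h>2b_h$. Here some survivor has $d_h\ge b_h/(2\sqrt3)$, so Lemma~\ref{ah:donor} gives $r_h\ge\kappa$ for that survivor. With probability at least $1/2$ the donor is such a survivor, and then $J$ increases by at least $\kappa^2$. So each large death costs, in conditional expectation, a fixed amount of $A$. I would pay for the factor $e^{\alpha}$ at large deaths from an $e^{\alpha'J}$ budget, with $\alpha'$ comparable to $\alpha/\kappa^2$.

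Combining the two cases, the count factor is dominated by $e^{\alpha K}\le e^{\alpha K^{\rm small}}e^{C\alpha J/\kappa^2}$ up to a conditional-expectation correction. I would then separate the two factors with Cauchy--Schwarz (or Hölder), obtaining
\[
 \E_xe^{\alpha K_{T\wedge\tau_m}}
 \le\bigl(\E_x[Y\text{-type supermartingale}]\bigr)^{1/2}
    \bigl(\E_xe^{C\alpha A_{T\wedge\tau_m}}\bigr)^{1/2}.
\]
The first factor is at most $\Phi_{h,\gamma}(x)^{1/2}$, and after the square root this yields the exponent $1/4$ in \eqref{d3:interior-count}. The second factor is controlled by Lemma~\ref{ah:energy} once $2C_0\gamma\le1/8$ and $C\alpha\le1/16$, which fixes $\gamma$ and $\alpha$. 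Lemma~\ref{ah:energy} also gives a version at stopping times $S$, and this handles the compensator. Removing the auxiliary stops by Fatou is justified by the localization convention stated before Lemma~\ref{ah:energy}.

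The main obstacle is the large-death accounting. The increase of $J$ is only in conditional expectation over the donor mark, while the $e^{\alpha}$ factor is paid on every death. I would handle this with a discrete exponential supermartingale: on a large death, $\E[e^{\alpha-\alpha'\Delta J}\mid\mathcal F_{\tau-}]\le e^{\alpha}\bigl(\tfrac12+\tfrac12e^{-\alpha'\kappa^2}\bigr)\le1$ for $\alpha$ small relative to $\alpha'\kappa^2$. Then $e^{\alpha K-\alpha'J}$ times the corner factor is a supermartingale, and Hölder with the energy exponential bound concludes.

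Nonexplosion follows next. Once \eqref{d3:interior-count} holds uniformly in $m$, we get $\Prob(\tau_m\le T)\le C_1\Phi_{h,\gamma}(x)^{1/4}e^{C_2T-\alpha m}\to0$, so $\tau_m\uparrow\infty$. Monotone convergence then removes the count stopping. The phase quotient inherits all of this because every quantity involved is lattice invariant.
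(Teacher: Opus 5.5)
\textbf{Verdict: fixable gap.} Your architecture works and is close to the paper's, but the small/large threshold is wrong and the small-death contraction fails as stated. The parts that are sound are these: a corner process compensated by $A$ to absorb \eqref{d3:corner-energy}, a $J$-budget paid at deaths where some survivor has clearance, and Lemma~\ref{ah:energy} to pay for both compensators through Cauchy--Schwarz. The paper runs two separate supermartingales, $e^{\theta_sK^s_t-C_0\gamma A_t}\Phi_{h,\gamma}$ and $e^{\theta_\ell K^\ell_t-\lambda_\ell J_t}$, and combines them by H\"older. Your single process $e^{\alpha K-\alpha' J-C_0\gamma' A^{\rm c}}\Phi_{h,\gamma'}$ is a legitimate merger, because $\Phi^+\le\Phi^-$ at every copy and $\Delta J\ge0$.

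\textbf{The failing step.} The jump formula $\Phi^+/\Phi^-=(1+d_h(X_d)^2/R_h^2)^{-\gamma/2}$ holds only when both the prestate and the poststate lie in the core $R_h\le b_h$, where $\Phi_{h,\gamma}=(b_h/R_h)^\gamma$. In the band $b_h<R_h<2b_h$ the function is flattened by $\chi_0$, and it is constant for $R_h\ge2b_h$. With your threshold $R_h\le 2b_h$, take a death whose prestate has $R_h$ close to $2b_h$. Then $R_h^+\ge R_h^-$ forces $\Phi^+/\Phi^-\to1$ for every donor. So there is no uniform $q<1$ on your ``small'' region, and $e^{\alpha}q\le1$ cannot absorb those deaths. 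The comparison with \eqref{eq:copy-contraction} is misleading here, because there $\Phi$ was a pure power everywhere.

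\textbf{The repair.} Lower the threshold: call a death small when its prestate has $R_h\le b_h/2$, as the paper does. Any threshold in $[b_h/\sqrt6,\,b_h/\sqrt2]$ works.
\begin{itemize}
\item \emph{Small deaths.} By \eqref{ah:normal-copy}, $R_h^+\le\sqrt2\,R_h^-<b_h$, so the pure power form holds on both sides. The convex chord average over the two survivors then gives a uniform contraction, for example $q_\gamma=\tfrac12(1+2^{-\gamma/2})$.
\item \emph{Large deaths.} The donor argument survives the lower threshold. If $R_h>b_h/2$ and the victim has $d_h=0$, one of the two survivors has $d_h>b_h/(2\sqrt2)\ge b_h/(2\sqrt3)$. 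Lemma~\ref{ah:donor} therefore still gives $r_h\ge\kappa$ for a donor chosen with probability $1/2$.
\end{itemize}

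\textbf{A smaller point.} Your Cauchy--Schwarz split needs the lower bound $\Phi_{h,\gamma}\ge2^{-\gamma}$ in order to discard the factor $\Phi^{-1/2}$; say so explicitly.

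With these changes your argument yields \eqref{d3:interior-count}. Nonexplosion and the removal of the count stopping then follow exactly as you describe.
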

\begin{proof}
Call a death small if its prestate has $R_h\le b_h/2$, and call it large otherwise; let $K^s,K^\ell$ be their counts.  At a small death, put $u=d_h(X_{d,s})^2/R_h^2$.  The two survivor squares sum to $R_h^2$, and~\eqref{ah:normal-copy} keeps the poststate inside $R_h<b_h$.  The conditional ratio is
\[
 \tfrac12\{(1+u)^{-\gamma/2}+(2-u)^{-\gamma/2}\}
 \le q_\gamma=\tfrac12(1+2^{-\gamma/2}).
\]
The function in braces is convex on $[0,1]$ and has equal endpoint values, proving the inequality.  Set $\theta_s=-\log q_\gamma$.  The continuous drift inequality and the conditional copy inequality show that
\[
 e^{\theta_sK_t^s-C_0\gamma A_t}\Phi_{h,\gamma}(X(t))
\]
is a nonnegative local supermartingale at each finite prefix and the
auxiliary stops above. The conditional copy estimate uses the next donor
uniform, which has not yet been revealed. At a large death the factor
$e^{-C_0\gamma\Delta A}$ and the corner function both decrease.
Since $\Phi_{h,\gamma}\ge2^{-\gamma}$, Fatou's lemma at fixed $m$ gives
\begin{equation}\label{ah:small-count}
 \mathbb E_x e^{\theta_sK_t^s-C_0\gamma A_t}
 \le2^\gamma\Phi_{h,\gamma}(x).
\end{equation}

At a large death at least one survivor has distance greater than $b_h/(2\sqrt3)$.  Its squared lifetime potential is at least $\kappa^2$ by Lemma~\ref{ah:donor}, and it is selected with probability at least $1/2$.  Choose $0<\lambda_\ell\le1/8$ and set
$\theta_\ell=-\log\{(1+e^{-\lambda_\ell\kappa^2})/2\}>0$.  Conditional on the prestate,
\[
 \mathbb E[e^{\theta_\ell-\lambda_\ell r_h(X_{d,s})^2}]
 \le1
\]
at a large death.  Small deaths only decrease the exponential involving $J$.  Thus
\begin{equation}\label{ah:large-count}
 \mathbb E_xe^{\theta_\ell K_t^\ell-\lambda_\ell J_t}\le1.
\end{equation}

Fix $\gamma$ so that $C_0\gamma\le1/8$.  By Cauchy--Schwarz,~\eqref{ah:small-count} and Lemma~\ref{ah:energy},
\[
 \mathbb E_xe^{\theta_sK_t^s/2}
 \le(2^\gamma\Phi_{h,\gamma}(x))^{1/2}
       (\mathbb E_xe^{C_0\gamma A_t})^{1/2}.
\]
The same inequality applied to~\eqref{ah:large-count} gives
\[
 \mathbb E_xe^{\theta_\ell K_t^\ell/2}
 \le(\mathbb E_xe^{\lambda_\ell J_t})^{1/2}
 \le(\mathbb E_xe^{A_t/8})^{1/2}.
\]
For $\alpha=\min(\theta_s,\theta_\ell)/4$, H\"older gives
\[
 \begin{aligned}
 \mathbb E_xe^{\alpha(K_t^s+K_t^\ell)}
 &\le\left(\mathbb E_xe^{\theta_sK_t^s/2}
                \mathbb E_xe^{\theta_\ell K_t^\ell/2}\right)^{1/2}\\
 &\le(2^\gamma\Phi_{h,\gamma}(x))^{1/4}
                (\mathbb E_xe^{A_t/8})^{1/2}\\
 &\le C_1\Phi_{h,\gamma}(x)^{1/4}e^{C_2T},
 \qquad t=T\wedge\tau_m.
 \end{aligned}
\]
This proves~\eqref{d3:interior-count} for stopped prefixes.

The choices of $\kappa,C_0,\gamma,\lambda_\ell,\theta_s,\theta_\ell$
are made once for the fixed potential and are independent of all
localization levels and of small $h$. Thus, in particular,
\[
 \mathbb P_x(\tau_m\le T)
 \le C_1\Phi_{h,\gamma}(x)^{1/4}e^{C_2T-\alpha m}\longrightarrow0.
\]
With finitely many copies on a finite interval, every actual position lies on one of finitely many Brownian pieces and remains finite.  Thus spatial escape cannot end the recursive construction.  The process is nonexplosive.  Monotone convergence removes the count stopping in the estimate.
\end{proof}

\begin{lemma}[Microscopic rescue]\label{ah:rescue}
Set $t_h=8b_h^2$ and define the observed event
\[
 \mathsf E_h^{\rm res}
 =\{\min_i\dist(X_i(t_h),\partial D_h)\ge b_h/20\}
 \in\mathcal F_{t_h}.
\]
There are $p_0,\delta,\beta_0>0$, uniform in small $h$ and in the
initial configuration, such that the sublaw on this event followed by
a quiet interval of length $\delta$ has a density $f_{h,x}$ with
\begin{equation}\label{ah:rescue-density}
 \begin{gathered}
 \inf_x\mathbb P_x^h(\mathsf E_h^{\rm res})\ge p_0,\qquad
 \inf_x\int_{M^3}f_{h,x}\ge\beta_0,\\
 \sup_x\|f_{h,x}\|_{H^1(M^3)}\le C_\delta.
 \end{gathered}
\end{equation}
\end{lemma}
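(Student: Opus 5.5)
The argument has two parts: a uniform lower bound on $\mathbb P_x^h(\mathsf E_h^{\rm res})$, obtained on the microscopic scale $b_h$, followed by a quiet interval of length $\delta$ that smooths the sublaw into an $H^1$ density.

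\textbf{Step 1: the rescue probability.} Fix an initial state $x$. If every label already has clipped distance at least $b_h/(2\sqrt3)$, we only need all three labels to stay away from walls during $[0,t_h]$. Since $t_h=8b_h^2$, Brownian scaling by $b_h$ makes this a fixed-size event: each label should remain in its own collar neighbourhood and finish at distance at least $b_h/20$. Its probability has a positive lower bound uniform in $h$, because the rescaled hole radii $a_z/b_h\in[1/\vartheta,k_+/(\vartheta k_-)]$ lie in a compact range and, after scaling, the neighbouring holes are far away. Otherwise some labels are near walls. By the normal-corner picture, a label at distance $s\ll b_h$ either dies within time $O(b_h^2)$ or drifts outward; this is the rescaled version of Lemma~\ref{ah:lifetime} together with the radial formula \eqref{ah:single-ball}. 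We therefore prescribe, on the rescaled picture, the following cascade.
\begin{itemize}
\item Use a label of maximal clearance. If none has clearance, take the label furthest from the wall and require it to move normally outward to distance $b_h$ during $[0,b_h^2]$. In the rescaled ball geometry its normal coordinate is a three-dimensional Bessel-like process with bounded curvature drift, so this has probability bounded below.
\item Every other label either dies within time $b_h^2$ and copies the protected label (donor probability $1/2$), or itself moves out.
\item Finally require all labels to stay at distance at least $b_h/20$ until $t_h$.
\end{itemize}
Each requirement is stated on the rescaled process, with finitely many Brownian pieces and prescribed donor marks. Lemma~\ref{lem:finite-prefix} and the strong Markov property at deaths multiply the finitely many uniform bounds. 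This gives $p_0$.

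\textbf{Step 2: the quiet interval and the density.} On $\mathsf E_h^{\rm res}$, apply the product killed kernel $\prod_i p_h(\delta,X_i(t_h),y_i)$ and require no deaths on $[t_h,t_h+\delta]$. The density is then
\[
 f_{h,x}(y)=\mathbb E_x\Bigl[\mathbf 1_{\mathsf E_h^{\rm res}}\prod_i p_h(\delta,X_i(t_h),y_i)\Bigr].
\]
The $H^1$ bound follows from the uniform row estimate $\sup_x\|p_h(\delta,x,\cdot)\|_\infty+\|p_h(\delta,x,\cdot)\|_{H^1}\le C_\delta$ proved in Appendix~\ref{app:marked-output-proof}, using tensor products and Minkowski's inequality. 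For the mass bound, the starting points have clearance at least $b_h/20$. By the rescaled escape estimate, a label started at clearance $b_h/20$ reaches the sphere $|x-z|=\rho_h$, or a point at distance $\rho_h$ from every centre, with probability bounded below before time $\delta/2$; it takes $O(\rho_h^2)$ time. Lemma~\ref{ah:lifetime} then gives survival to time $\delta$ with probability at least $1/2$ once $\delta\le t_0$. Hence $\int f_{h,x}\ge p_0\,c^3=:\beta_0$.

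\textbf{Main obstacle.} The hard part is Step 1 for configurations with two or three labels very close to walls at distances $\ll b_h$, possibly on different holes. The protected label must escape before the others' deaths force bad copies. I would handle this through the ideal assignment mechanism of Theorem~\ref{thm:polya-restart}, which shows that uniform donor choice reaches a configuration with all labels descended from the protected one at a probability rate independent of the victim order. I would combine this with the conditional short-wall estimate \eqref{eq:wall}, applied on the $b_h$ scale, to bound the waiting times. The uniformity then reduces to compactness of the rescaled ratios $a_z/b_h$.
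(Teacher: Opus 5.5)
There is a genuine gap in Step~1, in exactly the case you flag as the main obstacle. Suppose every label starts at distance $s\ll b_h$. You claim the label furthest from the wall moves outward to distance $b_h$ during $[0,b_h^2]$ with probability bounded below; this is false. Its wall distance is $|X-z|-a_z$ for a Bessel(3) radius, and on the $b_h$ scale the curvature drift $1/(a_z+d)$ is at most $b_h/a_-=\vartheta<1/100$. The probability of reaching distance $b_h$ before the wall is therefore $s(a_z+b_h)/(b_h(a_z+s))\approx s/b_h$, which tends to $0$.

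The P\'olya restart of Theorem~\ref{thm:polya-restart} does not repair this, for two reasons.
\begin{itemize}
\item It needs an interior label whose distance is bounded below while the wall labels' distances tend to zero, and it is a limit statement on a fixed domain.
\item Its output is genealogical: wall labels are relocated onto interior ancestors, but the clearance of the configuration does not increase.
\end{itemize}
From a configuration at scale $s$, coalescing onto the furthest label leaves all three labels at clearance about $s$. Crossing the $\log_2(b_h/s)$ dyadic scales by multiplying per-scale success probabilities gives no bound uniform in the initial configuration.

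The missing idea is a collective Lyapunov estimate for the corner radius, rather than escape of any single label. Put $Q_h=R_h^2=\sum_i d_h(X_i)^2$ and $\sigma=\inf\{t:Q_h(t)\ge4b_h^2\}$.
\begin{itemize}
\item Before $\sigma$ all clipped distances are actual spherical distances, so \eqref{ah:normal-generator} gives $\mathcal LQ_h=3+2\sum_i d_h(X_i)/(a_{z_i}+d_h(X_i))\ge3$.
\item Every copy adds a nonnegative donor square by \eqref{ah:normal-copy}, and a copying exit has $Q_h(\sigma)\le2Q_h(\sigma-)\le8b_h^2$.
\item Stopped Dynkin, with the prefix stop removed via Lemma~\ref{ah:count}, gives $3\mathbb E(t\wedge\sigma)\le8b_h^2$. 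Hence $\mathbb P(\sigma\le16b_h^2/3)\ge1/2$ from every initial state. Deaths do not hurt; they push $R_h$ outward.
\end{itemize}
At $\sigma$ some label has clearance at least $2b_h/\sqrt3$. Only then does your protected-label cascade apply, restarted at $\sigma$ by the strong Markov property.

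Even there, ``either dies or moves out'' must be replaced by an explicit positive-probability prescription. The paper does this as follows:
\begin{itemize}
\item each label with clearance below $b_h/10$ is pushed into its hole by a driver control inside a scaled Brownian tube, and the other labels are held in the tube;
\item donor $P$ is prescribed at most twice, giving the factor $1/4$;
\item the bound $\sigma\le16b_h^2/3$ ensures that the controlled window of length $8b_h^2$ contains both the forced hits and the fixed time $t_h$.
\end{itemize}

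Your Step~2 is fine. Escaping to the $\rho_h$ sphere as in Lemma~\ref{ah:donor} is a valid alternative to the paper's direct bound $P_\delta^h\one\ge s_1-CA(\delta+h^2)$ at rescued points.
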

\begin{proof}
Put $Q_h(t)=R_h(X_t)^2$ and
\[
 \sigma=\inf\{t\ge0:Q_h(t)\ge4b_h^2\}.
\]
Thus $\sigma=0$ if the initial state is already outside the thin corner.
Before $\sigma$ all three distances are actual spherical distances,
and~\eqref{ah:normal-generator} gives
\[
 \mathcal LQ_h=3+2\sum_i\frac{d_h(X_i)}{a_{z_i}+d_h(X_i)}\ge3.
\]
Every copy adds a nonnegative donor square. From $Q_h(0)<4b_h^2$,
a continuous exit has $Q_h(\sigma)=4b_h^2$, and a copying exit has
$Q_h(\sigma)\le2Q_h(\sigma-)\le8b_h^2$. Stopped Dynkin gives
\[
 \begin{aligned}
 3\mathbb E(t\wedge\sigma)
 &\le\mathbb E[Q_h(t\wedge\sigma)-Q_h(0)]\le8b_h^2,\\
 \mathbb E\sigma&\le8b_h^2/3,
 \qquad \mathbb P(\sigma\le16b_h^2/3)\ge1/2.
 \end{aligned}
\]
First remove the finite-prefix stop using Lemma~\ref{ah:count}, then
let $t\to\infty$ by monotone convergence. The same bounds hold when
$\sigma=0$.

At $\sigma$, choose a protected label $P$ with clipped clearance at
least $2b_h/\sqrt3$. Put $b=b_h$ and fix $0<\varepsilon<10^{-3}$.
Let $\mathcal I=\{i:d_h(X_i(\sigma))<b/10\}$; it contains at most two
labels. For $i\in\mathcal I$, write
$X_i(\sigma)=z_i+(a_{z_i}+d_i)n_i$, where $0<d_i<b/10$ and $|n_i|=1$.
Define the driver controls
\[
 \ell_1(s)=\frac b4\min(2s/b^2,1),\qquad
 g_i(s)=
 \begin{cases}
 -\ell_1(s)n_i,&i\in\mathcal I,\\
 0,&i\notin\mathcal I,
 \end{cases}
\]
and the whole-driver tube event
\[
 \mathsf T=
 \bigcap_{i=1}^3\left\{\sup_{0\le s\le8b^2}
   |B_i(\sigma+s)-B_i(\sigma)-g_i(s)|\le\varepsilon b\right\}.
\]
Let $\mathsf D$ require that the next donor of each label in
$\mathcal I$ is $P$.

On $\mathsf T$, the protected and initially thick labels move by at most
$\varepsilon b$ from their positions at $\sigma$ until any wall hit.
For a thin label with no wall hit by $\sigma+b^2/2$, its free endpoint
would satisfy
\[
 |X_i(\sigma+b^2/2)-z_i|
 \le a_{z_i}-(1/4-1/10-\varepsilon)b<a_{z_i}.
\]
Thus every thin label hits before that time.
The control meets no other sphere, since $b<a_-/100$ and the collars
are disjoint. On $\mathsf D$, every thin label then copies $P$, in
whatever order the hits occur.
After a thin label's first copy at $\tau$, its remaining control
variation is at most $b/4$, and its driver increment error between
any two window times is at most $2\varepsilon b$. Thus, until any
putative further death,
\[
 |X_i(v)-X_P(v)|\le b/4+4\varepsilon b .
\]
For a copied thin label and an initially thick label, respectively,
the clearance bounds are
\[
 \begin{aligned}
 \dist(X_i(v),\partial D_h)
 &\ge(2/\sqrt3-1/4-5\varepsilon)b>b/20,\\
 \dist(X_i(v),\partial D_h)
 &\ge(1/10-\varepsilon)b>b/20.
 \end{aligned}
\]
They exclude any further death and hold through $t_h$ on
$\{\sigma\le16b^2/3\}$.

Conditioned on $\mathcal F_\sigma$, space-time scaling by $(b,b^2)$
turns the controls into either zero or
$-\frac14\min(2s,1)n_i$, on the fixed interval $[0,8]$ with tube width
$\varepsilon$. Brownian scaling and rotation invariance give
\[
 \mathbb P(\mathsf T\mid\mathcal F_\sigma)\ge p_{\rm tube}>0,
 \qquad
 \mathbb P(\mathsf T\cap\mathsf D\mid\mathcal F_\sigma)
       \ge p_{\rm tube}/4.
\]
On the tube event, prescribe donor $P$ at each successive first death
of a thin label. Each fresh donor uniform chooses $P$ with probability
$1/2$. A successful copy keeps that label safe, so at most two
prescriptions are needed, giving the factor $1/4$.
On $\{\sigma\le16b^2/3\}$,
$\sigma+b^2/2<t_h=8b^2\le\sigma+8b^2$, so both forced hits and the
terminal time lie in the controlled window. Hence
\[
 \mathbb P_x\left\{\min_i
            \dist(X_i(t_h),\partial D_h)\ge b_h/20\right\}
 \ge\frac12\frac{p_{\rm tube}}4=:p_0.
\]

For a rescued position, the nearest-sphere contribution and the lattice
bound in Lemma~\ref{ah:lifetime} give
\[
 \begin{aligned}
 P_\delta^h\one(y)
 &\ge1-\frac{a_+}{a_++b_h/20}-CA(\delta+h^2)\\
 &\ge s_1-CA(\delta+h^2),
 \qquad s_1:=\frac{\vartheta k_-}{20k_++\vartheta k_-}>0.
 \end{aligned}
\]
Choose $\delta>0$ and then $h$ so that $CA(\delta+h^2)\le s_1/2$.
Since $\mathsf E_h^{\rm res}\in\mathcal F_{t_h}$, conditioning at $t_h$
gives the quiet-segment density
\[
 f_{h,x}(y_1,y_2,y_3)
 =\E_x\left[\one_{\mathsf E_h^{\rm res}}
              \prod_{i=1}^3p_\delta^h(X_i(t_h),y_i)\right].
\]
Consequently
\[
 \int_{M^3}f_{h,x}
 =\E_x\left[\one_{\mathsf E_h^{\rm res}}
              \prod_{i=1}^3P_\delta^h\one(X_i(t_h))\right]
 \ge p_0(s_1/2)^3=:\beta_0.
\]

The killed heat row satisfies, uniformly in $h,x$,
\begin{equation}\label{ah:heat-row}
 \|p_\delta^h(x,\cdot)\|_2+
 \|\nabla p_\delta^h(x,\cdot)\|_2\le C_\delta.
\end{equation}
The free torus heat kernel dominates the killed row, giving its $L^2$
bound. For the nonnegative Dirichlet operator $A_h$,
\[
 \begin{aligned}
 \|\nabla p_\delta^h(x,\cdot)\|_2
 &=\sqrt2\|A_h^{1/2}e^{-\delta A_h/2}
                         p_{\delta/2}^h(x,\cdot)\|_2\\
 &\le C\delta^{-1/2}\|p_{\delta/2}^h(x,\cdot)\|_2
 \le C_\delta.
 \end{aligned}
\]
All rows are extended by zero across the holes. For
$p_i=p_\delta^h(y_i,\cdot)$, the tensor product satisfies
\[
 \left\|\prod_{i=1}^3p_i\right\|_{H^1(M^3)}^2
 =\prod_i\|p_i\|_2^2
   +\sum_i\|\nabla p_i\|_2^2\prod_{j\ne i}\|p_j\|_2^2
 \le C_\delta.
\]
Minkowski's inequality in the density formula gives
$\|f_{h,x}\|_{H^1(M^3)}\le C_\delta$, proving~\eqref{ah:rescue-density}.
\end{proof}

\begin{lemma}[Phase invariant law and stationary estimates]\label{ah:corner}
For fixed sufficiently small $h$, the phase process has a unique invariant probability $\pi_h$.  It satisfies
\begin{align}
 \mathbb E_{\pi_h}\sum_{s\le T}r_h(X_{d,s})
 &=cT\,\pi_h\sum_i(1-r_h(X_i))\le3cT,
 \label{ah:weighted-deaths}\\
 T\pi_h\mathcal G_h+\mathbb E_{\pi_h}J_T
 &=2cT\,\pi_h\sum_i r_h(X_i)(1-r_h(X_i))\le3cT/2,
 \label{ah:stationary-identity}\\
 \sup_{h<h_0}\pi_h\Phi_{h,\gamma}&<\infty,
 \qquad
 \mathbb E_{\pi_h}e^{\alpha K_T}\le C_1e^{C_2T}.
 \label{ah:stationary-count}
\end{align}
In particular, $\sup_{h<h_0}\mathbb E_{\pi_h}(K_T+1)^p\le C_p(1+T)^p$ for all $p>0$.
\end{lemma}
\begin{proof}
For fixed $h$, Proposition~\ref{prop:template-foster} supplies $\pi_h$.

Apply stopped It\^o to $\sum_i r_h(X_i)$ at fixed $h$. The donor jump
is nonnegative, and the fixed-$h$ gradient bound gives a square
integrable continuous martingale. Thus
\[
 \begin{aligned}
 &\E_{\pi_h}\left[\sum_i r_h(X_i(T\wedge\tau_m))
                          -\sum_i r_h(X_i(0))\right]\\
 &\quad=c\E_{\pi_h}\int_0^{T\wedge\tau_m}
                          \sum_i(r_h(X_i(s))-1)\,ds
        +\E_{\pi_h}\sum_{s\le T\wedge\tau_m}r_h(X_{d,s}),\\
 &0\le\E_{\pi_h}\sum_{s\le T\wedge\tau_m}r_h(X_{d,s})\le3+3cT.
 \end{aligned}
\]
As $m\to\infty$, nonexplosion and monotone convergence pass the jump
sum to the limit; endpoints and drift are bounded. Stationarity cancels
the endpoints and gives~\eqref{ah:weighted-deaths}.

For $F=\sum_i r_h(X_i)^2$, integrate Lemma~\ref{ah:energy} against
$\pi_h$ and let $m\to\infty$. Monotone convergence gives
\[
 \begin{aligned}
 A_{T\wedge\tau_m}&\uparrow A_T,\\
 \E_{\pi_h}e^{A_T/8}&\le e^{3/4+3cT/8},\\
 \E_{\pi_h}\langle M\rangle_T
 &\le4\E_{\pi_h}A_T\le C(1+T).
 \end{aligned}
\]
The martingale now has mean zero. Apply stationarity to
\eqref{d3:energy-identity} to obtain~\eqref{ah:stationary-identity},
with constants independent of $h$.

For the corner moment, use the nonincreasing regularization, with
$0<\eta\leq b_h/4$,
\[
 \Phi_\eta(R)=b_h^\gamma\eta^{-\gamma}
   \exp\left\{-\gamma\int_0^R
       \frac{u\,\chi_0(u/b_h)}{u^2+\eta^2}\,\dd u\right\}.
\]
It equals $b_h^\gamma(R^2+\eta^2)^{-\gamma/2}$ in the core and is
constant for $R\geq2b_h$. It is smooth as a function of the normal
coordinates even at $R=0$. In the transition band its value is at most
one, and its first two radial derivatives are bounded by
$C\gamma/b_h$ and $C\gamma/b_h^2$, uniformly in $\eta/b_h\leq1/4$.
There the normal curvature is at most $C/a_-\leq C'/b_h$, so the
negative part of its continuous drift is at most
$C\gamma b_h^{-2}$. The function is bounded for fixed $h,\eta$, and
$-\Phi_\eta$ has nonnegative copy jumps by
\eqref{ah:normal-copy}. In the core, differentiation and positive
normal curvature give
\begin{equation}\label{ah:regularized-corner}
 \mathcal L(-\Phi_\eta)
 \ge\frac{\gamma(1-\gamma)}2
       \frac{b_h^\gamma}{(R_h^2+\eta^2)^{\gamma/2+1}}.
\end{equation}
Indeed the radial Laplacian contribution is
\[
 \frac\gamma2b_h^\gamma
 \frac{(1-\gamma)R_h^2+3\eta^2}
      {(R_h^2+\eta^2)^{\gamma/2+2}},
\]
and the curvature contribution is nonnegative.

At fixed $h,\eta$, bounded derivatives justify Dynkin for
$-\Phi_\eta$ at $T\wedge\tau_m$. Its nonnegative jump sum satisfies
\[
 \E_{\pi_h}\sum_{s\le T\wedge\tau_m}
       [\Phi_\eta(R_h(X_{s-}))-\Phi_\eta(R_h(X_s))]
 \le 2\|\Phi_\eta\|_\infty+T\|\cL\Phi_\eta\|_\infty.
\]
This bound is finite at fixed $h,\eta$ and independent of $m$.
Monotone convergence of the jumps and bounded convergence of the
endpoints and drift give, after $m\to\infty$,
\[
 0=T\pi_h\cL(-\Phi_\eta)
   +\E_{\pi_h}\sum_{s\le T}
        [\Phi_\eta(R_h(X_{s-}))-\Phi_\eta(R_h(X_s))],
 \qquad \pi_h\cL(-\Phi_\eta)\le0.
\]
Combine the core lower bound~\eqref{ah:regularized-corner} with the
transition bound $\cL(-\Phi_\eta)\ge-C\gamma b_h^{-2}$ to obtain
\[
 \pi_h\left[
 \frac{b_h^\gamma}{(R_h^2+\eta^2)^{\gamma/2+1}};
 R_h\le b_h\right]\le Cb_h^{-2}.
\]
The constant is independent of $h,\eta$. Now let $\eta\downarrow0$
at fixed $h$ in the nonnegative core integral:
\[
 \begin{aligned}
 \pi_h[b_h^\gamma R_h^{-\gamma-2};R_h\le b_h]
       &\le Cb_h^{-2},\\
 \pi_h[\Phi_{h,\gamma};R_h\le b_h]
       &\le b_h^2\pi_h[b_h^\gamma R_h^{-\gamma-2};R_h\le b_h]
        \le C,\\
 \pi_h\Phi_{h,\gamma}&\le C+1.
 \end{aligned}
\]
The last line uses $\Phi_{h,\gamma}\le1$ outside the core.

Integrating Lemma~\ref{ah:count} now gives
\[
 \E_{\pi_h}e^{\alpha K_T}
 \le C_1e^{C_2T}\pi_h\Phi_{h,\gamma}^{1/4}
 \le C_1e^{C_2T}(\pi_h\Phi_{h,\gamma})^{1/4}
 \le C_1'e^{C_2T}.
\]
For $u_T=\alpha^{-1}[\log(C_1'\vee1)+C_2T]$, the Chernoff bound and
tail integration yield
\[
 \begin{gathered}
 \mathbb P_{\pi_h}(K_T>u)\le\min(1,e^{-\alpha(u-u_T)}),\\
 \E_{\pi_h}(K_T+1)^p\le C_p(1+u_T)^p\le C_p'(1+T)^p.
 \end{gathered}
\]
\end{proof}

\begin{lemma}[Growing collars and short waits]\label{ah:short-wait}
Let $\Lambda_h=h^{-1/2}$ and let $E_h$ be the event that at least one particle lies in $a_z<|x-z|\le\Lambda_ha_z$ for some sphere.  Then
\begin{equation}\label{ah:collar-mass}
 \pi_h(E_h)\le Ch^4,
 \qquad
 \mathbb P_{\pi_h}(\text{next death}\le t)\le Ct+C\sqrt h,
 \quad0<t\le1.
\end{equation}
\end{lemma}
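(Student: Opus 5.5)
The plan has two steps. First I bound the collar mass from the stationary energy identity and the gradient lower bound. Then I bound the short-wait probability by splitting according to whether the initial state lies in $E_h$.

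\emph{Collar mass.} I start from \eqref{ah:stationary-identity}, which gives $\pi_h\mathcal G_h\le 3c/2$ uniformly in $h$. On $E_h$ some label satisfies $a_z<|X_i-z|\le\Lambda_ha_z$. I apply Lemma~\ref{ah:gradient} with $R=\Lambda_h=h^{-1/2}$. Its hypotheses hold because $Ra_z\le Ch^{5/2}\le\rho_h/2$, and because condition \eqref{ah:growing-condition} reads $Ch^{-1}a_z(\rho_h^{-1}+c\rho_h)\le Ch^{-1}h^3h^{-1}=Ch$, which is small. The lemma then gives $|\nabla r_h(X_i)|\ge m_0/(2\Lambda_h^2a_z)\ge c\,h^{-3}h=c\,h^{-2}$. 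Hence $\mathcal G_h\ge c h^{-4}$ on $E_h$, and Markov's inequality yields $\pi_h(E_h)\le Ch^4\,\pi_h\mathcal G_h\le Ch^4$.

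\emph{Short waits.} Let $x$ be a stationary initial configuration.
\begin{itemize}
\item On $E_h$ the probability is at most $\pi_h(E_h)\le Ch^4$.
\item Off $E_h$, every label lies outside all balls $B(z+L\ell,\Lambda_ha_z)$. The next death before $t$ requires some label's free path to be captured, since before the first death all paths are free. The capture estimate \eqref{ah:capture} with $R=\Lambda_h$ and a union bound over the three labels then give a bound $3(h^{1/2}+CA(t+h^2))$.
\end{itemize}
Combining the two cases gives $\Prob_{\pi_h}(\text{next death}\le t)\le Ct+C\sqrt h+Ch^2+Ch^4\le Ct+C\sqrt h$ for $0<t\le1$.

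The main obstacle is checking that Lemma~\ref{ah:gradient} really applies throughout the whole growing collar $a_z<r\le\Lambda_ha_z$, with constants uniform over varying radii. Its hypotheses only require $R^2a_z\rho_h^{-1}$ to be small, and this holds since it is $O(h)$. A second point is that the first death might be caused by a label that starts near some sphere other than its nearest one. This is handled by the union over the periodic lattice, which \eqref{ah:capture} already includes.
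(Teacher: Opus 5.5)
Your proposal is correct and matches the paper's proof. Both apply Lemma~\ref{ah:gradient} with $R=\Lambda_h$, combined with the stationary bound $\pi_h\mathcal G_h\le 3c/2$ from \eqref{ah:stationary-identity}, to get $\pi_h(E_h)=O(h^4)$; off $E_h$, both use the capture estimate \eqref{ah:capture} with a union bound over the three free motions, giving the $3/\Lambda_h+CA(t+h^2)$ term.
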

\begin{proof}
We have $\Lambda_ha_z=O(h^{5/2})<\rho_h/2$ and
$\Lambda_h^2a_z(\rho_h^{-1}+c\rho_h)=O(h)$.  Lemma~\ref{ah:gradient} gives $\mathcal G_h\ge m_0^2/(4\Lambda_h^4a_+^2)$ on $E_h$.  By~\eqref{ah:stationary-identity},
\[
 \pi_h(E_h)\le6c\Lambda_h^4a_+^2/m_0^2=O(h^4).
\]
Outside the collars, apply Lemma~\ref{ah:lifetime} to the three future
motions and use the union bound. Thus
\[
 \begin{aligned}
 \mathbb P_{\pi_h}(\text{next death}\le t)
 &\le\pi_h(E_h)+3/\Lambda_h+CA(t+h^2)\\
 &\le Ct+C\sqrt h,\qquad0<t\le1.
 \end{aligned}
\]
This proves~\eqref{ah:collar-mass}.
\end{proof}

\subsection{Uniform death counts and interior rescue}

\begin{proposition}[Uniform death counts and interior rescue]\label{ah:uniform-interface}
Fix $L,k_-,k_+,c,\vartheta$ and the smooth positive rate $k$.
There are $h_0>0$ and $\gamma\in(0,1/2]$ such that the stopped and
unstopped count estimate~\eqref{d3:interior-count} and the stationary
estimates~\eqref{ah:stationary-identity}, \eqref{ah:stationary-count}
and~\eqref{ah:collar-mass} hold uniformly for every admissible
$h<h_0$. In the same mesh range, the observed rescue event and its
quiet-segment density satisfy
\begin{equation}\label{ah:uniform-rescue-interface}
 \begin{gathered}
 \inf_x\mathbb P_x^h(\mathsf E_h^{\rm res})\ge p_0,\qquad
 \inf_x\int_{M^3}f_{h,x}\ge\beta_0,\\
 \sup_x\|f_{h,x}\|_{H^1(M^3)}\le C_\delta,
 \qquad p_0,\beta_0,\delta>0.
 \end{gathered}
\end{equation}
All constants are uniform over initial configurations, absolute
translations and initial windings.
\end{proposition}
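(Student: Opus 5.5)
This proposition is mostly a bookkeeping step. It assembles Lemmas~\ref{ah:lifetime}--\ref{ah:short-wait} under one choice of parameters and one mesh threshold, and checks that nothing depends on $h$, on the initial configuration, or on lattice offsets. I would fix the parameters in this order: $L,k_\pm,c$ and $\vartheta<1/100$ first; then $m_0,t_0$ from Lemma~\ref{ah:lifetime}; then $\kappa$ from Lemma~\ref{ah:donor}, and $c_0,c_1$ from \eqref{ah:fixed-collar-gradient}; then $C_0$ in \eqref{d3:corner-energy}; and last $\gamma\in(0,1/2]$ with $C_0\gamma\le1/8$. That fixes $\theta_s,\theta_\ell,\lambda_\ell,\alpha,C_1,C_2$ in Lemma~\ref{ah:count}. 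None of these constants involves $h$, because each lemma was proved with constants depending only on $L,k_\pm,c,\vartheta$ and the fixed rate.

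The mesh threshold $h_0$ is the minimum of the finitely many smallness conditions imposed along the way:
\begin{itemize}
\item $CAh^2\le1/4$ in Lemma~\ref{ah:lifetime};
\item $c\rho_h^2/3\le s_*/2$ in Lemma~\ref{ah:donor};
\item the growing-collar condition \eqref{ah:growing-condition} with $R=\Lambda_h$, which Lemma~\ref{ah:short-wait} verifies as $O(h)$;
\item $CA(\delta+h^2)\le s_1/2$ in Lemma~\ref{ah:rescue}, after choosing $\delta$;
\item disjointness of the radius-$h^2$ shells and of the collars.
\end{itemize}
Below this $h_0$, Lemma~\ref{ah:count} gives the stopped and unstopped \eqref{d3:interior-count}; removing the stopping is by monotone convergence, which that lemma already justifies through nonexplosion. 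Lemma~\ref{ah:corner} gives \eqref{ah:stationary-identity} and \eqref{ah:stationary-count}, Lemma~\ref{ah:short-wait} gives \eqref{ah:collar-mass}, and Lemma~\ref{ah:rescue} gives \eqref{ah:uniform-rescue-interface} with $p_0=p_{\rm tube}/8$ and $\beta_0=p_0(s_1/2)^3$.

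Next I would check uniformity over translations and windings. Every function involved is periodic and is evaluated only through phases or through the normal coordinates of individual labels: $r_h$, $d_h$, $R_h$, $\Phi_{h,\gamma}$, $\mathcal G_h$ and the killed kernels. The Euclidean lift enters only through real positions, and a common lattice translation leaves all of these unchanged. Since the copy rule \eqref{ah:normal-copy} involves only the donor's distance, initial windings that differ per label are also harmless. The infima in Lemma~\ref{ah:rescue} were taken over all configurations, including the case $\sigma=0$, so they are automatically uniform.

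The step needing most care is confirming that $\gamma$ in Lemma~\ref{ah:corner} is the same exponent used in Lemma~\ref{ah:count}. The regularized-corner argument gives $\sup_{h<h_0}\pi_h\Phi_{h,\gamma}<\infty$ for any $\gamma\le1/2$, so one common small $\gamma$ serves both; integrating the count bound against $\pi_h$ then yields \eqref{ah:stationary-count}. Conflicts between the order in which parameters are chosen would be the main risk, but the dependencies above are acyclic.
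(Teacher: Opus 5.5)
Your proposal is correct and follows the paper's proof essentially step for step. You fix $m_0,\kappa,C_0$ from the lifetime, donor and gradient lemmas, then choose $\gamma$ with $C_0\gamma\le1/8$, $\lambda_\ell\le1/8$ and the rescue delay $\delta$, take $h_0$ as the minimum of finitely many mesh thresholds, and invoke periodicity for uniformity over initial lifts; your values $p_0=p_{\rm tube}/8$ and $\beta_0=p_0(s_1/2)^3$, and your remark that the regularized-corner bound holds for every $\gamma\le1/2$ so that one exponent serves both the count and the stationary estimates, agree with the paper.
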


\begin{proof}
The lifetime and gradient estimates first fix $m_0,\kappa,C_0$.
The count parameters and rescue delay are then chosen with
\[
 C_0\gamma\le\tfrac18,\qquad \lambda_\ell\le\tfrac18,
 \qquad CA(\delta+h_0^2)\le\tfrac12s_1,
 \qquad \beta_0=p_0(s_1/2)^3,
\]
where $s_1>0$ is the survival margin in the rescue proof.
The scaled tube gives $p_0$ before any further reduction of $h_0$;
the fixed positive delay gives $C_\delta$ by~\eqref{ah:heat-row}.
All choices are independent of the localization levels.

The stationary argument removes the prefix cutoff $m$ and then
the regularization $\eta$ at each fixed mesh, yielding constants
uniform in $h$. Take $h_0$ to be the minimum of the finitely many
thresholds in the lifetime, gradient, count, rescue and growing-collar
estimates. Periodicity and Brownian increments preserve the bounds
under changes of initial lift.
\end{proof}

\subsection{Common ancestry at arbitrary killing rates}
\begin{lemma}[Uniform coalescence]\label{ah:coalescence}
For the fixed positive potential $k$ there are $h_0,T_*,\beta>0$ such that for every $h<h_0$ and every initial configuration in $D_h^3$, the particles at time $T_*$ share one ancestor from time zero with probability at least $\beta$.  Consequently the failure probability by time $nT_*$ is at most $(1-\beta)^n$.  The same conclusions hold for the soft model.
\end{lemma}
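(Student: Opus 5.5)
The plan is to combine the microscopic rescue of Lemma~\ref{ah:rescue} (uniform in $h$ through Proposition~\ref{ah:uniform-interface}) with the uniform finite-record comparison of Lemma~\ref{lem:compressed-record-tv}, and then import the soft common-ancestor probability. The soft statement is immediate. As in the stationary displacement subsection, prescribe on a window of length $T$ exactly $N-1=2$ accepted candidate events with distinct victims and one common donor, and forbid all others. This gives
\[
 \inf_x\Prob^k_x(\mathsf A_{0,T})\ge\beta_k
 :=e^{-3k_{\max}T}\tfrac12\Bigl(\tfrac{k_{\min}T}{2}\Bigr)^{2}>0,
\]
uniformly over initial configurations. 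For the hard model, the first step fixes the rescue delay $\delta>0$ and the constants $p_0,\beta_0,C_\delta$ from \eqref{ah:uniform-rescue-interface}. From any $x\in D_h^3$, the sublaw at time $t_h+\delta$ on $\mathsf E_h^{\rm res}$, with no death in the quiet segment, has density $f_{h,x}$ on $M_h^3$ with $\int f_{h,x}\ge\beta_0$ and $\|f_{h,x}\|_2\le C_\delta$. Hence the normalized density $\bar f_{h,x}=f_{h,x}/\int f_{h,x}$ satisfies $\|\bar f_{h,x}\|_2\le B:=C_\delta/\beta_0$, uniformly in $x$ and small $h$. It is a density for the partition into $r=3$ singleton roots, so it lies in the input class of Lemma~\ref{lem:compressed-record-tv}.

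The second step applies the strong Markov property at the deterministic time $t_h+\delta$ and runs a window of fixed length $T$ from there. The event $\mathsf A$ that all three labels at the end of the window share one ancestor at its start is a measurable function of the compressed record, since it depends only on the ordered victim--donor marks $(i_r,j_r)_{r\le K_T}$ through the ancestral maps $\mathfrak a$. Pushforward contraction then gives
\[
 \Prob^h_{\bar f}(\mathsf A)\ge\Prob^k_{\bar f}(\mathsf A)
 -\sup_{\|g\|_2\le B}\|\mathsf L_{h,g}-\mathsf L_{k,g}\|_{\TV}
 \ge\beta_k-o_h(1).
\]
Here the supremum is over probability densities on $M_h^3$, and the $o_h(1)$ is uniform in $x$ by Lemma~\ref{lem:compressed-record-tv}. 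Choosing $h_0$ so that the error is at most $\beta_k/2$ yields
\[
 \Prob_x^h\bigl(\mathsf A_{t_h+\delta,\,t_h+\delta+T}\bigr)\ge\tfrac12\beta_0\beta_k .
\]

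The third step turns this into a statement on a fixed window. Set $T_*=1+\delta+T$, so that $t_h+\delta+T\le T_*$ for small $h$. Two facts about the ancestral maps suffice. First, the composition rule $\mathfrak a_{0,t}=\mathfrak a_{0,s}\circ\mathfrak a_{s,t}$ shows that a single ancestor at time $s$ gives a single ancestor at time $0$. Second, a copy replaces one ancestral colour by another existing colour, so a single ancestor persists forward in time. Together these give $\mathsf A_{t_h+\delta,\,t_h+\delta+T}\subset\mathsf A_{0,T_*}$, and hence $\Prob^h_x(\mathsf A_{0,T_*})\ge\beta:=\tfrac12\beta_0\beta_k$ uniformly in $x$. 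The geometric bound $(1-\beta)^n$ then follows by iterating the Markov property at the times $mT_*$. On failure up to time $mT_*$, restart from the actual configuration there; a success on $[mT_*,(m+1)T_*]$ implies $\mathsf A_{0,(m+1)T_*}$ by the same composition and persistence facts. The soft model is handled identically using $\beta_k$. Initial windings do not enter, since everything is phase measurable.

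I expect the main obstacle to be checking that the record comparison genuinely applies uniformly with $h$-dependent inputs. This requires three things. The rescued density must be a true density on $M_h^3$ in the sense of the lemma, which holds because the quiet segment uses the killed kernel and the zero extension is legitimate. The $L^2$ bound must be uniform, which \eqref{ah:heat-row} supplies. And the total-variation estimate of Lemma~\ref{lem:compressed-record-tv} must hold uniformly over the whole $L^2$ ball, not just along a single convergent sequence; the lemma is stated this way, so it can be invoked directly. A secondary point is that the lemma's proof uses the one-particle kernel limits but not the count estimates being proved here, so there is no circularity.
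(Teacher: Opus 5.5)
Your proposal is correct and follows the paper's proof. Both arguments use the microscopic rescue and quiet segment to get a root subdensity of mass at least $\beta_0$ that is uniformly $L^2$-bounded, then import a soft common-ancestry probability through the uniform total-variation comparison of Lemma~\ref{lem:compressed-record-tv} over that $L^2$ ball, and finish with composition and persistence of ancestry and block iteration. The only difference is cosmetic: the paper compares one explicit timed word $\mathsf W$ (deaths $(2,1)$ in $[1/4,1/2]$ and $(3,1)$ in $[3/4,1]$, with $K_2=2$), whereas you compare the common-ancestor event $\mathsf A$ directly using the soft candidate bound $\beta_k$, which works equally well because total-variation convergence covers every record-measurable event.
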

\begin{proof}
Restart at the end of the observed rescue and quiet interval in
Lemma~\ref{ah:rescue}. Its actual root subdensities satisfy
$\int f_{h,x}\ge\beta_0$ and $\|f_{h,x}\|_2\le C_\delta$.
Write
$(\tau_j,i_j,d_j)$ for the subsequent death times, victims and donors,
and prescribe the event
\[
 \mathsf W=\left\{
 \begin{gathered}
 K_2=2,\quad\tau_1\in[1/4,1/2],\quad(i_1,d_1)=(2,1),\\
 \tau_2\in[3/4,1],\quad(i_2,d_2)=(3,1)
 \end{gathered}\right\}.
\]
In the soft process, the total death rate is at most
$3k_+$. Each selected victim has rate at least $k_-$ and selects
donor $1$ with probability $1/2$. Integration over the two
intervals gives
\[
 \inf_y\mathbb P_y^k(\mathsf W)
 \ge\int_{1/4}^{1/2}\!\int_{3/4}^1
                    (k_-/2)^2e^{-6k_+}\,dv\,du
 =:\beta_1>0.
\]
On this word the three terminal particles descend from label $1$ at
the start of the word.

The zero-extended normalized root densities satisfy
\[
 g_{h,x}=\frac{f_{h,x}}{\int f_{h,x}},\qquad
 \|g_{h,x}\|_2\le C_\delta/\beta_0,
 \qquad g_{h,x}=0\text{ on }M^3\setminus M_h^3.
\]
Lemma~\ref{lem:compressed-record-tv}, including the terminal requirement
of no further death before time two, gives
\[
 \sup_{\substack{g\ge0,\ \int g=1,\ g=0\text{ on }M^3\setminus M_h^3\\
                         \|g\|_2\le C_\delta/\beta_0}}
       |\mathbb P_g^h(\mathsf W)-\mathbb P_g^k(\mathsf W)|
       \longrightarrow0.
\]
Reduce $h_0$ so that this error is at most $\beta_1/2$. The actual
rescued density then gives
\[
 \inf_x\int_{M^3}f_{h,x}(y)\mathbb P_y^h(\mathsf W)\,dy
 \ge\frac{\beta_1}{2}\inf_x\int_{M^3}f_{h,x}
 \ge\beta_0\beta_1/2.
\]
Take a common
\[
 T_*>\sup_{h<h_0}(8b_h^2+\delta+2),\qquad
 \beta=\beta_0\beta_1/2>0.
\]
Common ancestry persists under every later copy, so the success bound
holds at $T_*$. A winding shift of any initial particle preserves its
phase motion and every event probability used above. Hence the bound
holds for every Euclidean initial configuration.

Let $E_n$ be the event that the particles at $nT_*$ have a common
ancestor from time zero, with $E_0=\varnothing$. Success in a block
implies $E_{n+1}$, and $E_n$ also persists, so
\[
 \mathbb P_x(E_{n+1}^c\mid\mathcal F_{nT_*})
 \le(1-\beta)\one_{E_n^c},\qquad
 \mathbb P_x(E_n^c)\le(1-\beta)^n.
\]
For the soft system the prescribed word gives probability $\beta_1$
from every state, which is at least $\beta$.
\end{proof}

Lemmas~\ref{ah:corner} and~\ref{ah:coalescence} verify both estimates
in~\eqref{eq:transfer-inputs}. The marked transfer principle therefore
gives Corollary~\ref{cor:three-particle-transfer}, including convergence
of the full vector current and uniform moments of every fixed order.

\end{document}